\newenvironment{customthm}[1]
  {\innercustomthm}
  {\endinnercustomthm}
\newenvironment{custompr}[1]
  {\innercustompr}
  {\endinnercustompr}
\newenvironment{customlem}[1]
  {\innercustomlem}
  {\endinnercustomlem}
\newtheorem{theorem}{Theorem}
\newtheorem{proposition}{Proposition}
\newtheorem{lemma}{Lemma}
\newtheorem{corollary}{Corollary}
\newtheorem{remark}{Remark}
\newtheorem{example}{Example}
\newtheorem{definition}{Definition}
\newtheorem{convention}{Convention}
\renewcommand{\labelenumi}{\arabic{enumi})}
\renewcommand{\theenumi}{\arabic{enumi}} 
\let\origenumerate\enumerate
\def\enumerate{\origenumerate\itemsep0pt}
\let\origitemize\itemize
\def\itemize{\origitemize\itemsep0pt}
\newcommand{\id} {\mathrm{id}}
\newcommand{\e} {\mathrm{e}}
\newcommand{\sign}   {\mathrm{sgn}}
\newcommand{\I}{\mathrm{i}}
\newcommand{\FP} {\mathcal{M}}
\newcommand{\he} {\hspace{1pt}}
\newcommand{\barrow} {\:{}_{g_n}\hspace{-7pt}\boldsymbol{\rightharpoonup}}
\newcommand{\barrows} {\:{}_{\tilde{g}_n}\hspace{-7pt}\boldsymbol{\rightharpoonup}}
\newcommand{\innt} {\mathrm{int}}
\newcommand{\clos} {\mathrm{clos}}
\newcommand{\inv} {\mathfrak{i}}
\newcommand{\og} {\ovl{\gamma}}
\newcommand{\ug} {\underline{\gamma}}
\newcommand{\um} {\underline{\mu}}
\newcommand{\oI} {\ovl{I}}
\newcommand{\uI} {I}
\newcommand{\T} {\mathfrak{T}}
\newcommand{\deff} {{\it iff }}
\newcommand{\defff} {{\it iff }}
\newcommand{\x} {\mathrm{x}}
\newcommand{\dist} {\mathrm{dist}}
\newcommand{\met} {\mathrm{d}}
\newcommand{\CM} {C}
\newcommand{\im} {\mathrm{im}}
\newcommand{\dom} {\mathrm{dom}}
\newcommand{\dd} {\mathrm{d}}
\newcommand{\wt}[1] {\widetilde{#1}} 
\newcommand{\ovl}[1]{\overline{#1}} 
\newcommand{\OO} {\mathcal{O}}
\newcommand{\RR} {\mathbb{R}}
\newcommand{\CC} {\mathbb{C}}
\newcommand{\QQ} {\mathbb{Q}}
\newcommand{\ZZ} {\mathbb{Z}}
\newcommand{\CN} {\mathfrak{N}}
\newcommand{\cn} {\mathfrak{n}}
\newcommand{\ocn} {\ovl{\mathfrak{n}}}
\newcommand{\NN} {\mathbb{N}}
\newcommand{\cp} {\circ}
\newcommand{\wm} {\varphi}
\newcommand{\g} {{\vec{g}}}
\newcommand{\q} {{\vec{q}}}
\newcommand{\h} {{\vec{h}}}
\newcommand{\ccc} {{\vec{c}}}
\newcommand{\mg} {\mathfrak{g}}
\newcommand{\mc} {\mathfrak{c}}
\newcommand{\mq} {\mathfrak{q}}
\newcommand{\mh} {\mathfrak{h}}
\newcommand{\SO} {\mathrm{SO}}
\newcommand{\UE} {\mathrm{U}(1)}
\newcommand{\ppsim} {\rightharpoonup}
\newcommand{\cpsim} {\sim_\cp}
\newcommand{\psim} {\leadsto}
\newcommand{\gag}{{\gamma_{\g}^x}}
\newcommand{\gagg}{{\gamma_{\q}^y}}
\newcommand{\pii} {\boldsymbol{\pi}}
\newcommand{\ppi} {\Pi}
\newcommand{\pri} {\pi}
\newcommand{\zd} {k}
\newcommand{\EE} {\mathrm{E}}
\newcommand{\MK}{{\mathfrak{D}}}
\newcommand{\ML}{{\mathfrak{C}}}
\newcommand{\vv}{\vec{v}}
\def\blfootnote{\gdef\@thefnmark{}\@footnotetext}
\begin{document}
\title{Symmetries of Analytic Curves}
\author{Maximilian Hanusch\thanks{e-mail:
    {\tt mhanusch@math.upb.de}}\\   
  \\
  {\normalsize\em Department of Physics}\\[-0.15ex]
  {\normalsize\em Florida Atlantic University}\\[-0.15ex]
  {\normalsize\em 777 Glades Road}\\[-0.15ex]
  {\normalsize\em FL 33431 Boca Raton}\\[-0.15ex]
  {\normalsize\em USA}}   
\date{October 17, 2022}
\maketitle

\begin{abstract}
Analytic curves are classified w.r.t.\ their symmetry under a  regular and separately analytic Lie group action $G\times M\rightarrow M$  on an analytic manifold. We prove that a non-constant analytic curve $\gamma\colon D\rightarrow M$ is either exponential or free. In the first case, $\gamma$ is  up to analytic reparametrization of the form $t\mapsto \exp(t\cdot \g)\cdot x$, whereby $\g\in\mg$ 
is unique up to non-zero scalation and addition of elements in the Lie algebra of the stabilizer $G_\gamma\equiv \{g\in G\:|\: g\cdot \gamma=\gamma\}$ of the curve $\gamma$. 
In the free case, $\gamma$ splits into countably many immersive subcurves, each of them being discretely generated by $G$.    
This means that each such subcurve $\delta\colon D\supseteq (\iota',\iota)\rightarrow M$   
is build up countably many $G$-translates of a symmetry free building block $\delta|_{\Delta}$, with $\Delta\subseteq (\iota',\iota)$ a nondegenerate  interval; whereby the following three cases can occur:\hspace*{\fill}
\begingroup
\setlength{\leftmarginii}{11pt}
\begin{itemize}
\item[$-$]
In the shift case, the building blocks are continuously distributed in $\delta$, with $\Delta$ always compact.
For $\Delta$ fixed, $\delta$ is created by iterated shifts of $\delta|_\Delta$ by some $g\in G$ and its inverse $g^{-1}$, whereby the class $[e]\neq [g]\in G\slash G_\gamma$ is uniquely determined (in particular  the same for each possible decomposition). 
\item[$-$]
In the flip case, there exist countably many building blocks; each of them is defined on a compact interval, and contained in the one and only decomposition that exists in this case. 
The curve $\delta$ is created by iterated flips at the boundary points of these building blocks, whereby the occurring transformations are generated by two non-trivial classes in $G\slash G_\gamma$. 
\item[$-$]
In the mirror case, there exists exactly one symmetry (flipping) point $\delta(\tau)$, as well as one translation class $[e]\neq [g]\in G\slash G_\gamma$. The one and only decomposition of $\delta$ is given by $\delta|_{(i',\tau]}$, $\delta|_{[\tau,i)}$, whereby $\delta|_{(i',\tau]}$ is flipped into $\delta|_{[\tau,i)}$ or vice versa (or both). 
\end{itemize}
\endgroup
\noindent
We finally extend the classification result to the analytic 1-submanifold case. Specifically, we show that an analytic 1-submanifold of $M$ is either free or (exponential, i.e) analytically diffeomorphic (via the exponential map) to the unit circle or an interval. The corresponding decomposition results in the free case are outlined in this paper, but proven in \cite{MAXDECO}.
\end{abstract}
\tableofcontents 

\section{Introduction}
\label{intro}
Curves and 1-submanifolds are the key mathematical objects of both loop quantum gravity \cite{BackLA, Thiemann} and string theory \cite{Polch}, two of the major current approaches to quantum gravity.   
In the context of a symmetry reduction of these two theories, therefore a detailed analysis of the symmetry properties of such 1-dimensional objects should be done. 
For instance, in loop quantum gravity (where the analytic category is considered) symmetries of embedded analytic curves  
have turned out to be elementary for the investigation of quantum-reduced configuration spaces \cite{InvConLQG, MAX}.  This is because  the holonomy of an invariant connection along an analytic embedded curve $\gamma$   is  completely determined by its holonomy along a symmetry free building block (if $\gamma$ is discretely generated)  or given in terms of the exponential map (if $\gamma$ is exponential), cf.\ Lemma 5.6.1 in \cite{MAX}. In this context, it is thus relevant to know whether a given analytic curve is either of these types. 
For this reason, in \cite{MAX} symmetries of analytic curves had been investigated, and the mentioned classification result was established there for particular classes of Lie group actions. Specifically, it was shown that, given an analytic Lie group action $\wm\colon G\times M\rightarrow M$ on an analytic manifold, then $\gamma$ is discretely generated if it is free and if $\wm$ is pointwise proper, cf.\ Proposition 5.23.1) in \cite{MAX}. It was furthermore shown that an embedded analytic curve is either exponential or free if $\wm$ admits only normal stabilizers, and is proper or transitive and pointwise proper, cf.\ Proposition 5.23 in \cite{MAX}.   
In this paper, we generalize these statements to arbitrary analytic curves, and  to a significantly larger class of Lie group actions. More specifically, the classification result is proven in the regular context (less restrictive than pointwise properness), and the decomposition results for free curves are established for sated actions (even less restrictive than regularity\footnote{Notably, Example \ref{podspodspopodsdsa} shows that satedness is sufficient for the decomposition result, but not for the classification result.}). We furthermore discuss uniqueness of the Lie algebra element in the exponential context; and perform a detailed analysis of the free case, i.e., we work out the different decomposition types that can occur and clarify their special properties.   

Let $\wm\colon G\times M\rightarrow M$ be a left action of a Lie group $G$ on an analytic manifold $M$ (without boundary) that is continuous in $G$, i.e., $\wm_x\colon G\ni g\mapsto \wm(g,x)\in M$ is continuous for each fixed $x\in M$.  
We say that $\wm$ is
\begingroup
\setlength{\leftmargini}{12pt}
\begin{itemize}
\item
analytic in $G$\hspace{24.7pt}\: \defff\:  
	$\wm_x\colon \hspace{3.1pt}G\ni g\mapsto\hspace{0.6pt} \wm(g,x)\in M$\hspace{1pt} is analytic for each fixed $x\in M$.
\item	
analytic in $M$\hspace{21.8pt}\: \defff\:  
	$\wm_g\hspace{0.6pt}\colon M\ni x\mapsto \wm(g,x)\in M$\hspace{1pt} is analytic for each fixed $g\in G$.
\item
separately analytic\: \defff\:   $\wm$ is analytic in $G$ and $M$.
\end{itemize}
\endgroup
\noindent   
Moreover, we say that $\wm$ is\footnote{In \cite{MAXDECO}, we use the following equivalent formulation of satedness: To each  $C\subseteq M$ with $|C|\geq 2$ and each $x\in M\setminus C$, there exists a neighbourhood $U$ 
of $x$ with $g\cdot C\subsetneq U$ for all $g\in G$.} 
\begingroup
\setlength{\leftmargini}{12pt}
\begin{itemize}
\item
\textbf{sated}\:\:\defff\:{}for $x,y,z\in M$ mutually different, we cannot have\footnote{Notably, the multiplicative action of $\RR_{>0}$ on $\RR^n$ for $n\geq 1$ is not sated.}
\begin{align}
\label{iosdiosd1}
	\textstyle\lim_n g_n\cdot y =x= \lim_n g_n\cdot z\qquad\quad\text{for a sequence}\qquad\quad  \{g_n\}_{n\in \NN}\subseteq G.
\end{align}
\vspace{-17pt}
\item
\textbf{stable}\:\:\defff\:$\lim_n g_n\cdot x=x$ for $x\in M$ and  $\{g_n\}_{n\in \NN}\subseteq G\backslash G_x$, implies that $\{h_n\cdot g_n\cdot h'_n\}_{n\in \NN}$ admits a convergent subsequence, for certain sequences   
\begin{align}
\label{iosdiosd2}
	\textstyle\{h_n\}_{n\in \NN}, \{h'_n\}_{n\in \NN}\subseteq \bigcap_{g\in G}G_{g\cdot x}\equiv G_{[x]}
\end{align}
contained in the stabilizer $G_{[x]}$ of the $G$-orbit of $x$.
\item
\textbf{regular}\:\:\defff\:$\wm$ is sated and stable. 
\end{itemize} 
\endgroup
\noindent
For instance, pointwise proper (hence proper) actions are regular, cf.\ Remark \ref{remmmmmi}.\ref{regp1}. Moreover, $\wm$ is regular if the following two conditions are fulfilled (cf.\ Remark \ref{remmmmmi}.\ref{regp3dsdsdsds}):
\begingroup
\setlength{\leftmargini}{12pt}
\begin{itemize}
\item
$(M,*)$ is a topological group, with $\wm(g,x)=\phi(g)* x$ for all $g\in G$ and $x\in M$, for a continuous group homomorphism $\phi\colon G\rightarrow M$.  
\item
$\phi\cp s=\id_{V}$ holds for a continuous map $s\colon M\supseteq V:= U\cap \phi(G)\rightarrow G$, with $U$ a neighbourhood of $e_M$.
\end{itemize}
\endgroup
\noindent
In particular, this covers the situation where $G$ (or a closed subgroup) acts in the usual way on $M\equiv G\slash H$, for $H$ a closed normal subgroup of $G$ -- The next two examples also show that regularity is a strictly  weaker condition than pointwise properness, just because the corresponding subgroups $H$ are not compact there ($n>1$): 
\begingroup
\setlength{\leftmargini}{12pt}
\begin{itemize}
\item
$G\equiv \RR^n$ and $H\equiv\ZZ^n$, hence $M\equiv \mathbb{T}^n$.   
\item
$G\equiv\RR^n$ and $H\subseteq \RR^n$ some $m$-dimensional linear subspace for $m>0$, hence $M\equiv  \RR^{n-m}$.
\end{itemize}
\endgroup
Intervals $D\subseteq \RR$ are always assumed to be nondegenerate (non-empty interior $\innt[D]$), and   
domains of curves (or maps between subsets of $\RR$) are always assumed to be intervals.  A curve $\gamma\colon D\rightarrow M$ is said to be analytic (immersive) \deff $\gamma=\tilde{\gamma}|_D$ holds for an analytic (immersion) $\tilde{\gamma}\colon \tilde{I}\rightarrow M$ that is defined on an open interval $\tilde{I}\subseteq \RR$ with $D\subseteq \tilde{I}$. 
We prove the following classification result, 
cf.\ Theorem \ref{classi}. 
\begin{customthm}{A}\label{A}
If $\wm$ is regular and separately analytic, then an analytic curve is either exponential or free. 
\end{customthm} 
\noindent
An analytic curve $\gamma\colon D\rightarrow M$ is said to be 
\begingroup
\setlength{\leftmargini}{12pt}
\begin{itemize}
\item
{\bf exponential}\:\hspace{0.5pt} \deff\: $\gamma= \exp(\rho\cdot \g)\cdot x$ holds for some $x\in M$, $\g\in \mg$, and an analytic map $\rho\colon D\rightarrow \RR$. 
\item
{\bf free}\hspace{39pt}\:\: \deff\: $\gamma$ admits a free segment -- i.e., an immersive subcurve $\delta\equiv \gamma|_{D'}$, such that the implication 
\begin{align*}
	g\cdot \delta \cpsim \delta\quad\text{for}\quad g\in G\qquad\quad\Longrightarrow\qquad\quad g\cdot \delta=\delta
\end{align*}
\vspace{-16pt}
\par
\begingroup
\leftskip=2.87cm 
\noindent
holds. 
Here and in the following, we write $\gamma \cpsim \gamma'$ for analytic immersive curves $\gamma,\gamma'$ \deff $\gamma(J)=\gamma'(J')$ holds for non-empty open intervals $J$ and $J'$  
on which $\gamma$ and $\gamma'$ are embeddings, respectively.
\par
\endgroup
\end{itemize}
\endgroup
\noindent
Additionally, we prove the following uniqueness statement, cf.\ Proposition \ref{classsiilie}.
\begin{custompr}{B}\label{B}
Assume that $\wm$ is sated and separately analytic; and let $\gamma$ be non-constant as well as exponential w.r.t.\ $x$ and $\g$. Then, $\gamma$ is exponential w.r.t.\ $y$ and $\q$ \deff 
\begin{align*}
	y\in \exp(\RR\cdot \g)\cdot x\qquad\text{and}\qquad \q\in \RR_{\neq 0}\cdot \g + \mg_\gamma\qquad\text{holds}
\end{align*}
for $\mg_\gamma$ the Lie algebra of the stabilizer $G_\gamma\equiv \{g\in G\:|\:g\cdot \gamma=\gamma\}$ of $\gamma$.
\end{custompr}
\noindent 
In  Sect.\ \ref{ghdhgg}, both results (Theorem \ref{A} and Proposition \ref{B}) are carried over to the analytic 1-submanifold case, cf.\ Theorem \ref{ghfgh} and Proposition \ref{dfgffhfh}.

Theorem \ref{A}, without the uniqueness statement in Proposition \ref{B}, was originally proven in \cite{MAX} for embedded analytic curves with compact domain;  
 namely, for the situation that $\wm$ is analytic, admits only normal stabilizers, and is proper or transitive as well as  pointwise proper, cf.\ Proposition 5.23 in \cite{MAX}. 
Then, in \cite{FH}, extensive technical efforts had been made to generalize this statement to the analytic pointwise proper case (no uniqueness result either). 
The more general Theorem \ref{A} now follows by elementary arguments from Lemma \ref{lieth}, stating that (locally) an analytic immersive curve is exponential if it fulfills a particular    approximation property (cf.\ Definition \ref{Def:LC}). We then  first derive from \eqref{iosdiosd1} that each non-free curve admits a special self similarity property -- this is done in Lemma \ref{HL} and Lemma \ref{seque}, which essentially reflect the argumentations in Lemma 5.19.2) in \cite{MAX} -- and then conclude from \eqref{iosdiosd1} and \eqref{iosdiosd2} that this self similarity property implies the mentioned approximation property, cf.\ Sect.\ \ref{regcase}. The  uniqueness statement in Proposition \ref{B} is proven in Sect.\ \ref{sdjvsdghddssad}.

In addition to the classification Theorem \ref{A}, we show that each free immersive curve $\gamma$ is discretely generated by the symmetry group if $\wm$ is sated and analytic in $M$ --   
 This was proven in Proposition 5.23.1) in \cite{MAX} for analytic pointwise proper actions and embedded analytic curves with compact domain; and then worked out in slightly more detail in \cite{FH}.\footnote{It was figured out that at most two group elements are necessary to relate the different free segments; and, formulas have been provided for the two cases discussed there, see also the points  \ref{casea} and \ref{caseb} below.}  
Specifically, we provide a detailed case analysis (cf.\ Convention \ref{nmsnmdsnmdsddscxcxcx} and Theorem \ref{sfdknfdhujfd} as well as Corollary \ref{sfdknfdhujfdd} for $\dom[\gamma]$ an arbitrary interval) that we now first state, and then explain step by step: 
\begin{customthm}{C}\label{C}
Assume that $\wm$ is sated and analytic in $M$; and let $\gamma\colon I\rightarrow M$ be immersive and free, such that $\gamma$ is not a free segment by itself. Then, $\gamma$ either admits a unique $\tau$-decomposition or a compact maximal interval.  In the second case, $\gamma$ is either positive or negative, and admits a unique $A$-decomposition for each compact maximal interval $A$. 
\end{customthm}
\vspace{-3pt}
\noindent
The used notions (maximal interval, $\tau$-decomposition, $A$-decomposition) are defined as follows:
\vspace{-3pt}
\begingroup
\setlength{\leftmargini}{18pt}
{
\renewcommand{\theenumi}{\bf\small(\arabic{enumi})} 
\renewcommand{\labelenumi}{\theenumi}
\begin{enumerate}
\item 
\label{hjdshjdhjdshjshjdsII0}
{\bf maximal interval:}  
An interval $D\subseteq I=\dom[\gamma]$ is said to be maximal \deff it is maximal w.r.t.\ the property that $\gamma|_D$ is a free segment. Notably, each such interval is necessarily closed in $I=(i',i)$,  hence either compact or of the form $(i',\tau]$ or $[\tau,i)$ for some $\tau\in I$. 
\end{enumerate}}
\endgroup
\noindent 
For the other two notions, we first need the following  two notations:
\begingroup
\setlength{\leftmargini}{11pt}
\begin{itemize}
\item
An analytic diffeomorphism is a bijection $\rho\colon D\rightarrow D'$ between intervals $D,D'\subseteq \RR$, such that both $\rho,\rho^{-1}$ are analytic (necessarily immersive). We denote  
the maximal analytic immersive extension (cf.\ Lemma \ref{maximalextension}) of $\rho$ by $\ovl{\rho}$, which is necessarily an analytic diffeomorphism to its image (as strictly monotonous, hence injective -- additionally apply Theorem \ref{ofdpofdpfdpof}).  
\item
Let $\tilde{\gamma}\colon \tilde{D}\rightarrow M$ and $\gamma\colon D\rightarrow M$ be analytic immersions, as well as  $\mu\colon \tilde{D}\supseteq \dom[\mu]\rightarrow \im[\mu]\subseteq D$ an analytic diffeomorphism. We write\: $\tilde{\gamma}\psim \gamma$\:  w.r.t.\ $\mu$\: \defff\:{}the following two conditions are fulfilled:
\begingroup
\setlength{\leftmarginii}{17pt}
\begin{itemize}
\item[$\small\mathsf{(i)}$]
We have $\tilde{\gamma}|_{\dom[\mu]}=\gamma\cp \mu$.
\item[$\mathsf{(ii)}$]
One of the following situations hold:
\begingroup
\setlength{\leftmarginiii}{11pt}
\begin{itemize}
\item
$\tilde{D}=(i',\tau]$ and $D= [\tau,i)$ are half-open, and $\mu(\tau)=\tau$ as well as $\dom[\mu]=\tilde{D}$ or $\im[\mu]=D$ holds.
\vspace{2pt}
\item
$\tilde{D}=\dom[\mu]$, $D=\im[\mu]$ are both compact.
\vspace{2pt}
\item
$\tilde{D}$ is compact, $\dom[\mu]\subset \tilde{D}$ and $D=\im[\mu]$ are half-open, and $\dom[\mu]\cap (\he\tilde{D}\backslash \innt[\tilde{D}]\he)\neq \emptyset$.
\end{itemize}
\endgroup
\end{itemize}
\endgroup
\noindent
$\boldsymbol{(\natural)}$ Notably, $\mu$ is already uniquely determined by the conditions $\mathsf{(i)}$ and $\mathsf{(ii)}$, cf.\ Sect.\ \ref{repari}. 
\end{itemize}
\endgroup
\begingroup
\setlength{\leftmargini}{18pt}
{
\renewcommand{\theenumi}{\bf\small(\arabic{enumi})} 
\renewcommand{\labelenumi}{\theenumi}
\begin{enumerate}
\setcounter{enumi}{1}
\item
\label{hjdshjdhjdshjshjdsII1}
 $\boldsymbol{\tau}${\bf{}-decomposition:} 
If $\gamma$ admits no compact maximal interval $A\subseteq I\equiv(i',i)$, 
then there exists
\begingroup
\setlength{\leftmarginii}{11pt}
\begin{itemize}
\item
 $\tau\in I$,  such that $(i',\tau]$, $[\tau,i)$ are the only maximal intervals ($\tau$ is necessarily unique).
\item 
a class $[e]\neq [g]\in G\slash G_\gamma$ with $g\cdot \gamma|_{(i',\tau]}\psim \gamma|_{[\tau,i)}$ w.r.t.\ $\mu$, such that:
\vspace{6pt}

If $g'\cdot \gamma|_J = \gamma\cp \rho$ holds for an analytic diffeomorphism $\rho\colon (i',\tau]\supseteq J\rightarrow J'\subseteq I$, then we have
\begin{align*}
\text{either}\qquad\quad 
[g']=[e]\quad\wedge\quad\ovl{\rho}|_{(i',\tau]}=\id_{(i',\tau]}\qquad\quad\text{or}\qquad\quad [g']=[g]\quad\wedge\quad \ovl{\rho}|_{\dom[\mu]}=\mu.
\end{align*}
In particular, the class $[g]$  is necessarily unique (also $\mu$ by $\boldsymbol{(\natural)}$); and a proper $G$-translate of $\gamma|_{(i',\tau]}$ can overlap $\gamma$ in exactly one way. 
\end{itemize}
\endgroup
\noindent  
For instance, if $\SO(2)$ acts via rotations on $\RR^2$, then $\gamma\colon \RR\ni t\mapsto (t,t^3)$ admits the $0$-decomposition $[g_\pi]$, for $g_\pi$ the rotation by the angle $\pi$, cf.\ Example \ref{dfggf}.
\item
\label{hjdshjdhjdshjshjdsII2}
$\boldsymbol{A}${\bf{}-decomposition:}  
If $\gamma$ admits a compact maximal interval, then each maximal interval is compact; and to each such 
compact maximal interval $A\subseteq I\equiv(i',i)$, there exists a unique $A$-decomposition of $\gamma$, i.e. a (necessarily unique) pair 
$(\{a_n\}_{n\in \cn},\{[g_n]\}_{n\in \cn})$  with\footnote{If $\cn_-=-\infty$ holds, then $\cn_-\leq n$ means $n\in \ZZ$; and analogously for $\cn_+$.} 
\begin{align*}
	\cn=\{n\in \ZZ_{\neq 0}\:|\: \cn_-\leq n\leq \cn_+\}\qquad \text{for }&\text{certain} \qquad -\infty\leq \cn_-<0< \cn_+\leq\infty
	\\[5pt]
	\text{as }&\text{well as}\\[5pt]
	\{a_n\}_{n\in \cn}\subseteq I\quad\text{with}\quad A=[a_{-1},a_1] \qquad\quad&\text{and}\qquad\quad \{g_n\}_{n\in \cn}\subseteq G\quad\text{with}\quad [e]\neq [g_{\pm1}]\in G\slash G_\gamma\\[5pt]
	\text{su}&\text{ch that}\\[5pt]
	\textstyle I=\bigcup_{n\in \cn}A_n\qquad\quad&\text{and}\qquad\quad 
	g_n\cdot \gamma|_A\psim \gamma|_{A_n}\:\:\:  \text{w.r.t.}\:\:\: \mu_n \qquad\forall\: n\in \cn\hspace{13pt}\\[5pt]
	\text{ holds for}& \text{ the Intervals}\\[5pt]
	A_{\cn_-}\equiv(i',a_{\cn_-}]\quad \text{if}\quad \cn_-\neq -\infty\qquad\:\:&\qquad\quad\:\: A_{\cn_+}\equiv[a_{\cn_+},i)\quad \text{if}\quad\cn_+\neq \infty\\[4pt]
A_n\equiv[a_{n-1},a_{n}]\quad\text{for}\quad \cn_-< n\leq -1 \qquad\quad A_0\equiv\:&[a_{-1},a_1]\qquad\quad A_n\equiv[a_n,a_{n+1}]\quad\text{for}\quad 1\leq n<\cn_+,
\end{align*}
whereby for each analytic diffeomorphism $\rho\colon A\supseteq  J\rightarrow J'\subseteq I$,  we have the implication:
\begin{align}
\label{fghhg}
	g\cdot \gamma|_{J} =\gamma\cp \rho\qquad\Longrightarrow \qquad [g]=[g_n]\:\:\:\text{and}\:\:\: \ovl{\rho}|_{\dom[\mu_n]}=\mu_n 
	\:\:\:\text{for some unique}\:\:\: n\in \cn\cup \{0\}
\end{align} 
provided we set $\mu_0:=\id_A$ and $g_0:= e$.

In particular, \eqref{fghhg} implies that the index set $\cn$ as well as the families $\{a_n\}_{n\in \cn}$, $\{[g_n]\}_{n\in \cn}$ are unique (also $\mu_n$ for each $n\in \cn$ by $\boldsymbol{(\natural)}$); and that each translate of $\gamma|_A$ overlaps $\gamma$ in a unique way. 
\vspace{3pt}

In this context, the following two cases can occur:\footnote{The different symmetry types (shifts $\cong$ positive and flips $\cong$ negative) that can occur, had already been identified in the proof of Proposition 5.23.1) in \cite{MAX}. It just was not explicitly worked out there that for a fixed curve only either of these symmetries can occur, and that the group elements are related to each other in the described way -- These statements, however, follow by simple geometric arguments from analyticity of the involved curve, cf.\ Sect.\ \ref{jsdjklsdjklsd}.}
\begingroup
\setlength{\leftmarginii}{18pt}
{
\renewcommand{\theenumii}{\bf.\small(\alph{enumii})} 
\renewcommand{\labelenumii}{\bf\small(\alph{enumii})}
\begin{enumerate}
\item
\label{casea}
$\boldsymbol{\gamma}$ {\bf positive:} \hspace*{\fill}(cf.\  Proposition \ref{sdfdfdlla})
\vspace{2pt}
 
For each fixed compact maximal interval $A$, the corresponding  diffeomorphisms $\mu_n$ are positive ($\dot\mu_n>0$), and each compact $A_n$ (i.e., $\cn_-<n<\cn_+$) is maximal.\footnote{If $\cn_-\neq -\infty$, then $A_{\cn_-}$ might not be maximal; and, if $\cn_+\neq \infty$, then $A_{\cn_+}$ might not be maximal.} 
Moreover, there exists a unique class $[h]$, such that $[g_n]=[h^n]$ holds for all $n\in \cn$; and this class is the same for each compact maximal interval $A$. In addition to that, each $t\in I$ is contained in the interior of a compact maximal interval; specifically, if $A\equiv [a',a]$ is maximal, then 
\begingroup
\setlength{\leftmarginiii}{12pt}
\begin{itemize}
\item[$-$]
for each $a<b\hspace{3.46pt}<i$,\hspace{4.7pt} there exists $a'<b'<b$ unique, such that $[b',b]$ is maximal.
\item[$-$]
for each $i'<b'<a'$, there exists $b'\hspace{0.45pt}<b\hspace{2.35pt}<a$ unique, such that $[b',b]$ is maximal. 
\end{itemize}
\endgroup 
\noindent
For instance, if $\RR$ acts via $\wm(t,(x,y)):=(t+x,y)$ on $\RR^2$, then $\gamma\colon \RR\ni t\mapsto (t,\sin(t))$ is positive, with compact maximal intervals $\{[t,t+2\pi]\}_{t\in \RR}$ as well as and $[h]=[2\pi]$, cf.\ Example \ref{dsajsu}.
\item
\label{caseb}
$\boldsymbol{\gamma}$ {\bf negative:} \hspace*{\fill}(cf.\ Proposition \ref{trhdhg})
\vspace{2pt}

Let $A$ be a fixed compact maximal interval. 
Then, the derivative of $\mu_n$ has the signature $(-1)^n$, and  $$[g_n]=\underbrace{[g_{\sigma(1\he\cdot\: \sign(n))}\cdot g_{\sigma(2\he\cdot\: \sign(n))}\cdot  {\dots}\cdot g_{\sigma(|n|\he\cdot\:  \sign(n))}]}_{\displaystyle [g_{\sigma(\sign(n))}\cdot {\dots}\cdot g_{\sigma(n)}]}\qquad\quad \forall\: n\in \cn$$ holds  for $\sigma\colon \ZZ_{\neq 0}\rightarrow \{-1,1\}$ defined by
\begin{align*}
\sigma(n):=
\begin{cases} 
	(-1)^{n-1} &\mbox{if }\: n > 0 \\ 
	(-1)^n & \mbox{if }\: n < 0. 
\end{cases} 
\end{align*}
Hence, e.g.\ $[g_{\pm 2}]=[g_{\pm1}\cdot g_{\mp1}]$, $[g_{\pm 3}]=[g_{\pm 1}\cdot g_{\mp 1}\cdot g_{\pm 1}]$, $[g_{\pm 4}]=[g_{\pm 1}\cdot g_{\mp 1}\cdot g_{\pm 1}\cdot g_{\mp 1}]$.
\vspace{4pt}

\noindent
In addition to that, the following statements hold:
\begingroup
\setlength{\leftmarginiii}{12pt}
\begin{itemize}
\item
We have\hspace{1pt} 
 $[g_{\pm 1}]\subseteq  G_{\gamma(a_{\pm 1})}\setminus G_\gamma$\hspace{1pt} as well as\hspace{1pt} $[g^{-1}_{\pm 1}]=[g_{\pm1}]$.
 \vspace{2pt}
\item
Each of the intervals $A_n$ is maximal; and they are the only maximal intervals. Hence, if $B$ is a further maximal  interval, then it equals some $A_n$; and the corresponding $B$-decomposition of $\gamma$ can be obtained from the $A$-decomposition of $\gamma$ just via \eqref{fghhg}.
\end{itemize}
\endgroup
For instance, if the euclidean group $\RR^2 \rtimes \SO(2)$ acts canonically on $\RR^2$, then $\gamma\colon \RR\ni t\mapsto (t,\sin(t))$ is negative with compact maximal interval  $A=[0,\pi]$. In this case, $[g_{-1}]$ and $[g_{1}]$ are classes of the rotations by the angle $\pi$ around $(0,0)$ and $(\pi,0)$, respectively, cf.\ Example \ref{sepocvofgvifg}.\ref{sepocvofgvifg1}.
\end{enumerate}}
\endgroup
\end{enumerate}}
\endgroup
\noindent
If $\gamma$ is non-constant analytic, then    
 $Z\equiv\{t\in \dom[\gamma]\:|\:\dot\gamma(t)=0\}$ consists of isolated points and admits no limit point in $\dom[\gamma]$, just by analyticity of $\dot\gamma$. Therefore, $\gamma$ splits into countably many analytic immersive subcurves, ``pinned together'' at the points in $Z$, cf.\ Remark \ref{rtrevgf}. Then, each of these subcurves is free as well (cf.\ Corollary \ref{fdgf}), so that our decomposition results apply to each of them separately. Anyhow, besides certain combinatorical and technical issues, a deeper investigation of the analysis of $\gamma$ at the points in $Z$ seems to be necessary, in order to prove analogous decomposition results also for the general non-constant analytic case. For  analytic 1-submanifolds, the strategy is sketched in the end of Sect.\ \ref{ghdhgg}; and the expected results \cite{MAXDECO} are stated there. 
\vspace{6pt}

\noindent
This paper is organized as follows:
\begingroup
\setlength{\leftmargini}{12pt}
\begin{itemize}
\item
In Sect.\ \ref{gffggf}, we fix the notations and collect the basic facts and definitions that we need in the main text.
\item
In Sect.\ \ref{skjsdghfsd}, we prove Theorem \ref{A} $\equiv$ Theorem \ref{classi}.
\item
In Sect.\ \ref{disgencur}, we prove the decomposition results for analytic immersive curves; specifically, Theorem \ref{C} $\equiv$ Theorem \ref{sfdknfdhujfd} as well as Corollary \ref{sfdknfdhujfdd} for arbitrary domains (intervals).
\item
In Sect.\ \ref{ghdhgg}, analytic 1-submanifolds are discussed. We prove analogues to Theorem \ref{A} and Proposition \ref{B},  and pave the way for global decomposition results for such  1-submanifolds.
\end{itemize}
\endgroup

\section{Preliminaries}
\label{gffggf}
In this section, we fix some conventions and provide several basic facts and definitions that we will need to work efficiently in the main text. 
\subsection{Conventions}
\label{dsdsaddslooasaasasalsads}
Manifolds are always assumed to be   
finite-dimensional,  
Hausdorff, and analytic. 
If $f\colon M\rightarrow N$ is a differentiable map between the manifolds $M$ and $N$, then $\dd f\colon TM\rightarrow TN$ denotes the corresponding differential map between their tangent manifolds. A differentiable map $f\colon M\rightarrow N$ is said to be immersive \deff  the restriction $\dd_xf:=\dd f|_{T_xM}\colon T_xM\rightarrow T_{f(x)}N$ is injective for each $x\in M$. Then, $f$ is said to be an embedding \deff $f$ is injective, immersive, as well as a homeomorphism to $f(M)$ when equipped with the subspace topology. 
Elements of tangent spaces are usually written with arrows, such as $\vv\in T_xM$. 
Unless explicitly stated otherwise, manifolds are assumed to be manifolds without boundary, and we do not assume  second countability.
Specifically, in what follows
\begingroup
\setlength{\leftmargini}{10pt}
\begin{itemize} 
\item
$M$ denotes a finite-dimensional Hausdorff analytic manifold  without boundary (second countability is not required); and, the  same conventions hold for the manifold structures of Lie groups.
\item
$S$ denotes a  connected,  Hausdorff,  second countable $1$-dimensional analytic manifold with boundary.
\end{itemize}
\endgroup \noindent
By an accumulation point of a topological space $X$, we understand an element $x\in X$ such that there exists a net $\{x_\alpha\}_{\alpha\in I}\subseteq X\backslash\{x\}$ with $\lim_\alpha x_\alpha=x$.
\vspace{6pt}

\noindent
Intervals are always assumed to be nondegenerate, i.e., an interval $D$ is a connected subset of $\RR$ with non-empty interior $\innt[D]$. We denote the closure of $D$ in $\RR$ by $\clos[D]$. We  say that $-\infty$ or $\infty$ is a  boundary point of $D$ \deff $\inf(D)=-\infty$ or $\sup(D)=\infty$ holds, respectively. If we write $I,J$ or $K,L$ instead of $D$, we always mean that $I,J$ are open, and that $K,L$ are compact. 
\vspace{6pt}

\noindent
A curve is a continuous map $\gamma\colon D\rightarrow X$ from an interval  $D$ to a topological space $X$. 
\begingroup
\setlength{\leftmargini}{12pt}
\begin{itemize} 
\item
If $\gamma$ is injective, then $\gamma^{-1}\colon \im[\gamma]\rightarrow \dom[\gamma]$  denotes its inverse in the sense of mappings. 
\end{itemize}
\endgroup
\noindent
An extension of $\gamma$ is a curve $\wt{\gamma}\colon I\rightarrow X$  that is defined on an open interval $I$ containing $D$, such that $\wt{\gamma}|_D=\gamma$ holds. 
If $M$ is an analytic manifold (without boundary), then a curve $\gamma\colon D\rightarrow M$ is said to be
\begingroup
\setlength{\leftmargini}{12pt}
\begin{itemize}
\item
analytic \deff it admits an analytic extension.
\item
(analytic) immersive \deff it admits an (analytic) immersive extension.
\item
an analytic embedding \deff it admits an analytic immersive extension that is a homeomorphism onto its image equipped with the subspace  topology.
\end{itemize}
\endgroup
\noindent
Analogously, a continuous map (curve) $\wt{\rho}\colon I\rightarrow \RR$ is said to be an extension of the continuous map (curve) $\rho\colon  D\rightarrow \RR$ \deff $I\subseteq \RR$ is an open interval with $D\subseteq I$ and $\rho=\wt{\rho}|_D$. Then, $\rho\colon  D\rightarrow D'\subseteq \RR$ is said to be
\begingroup
\setlength{\leftmargini}{12pt}
\begin{itemize}
\item
analytic (immersive) \deff it admits an analytic (immersive) extension.
\item
an (analytic) diffeomorphism \deff it admits an extension which is an (analytic) diffeomorphism.
\end{itemize}
\endgroup
\noindent
A diffeomorphism $\rho\colon D\rightarrow D'$ is said to be positive or negative \deff $\dot\rho(t)>0$ or $\dot\rho(t)<0$ holds for one (and then for each) $t\in \innt[D]$, respectively.\footnote{Observe  that $\rho$ is positive\slash{}negative \deff $\rho$ is strictly increasing/decreasing.} 
\vspace{6pt}

\noindent
In this paper, $\wm\colon G\times M\rightarrow M$ always denotes a left action of a Lie group $G$ on an analytic  manifold $M$ that is continuous in $G$, i.e., $\wm_x\colon G\ni g\mapsto \wm(g,x)\in M$ is continuous for each fixed $x\in M$.  
We say that $\wm$ is
\begingroup
\setlength{\leftmargini}{12pt}
\begin{itemize}
\item
analytic in $G$\hspace{24.7pt}\: \defff\:   
	$\wm_x\colon \hspace{3.1pt}G\ni g\mapsto\hspace{0.6pt} \wm(g,x)\in M$\hspace{1pt} is analytic for each fixed $x\in M$.
\item	
analytic in $M$\hspace{21.8pt}\: \defff\:  
	$\wm_g\hspace{0.6pt}\colon M\ni x\mapsto \wm(g,x)\in M$\hspace{1pt} is analytic for each fixed $g\in G$.
\item
separately analytic\: \defff\:   $\wm$ is analytic in $G$ and $M$.
\end{itemize}
\endgroup
\noindent 
We write $g\cdot x$ instead of $\wm(g,x)$ if it helps to simplify the notations. 
Notably, if $\wm$ is analytic in $M$, then $\gamma\colon D\rightarrow M$ is analytic (immersive)\slash an analytic embedding \defff   $g\cdot \gamma$ is analytic (immersive)\slash an analytic embedding for each $g\in G$. 
For $x\in M$,
\begingroup
\setlength{\leftmargini}{12pt}
\begin{itemize}
\item
$G_x\hspace{8pt}=\left\{g\in G\: \big| \: g\cdot x=x\right\}$\quad denotes the stabilizer of $x$, and $\mg_x$ the Lie algebra of $G_x$.\footnote{Observe that $G_x$ is a Lie subgroup of $G$, because $G_x$ is closed in $G$ as $\wm_x$ is continuous.} 
\item
$G\cdot x=\{g\cdot x \:|\: g\in G\}$\hspace{21.5pt}\quad denotes the orbit of $x$ under $G$, with stabilizer $G_{[x]}:=\bigcap_{g\in G} G_{g\cdot x}\subseteq G_x$. 
\end{itemize}
\endgroup
\noindent
Finally, for $x\in M$ and $\g\in \mg$, we define the curve
\begin{align}
\label{sdfdshjjww}
	\gag\colon\RR\rightarrow M,\qquad t\mapsto \exp(t\cdot \g)\cdot x
\end{align}
that is immersive \deff $\g\in \mg\backslash \mg_x$ holds, and constant elsewise; specifically, we have (cf.\ Lemma 5.6.2) in \cite{MAX}):   
\begin{lemma}
\label{lemma:expeig}
Assume that $\wm$ is analytic in $G$, and let  $\gamma\equiv \gag$ for $x\in M$ and $\g\in \mg\backslash\mg_x$. 
\begingroup
\setlength{\leftmargini}{15pt}
{
\renewcommand{\theenumi}{\sf\alph{enumi})} 
\renewcommand{\labelenumi}{\theenumi}
\begin{enumerate}
\item
$\gamma$ is analytic immersive. 
\item
If $\gamma$ is not injective, then $\gamma$ is cyclic in the sense that there exists $\pii(x,\g)\in \RR_{>0}$ unique with:  
	\begin{align*}
	\gamma(t')=\gamma(t)\quad\text{for}\quad t',t\in \RR\qquad\quad\Longleftrightarrow \qquad\quad t'=t + n\cdot \pii(x,\g) \quad\text{for some}\quad n\in \mathbb{Z}.
	\end{align*}	    	 
\end{enumerate}}
\endgroup
\end{lemma}
\begin{proof}
The proof is elementary, and can be found in Appendix \ref{appA7}.
\end{proof}
\noindent
Then, for $x\in M$ and $\g\in\mg_x$, we define: 
\begingroup
\setlength{\leftmargini}{12pt}
\begin{itemize}
\item
$\pii(x,\g):=0$, \hspace{46.8pt}and let $\ppi(x,\g):=\RR$\hspace{50.2pt} \deff\quad\:\:$\gag$ is constant, i.e.\ $\g\in \mg_x$, 
\item
$\pii(x,\g):=\infty$, \hspace{38.5pt} and let $\ppi(x,\g):=\{0\}$\hspace{45.75pt}\deff\quad\:\:$\gag$ is injective, 
\item
$\pii(x,\g)$ as in Lemma \ref{lemma:expeig}, and let $\ppi(x,\g):=\ZZ\cdot \pii(x,\g)$\quad\:\: \deff\quad\:\:$\gag$ is non-constant and not injective.
\end{itemize}
\endgroup
\noindent
Evidently, then $\pii(y,\g)=\pii(x,\g)$ as well as $\ppi(y,\g)=\ppi(x,\g)$ holds for each $y\in \im[\gamma]$.

\subsection{The Inverse Function Theorem}
\label{sdsddsdsewoewopewopweds}
We now briefly discuss some consequences of the Real Analytic Inverse Function Theorem:
\begin{theorem}
\label{ofdpofdpfdpof}
Assume that $F\colon \RR^n\supseteq U\rightarrow \RR^n$ is analytic ($U$ open) such that $\dd_p F$ is an isomorphism for some $p\in U$. Then,  there exists an open neighbourhood $V$ of $p$ with $V\subseteq U$, and an open neighbourhood $W\subseteq \RR^n$ of $F(p)$ such that $(F|_V)|^W\colon V\rightarrow W$ is an analytic diffeomorphism.
\end{theorem}
\begin{proof}
	Confer, e.g., Theorem 2.5.1 in \cite{PRIM}.
\end{proof}
\begin{lemma}
\label{dfdfdfdfdf}
Let $m\geq 1$ and $\ell\geq 0$ be given; and assume that $f\colon  \RR^m\supseteq U \rightarrow \RR^{m+\ell}$ is analytic ($U$ open), with $\dd_x f$ injective for some $x\in U$. Then, there exist $V\subseteq U$ open with $x\in V$, $W\subseteq \RR^{m+\ell}$ open with $f(V)\subseteq  W$, and an analytic diffeomorphism $\alpha\colon W\rightarrow W'$ to an open subset $W'\subseteq \RR^{m+\ell}$, such that
\begin{align*}
	\qquad\qquad\qquad(\alpha\cp f|_V)(v)=(v,0)\in \underbrace{\RR^{m}\times \RR^\ell}_{\displaystyle\cong \RR^{m+\ell}}\qquad\quad \forall\: v\in V 
\end{align*}
\vspace{-17pt}

\noindent
holds (with $\RR^0:=\{0\}$ for the case $\ell=0$). 
\end{lemma}
\begin{proof}
This is a straightforward consequence of Theorem \ref{ofdpofdpfdpof}, cf.\  Appendix \ref{appA77}.
\end{proof}

\begin{corollary}
\label{dfdsasasasassa}
Let $m\geq 1$, $\ell\geq 0$, $U\subseteq \RR^m$ open, $M$ an analytic manifold with $\dim[M]=m+\ell$, and $\iota\colon \RR^m\supseteq U \rightarrow M$ analytic with $\dd_x\iota$ injective for some fixed $x\in U$. Then, there exists an open neighbourhood $V\subseteq U$ of $x$ and an analytic chart $(O,\psi)$ of $M$ with $\iota(V)\subseteq O$, such that
\begin{align*}
	(\psi\cp \iota|_V)(v)=(v,0) \qquad\quad \forall\: v\in V 
\end{align*}  
holds; hence, $\iota|_V$ is an embedding.
\end{corollary}
\begin{proof}
Let $(\tilde{O},\tilde{\psi})$ be an analytic chart around $\iota(x)$. Shrinking $U$ around $x$ if necessary, we can assume that $\im[\iota]\subseteq \tilde{O}$ holds. Let $\alpha,W,W',V$ be as in Lemma \ref{dfdfdfdfdf}, for  $f\equiv \tilde{\psi}\cp \iota$ there. 
\begingroup
\setlength{\leftmargini}{12pt}
\begin{itemize}
\item[$\triangleright$]
We can assume $\im[\tilde{\psi}]\subseteq W$, just by restricting $\tilde{\psi}$ to $\tilde{O}\cap \tilde{\psi}^{-1}(W)$.
\item[$\triangleright$]
We can assume $\im[\tilde{\psi}]=W$, just by restricting $\alpha$ to $\im[\tilde{\psi}]$.
\end{itemize}
\endgroup
\noindent
The claim now holds for the analytic chart $(O,\psi):=(W,\alpha\cp\tilde{\psi})$.   
\end{proof}
\noindent
From Corollary \ref{dfdsasasasassa}, we obtain the following two  statements: 
\begin{lemma}
\label{lemma:BasicAnalyt0}
	Let $\gamma\colon I\rightarrow M$, $\gamma'\colon I'\rightarrow M$ be analytic curves, with $\dot\gamma(t)\neq 0$ for some fixed $t\in I$; and let $I\supseteq \{t_n\}_{n\in \NN}\rightarrow t\in I$ as well as $I'\backslash\{t'\}\supseteq \{t'_n\}_{n\in \NN}\rightarrow t'\in I'$ be sequences with $\gamma(t_n)=\gamma'(t_n')$ for all $n\in \NN$. Then, $\gamma'|_{J'}=\gamma\cp \rho$ holds for an analytic map $\rho\colon I'\supseteq J'\rightarrow B\subseteq I$ with $\rho(t')=t$, for  an open interval $J'$ containing $t'$, as well as $B\subseteq \RR$ connected. Moreover, $B\neq \emptyset$ is singleton \deff $\gamma'|_{J'}$ is constant; hence, $B$ is non-singleton (i.e.\ an interval) \deff $\gamma'|_{J'}$ is non-constant.
\end{lemma}
\begin{proof}
The proof is elementary, and can be found in Appendix \ref{appA2}. 
\end{proof}
\begin{lemma}
\label{gdfgfgf}
Let $\wm$ be analytic in $G$, and $\gamma\colon I\rightarrow M$ an analytic curve. Let $t\in I$ be given such that there exist  sequences $G\supseteq \{g_n\}_{n\in \NN}\rightarrow e$ and $I\backslash\{t\}\supseteq \{t_n\}_{n\in \NN}\rightarrow t$ with $g_n\cdot \gamma(t)=\gamma(t_n)$ for all $n\in \NN$. Then, there exists an open interval $J\subseteq I$ with $t\in J$, such that $\gamma(J)\subseteq G\cdot \gamma(t)$ holds. 
\end{lemma}
\begin{proof}
The proof is elementary, and can be found in Appendix \ref{appA1}.
\end{proof}

\subsection{Analytic Curves}
This subsection collects the most important properties of analytic curves that we will need in the main text. We start with some basic properties, and then discuss relations between immersive analytic curves. 

\subsubsection{Basic Properties}
Lemma \ref{lemma:BasicAnalyt0} provides us with the following two statements:
\begin{lemma}
    \label{lemma:BasicAnalyt1}
	 Let $\gamma\colon I\rightarrow M$, $\gamma'\colon I'\rightarrow M$ be analytic embeddings; and let $x$ be an accumulation point of $\im[\gamma]\cap \im[\gamma']$.\footnote{Hence, $x\in X:= \im[\gamma]\cap \im[\gamma']$ holds; and there exists a net $X\setminus\{x\}\supseteq \{x_\alpha\}_{\alpha\in I}\rightarrow x$ that converges w.r.t.\ the subspace topology on $X$  inherited from $M$.} Then, $\gamma(J)=\gamma'(J')$ holds for some open intervals $J\subseteq I$ and $J'\subseteq I'$ with $x\in  \gamma(J),\gamma'(J')$. 
\end{lemma}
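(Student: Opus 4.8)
The plan is to deduce this at once from Lemma~\ref{lemma:BasicAnalyt0}; the single extra ingredient is that an analytic \emph{embedding}, unlike a bare analytic immersion, is a homeomorphism onto its image, so that convergence in $M$ can be transported back into the parameter intervals. First I would note that, as an accumulation point of $\im[\gamma]\cap\im[\gamma']$, the point $x$ lies in that set and is not isolated in it; hence $t:=\gamma^{-1}(x)\in I$ and $t':=(\gamma')^{-1}(x)\in I'$ are well defined, and there is a sequence $\{y_n\}_{n\in\NN}\subseteq(\im[\gamma]\cap\im[\gamma'])\backslash\{x\}$ with $y_n\to x$ in $M$.

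Next I would pull the $y_n$ back to the domains. Put $t_n:=\gamma^{-1}(y_n)\in I$ and $t_n':=(\gamma')^{-1}(y_n)\in I'$. Since $\gamma$ and $\gamma'$ are embeddings, the inverse maps $\gamma^{-1}\colon\im[\gamma]\to I$ and $(\gamma')^{-1}\colon\im[\gamma']\to I'$ are continuous, so $t_n\to t$ in $I$ and $t_n'\to t'$ in $I'$; moreover $t_n\neq t$ and $t_n'\neq t'$ by injectivity of $\gamma,\gamma'$ together with $y_n\neq x$. Because $\gamma(t_n)=y_n=\gamma'(t_n')$ for all $n$, Lemma~\ref{lemma:BasicAnalyt0}, applied with $\gamma$ in the role of the embedding and $\gamma'$ in the role of the analytic curve, yields an open interval $J'\subseteq I'$ with $t'\in J'$ and an analytic map $\rho\colon J'\to I$ with $\rho(t')=t$ and $\gamma'|_{J'}=\gamma\cp\rho$.

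Finally I would unwind this. The map $\rho=\gamma^{-1}\cp\gamma'|_{J'}$ is a continuous injection of the interval $J'$, hence strictly monotone, so it carries $J'$ onto an open interval $J:=\rho(J')\subseteq I$ with $t=\rho(t')\in J$. Then $\gamma(J)=\gamma(\rho(J'))=\gamma'(J')$, and since $t\in J$ and $t'\in J'$ we obtain $x=\gamma(t)=\gamma'(t')\in\gamma(J)\cap\gamma'(J')$, which is precisely the assertion.

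I do not expect a genuine obstacle here --- this is the ``immediate'' corollary the author advertises. The only two points that need a little care are the ones flagged above: first, reading off from the accumulation-point hypothesis that $x$ actually lies on both images, which is where embeddedness (as opposed to mere immersivity) is essential, since it forces the preimage sequences $\{t_n\},\{t_n'\}$ to converge \emph{inside} the open intervals $I,I'$ as required by Lemma~\ref{lemma:BasicAnalyt0}; and second, using that a continuous injection of an interval is monotone, so that the image $\rho(J')$ is again an open interval and not merely an arbitrary subset of $I$.
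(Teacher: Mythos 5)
Your proposal is correct and follows exactly the route the paper intends: the paper gives no explicit proof, stating only that the lemma follows ``immediately'' from Lemma~\ref{lemma:BasicAnalyt0}, and your argument supplies precisely the missing details (pulling the accumulating sequence back through the continuous inverses of the embeddings, applying Lemma~\ref{lemma:BasicAnalyt0}, and noting that the analytic injection $\rho=\gamma^{-1}\cp\gamma'|_{J'}$ is strictly monotone so that $\rho(J')$ is an open interval). No gaps.
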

\begin{proof}
	The claim is clear from Lemma \ref{lemma:BasicAnalyt0}.
\end{proof} 
\begin{lemma}
	\label{lemma:BasicAnalyt2}
   	Let $\gamma\colon D\rightarrow M$, $\gamma'\colon D'\rightarrow M$ be analytic embeddings with $\gamma(D)=\gamma'(D')$. Then, $\gamma=\gamma' \cp \rho$ holds for a  (necessarily unique) analytic diffeomorphism $\rho\colon D\rightarrow D'$; specifically,
$$
\rho=(\gamma'|^{\im[\gamma']})^{-1}\cp \gamma\colon D \rightarrow D'.
$$   	
\end{lemma}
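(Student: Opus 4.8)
\emph{Proof idea.} The plan is to let $\rho\colon D\to D'$ be the only candidate that can possibly work, namely $\rho:=(\gamma')^{-1}\cp\gamma$. This is well defined because $\gamma$ is injective and $\im[\gamma]=\gamma(D)=\gamma'(D')\subseteq\im[\gamma']$, and its set-theoretic inverse is $\bar\rho:=\gamma^{-1}\cp\gamma'\colon D'\to D$. Since $\gamma,\gamma'$ are analytic embeddings, $\gamma^{-1}$ and $(\gamma')^{-1}$ are continuous for the subspace topologies on their images, so $\rho$ and $\bar\rho$ are mutually inverse homeomorphisms between $D$ and $D'$; in particular $\rho$ maps $\innt[D]$ onto $\innt[D']$ and the endpoints of $D$ onto those of $D'$. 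Uniqueness is immediate, since $\gamma=\gamma'\cp\rho_1$ forces $\rho_1=(\gamma')^{-1}\cp\gamma=\rho$ by injectivity of $\gamma'$; so it only remains to show that $\rho$ is an analytic diffeomorphism.

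First I would fix analytic immersive extensions $\wt{\gamma}\colon I\to M$ and $\wt{\gamma}'\colon I'\to M$ on open intervals; by the definition of an analytic embedding these are homeomorphisms onto their images, hence are themselves analytic embeddings. Fix $t_0\in D$, choose $\{t_n\}_{n\in\NN}\subseteq\innt[D]\backslash\{t_0\}$ with $t_n\to t_0$ (possible whether $t_0$ is interior to $D$ or an endpoint), and put $t'_n:=\rho(t_n)$, $t'_0:=\rho(t_0)$. Then $t'_n\to t'_0$ by continuity, $t'_n\neq t'_0$ by injectivity, and $\wt{\gamma}(t_n)=\gamma(t_n)=\gamma'(t'_n)=\wt{\gamma}'(t'_n)$. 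Applying Lemma~\ref{lemma:BasicAnalyt0} to the embedding $\wt{\gamma}$ on $I$ and the analytic curve $\wt{\gamma}'$ on $I'$, at $t_0\in I$ and $t'_0\in I'$, produces an open interval $J'\ni t'_0$ and an analytic map $\sigma\colon J'\to D_0\subseteq I$ with $\wt{\gamma}'|_{J'}=\wt{\gamma}\cp\sigma$ and $\sigma(t'_0)=t_0$. For $s\in J'\cap D'$ we then get $\wt{\gamma}(\sigma(s))=\wt{\gamma}'(s)=\gamma'(s)=\gamma(\bar\rho(s))=\wt{\gamma}(\bar\rho(s))$, so $\sigma(s)=\bar\rho(s)$ by injectivity of $\wt{\gamma}$; as $J'\cap D'$ is a neighbourhood of $t'_0$ in $D'$, this exhibits $\bar\rho$ as analytic near $t'_0$. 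Since $t_0\in D$ was arbitrary and $\rho$ is onto, such analytic extensions are defined near every point of $D'$, agree on overlaps (they all coincide with $\bar\rho$ there, so the identity theorem applies), and glue to an analytic extension $P'$ of $\bar\rho$ on an open interval $\supseteq D'$; exchanging $\gamma$ and $\gamma'$ gives an analytic extension $P$ of $\rho$ on an open interval $\supseteq D$.

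It remains to see $\rho$ is a \emph{diffeomorphism}. Shrinking domains so that $P\cp P'$ and $P'\cp P$ are defined on neighbourhoods of $D'$ and $D$ respectively, each is analytic and equals $\id$ on a set with non-empty interior, hence equals $\id$ there by the identity theorem (cf.\ Lemma~\ref{corgleich}). Thus $P$ and $P'$ are mutually inverse where defined, so $\dot P$ never vanishes and $P$ restricts to an analytic diffeomorphism between open intervals which extends $\rho\colon D\to D'$; that is the claim. The only delicate point is the behaviour at the endpoints of $D$: one needs an analytic function on a full, \emph{two-sided} neighbourhood of each endpoint $t'_0=\rho(t_0)$, not merely a one-sided analytic function on $D'$, and this is exactly what is bought by applying Lemma~\ref{lemma:BasicAnalyt0} to the \emph{open} extension $\wt{\gamma}$ while approaching $t_0$ from within $\innt[D]$.
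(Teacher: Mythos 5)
Your proposal is correct and follows essentially the same route as the paper: the paper extends $\gamma'$ to an analytic embedding $\wt{\gamma}'$ on an open interval and sets $\rho=\wt{\gamma}'^{-1}\cp\gamma$, the analyticity of $\wt{\gamma}'^{-1}$ coming from the embedded-analytic-submanifold structure of $(I,\wt{\gamma}')$ --- which is exactly what your pointwise application of Lemma~\ref{lemma:BasicAnalyt0} (itself proved via submanifold charts) reconstructs. Your version merely supplies the details the paper leaves implicit (gluing the local extensions, non-vanishing of the derivative, behaviour at the endpoints of $D$), but the underlying idea is identical.
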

\begin{proof}
The proof is elementary, and can be found in Appendix \ref{appA3}. 
\end{proof}
\noindent
We also explicitly mention the following statement, as we will use  it permanently in this paper.
\begin{lemma}
\label{corgleich}
Let $\gamma,\gamma'\colon D\rightarrow M$ be analytic curves, and $D'\subseteq D$ an interval. Then,   
\begin{align*}
	\gamma|_{D'}=\gamma'|_{D'}\qquad\quad\Longrightarrow\quad \qquad\gamma=\gamma'.
\end{align*}
\end{lemma}
\begin{proof}
Let $A\subseteq D$ denote the union of all intervals $B\subseteq \RR$, with $D'\subseteq B\subseteq D$ and $\gamma|_{B}=\gamma'|_{B}$. Then, $A$ is closed in $D$ by continuity of $\gamma,\gamma'$, as well as open in $D$ by analyticity of $\gamma, \gamma'$. Since $A$ is non-empty and $D$ is connected, we must have $A=D$. 
\end{proof}
An analytic (immersive) curve $\gamma\colon D\rightarrow M$ is said to be {\bf maximal} \deff it has no proper extension, i.e., \deff $\wt{\gamma}=\gamma$ holds for each analytic (immersive) extension $\wt{\gamma}$ of $\gamma$. Analogous conventions will also hold for analytic (immersive) maps $\rho\colon D\rightarrow \RR$, and analytic diffeomorphisms $\rho\colon D\rightarrow D'\subseteq \RR$. The domain of each such maximal $\gamma$ or $\rho$ is necessarily open; and  Lemma \ref{corgleich} yields the following statement.
\begin{lemma}
 \label{maximalextension}
    	Each analytic (immersive) curve admits a unique maximal analytic (immersive) extension.
\end{lemma}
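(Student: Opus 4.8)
The plan is to construct the maximal extension as a union of all analytic (immersive) extensions, and then use Lemma~\ref{corgleich} to check that this union is well-defined as a map. Concretely, given an analytic (immersive) curve $\gamma\colon D\rightarrow M$, let $\mathcal{E}$ denote the collection of all analytic (immersive) extensions $\wt{\gamma}_\alpha\colon I_\alpha\rightarrow M$ of $\gamma$, where each $I_\alpha$ is an open interval containing $D$. This collection is non-empty, since $\gamma$ is analytic (immersive) by assumption and hence admits at least one such extension. Put $I:=\bigcup_\alpha I_\alpha$; this is again an interval, being a union of intervals all containing the fixed interval $D$, and it is open as a union of open sets.

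The key step is to see that all the $\wt{\gamma}_\alpha$ agree on overlaps, so that they glue to a single map $\ovl{\gamma}\colon I\rightarrow M$. Given $\alpha,\beta$, the intersection $I_\alpha\cap I_\beta$ is an interval containing $D$, and $\wt{\gamma}_\alpha|_D=\gamma=\wt{\gamma}_\beta|_D$; since $\wt{\gamma}_\alpha|_{I_\alpha\cap I_\beta}$ and $\wt{\gamma}_\beta|_{I_\alpha\cap I_\beta}$ are analytic curves on the interval $I_\alpha\cap I_\beta$ that coincide on the subinterval $D$, Lemma~\ref{corgleich} forces them to coincide on all of $I_\alpha\cap I_\beta$. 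Hence $\ovl{\gamma}$ is well-defined by $\ovl{\gamma}|_{I_\alpha}:=\wt{\gamma}_\alpha$, and it is analytic (and immersive, in that case) since analyticity and immersivity are local conditions and $\ovl{\gamma}$ restricts to an analytic (immersive) map on each member of the open cover $\{I_\alpha\}$ of $I$. By construction $\ovl{\gamma}|_D=\gamma$, so $\ovl{\gamma}$ is an analytic (immersive) extension of $\gamma$, and any analytic (immersive) extension of $\ovl{\gamma}$ is in particular one of $\gamma$, hence already contained in $I$; thus $\ovl{\gamma}$ is maximal.

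For uniqueness, suppose $\ovl{\gamma}\colon I\rightarrow M$ and $\ovl{\gamma}'\colon I'\rightarrow M$ are both maximal analytic (immersive) extensions of $\gamma$. Then $I\cup I'$ is an open interval containing $D$, and on the overlap $I\cap I'$ the two maps agree by the same Lemma~\ref{corgleich} argument (both restrict to analytic curves on $I\cap I'$ coinciding on $D$). Hence they glue to an analytic (immersive) curve on $I\cup I'$ extending both; by maximality this forces $I=I\cup I'=I'$ and $\ovl{\gamma}=\ovl{\gamma}'$. I do not expect any genuine obstacle here: the only point requiring a little care is the verification that analyticity and immersivity pass from the members of the cover to the glued map, which is immediate from locality of both notions, and the observation that in the immersive case one should work with analytic immersive extensions throughout so that the glued object inherits immersivity rather than merely analyticity.
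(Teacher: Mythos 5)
Your proposal is correct and follows essentially the same route as the paper: both define the maximal extension on the union of the domains of all analytic (immersive) extensions with open domain, and both invoke Lemma~\ref{corgleich} (applied on the overlap, which is an interval containing $D$) to see that any two such extensions agree, so the glued map is well defined and maximal. The only cosmetic difference is in the uniqueness step, where the paper simply notes that any maximal extension belongs to the collection and hence has domain equal to the union, while you glue two maximal extensions together; both arguments reduce to the same application of Lemma~\ref{corgleich}.
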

\begin{proof}  
The proof is elementary, and can be found in Appendix \ref{appA4}.
\end{proof}
Then, for $\gamma\colon D\rightarrow M$ an analytic (immersive) curve,   $\ovl{\gamma}\colon I\rightarrow M$ will always denote its maximal analytic (immersive) extension in the following.  Analogously, if $\rho \colon D\rightarrow \RR$ is an (immersive) analytic map, then $\ovl{\rho}\colon I\rightarrow \RR$ always denotes its maximal (immersive) analytic extension. 
Observe that in the immersive case, $\ovl{\rho}|^{\im[\ovl{\rho}]}$ (hence $\rho|^{\im[\rho]}$) is automatically an analytic diffeomorphism by Theorem \ref{ofdpofdpfdpof} (as $\ovl{\rho}$ is  strictly monotonous, hence injective).

\begin{corollary}
\label{sdoppsdoods}
	Let $\rho\colon D\rightarrow \RR$ and $\rho'\colon D'\rightarrow \RR$ be analytic (immersive); and assume that $\rho|_A=\rho'|_A$ holds for an interval $A\subseteq D\cap D'$. Then, $\ovl{\rho}=\ovl{\rho}'$ holds, hence $D\cup D'\subseteq \dom[\ovl{\rho}]= \dom[\ovl{\rho}']$. 
\end{corollary}
\begin{proof}
	Let $I:=\dom[\ovl{\rho}]$, $I':=\dom[\ovl{\rho}']$, and observe that $A\subseteq I\cap I'$ holds; hence, $\ovl{\rho}|_{I\cap I'}=\ovl{\rho}'|_{I\cap I'}$ by Lemma \ref{corgleich}. Then,  $\ovl{\rho}=\hat{\rho}=\ovl{\rho}'$ holds by maximality of $\ovl{\rho},\ovl{\rho}'$, for the analytic (immersive) map \hspace{\fill} (extension of $\ovl{\rho},\ovl{\rho}'$)
\[
	\hat{\rho}\colon  I\cup I'\rightarrow \RR,\qquad t\mapsto 
	\begin{cases} 
	\ovl{\rho}(t) &\mbox{for }\:\: t\in I \\ 
	\ovl{\rho}'(t) & \mbox{for }\:\: t\in I'\backslash I. 
\end{cases} 
\qedhere
\]
\end{proof}

\subsubsection{Relations between Curves}
\label{fdggdrere}
We write $\gamma\cpsim \gamma'$ for analytic immersions $\gamma\colon D\rightarrow M$ and $\gamma'\colon D'\rightarrow M$ \deff $\gamma(J)=\gamma'(J')$ holds for open intervals $J\subseteq D$ and $J'\subseteq D'$ on which $\gamma$ and $\gamma'$ are embeddings, respectively.
\begingroup
\setlength{\leftmargini}{12pt}
\begin{itemize}
\item
We have $\gamma=\gamma'\cp \rho$ for a unique analytic diffeomorphism $\rho\colon J\rightarrow J'$, by Lemma \ref{lemma:BasicAnalyt2}. 

If it helps to simplify the argumentation, we will alternatively say that $\gamma\cpsim \gamma'$ holds w.r.t.\ $\rho$.
\item
Assume that $\wm\colon G\times M\rightarrow M$ is analytic in $M$. Then, the following equivalences hold for each $g\in G$, and each analytic diffeomorphism $\rho'\colon D''\rightarrow D'$:
\begin{align*}
g\cdot \gamma\cpsim g\cdot \gamma'	\qquad\quad\Longleftrightarrow\qquad\quad\gamma\cpsim \gamma'\qquad\quad\Longleftrightarrow\qquad\quad \gamma\cpsim \gamma'\cp \rho'.  
\end{align*}
\end{itemize}
\endgroup
\noindent
Assume that $\gamma\cpsim \gamma'$ holds w.r.t.\ $\rho$. We now want to figure out what might happen, if we try to extend such a relation to the whole domain of $\gamma$. For this, we first observe that 
\begin{align}
\label{pospodspoaop}
\gamma|_\CM=\gamma'\cp \ovl{\rho}|_\CM\qquad\quad\text{holds on the interval}\qquad\quad \CM:=D\cap \ovl{\rho}^{-1}(D')
\end{align}
by Lemma \ref{corgleich}, and that $\CM$ is maximal w.r.t.\ this property. 
For $D\equiv K$ and $D'\equiv K'$ compact, we additionally have the following statement.
\begin{lemma}
\label{lem:maxextKomp}
Let $\gamma\colon K\rightarrow M$ and $\gamma'\colon K'\rightarrow M$ be analytic immersions with $\gamma|_B=\gamma'\cp \rho$ for an  analytic diffeomorphism $\rho\colon K\supseteq B\rightarrow B'\subseteq K'$. 
Then, $\CM:=K\cap \ovl{\rho}^{-1}(K')$ is a compact interval; and we have
\begin{align}
\label{ddddd}
 a<c'\quad\:\:\: \Longrightarrow\quad\:\:\: \ovl{\rho}(c')\in\{a',b'\} \qquad\quad\:\text{as well as}\qquad\quad\: c<b\quad\:\: \Longrightarrow\quad\:\: \ovl{\rho}(c)\in\{a',b'\}
\end{align} 
for $\CM\equiv[c',c]\subseteq K\equiv[a,b]$, and $K'\equiv[a',b']$.
\end{lemma}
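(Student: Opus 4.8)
The plan is to prove this in three steps: first that $\CM := K\cap\ovl{\rho}^{-1}(K')$ is an interval, then that it is closed in $\RR$ (hence, being bounded, a compact interval), and finally the endpoint alternative \eqref{ddddd}. Only the closedness needs real work.

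Since $\ovl{\rho}$ is an analytic immersion on the open interval $I_\rho:=\dom[\ovl{\rho}]$, its derivative is continuous and nowhere zero, hence of constant sign, so $\ovl{\rho}$ is strictly monotone; in particular it is injective, $\ovl{\rho}(I_\rho)$ is an open interval, and $\ovl{\rho}^{-1}(K')$ is an interval. As $\ovl{\rho}(B)=\rho(B)=B'\subseteq K'$ and $B\subseteq K$, the set $\CM=K\cap\ovl{\rho}^{-1}(K')$ contains $B$, hence is a non-empty interval, and $\CM\subseteq K$ makes it bounded; so $c':=\inf[\CM]$ and $c:=\sup[\CM]$ both lie in $[a,b]$.

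For closedness, let $\{t_n\}_{n\in\NN}\subseteq\CM$ with $t_n\to t$; then $t\in K$, and after passing to a subsequence $\ovl{\rho}(t_n)\to w\in K'$ by compactness of $K'$. If $t\in I_\rho$ we are done: $\ovl{\rho}(t)=\lim_n\ovl{\rho}(t_n)=w\in K'$, so $t\in\CM$. The only case to exclude is that $t$ is a finite boundary point of $I_\rho$, say, by symmetry, $t=\alpha:=\inf[I_\rho]$; then $t_n\downarrow\alpha$, and monotonicity gives $p:=\lim_{s\downarrow\alpha}\ovl{\rho}(s)=\lim_n\ovl{\rho}(t_n)=w\in K'$. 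Since $\alpha\in K\subseteq\dom[\og]$, the curve $\og$ is analytic immersive on a neighbourhood of $\alpha$. As $\gamma|_B=\gamma'\cp\rho=\ovl{\gamma'}\cp\ovl{\rho}|_B$, Lemma~\ref{corgleich} applied on the interval $K\cap\ovl{\rho}^{-1}(I')$ --- where $I':=\dom[\ovl{\gamma'}]$ is an open interval containing $K'$ --- yields $\og=\ovl{\gamma'}\cp\ovl{\rho}$ on some $(\alpha,\alpha+\varepsilon)$ (using $p\in K'\subseteq I'$ and continuity), and letting $s\downarrow\alpha$ gives $\og(\alpha)=\ovl{\gamma'}(p)$. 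Now $\ovl{\gamma'}$ is immersive at $p$, so there is, near $\ovl{\gamma'}(p)=\og(\alpha)$, an analytic left inverse $\lambda$ of $\ovl{\gamma'}$, i.e.\ $\lambda\cp\ovl{\gamma'}=\id$ on a neighbourhood of $p$ (a suitable coordinate of a submanifold chart straightening $\ovl{\gamma'}$, as in the proof of Lemma~\ref{lemma:BasicAnalyt0}); hence $\ovl{\rho}=\lambda\cp\ovl{\gamma'}\cp\ovl{\rho}=\lambda\cp\og$ on $(\alpha,\alpha+\varepsilon)$. The map $\lambda\cp\og$ is analytic on a full neighbourhood of $\alpha$, and there (again by Lemma~\ref{corgleich}) $\og=\ovl{\gamma'}\cp(\lambda\cp\og)$, the two sides agreeing with $\og$ on $(\alpha,\alpha+\varepsilon)$; differentiating at $\alpha$ gives $\dot{\og}(\alpha)=\dd_p\ovl{\gamma'}\bigl((\lambda\cp\og)'(\alpha)\bigr)$, and since $\dd_p\ovl{\gamma'}$ is injective and $\dot{\og}(\alpha)\neq 0$ (immersivity of $\og$ at $\alpha$), we get $(\lambda\cp\og)'(\alpha)\neq 0$. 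Thus $\lambda\cp\og$, restricted to a small neighbourhood of $\alpha$, is an analytic immersion coinciding with $\ovl{\rho}$ on $(\alpha,\alpha+\varepsilon)$; gluing it to $\ovl{\rho}$ (well defined by Lemma~\ref{corgleich}) produces an analytic immersive extension of $\ovl{\rho}$ with domain strictly larger than $I_\rho$, contradicting the maximality of $\ovl{\rho}$. Hence always $t\in I_\rho$, so $\CM$ is closed and $\CM=[c',c]$ is a compact interval.

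It remains to prove \eqref{ddddd}. Assume $a<c'$. Then $c'\in\CM\subseteq I_\rho$, so $\ovl{\rho}(c')\in K'$; and if $\ovl{\rho}(c')\in(a',b')$, continuity gives $\ovl{\rho}\bigl((c'-\delta,c']\bigr)\subseteq(a',b')\subseteq K'$ for small $\delta$, while $a<c'$ together with $c'\in I_\rho$ gives $(c'-\delta,c']\subseteq K\cap I_\rho$ for small $\delta$, so $(c'-\delta,c']\subseteq\CM$, contradicting $c'=\inf[\CM]$. Hence $\ovl{\rho}(c')\in\{a',b'\}$, and $c<b\Rightarrow\ovl{\rho}(c)\in\{a',b'\}$ follows identically with $[c,c+\delta)$ in place of $(c'-\delta,c']$. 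The main obstacle is the closedness step: a priori $\ovl{\rho}^{-1}(K')$ is only closed in the open subset $I_\rho$ of $\RR$, so one must rule out $\CM$ accumulating onto a boundary point of $I_\rho$ lying in $K$, and it is precisely there that the immersivity of $\gamma'$ (furnishing the local left inverse $\lambda$) and the maximality of $\ovl{\rho}$ are used essentially.
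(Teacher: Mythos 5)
Your proof is correct and follows essentially the same route as the paper: the whole weight rests on showing that $\ovl{\rho}$ extends across any accumulation point $t$ of $\CM$ in $K$, by using compactness of $K'$ to extract a limit $p=\lim_n\ovl{\rho}(t_n)\in K'$, locally inverting $\ovl{\gamma}'$ near $p$ to realize $\ovl{\rho}$ as a transition map, and invoking maximality of $\ovl{\rho}$; the endpoint alternative \eqref{ddddd} is then obtained by the identical continuity argument. The only difference is one of implementation — you build the local left inverse $\lambda$ of $\ovl{\gamma}'$ by hand and check immersivity of $\lambda\cp\og$ at $\alpha$ via the chain rule, where the paper simply cites Lemmas \ref{lemma:BasicAnalyt1} and \ref{lemma:BasicAnalyt2} to produce the local diffeomorphism $\tau$ with $\ovl{\gamma}|_I=\ovl{\gamma}'\cp\tau$ and identifies it with $\ovl{\rho}$ on $\CM\cap J$.
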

\begin{proof}
We first show that 
$\ovl{\rho}$ is defined on an open interval around each $t\in \clos[C]$. This is clear if $t\in \innt[C]$ holds; and, for $t\in \partial C:=\clos[C]\setminus \innt[C]$ (boundary point),  we argue as follows:   
\begingroup
 \setlength{\leftmargini}{12pt}
\begin{itemize}
\item[$\triangleright$]
We fix $C\backslash\{t\}\supseteq \{t_n\}_{n\in \NN}\rightarrow t$, hence $\{t_n\}_{n\in \NN}\subseteq \dom[\ovl{\rho}]$ with $\ovl{\rho}(\{t_n\}_{n\in \NN})\subseteq K'$. Since $K'$ is compact,   
we can assume that $\lim_n \ovl{\rho}(t_n)=t'\in K'$ converges in $K'$ (just by passing to a subsequence if necessary).
\item[$\triangleright$] 
We fix open intervals $I,I'$ with $t\in I$, $t'\in I'$, such that $\ovl{\gamma}|_I, \ovl{\gamma}'|_{I'}$ are embeddings (Corollary \ref{dfdsasasasassa}). We have
\vspace{-4pt}
\begin{align*}
	 &\textstyle\ovl{\gamma}(t_n)=\gamma(t_n)\stackrel{\eqref{pospodspoaop}}{=}\gamma'(\ovl{\rho}(t_n))=\ovl{\gamma}'(\ovl{\rho}(t_n))\qquad\quad\forall\: n\in \NN,
\end{align*}
hence $x:= \ovl{\gamma}(t)=\lim_n\ovl{\gamma}(t_n)=\lim_n\ovl{\gamma}'(\ovl{\rho}(t_n))  =\ovl{\gamma}'(t')$ is an accumulation point of $\im[\ovl{\gamma}]\cap \im[\ovl{\gamma}']$. 
By Lemma \ref{lemma:BasicAnalyt1} and Lemma \ref{lemma:BasicAnalyt2}, we can shrink $I,I'$ (around $t,t'$) in such a way that $\ovl{\gamma}|_I=\ovl{\gamma}'\cp \tau$ holds for an  analytic diffeomorphism $\tau\colon I\rightarrow I'$.
\item[$\triangleright$] 
Since $\ovl{\rho}$ is strictly monotonous (with $\lim_n \ovl{\rho}_n(t_n)=t'\in I'$ and $\lim_n t_n=t\in I\cap \partial C$), there exists an open interval $J\subseteq I$ with $t\in J$, such that $\ovl{\rho}(C\cap J)\subseteq I'$ holds. Then, $\tau$ coincides with $\ovl{\rho}$ on $\CM\cap J$, because 
\vspace{-4pt}
\begin{align*}
	\ovl{\gamma}'\cp \tau|_{\CM\cap J}=\ovl{\gamma}|_{\CM\cap J}=\gamma|_{\CM\cap J}\stackrel{\eqref{pospodspoaop}}{=}\gamma'\cp \ovl{\rho}|_{\CM\cap J}=\ovl{\gamma}'\cp \ovl{\rho}|_{\CM\cap J}\qquad\quad\Longrightarrow\qquad\quad \tau|_{\CM\cap J}=\ovl{\rho}|_{\CM\cap J}
\end{align*} 
by injectivity of $\ovl{\gamma}'$ on $I'$. Corollary \ref{sdoppsdoods} thus shows $I=\dom[\tau]\subseteq \dom[\ovl{\rho}]$. 
\end{itemize}
\endgroup
\noindent
Then, $C\equiv [c',c]$ is compact, because $\ovl{\rho}(C)\subseteq K'$ implies $\ovl{\rho}(\clos[C])\subseteq K'$ (as $K'$ is closed). 
Moreover: 
\begingroup
 \setlength{\leftmargini}{12pt}
\begin{itemize}
\item[$\triangleright$]
If $c<b$ and $\ovl{\rho}(c)\in (a',b')$ holds, then there exists an open interval $\tilde{J}\subseteq K$ with $c\in \tilde{J}$ and $\ovl{\rho}(\tilde{J})\subseteq K'$; which contradicts the definition of $\CM$. 
\item[$\triangleright$]
If $a<c'$ and $\ovl{\rho}(c')\in (a',b')$ holds, then there exists an open interval $\tilde{J}\subseteq K$ with $c'\in \tilde{J}$ and $\ovl{\rho}(\tilde{J})\subseteq K'$; which contradicts the definition of $\CM$. 
\qedhere
\end{itemize}
\endgroup
\end{proof}
Lemma \ref{lem:maxextKomp} yields the following useful corollaries:
\begin{corollary}
\label{cgdcfgd}
Assume that $\gamma|_J=\gamma'\cp \rho$ holds for analytic immersions 
$\gamma\colon [a,b]\rightarrow M$, $\gamma'\colon K'\rightarrow M$,   and an 
analytic diffeomorphism $\rho\colon J\rightarrow J'$. Then, for each fixed $t\in J$, we have the implication:
\begin{align*}
	\im[\gamma']\nsubseteq \im[\gamma] \qquad\quad\Longrightarrow \qquad\quad \gamma([a,t])\subseteq \im[\gamma']\qquad\text{or} \qquad\gamma([t,b])\subseteq \im[\gamma'].
\end{align*}
\end{corollary}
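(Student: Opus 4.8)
The plan is to apply Lemma \ref{lem:maxextKomp} with $K:=[a,b]$, with $K'$ as given, and with $B:=J$, $B':=J'$ (note that $J'=\im[\rho]\subseteq\dom[\gamma']=K'$ since $\gamma'\cp\rho$ is defined), and then to read off the claim from the structure of the maximal overlap interval $\CM:=[a,b]\cap\ovl{\rho}^{-1}(K')$. Writing $\CM=[c',c]$ and $K'=[a',b']$ as in that lemma, one has $\gamma|_\CM=\gamma'\cp\ovl{\rho}|_\CM$ by Lemma \ref{corgleich}. First I would observe that $J\subseteq\CM$: indeed $\ovl{\rho}(J)=\rho(J)=J'\subseteq K'$ and $J\subseteq[a,b]$. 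In particular $\CM$ has non-empty interior, so $c'<c$, and every $t\in J$ satisfies $c'\leq t\leq c$.

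The key step is to exclude the case $a<c'$ and $c<b$, and this is precisely where the hypothesis $\im[\gamma']\nsubseteq\im[\gamma]$ is used. If both strict inequalities held, then \eqref{ddddd} would give $\ovl{\rho}(c'),\ovl{\rho}(c)\in\{a',b'\}$; since $\ovl{\rho}$ is strictly monotone (being the maximal analytic immersive extension of the diffeomorphism $\rho$) and $c'<c$, this forces $\{\ovl{\rho}(c'),\ovl{\rho}(c)\}=\{a',b'\}$, hence $\ovl{\rho}([c',c])=K'$ by the intermediate value theorem. But then $\im[\gamma']=\gamma'(K')=\gamma'(\ovl{\rho}(\CM))=\gamma(\CM)\subseteq\gamma([a,b])=\im[\gamma]$, contradicting the assumption. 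Therefore $c'=a$ or $c=b$.

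It then remains to conclude by the evident case distinction. If $c'=a$, then for each $t\in J$ we have $[a,t]\subseteq[a,c]=\CM$, so $\gamma([a,t])=\gamma'(\ovl{\rho}([a,t]))\subseteq\im[\gamma']$. If instead $c=b$, then for each $t\in J$ we have $[t,b]\subseteq[c',b]=\CM$, so $\gamma([t,b])\subseteq\im[\gamma']$. In either case one of the two alternatives in the statement holds.

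I do not expect a serious obstacle: the whole content sits in the middle step, where the global containment hypothesis has to be turned into the local alternative $c'=a$ or $c=b$, and once Lemma \ref{lem:maxextKomp} is invoked this is a short monotonicity argument. The only points requiring a little care are that $\CM$ genuinely has non-empty interior — so that injectivity of $\ovl{\rho}$ on $[c',c]$ can be used — and that $\ovl{\rho}$ maps $[c',c]$ \emph{onto} $K'$ as soon as both of its endpoints land in $\partial K'$; both follow at once from monotonicity and continuity.
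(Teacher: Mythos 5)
Your proposal is correct and follows essentially the same route as the paper: invoke Lemma \ref{lem:maxextKomp} for $\CM=[c',c]=[a,b]\cap\ovl{\rho}^{-1}(K')$, rule out $a<c'$ and $c<b$ simultaneously via \eqref{ddddd} together with injectivity of $\ovl{\rho}$ (since that would force $\ovl{\rho}(\CM)=K'$ and hence $\im[\gamma']\subseteq\im[\gamma]$), and then read off the two alternatives from $c'=a$ or $c=b$. The only difference is that you spell out the intermediate details (why $J\subseteq\CM$, why the two endpoint images are distinct) that the paper leaves implicit.
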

\begin{proof}
The proof is elementary, and can be found in Appendix \ref{appA5}.
\end{proof}

\begin{corollary}
\label{dfdgttrgf}
If $\gamma\colon [a',a]\rightarrow M$ is an analytic immersion, then $\gamma|_{[a',r]}=\gamma\cp \rho$ cannot hold for a negative analytic diffeomorphism $\rho\colon [a',r]\rightarrow [s,a]$ with $r,s\in (a',a)$.
\end{corollary}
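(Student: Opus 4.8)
The plan is to argue by contradiction: assuming the stated $\rho$ exists, I will manufacture an honest fixed point $t^*$ of the maximal analytic immersive extension $\ovl{\rho}$ of $\rho$, and then contradict the local injectivity of the immersion $\gamma$ near $t^*$. So suppose $\gamma|_{[a',r]}=\gamma\cp\rho$ for a negative analytic diffeomorphism $\rho\colon[a',r]\rightarrow[s,a]$ with $r,s\in(a',a)$. Since $\rho$ is strictly decreasing, it sends the left endpoint of its domain to the right endpoint of its image, i.e.\ $\rho(a')=a$; and its maximal analytic immersive extension $\ovl{\rho}$, being an analytic diffeomorphism with open domain (hence of nowhere-vanishing derivative), is strictly decreasing throughout and still satisfies $\ovl{\rho}(a')=a$.

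Next I would invoke Lemma \ref{lem:maxextKomp} with $\gamma'=\gamma$ and $K=K'=[a',a]$, $B=[a',r]$, $B'=[s,a]$. It produces the compact interval $C:=[a',a]\cap\ovl{\rho}^{-1}([a',a])=[c',c]$, on which $\gamma|_C=\gamma\cp\ovl{\rho}|_C$ holds (cf.\ the discussion preceding that lemma and Lemma \ref{corgleich}), together with the endpoint dichotomy \eqref{ddddd}. Since $\ovl{\rho}([a',r])=[s,a]\subseteq[a',a]$, we have $[a',r]\subseteq C$, which forces $c'=a'$ and $c\ge r>a'$. I then claim $\ovl{\rho}(c)<c$: from $c\in C$ we get $\ovl{\rho}(c)\in[a',a]$, and strict monotonicity gives $\ovl{\rho}(c)<\ovl{\rho}(a')=a$; if $c=a$ this already reads $\ovl{\rho}(c)<c$, while if $c<a$, then \eqref{ddddd} forces $\ovl{\rho}(c)\in\{a',a\}$, hence $\ovl{\rho}(c)=a'<r\le c$. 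Together with $\ovl{\rho}(a')=a>a'$, the intermediate value theorem applied to $t\mapsto\ovl{\rho}(t)-t$ on $[a',c]$ yields a point $t^*\in(a',c)\subseteq(a',a)$ with $\ovl{\rho}(t^*)=t^*$.

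Finally, since $\gamma$ is an analytic immersion it is locally injective; so I would pick $\delta>0$ small enough that $(t^*-\delta,t^*+\delta)\subseteq(a',c)$, that $\gamma$ is injective on this interval, and — using continuity of $\ovl{\rho}$ at $t^*$ — that $\ovl{\rho}(t)\in(t^*-\delta,t^*+\delta)$ for every $t$ there. Then for any $t\in(t^*,t^*+\delta)$ we have $\ovl{\rho}(t)<\ovl{\rho}(t^*)=t^*<t$, so $t$ and $\ovl{\rho}(t)$ are two distinct points of $(t^*-\delta,t^*+\delta)$; yet $\gamma(t)=\gamma(\ovl{\rho}(t))$ by $\gamma|_C=\gamma\cp\ovl{\rho}|_C$, contradicting injectivity of $\gamma$ on $(t^*-\delta,t^*+\delta)$. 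This contradiction proves the claim.

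The only genuinely delicate point is the inequality $\ovl{\rho}(c)<c$: it is precisely here that one needs the endpoint alternative \eqref{ddddd} of Lemma \ref{lem:maxextKomp} (in the subcase $c<a$), since without control on $\ovl{\rho}(c)$ the function $\ovl{\rho}-\mathrm{id}$ need not change sign on $[a',c]$ — in particular a naive attempt to produce a fixed point of $\rho$ itself on $[a',r]$ fails as soon as $s\ge r$, which is why passing to the maximal extension is essential. Everything else, namely the reduction to a fixed point and the local-injectivity argument, is routine.
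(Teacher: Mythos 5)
Your proposal is correct and follows essentially the same route as the paper: pass to the maximal extension $\ovl{\rho}$, use Lemma \ref{lem:maxextKomp} and \eqref{ddddd} to control the interval $C=[a',c]$, produce a fixed point of $\ovl{\rho}$ in $(a',a)$, and contradict local injectivity of the immersion there. The only (cosmetic) difference is that you merge the paper's two cases $c=a$ and $c<a$ into a single intermediate-value argument for $\ovl{\rho}-\mathrm{id}$, whereas the paper treats them separately via a $\sup$/$\inf$ construction of the fixed point.
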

\begin{proof}
The proof is elementary, and can be found in Appendix \ref{appA6}.
\end{proof}

\subsubsection{Self-Relations}
We say that the analytic immersion $\gamma\colon D\rightarrow M$ is {\bf self-related} \deff $\gamma(K)=\gamma(K')$ holds for disjoint compact intervals $K,K'\subseteq D$ on which $\gamma$ is an embedding. We now first show the following statement. 
\begin{lemma}
\label{setsdffds}
If $\gamma\colon K\rightarrow M$ is an immersion, then there does not exist a homeomorphism $\rho\colon K\rightarrow K'\subset K$ with $\gamma=\gamma\cp \rho$.
\end{lemma}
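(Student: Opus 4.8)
The plan is to combine two elementary facts: an immersion is locally injective, and the identity $\gamma=\gamma\cp\rho$ forces $\gamma$ to be constant along every forward $\rho$-orbit. Write $K=[a,b]$ with $a<b$ (by the convention on intervals). Since $\rho$ is a homeomorphism onto $K'$, the set $K'$ is a nondegenerate compact interval $[a',b']\subseteq[a,b]$, which by hypothesis is a proper subset, and $\rho$ — being a continuous bijection between intervals — is strictly monotone. Because $\rho(K)=K'\subseteq K$, the map $\rho$ is a self-map of $K$, so $\rho^n$ is defined for all $n$, and iterating $\gamma=\gamma\cp\rho$ yields $\gamma=\gamma\cp\rho^n$ for every $n\in\NN$. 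Moreover, if $\rho$ happens to be decreasing, then $\rho^2$ is increasing, still maps $K$ into $\rho(K')\subseteq K'\subset K$ (a proper subinterval), and satisfies $\gamma=\gamma\cp\rho^2$; hence, replacing $\rho$ by $\rho^2$ if necessary, I may assume $\rho$ is increasing. Finally, since $\gamma$ is an immersion it is locally injective, so every point of $K$ has a neighbourhood in $K$ on which $\gamma$ is injective.

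So now $\rho\colon[a,b]\to[a',b']$ is increasing with $[a',b']$ a proper subinterval of $[a,b]$, i.e.\ $a<a'$ or $b'<b$; assume $a<a'$ (the case $b'<b$ being symmetric, with $b$ in place of $a$ below). Set $a_n:=\rho^n(a)\in K$. Since $\rho$ is increasing and $a_1=a'>a=a_0$, an induction gives $a_{n+1}=\rho(a_n)\ge\rho(a_{n-1})=a_n$; and if $a_n=a_m$ for some $n<m$, then cancelling $\rho^n$ — legitimate, as $\rho$ is injective — would give $a=\rho^{m-n}(a)=a_{m-n}\ge a_1>a$, which is absurd. Hence $(a_n)_{n\in\NN}$ is strictly increasing; being bounded in $K$, it converges to some $\tau\in K$. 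Choosing a neighbourhood $U$ of $\tau$ in $K$ on which $\gamma$ is injective, and then $N$ with $a_N,a_{N+1}\in U$, we get $a_N\ne a_{N+1}$ while $\gamma(a_N)=\gamma(a)=\gamma(a_{N+1})$, contradicting injectivity of $\gamma|_U$. This proves the lemma.

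The one step deserving attention is the strict monotonicity of the orbit $(a_n)$: a priori it could stabilise (become eventually constant), and it is precisely injectivity of $\rho$ that rules this out and thereby forces a genuine accumulation point $\tau\in K$ at which the local injectivity of the immersion $\gamma$ is violated. Everything else is routine bookkeeping, and — consistent with the hypothesis asking only that $\gamma$ be an immersion — no analyticity is used.
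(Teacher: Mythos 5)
Your proof is correct and follows essentially the same strategy as the paper: iterate $\gamma=\gamma\cp\rho$ to produce an infinite orbit on which $\gamma$ is constant, let it accumulate at some $\tau\in K$, and contradict local injectivity of the immersion there. The only (harmless) difference is bookkeeping: you exploit strict monotonicity of $\rho$ on the interval to make the orbit of an endpoint strictly monotone and hence convergent, whereas the paper starts from an arbitrary $k\in K\backslash K'$, gets distinctness of the orbit from the mutual disjointness of the sets $\rho^n(K\backslash K')$, and extracts a convergent subsequence by compactness.
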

\begin{proof}
Assume that such a homeomorphism $\rho\colon K\rightarrow K'\subset K$  exists.  We set $K_n:=\rho^n(K)$ for each $n\in \NN$,\footnote{Here, $\rho^n$ denotes the $n$-fold composition of $\rho$ with itself, hence $\rho^0=\id_K$ as well as  $\rho^1=\rho$ and $K'=K_1$.} so that $K_{n+1}\subset K_n$ holds for each $n\in \NN$. We fix $k\in K\backslash K'$, and define $k_n:=\rho^n(k)\in K$ for each $n\in \NN$ (hence $k_0=k$). Then, 
\begingroup
 \setlength{\leftmargini}{12pt}
\begin{itemize}
\item[$\triangleright$]
The $k_n$ are  mutually different, because the sets $\rho^{n}(K\backslash K')=K_n\backslash K_{n+1}$ are mutually disjoint.
\item[$\triangleright$]
Since $K$ is compact with $\{k_n\}_{n\in \NN}\subseteq K$, there exists $\iota\colon \NN\rightarrow \NN$ strictly increasing, such that $\lim_n k_{\iota(n)}=k'\in K$ exists.
\end{itemize}
\endgroup
\noindent
By construction, we have 
\begin{align*}
	\gamma(k_{n+1})=\gamma(\rho(k_n))=\gamma(k_n)\qquad\forall\: n\in \NN\qquad\quad&\Longrightarrow\hspace{9.2pt}\qquad\quad \gamma(k_{n})=\gamma(k_0)\qquad \forall\: n\in \NN\\
	&\Longrightarrow\qquad\quad \gamma(k_{\iota(n)})=\gamma(k_0)\qquad\forall\: n\in \NN,
\end{align*} 
which contradicts that $\gamma$ is injective on a neighbourhood of $k'$.
\end{proof}
\begin{lemma}
\label{sdffsd}
Let $\gamma\colon I\rightarrow M$ be an analytic immersion, and $D\subset I$ an interval such that $\gamma|_D$ is self-related. 
\begingroup
\setlength{\leftmargini}{14pt}
{
\renewcommand{\theenumi}{\alph{enumi})} 
\renewcommand{\labelenumi}{\theenumi}
\begin{enumerate}
\item
\label{sdffsd1}
Let\hspace{1pt} $\tau\in I\cap \{\inf(D),\sup(D)\}$ be given. Then,  \hspace*{\fill}(\he$J_\tau,J'_\tau$ open intervals\he)
\begin{align*}
	\gamma|_{J_\tau} = \gamma\cp\rho_\tau \quad\text{holds for an analytic diffeomorphism}\quad\rho_\tau\colon  D\supseteq J_\tau\rightarrow J'_\tau\subseteq I \quad\text{with}\quad \tau\in J'_\tau. 
\end{align*}
\item
\label{sdffsd2}
We have $\gamma(J)= \gamma(J')$ for certain open intervals $J\subseteq D$ and $J'\subseteq I\backslash D$ on which $\gamma$ is an embedding.
\end{enumerate}}
\endgroup
\end{lemma}
\begin{proof}
First observe that $I\cap \{\inf(D),\sup(D)\}\neq \emptyset$ holds, because  $I$ is open with $D\subset I$. Hence, 
Part \ref{sdffsd2} is clear from Part \ref{sdffsd1} and Corollary \ref{dfdsasasasassa} (as $\rho_\tau$ is a homeomorphism and $\innt[J'_\tau\cap I\setminus D]\neq \emptyset$). It thus remains to prove Part \ref{sdffsd1}.   
For this, we assume that $\tau=\sup(D)\in I$ holds (the case $\tau=\inf(D)\in I$ is treated analogously), and fix $\epsilon>0$ with $\tau+\epsilon\in I$:
 \begingroup
\setlength{\leftmargini}{12pt}
\begin{itemize}
\item[$\triangleright$]
Since $\gamma|_D$ is self-related, Lemma \ref{lemma:BasicAnalyt2} provides an analytic diffeomorphism $\rho\colon D\supseteq [a',r]\rightarrow [s,a]\subseteq D$ for certain $D\ni a'<r<s<a\in D$, such that $\gamma|_{[a',r]}=\gamma\cp \rho$ holds. 
\item[$\triangleright$]
Corollary \ref{dfdgttrgf} applied to $\gamma|_{[a',a]}$ shows that $\rho$ (hence $\ovl{\rho}$) is positive; so that $\rho(a')=s$ and $\rho(r)=a$ holds. 
\item[$\triangleright$]
Lemma \ref{lem:maxextKomp} applied to $\gamma\equiv \gamma|_{[a',\tau]}$ and $\gamma'\equiv \gamma|_{[s,\tau+\epsilon]}$ yields\footnote{Observe $\dom[\rho]=[a',r]\subseteq [a',\tau]$ as well as $\im[\rho]=[s,a]\subseteq [s,\tau+\epsilon]$.} 
\begin{align}
\label{nmdsnmdsnmdsnmdsds98ds98s98d98ds98dsdsdsdsds}
[a',c]=[a',\tau]\cap \ovl{\rho}^{-1}([s,\tau+\epsilon])\quad\:\:\text{with}\quad\:\: \gamma|_{[a',c]}=\gamma\cp \ovl{\rho}|_{[a',c]}\quad\:\:\text{for some}\quad\:\: r\leq c\leq \tau.
\tag{$\natural$}
\end{align}   
\end{itemize}
\endgroup
\vspace{-6pt}

\noindent   
Then, the following two cases can occur:  
\begingroup
\setlength{\leftmargini}{12pt}
\begin{itemize}
\item	
If $\ovl{\rho}(c)>\tau$, the claim holds for $J_\tau:=(a',c)$, $J'_\tau:= (s,\rho(c))$, $\rho_\tau:=\ovl{\rho}|_{J_\tau}$. 
\item	
Assume $\ovl{\rho}(c)\leq \tau$: 
\begingroup
\setlength{\leftmarginii}{12pt}
\begin{itemize}
\item[$*$]
We necessarily have $c=\tau$. In fact, since $\ovl{\rho}$ is positive, Lemma \ref{lem:maxextKomp} yields the implication
\vspace{-3pt}
\begin{align*}
	c<\tau\qquad\quad\stackrel{\eqref{ddddd}}{\Longrightarrow} \qquad\quad \ovl{\rho}(c)=\tau+\epsilon>\tau.
\end{align*}
\vspace{-16pt} 
\item[$*$]
We thus have $c=\tau$ and $\ovl{\rho}(\tau)=\ovl{\rho}(c)\leq \tau$. Hence, we obtain 
\vspace{-3pt}
\begin{align*}
\gamma|_{[a',\tau]}\stackrel{\eqref{nmdsnmdsnmdsnmdsds98ds98s98d98ds98dsdsdsdsds}}{=}\gamma|_{[a',\tau]}\cp \mu\qquad\quad\text{for}\qquad\quad \mu:=\ovl{\rho}|_{[a',\tau]}\colon [a',\tau]\rightarrow [s,\ovl{\rho}(\tau)]\subset [a',\tau],
\end{align*}  
which contradicts Lemma \ref{setsdffds}.\qedhere
\end{itemize}
\endgroup
\end{itemize}
\endgroup
\end{proof}
\noindent
The following lemma provides two useful conditions for self-relatedness of curves.
\begin{lemma}
\label{sshift}
Let $\gamma\colon D\rightarrow M$ and $\gamma'\colon D'\rightarrow M$ be analytic immersions, with 
\begin{align*}
\gamma|_B=\gamma'\cp \phi\qquad&\text{for}\qquad \phi\colon B\rightarrow D'\qquad\text{an analytic diffeomorphism},\\
\gamma|_J=\gamma'\cp \psi\qquad&\text{for}\qquad \psi\colon J\rightarrow J'\qquad\hspace{3pt}\text{an analytic diffeomorphism.}
\end{align*}
Then, $\gamma$ is self-related, if we have\: $J\nsubseteq B$\: or\: $J\subseteq B$ and $\phi|_J\neq \psi$. 
\end{lemma}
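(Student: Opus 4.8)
The plan is to reduce everything to the two self-relation lemmas already established, namely Lemma \ref{setsdffds} and Lemma \ref{sdffsd}, by transferring the two given relations $\gamma|_B = \gamma'\cp\phi$ and $\gamma|_J = \gamma'\cp\psi$ into a single self-relation of $\gamma$. First I would handle the case $J\subseteq B$ and $\phi|_J\neq\psi$: here both $\phi|_J$ and $\psi$ are analytic diffeomorphisms from $J$ into $D'$ with $\gamma'\cp(\phi|_J) = \gamma|_J = \gamma'\cp\psi$, so since $\gamma'$ is an immersion (hence locally injective), the composite $\rho := (\phi|_J)^{-1}\cp\psi$ (suitably restricted) is a well-defined analytic diffeomorphism of a subinterval of $J$ satisfying $\gamma\cp\rho = \gamma$ on that subinterval, yet $\rho\neq\id$ because $\phi|_J\neq\psi$. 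Passing to $\ovl\rho$ and using that $\rho$ is not the identity, $\ovl\rho$ moves some point, and one extracts two disjoint compact subintervals $K, K'$ on which $\gamma$ is an embedding with $\gamma(K)=\gamma(K')$ — precisely self-relatedness. (Here I would invoke Lemma \ref{corgleich} to argue $\ovl\rho\neq\id$ globally from $\rho\neq\id$ locally, and I'd pick $K'=\ovl\rho(K)$ disjoint from $K$ by continuity, choosing $K$ small enough near a point where $\ovl\rho(t)\neq t$.)

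Next I would treat the case $J\nsubseteq B$. The idea is to compose the two relations to get a self-relation of $\gamma'$ first, then pull it back. On the overlap, $\gamma|_{B\cap J} = \gamma'\cp\phi|_{B\cap J} = \gamma'\cp\psi|_{B\cap J}$, so $\phi$ and $\psi$ agree there (again by local injectivity of $\gamma'$, after shrinking to an interval where $\gamma'$ is an embedding and using Lemma \ref{corgleich} to propagate). Hence $\phi$ and $\psi$ glue to an analytic diffeomorphism $\chi$ defined on $B\cup J$, which is a strictly larger interval than $B$, with $\gamma|_{B\cup J} = \gamma'\cp\chi$. But $\chi(B\cup J)\subseteq D'$ while $\chi(B)=D'$, which forces $\chi|_{B\cup J}$ to be a diffeomorphism onto $D'$ whose restriction to $B$ is already onto $D'$ — so $\chi^{-1}\cp(\chi|_B)$ gives a self-homeomorphism of the type forbidden by Lemma \ref{setsdffds} applied to a compact subinterval, unless $B=B\cup J$, contradicting $J\nsubseteq B$. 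More cleanly: $\chi|_B\colon B\to D'$ and $\chi|_{B\cup J}\colon B\cup J\to D'$ both surject onto $D'$, so $\chi$ is not injective on $B\cup J$ unless $B\cup J = B$; but a diffeomorphism is injective, contradiction — hence this case produces the self-relation directly, or more carefully one extracts $K\subseteq J\setminus B$ and $K'\subseteq B$ with $\gamma(K)=\gamma(K')$ via Lemma \ref{sdffsd} applied with $D := B$ and the relation $\gamma|_J=\gamma'\cp\psi$ recording that $\gamma|_J$ revisits $\im[\gamma|_B]$.

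I expect the main obstacle to be the bookkeeping in the second case: $B$, $J$, $D'$ need not be compact, and $\chi|_B$ being ``onto $D'$'' has to be argued from $\phi\colon B\to D'$ being a diffeomorphism while guaranteeing that the glued map $\chi$ stays inside $D'$ — one must be careful that $\chi$ doesn't run off the end of $D'$, which is exactly where the structure of Lemma \ref{lem:maxextKomp} and the monotonicity/boundary analysis of $\ovl\rho$ enters. The cleanest route is probably: fix a compact $K\subseteq J\setminus B$ (nonempty since $J\nsubseteq B$, after possibly passing to $J\cap(\,\cdot\,)$), note $\gamma(K) = \gamma'(\psi(K))\subseteq\gamma'(D')=\gamma(B)$, so each point of $\gamma(K)$ lies on $\im[\gamma|_B]$; then by Lemma \ref{lemma:BasicAnalyt1} and \ref{lemma:BasicAnalyt2} one finds a compact $K'\subseteq B$ with $\gamma(K)=\gamma(K')$, and $K\cap K'=\emptyset$ since $K\cap B=\emptyset$. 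This makes $\gamma$ self related, completing the proof. The first case is the technically lighter one; the second case is where I would spend most of the care, and I would lean on the already-proven Lemmas \ref{setsdffds} and \ref{sdffsd} rather than redo the extension arguments from scratch.
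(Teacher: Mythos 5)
Your proof, in the form you finally commit to, is correct and is essentially the paper's argument: in both cases one pulls a suitable compact interval $L$ of the common target $J'$ back through $\phi^{-1}$ and $\psi^{-1}$ to obtain compact intervals $K\subseteq B$, $K'\subseteq J$ with $\gamma(K)=\gamma'(L)=\gamma(K')$, and then arranges disjointness --- for $J\nsubseteq B$ by working inside $J\setminus B$, and for $J\subseteq B$ with $\phi|_J\neq\psi$ by your device of locating a point moved by $\rho:=\phi^{-1}\cp\psi$ (which satisfies $\gamma\cp\rho=\gamma|_J$ and $\rho\neq\id_J$, so by analyticity moves some $t_0$, around which a small $K$ and $K':=\rho(K)$ are disjoint). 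That last step is a slightly more direct route than the paper's, which instead shows that non-disjointness of $\phi^{-1}(L)\cap\psi^{-1}(L)$ for all small $L$ would force $\phi^{-1}|_{J'}=\psi^{-1}|_{J'}$ via Lemma \ref{corgleich}; both work.

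One genuine error in your discarded detour deserves flagging: the claim that $\gamma'\cp\phi|_{B\cap J}=\gamma'\cp\psi|_{B\cap J}$ forces $\phi=\psi$ on $B\cap J$ ``by local injectivity of $\gamma'$'' is false. An immersion $\gamma'$ may self-intersect, so $\phi$ and $\psi$ can send the same parameter to different preimages of the same point of $M$; indeed the second hypothesis of the lemma ($J\subseteq B$ but $\phi|_J\neq\psi$) is precisely the situation where this happens, so the gluing of $\phi$ and $\psi$ into a single $\chi$ on $B\cup J$ cannot be carried out in general. Since you abandon that route for the correct one, the proof stands, but do not rely on that step elsewhere. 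Finally, remember to shrink $L$ (equivalently $K$) so that $\gamma'|_L$, hence $\gamma|_K$ and $\gamma|_{K'}$, are embeddings, as the definition of self-relatedness requires; this is the one detail your write-up leaves implicit.
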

\begin{proof}
Let $L'\subseteq J'\subseteq D'$ be a compact interval, such that $\gamma'|_{L'}$ is an embedding (Corollary \ref{dfdsasasasassa}). We set $D\supseteq B\supseteq K:=\phi^{-1}(L')$ and $J\supseteq \tilde{K}:=\psi^{-1}(L')$. Then, $\gamma|_K, \gamma|_{\tilde{K}}$ are embeddings with $\gamma(K)=\gamma'(L')=\gamma(\tilde{K})$. 
\begingroup
\setlength{\leftmargini}{12pt}
\begin{itemize}
\item
Assume $J\nsubseteq B$. Then, 
$\innt[J\backslash B]\neq \emptyset$ holds as $J$ is open. Thus, shrinking $J$ and $L'$ if necessary, we can assume $J\cap B=\emptyset$. Then, $\gamma$ is self-related as we have $K\cap \tilde{K}\subseteq B\cap J=\emptyset$.	
\item	
Assume $J\subseteq B$ and $\phi|_J\neq \psi$. We claim that we can shrink $L'$ such that $K\cap \tilde{K}=\emptyset$ holds; from which the claim is evident.

In fact, assume that there exists no compact interval $\tilde{L}'\subseteq L'$ with $\phi^{-1}(\tilde{L}')\cap \psi^{-1}(\tilde{L}')\neq \emptyset$. We fix $t\in L'$, and choose a sequence $\{L'_n\}_{n\in \NN}$ of compact intervals with $L'_{n+1}\subseteq L'_n\subseteq L'$ for all $n\in \NN$ as well as $\bigcap_{n\in \NN} L'_n=\{t\}$. By assumption, $\phi^{-1}(L'_n)\cap \psi^{-1}(L'_n)\neq \emptyset$ holds for all $n\in \NN$. There thus exist sequences $\{r_n\}_{n\in \NN}, \{s_n\}_{n\in \NN}\subseteq L'$ with $\lim_n r_n=t=\lim_n s_n$, such that $\phi^{-1}(r_n)=\psi^{-1}(s_n)$ holds for all $n\in \NN$, hence
\vspace{-10pt}
\begin{align*}
	\textstyle\phi^{-1}(t)=\lim_n \phi^{-1}(r_n)=\lim_n\psi^{-1}(s_n)=\psi^{-1}(t).
\end{align*}
Since, this holds for each $t\in L'$, we have $\phi^{-1}|_{L'}=\psi^{-1}|_{L'}$; hence, $\phi^{-1}|_{J'}=\psi^{-1}$ by Lemma \ref{corgleich}. This implies $\phi|_J=\psi$, which contradicts the assumptions.\qedhere
\end{itemize}
\endgroup
\end{proof}
\subsection{Reparametrizations}
\label{repari}
In this subsection, we collect some further statements concerning reparametrizations of analytic immersive curves that we will need in Sect.\ \ref{disgencur}. 
More specifically, the arguments used in the proof of Lemma \ref{lem:maxextKomp} also work for non-compact domains. 
To figure out the possible cases efficiently, in the following we write $\gamma \psim_{t,t'} \gamma'$ for analytic immersions $\gamma\colon D\rightarrow M$ and $\gamma'\colon D'\rightarrow M$ with $t\in D$ and $t'\in D'$ \deff there exists an analytic diffeomorphisms $\rho\colon D \rightarrow D'$ with $\rho(t)=t'$ and $\gamma=\gamma'\cp \rho$.\footnote{We explicitly mention that $t$ and $t'$ are not assumed to be contained in the interior of $D$ and $D'$, respectively.} This diffeomorphism is uniquely determined, because the following implication holds:
\begin{align}
\label{pofdpofdpo}
   \begin{array}{@{}lr@{}}
       \gamma|_A \psim_{t,t'} \gamma'|_{A'}\quad\:\hspace{1pt}\text{w.r.t.}\quad\: \upsilon\colon A\rightarrow A'\\
	\gamma|_B \psim_{t,t'} \gamma'|_{B'}\quad\:\text{w.r.t.}\quad\: \kappa\colon B\rightarrow B'
        \end{array}\bigg\} \qquad\quad\Longrightarrow\qquad\quad\upsilon|_{A\cap B}=\kappa|_{A\cap B}.
\end{align}
\begin{proof}[Proof of Implication \eqref{pofdpofdpo}]
\vspace{0pt}

\noindent
\begingroup
\setlength{\leftmargini}{12pt}
\begin{itemize}
\item[$\triangleright$]
There exist open intervals $I\subseteq D$, $I'\subseteq D'$ with $t\in I$, $t'\in I'$ and $\ovl{\gamma}(I)=\ovl{\gamma}'(I')$, such that $\ovl{\gamma}|_I$ and $\ovl{\gamma}'|_{I'}$ are embeddings (combine $\gamma\psim_{t,t'} \gamma'$ with Corollary \ref{dfdsasasasassa} and Lemma \ref{lemma:BasicAnalyt1}). Hence, $\ovl{\gamma}|_I=\ovl{\gamma}'\cp \mu$ holds for an analytic diffeomorphism $\mu\colon I\rightarrow I'$ by Lemma \ref{lemma:BasicAnalyt2}.
\item[$\triangleright$]
By continuity of $\upsilon$ and $\kappa$, there exists  an open interval $J\subseteq I$ with $t\in J$ and  
 $\upsilon(J\cap A),\kappa(J\cap B)\subseteq I'$. 
 Since $\ovl{\gamma}'$ is injective on $I'$, we have 
\begin{align*}
	\upsilon|_{J\cap A\cap B}=\mu|_{J\cap A\cap B}=\kappa|_{J\cap A\cap B}\qquad\quad\Longrightarrow\qquad\quad \upsilon|_{A\cap B}=\kappa|_{A\cap B}
\end{align*} 
whereby the implication is trivial for $A\cap B=\{t\}$, and holds by Lemma \ref{corgleich} in the other case.\qedhere
\end{itemize}
\endgroup
\end{proof}
\noindent
Let now $\gamma\colon D\rightarrow M$ and $\gamma'\colon D'\rightarrow M$ be fixed analytic immersions, such that $\gamma|_K=\gamma'\cp \rho$ holds for an  analytic diffeomorphism $\rho\colon K\rightarrow K'$,  with compact intervals $K\subseteq D$ and $K'\subseteq D'$.  We fix some $t\in \innt[K]$ and set $t':=\rho(t)$.  Then, the explicit form of the intervals (observe $K\subseteq C$ and $K'\subseteq C'$)
\begin{align*}
	\CM:=D\cap \ovl{\rho}^{-1}(D')\qquad\quad\text{and}\qquad\quad C':=\ovl{\rho}(C)
\end{align*}
is completely determined by Corollary \ref{sdoppsdoods} as well as the following observations:
\vspace{6pt}

\noindent
We set $K_+:=K\cap [t,\infty)$ and $K_-:=K\cap (-\infty,t]$ as well as $K'_+:=K'\cap [t',\infty)$ and $K'_-:=K'\cap (-\infty,t']$, and observe:
\begin{align*}
\begin{array}{lrllrl}
	\dot{\rho}>0\qquad\qquad\Longleftrightarrow&\qquad\quad\gamma|_{K_+}\hspace{-6pt}&\psim_{t,t'} \gamma'|_{K'_+}\quad &\text{as well as}&\quad \gamma|_{K_-}\hspace{-6pt}&\psim_{t,t'} \gamma'|_{K'_-}\\[2pt] 
	\dot{\rho}<0\qquad\qquad\Longleftrightarrow&\qquad\quad\gamma|_{K_+}\hspace{-6pt}&\psim_{t,t'} \gamma'|_{K'_-}\quad &\text{as well as}&\quad \gamma|_{K_-}\hspace{-6pt}&\psim_{t,t'} \gamma'|_{K'_+}.
\end{array}
\end{align*}
We split the intervals $D,D',C,C'$ into their positive and negative parts as well, hence:  
\begingroup
\setlength{\leftmargini}{12pt}
\begin{itemize}
\item
Let $D_+,C_+$ and $D_-,C_-$ denote the intersections of $D,C$ \hspace{5pt}with $[t,\infty)$ \hspace{3pt}and $(-\infty,t$], \hspace{3.3pt}respectively.
\item 
Let $D'_+,C'_+$ and $D'_-,C'_-$ denote the intersections of $D',C'$ with $[t',\infty)$ and $(-\infty,t'$], respectively.
\end{itemize}
\endgroup
\noindent
Then,
\begin{align*}
\ovl{\rho}\colon \CM_\pm\rightarrow C'_\pm\qquad\Longleftrightarrow\qquad \dot{\rho}>0\qquad\qquad\text{as wel as}\qquad 
	\qquad\ovl{\rho}\colon \CM_\pm\rightarrow C'_\mp \qquad\Longleftrightarrow\qquad \dot{\rho}<0
\end{align*}
holds; and we conclude:
\begingroup
\setlength{\leftmargini}{12pt}
\begin{itemize}
\item
If $\dot{\rho}>0$ holds, then we have: \qquad\quad $C_{+}=D_+$\qquad$\Longleftrightarrow$\qquad$\gamma|_{D_+}\psim_{t,t'} \gamma'|_{C'_+}$\\
\phantom{If $\dot{\rho}>0$ holds, then we have:}\hspace{1pt} \qquad\quad $C_{-}=D_-$\qquad$\Longleftrightarrow$\qquad$\gamma|_{D_-}\psim_{t,t'} \gamma'|_{C'_-}$
\item
If $\dot{\rho}<0$ holds, then we have:\hspace{0.2pt} \qquad\quad $C_+=D_+$\qquad$\Longleftrightarrow$\qquad$\gamma|_{D_+}\psim_{t,t'} \gamma'|_{C'_-}$\\
\phantom{If $\dot{\rho}<0$ holds, then we have:}\hspace{1.2pt} \qquad\quad $C_-=D_-$\qquad$\Longleftrightarrow$\qquad$\gamma|_{D_-}\psim_{t,t'} \gamma'|_{C'_+}$
\end{itemize}
\endgroup
\noindent
It thus remains to investigate what might happen if $C_+\subset D_+$ or $C_-\subset D_-$ holds. This is covered by the following lemma. 
\begin{lemma}
\label{dgfggfg}
Assume that $C_{\bullet}\subset D_\bullet$ holds for $\bullet\in \{+,-\}$. Then, the following assertions hold:
\begingroup
\setlength{\leftmargini}{12pt}
\begin{itemize}
\item
For $\dot{\rho}>0$ and $\bullet\equiv +$, we have \qquad $C'_{+}= D'_+$\qquad hence\qquad $\gamma|_{C_+}\psim_{t,t'} \gamma'|_{D'_+}$.\\
For $\dot{\rho}>0$ and $\bullet\equiv -$, we have \qquad $C'_{-}= D'_-$\qquad hence\qquad $\gamma|_{C_-}\psim_{t,t'} \gamma'|_{D'_-}$.
\item
For $\dot{\rho}<0$ and $\bullet\equiv +$, we have \qquad  $C'_{+}= D'_-$\qquad hence\qquad $\gamma|_{C_+}\psim_{t,t'} \gamma'|_{D'_-}$.\\
For $\dot{\rho}<0$ and $\bullet\equiv -$, we have \qquad  $C'_{-}= D'_+$\qquad hence\qquad $\gamma|_{C_-}\psim_{t,t'} \gamma'|_{D'_+}$.
\end{itemize}
\endgroup
\end{lemma}
\begin{proof}
We only discuss the case $C_+\subset D_+$ with $\dot{\rho}>0$ (the other cases follow analogously):  
\vspace{4pt}

\noindent
We first observe that either $C_+=[t,c]$ and $C'_+=[t',c']$ or $C_+=[t,c)$ and $C'_+=[t',c')$ holds, for certain $c>t$ and $c'>t'$. Since $C_+\subset D_+$ holds (hence $t\in D_+\subseteq \dom[\ovl{\gamma}]$), there exists an open interval $I\subseteq \dom[\ovl{\gamma}]$ with $c\in I$, such that $\ovl{\gamma}|_I$ is an embedding. 
Assume now that the statement is wrong, i.e., that $C'_+\subset D'_+$ holds (hence $t'\in D'_+\subseteq \dom[\ovl{\gamma}']$). Then, there exists an open interval $I'\subseteq \dom[\ovl{\gamma}']$ with $c'\in I'$, such that $\ovl{\gamma}'|_{I'}$ is an embedding.  
By  Lemma \ref{lemma:BasicAnalyt1}, we can shrink $I$ around $c$ as well as $I'$ around $c'$, such that $\ovl{\gamma}(I)=\ovl{\gamma}'(I')$ holds.\footnote{Observe:  If $C_+$ is of the form $[t,c)$ and $\{t_n\}_{n\in\NN}\subseteq [t,c)$ a sequence with $\{t_n\}_{n\in\NN}\rightarrow c$,  then we necessarily have 
$[t',c')\supseteq \{\ovl{\rho}(t_n)\}_{n\in \NN}\rightarrow c'$ by positivity of $\ovl{\rho}$.} Then, the same arguments as in the proof of Lemma \ref{lem:maxextKomp} show that $\ovl{\rho}$ is defined on $C_+\cup I$;   
and we conclude:
\begingroup
\setlength{\leftmargini}{12pt}
\begin{itemize}
\item[$\triangleright$]	
Assume $C_+=[t,c]$ and $C'_+=[t',c']$. Since $C_+\subset D_+$ and $C_+'\subset D'_+$ holds, there exists $\epsilon>0$ with $[t,c+\epsilon)\subseteq D_+$ and $\ovl{\rho}([t,c+\epsilon))\subseteq D'_+$, which contradicts the definition of $C$.
\item[$\triangleright$]	
Assume $C_+=[t,c)$ and $C'_+=[t',c')$. Then, $[t,c]\subseteq D_+$ and $\ovl{\rho}([t,c])=[t',c']\subseteq D'_+$ holds, which contradicts the definition of $C$.\qedhere
\end{itemize}
\endgroup
\end{proof}
\noindent
We now finally provide some notations that are adapted to the situation in Sect.\ \ref{disgencur}. There, we will be concerned with restrictions of curves to compact and half-open intervals:
\begingroup
\setlength{\leftmargini}{12pt}
\begin{itemize}
\item
 If $D=(i',\tau]$ and $D'=[\tau,i)$ holds, we  write  $\gamma \psim \gamma'$ \deff
\begin{align*}
 \gamma \psim_{\tau,\tau} \gamma' \qquad\:\: \text{or}\qquad\:\: \gamma|_{(j',\tau]} \psim_{\tau,\tau} \gamma'\quad\text{for}\quad i'<j' \qquad\:\: \text{or}\qquad\:\: \gamma \psim_{\tau,\tau} \gamma'|_{[\tau,j)}\quad\text{for}\quad j<i \qquad\:\: \text{holds.}
\end{align*}
It is then clear from the above discussions that only one of these cases can occur, and that the corresponding reals $j$ and $j'$  are uniquely determined. 
\item 
In addition to that, we write $\gamma \psim \gamma'$ \deff one of the following situations hold:
$$
\begin{array}{llllll}
\text{We have}\qquad D=[a,b],\: D'=[a',b'] \quad &\text{as well as}\qquad \hspace{18pt} \gamma &\hspace{-8pt}\psim_{a,a'} \gamma' \quad &\text{or}\qquad\hspace{20pt} \gamma &\hspace{-8pt}\psim_{a,b'} \gamma'.\quad &\footnotemark\\[4pt]
\text{We have}\qquad D=[a,b],\: D'=[a',b') \quad &\text{as well as}\qquad \gamma|_{[a,j)} &\hspace{-8pt}\psim_{a,a'} \gamma' \quad &\text{or}\qquad \gamma|_{(j',b]} &\hspace{-8pt}\psim_{b,a'} \gamma'.\quad &\\[4pt]
\text{We have}\qquad D=[a,b],\: D'=(a',b'] \quad &\text{as well as}\qquad \gamma|_{(j',b]} &\hspace{-8pt}\psim_{b,b'} \gamma' \quad &\text{or}\qquad \gamma|_{[a,j)} &\hspace{-8pt}\psim_{a,b'} \gamma'.\quad &\\[4pt]
\end{array}
$$
\footnotetext{Observe: The first case is equivalent to $\gamma \psim_{b,b'} \gamma'$; and the second case is equivalent to $\gamma \psim_{b,a'} \gamma'$.}
\vspace{-13pt}

\noindent
Again, in each of these situations, only one of the  cases on  the right side can occur. This now follows from Corollary \ref{dfdgttrgf}; whereby uniqueness of the involved reals $j$ and $j'$ is again shown by the above discussions. 

Then, instead of $\gamma\psim \gamma'$, we also write $\gamma\psim_+ \gamma'$ or $\gamma\psim_- \gamma'$ \deff one of the above cases on the left or on the right side holds, respectively.
\end{itemize}
\endgroup
\subsection{Regularity and Stabilizers}
\label{regact}
In this subsection, we collect some 
facts and definitions concerning group actions that we will need in the main text. Let thus  $\wm\colon G\times M\rightarrow M$ denote a fixed left action in the following, i.e., $G$ is a Lie group, $M$ an analytic manifold, and $\wm$ is continuous in $G$.

\subsubsection{Regularity}
\label{dsfsdsregact}
The following definitions are central:
\begin{definition}
\label{kjcxjkcxjkcx}
\noindent

\vspace{-6pt}
\begingroup
\setlength{\leftmargini}{12pt}
\begin{itemize}
\item
A point $x\in M$ is said to be {\bf sated}\:\:\defff\:{}for  $M\backslash\{x\}\ni y\neq z\in M\backslash\{x\}$,  
we cannot have
\begin{align*} 
 \textstyle\lim_n g_n\cdot y=x=\lim_n g_n\cdot z\qquad\quad\text{for a  sequence}\qquad\quad \{g_n\}_{n\in \NN}\subseteq G.
 \end{align*}
 \vspace{-17pt}
\item
A point $x\in M$ is said to be {\bf stable}\:\:\deff\:$\textstyle\lim_n g_n\cdot x=x$ for $\{g_n\}_{n\in \NN}\subseteq G\backslash G_x$, 
 implies that there exist sequences $\{h_n\}_{n\in \NN}, \{h'_n\}_{n\in \NN}\subseteq  G_{[x]}$ such that  
 $\{h_n\cdot g_n\cdot h'_n\}_{n\in \NN}$ admits a convergent subsequence.\footnote{Observe: Since $G_x$ is closed (as $\wm_x$ is continuous), this subsequence necessarily converges to some element in $G_x$.} 
\item
A point $x\in M$ is said to be {\bf regular}\:\:\defff\:{}$x$ is \emph{sated} and \emph{stable}. 
\end{itemize}
\endgroup
\noindent
The left action $\wm$ is said to be {\bf  sated\slash stable\slash regular} \deff each $x\in M$ is \emph{sated\slash stable\slash regular}.
\end{definition}

\begin{remark}
\label{remmmmmi}
\noindent

\vspace{-4pt}
\begingroup
\setlength{\leftmargini}{15pt}
{
\renewcommand{\theenumi}{\sf\arabic{enumi})} 
\renewcommand{\labelenumi}{\theenumi}
\begin{enumerate}
\item
The point $x\in M$ is  \emph{sated\slash stable\slash regular} \deff each $y\in G\cdot x$ is \emph{sated\slash stable\slash regular}.
\vspace{1pt}

In fact, for satedness, the equivalence is immediate; and, for stability, the equivalence follows from $G_{[x]}=g\cdot G_{[x]} \cdot g^{-1}$  for all $g\in G$, as well as from $G_y=g\cdot G_x\cdot g^{-1}$  for all $y\in M$ and $g\in G$ with $y=g\cdot x$. 
\item
Each closed subgroup $H$ of a Lie group $G$ acts via left multiplication regularly on $G$.
\vspace{1pt}

In fact, both satedness and stability are immediate from the implication: 
\begin{align*}
\textstyle\lim_n h_n\cdot g=g'\quad\text{ for }\quad g,g'\in G,\:\{h_n\}_{n\in \NN}\subseteq H\qquad\quad\Longrightarrow\qquad\quad \lim_n h_n= g'\cdot g^{-1}. 
\end{align*}
\vspace{-15pt}
\item
\label{regp3}
$\wm$ is \emph{sated} if there exists a $G$-invariant continuous metric $\met$ on $M$. 
\vspace{1pt}

In fact, then $\lim_n g_n\cdot y =x=\lim_n g_n\cdot z$ implies 
$0=\met(x,x)=\textstyle\lim_n\met(g_n\cdot y, g_n\cdot z)=\met(y,z)$.
\item
\label{regp3dsdsdsds}
Assume that $(M,*)$ is a group such that $*\colon M\times M\rightarrow M$ is continuous in the first argument (i.e., $M\ni x\mapsto x*y\in M$ is continuous for each fixed $y\in M$). Assume furthermore that
\begin{align*}
	\wm(g,x)\equiv g\cdot x=\phi(g)* x\qquad\quad\forall\: g\in G,\: x\in M
\end{align*}
holds for a (necessarily continuous\footnote{Since $\wm$ is assumed to be continuous in $G$, the homomorphism $\phi\colon G\ni g\mapsto\wm(g,e_M)\in M$ is necessarily continuous.}) group homomorphism $\phi\colon G\rightarrow M$. Then, we have:
\begin{align}
\label{dsoidsoidskjdskjds8787ds76ds76d76sd76ds7676ds76sd76dsd1}
	\forall\: x,y\in M,\: \{g_n\}_{n\in \NN}\subseteq G\colon\qquad \textstyle\lim_n g_n\cdot y =x \qquad \textstyle\Longleftrightarrow\qquad \lim_n \phi(g_n)=h:=x* y^{-1}\qquad\\[5pt]\nonumber
	\text{as well as}\hspace{190pt}\\[3pt]
\label{dsoidsoidskjdskjds8787ds76ds76d76sd76ds7676ds76sd76dsd2}
 G_x=\ker[\phi]\qquad\forall\: x\in M\qquad\quad\text{hence}\qquad\quad G_{[x]}=\ker[\phi]\qquad\forall\: x\in M.\hspace{53pt}
\end{align}
We obtain the following statements:
\begingroup
\setlength{\leftmarginii}{14pt}
{
\renewcommand{\theenumii}{.\sf\alph{enumii})} 
\renewcommand{\labelenumii}{\sf\alph{enumii})}
\begin{enumerate}
\item
\label{dslkjkjdskjdskjdskjsdkjkjdskjds87ds87dsa}
$\wm$ is \emph{sated}, because for $x,y,z\in M$ and $\{g_n\}_{n\in \NN}\subseteq G$, we have the implications:
$$
\textstyle\lim_n g_n\cdot y=x=\lim_n g_n\cdot z\qquad\stackrel{\eqref{dsoidsoidskjdskjds8787ds76ds76d76sd76ds7676ds76sd76dsd1}}{\Longrightarrow}\qquad x*y^{-1}=\lim_n\phi(g_n)=x*z^{-1}\qquad\Longrightarrow\qquad y=z.
$$
\item
\label{dslkjkjdskjdskjdskjsdkjkjdskjds87ds87dsb} 
\begingroup
\setlength{\leftmarginiii}{12pt}
\begin{itemize}
\item
$\wm$ is  \emph{stable}  \deff $\wm$ is \emph{stable} at $e_M\in M$, because for $x\in M$ and $\{g_n\}_{n\in \NN}\subseteq G$ we have:
\begin{align*}
\textstyle\lim_n \phi(g_n)* x= x\qquad\textstyle\stackrel{\eqref{dsoidsoidskjdskjds8787ds76ds76d76sd76ds7676ds76sd76dsd1}}{\Longleftrightarrow}\qquad\lim_n &\:\phi(g_n)=e_M\qquad\textstyle\stackrel{\eqref{dsoidsoidskjdskjds8787ds76ds76d76sd76ds7676ds76sd76dsd1}}{\Longleftrightarrow}\qquad \lim_n \phi(g_n)* e_M= e_M\quad\\[6pt] 
&\:\text{with}
\\
 G_{[x]}\stackrel{\eqref{dsoidsoidskjdskjds8787ds76ds76d76sd76ds7676ds76sd76dsd2}}{=}&\ker[\phi]\stackrel{\eqref{dsoidsoidskjdskjds8787ds76ds76d76sd76ds7676ds76sd76dsd2}}{=}G_{[e_M]}.
\end{align*}
\item
$\wm$ is \emph{stable} 
if $\phi\cp s=\id_V$ holds for a continuous map   
$s\colon M\supseteq V:= U\cap \phi(G)\rightarrow G$, with $U$ a neighbourhood of $e_M$.  
\vspace{4pt}

In fact, for $x\in M$ and $\{g_n\}_{n\in\NN}\subseteq G\setminus G_x$, we have the implication:
\begin{align}
\label{dslldsoidsoidsdspodslkdslkdslklkds86s76ds76sdds}
\textstyle\lim_n\underbrace{\phi(g_n)* x}_{\displaystyle \wm(g_n, x)}=x\qquad\quad\stackrel{\eqref{dsoidsoidskjdskjds8787ds76ds76d76sd76ds7676ds76sd76dsd1}}{\Longrightarrow}\qquad\quad \lim_n\phi(g_n)=e_M.
\end{align}
In particular, there exists $m\in \NN$ with $\{\phi(g_n)\}_{n\geq m}\subseteq U$. Then, $\phi\cp s=\id_V$ implies
$$
h'_n:=g_n^{-1}\cdot s(\phi(g_n))\in \ker[\phi]\stackrel{\eqref{dsoidsoidskjdskjds8787ds76ds76d76sd76ds7676ds76sd76dsd2}}{=}G_{[x]}\qquad\quad\forall\: n\geq m, 
$$ 
and we obtain from \eqref{dslldsoidsoidsdspodslkdslkdslklkds86s76ds76sdds} (first step)  as well as from continuity of $s$ (second step):
\begin{align*}
	\textstyle\lim_n \phi(g_n)* x=x\qquad\quad&\Longrightarrow \qquad\quad \textstyle\lim_n\phi(g_n)= e_M \\
	&\textstyle\Longrightarrow \qquad\quad  \lim_n \underbrace{s(\phi(g_n))}_{\displaystyle g_n\cdot h_n'}= s(e_M).
\end{align*} 
\vspace{-20pt}
\end{itemize}
\endgroup
\end{enumerate}}
\endgroup
\item
\label{regp1}
The point $x\in M$ is \emph{regular} if $\wm_x$ is proper.
\vspace{-6pt}
\begin{proof}
According to Appendix \ref{appA799}, properness of $\wm_x$ is equivalent to that the following implication holds:
\begin{align}
\label{lkdslkdslklkdslkdslklkdsdsds09ds9009dsdsdsdsdsdsds}
\textstyle \lim_n g_n\cdot x=y\in M\:\:\:\text{for}\:\:\: \{g_n\}_{n\in \NN}\subseteq G\qquad\Longrightarrow\qquad \{g_n\}_{n\in \NN}\:\:\:\text{admits convergent subsequence}.
\end{align}
Then, $x$ is evidently stable; and, if $\lim_n g_n\cdot y=x=\lim_n g_n\cdot z$ holds for $y,z\in M$, then $\lim_n g_n$ can be assumed to exist (just by passing to a subsequence), with what $y=z$ follows. 
\end{proof}
\vspace{-6pt}
Thus, pointwise proper ($\wm_x$ is proper for each $x\in M$), hence proper actions are regular.  
Anyhow, in general, pointwise properness is a stronger condition than  regularity as, e.g., an action cannot be pointwise proper if $G_x$ is non-compact for some $x\in M$. 
For instance (see also Example \ref{dsassayyasa}.\ref{dsassayyasa3}): 
\vspace{2pt}

Let $G$ be a Lie group with (possibly non-compact) closed normal subgroup $H$; and let $G$ act on $M\equiv G\slash H$ in the canonical way, i.e.\ via $\wm\colon (g,x)\mapsto [g\cdot x]$. Since $H$ is normal, $M$ is a Lie group, and the projection $\pri\colon G\rightarrow M$ is a Lie group homomorphism. Moreover, by general theory, there exists a local section $s\colon M\supseteq V\equiv U\rightarrow G$ with $\pri\cp s=\id_U$ for $U$ an open neighbourhood of $[e]$. Thus, $\wm$ is regular by Part \ref{regp3dsdsdsds}. Then, we obtain regular actions that admit non-compact normal stabilizers if we choose, e.g., ($n>1$): 
\begingroup
\setlength{\leftmarginii}{12pt}
\begin{itemize}
\item
$G\equiv \RR^n$ and $H\equiv\ZZ^n$, hence $M\equiv \mathbb{T}^n$.   
\item
$G\equiv\RR^n$ and $H\subseteq \RR^n$ some $m$-dimensional linear subspace for $m>0$, hence $M\equiv  \RR^{n-m}$.\hspace*{\fill}$\ddagger$
\end{itemize}
\endgroup
\end{enumerate}}
\endgroup
\end{remark}
\begin{example}
\label{dsassayyasa}
\noindent

\vspace{-4pt}
\begingroup
\setlength{\leftmargini}{15pt}
{
\renewcommand{\theenumi}{\sf\arabic{enumi})} 
\renewcommand{\labelenumi}{\theenumi}
\begin{enumerate}
\item
\label{dsassayyasa2}
	The origin is stable (as $G\setminus G_0=\emptyset$ by $G_0=G=\RR_{>0}$) but  not sated w.r.t.\ the multiplicative action of $\RR_{>0}$ on $\RR^n$. But, each point in $\RR^n\backslash \{0\}$ is regular. 
\item
\label{dsassayyasa3}
	For $\lambda\in \RR$ fixed, we consider the diagonal action \hspace*{\fill} (\:$\mathbb{T}^2= \UE\times \UE\subseteq \CC^2$)
	\begin{align*}	
	\wm\colon \RR\times \mathbb{T}^2\rightarrow \mathbb{T}^2,\qquad (\tau,(x_1,x_2))\mapsto (\e^{2\pi \tau\cdot \I}\cdot x_1, \e^{2\pi \tau\lambda\cdot \I}\cdot x_2)
	\end{align*}
	of $(\RR,+)$ on the 2-Torus $(\mathbb{T}^2,*)$, with 
	$$(x_1,x_2)*(y_1,y_2)=(x_1\cdot y_1,x_2\cdot y_2)\qquad\quad\forall\:(x_1,x_2),(y_1,y_2)\in \CC^2.$$ 
	Then,  $\wm(\tau,\x)\equiv \tau\cdot \x= \phi(\tau)*\x$ holds for all $\tau\in \RR$ and $\x\equiv (x_1,x_2)\in \mathbb{T}^2$ if we set
\begin{align*}
	\phi\colon \RR\rightarrow \mathbb{T}^2,\qquad \tau\mapsto(\e^{2\pi \tau\cdot \I},\e^{2\pi \tau\lambda\cdot \I}).
\end{align*}  	
Hence,  
 $\wm$ is sated by Remark \ref{remmmmmi}.\ref{dslkjkjdskjdskjdskjsdkjkjdskjds87ds87dsa}; and $\wm$ is stable \deff $\lambda$ is rational:
\begingroup
\setlength{\leftmarginii}{10pt}
\begin{itemize}
\item
Assume that $\lambda$ is rational. Then,  $\ker[\phi]=\ZZ\cdot m$ holds, for $m\in \NN_{>0}$ minimal with $m\cdot \lambda \in \ZZ$. Assume now  $\lim_n \tau_n\cdot \x=\x$ for some $\{\tau_n\}_{n\in \NN}\subseteq \RR$ and $\x\in \mathbb{T}^2$. Then, for each $n\in \NN$, there exists some $h_n\in G_{[\x]}=\ker[\phi]=\ZZ\cdot m$ with $h_n + \tau_n \in [0, m]$. Then, $\x$ is stable, because $\{h_n+ \tau_n\}_{n\in \NN}$ admits a convergent subsequence as $[0, m]$ is compact. 
\item
Assume that $\lambda$ is irrational. Then, $(\e^{2\pi \tau\cdot \I},\e^{2\pi \tau\lambda\cdot \I})=(1,1)\equiv e$ for $\tau\in \RR$     implies $\tau=0$, hence  $\ker[\phi]=\{0\}$. Now, there exists $\mu\in \RR_{\neq 0}$ such that $1,\mu,\mu\cdot\lambda$ are $\QQ$-independent\footnote{Elsewise, for each $\mu\in \RR$, there exist $q,q'\in \QQ$ with $\mu=\frac{q'}{1+\lambda q}$, which contradicts that $\RR$ is uncountable.};   
so that $\{\x^n\}_{n\in \ZZ}\subseteq \mathbb{T}^2$ with $\x:=(\e^{2\pi \mu\cdot\I},\e^{2\pi \mu\lambda \cdot\I})$ is dense in $\mathbb{T}^2$ by Kronecker's theorem. Then, $U\cap \{\x^n\}_{n\in \ZZ}$ is infinite for each neighbourhood $U\subseteq \mathbb{T}^2$ of $e$; so that there exists $\iota\colon \NN\rightarrow \ZZ$ injective with $\lim_n \x^{\iota(n)} = e$ and $|\iota(n+1)|>|\iota(n)|$ for all $n\in \NN$. For each $n\in \NN$  we have $\x^{\iota(n)}=\phi(\tau_n)$ with $\tau_n:=\iota(n)\cdot\mu\in \RR$, hence $\lim_n \phi(\tau_n)* e=\lim_n \x^{\iota(n)}=e$. But $\{\tau_n\}_{n\in\NN}$ cannot admit a convergent subsequence, because $|\tau_{n}-\tau_{n+m}|\geq m\cdot |\mu|$ with $|\mu|>0$ holds for all $n,m\in \NN$. Hence,  
$e$ cannot be stable as we have $G_{[e]}=\ker[\phi]=\{0\}$. \hspace*{\fill}$\ddagger$
\end{itemize}
\endgroup
\end{enumerate}}
\endgroup
\end{example} 

\subsubsection{Stabilizers}
In this brief subsection, we collect the properties of stabilizers of curves that are relevant for our discussions in the main text.
\begin{definition}
\label{defStabbi}
For a curve $\gamma\colon D\rightarrow M$, 
we define its stabilizer subgroup by
\begin{align*}
	G_\gamma:=\{g\in G\:|\: g\cdot  \gamma =\gamma\}=\textstyle\bigcap_{t\in D}G_{\gamma(t)}. 
\end{align*}
Then, $G_\gamma$ is a Lie subgroup of $G$ as closed in $G$, and we denote its Lie algebra by $\mg_\gamma$.
\end{definition}
\begin{lemma}
\label{lemma:stabi}
Let $\wm$ be analytic in $M$, and $\gamma\colon D\rightarrow M$ an   analytic curve. Then, $G_\gamma=G_{\gamma|_{B}}$ holds for each interval $B\subseteq D$.
\end{lemma}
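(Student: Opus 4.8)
The plan is to verify the two inclusions $G_\gamma\subseteq G_{\gamma|_{D'}}$ and $G_{\gamma|_{D'}}\subseteq G_\gamma$ separately. The first inclusion is immediate from the description of the stabilizer in Definition \ref{defStabbi}: since $D'\subseteq D$, we have $\bigcap_{t\in D}G_{\gamma(t)}\subseteq \bigcap_{t\in D'}G_{\gamma(t)}$, i.e.\ $g\cdot \gamma=\gamma$ trivially forces $g\cdot \gamma|_{D'}=\gamma|_{D'}$.

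For the reverse inclusion I would take $g\in G_{\gamma|_{D'}}$, so that $(g\cdot \gamma)|_{D'}=g\cdot \gamma|_{D'}=\gamma|_{D'}$. The key observation is that $g\cdot \gamma=\wm_g\cp \gamma$ is again an analytic curve $D\rightarrow M$: the map $\wm_g\colon M\rightarrow M$ is analytic by the standing assumption that $\wm$ is analytic in $M$, and $\gamma$ is analytic by hypothesis, so $\wm_g\cp \gamma$ admits an analytic extension obtained by composing $\wm_g$ with an analytic extension of $\gamma$. Thus $g\cdot \gamma$ and $\gamma$ are two analytic curves on $D$ that agree on the subinterval $D'$, and Lemma \ref{corgleich} yields $g\cdot \gamma=\gamma$ on all of $D$, i.e.\ $g\in G_\gamma$.

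I do not expect any real obstacle here; the only point requiring care is the reduction to Lemma \ref{corgleich}, which genuinely uses analyticity of $\gamma$ (and of $\wm_g$) rather than mere continuity — if $\gamma$ were only continuous the statement would fail, since agreement on $D'$ would say nothing about the rest of $D$. Everything else is a one-line bookkeeping argument about intersections of stabilizer subgroups.
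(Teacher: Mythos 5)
Your proof is correct and follows exactly the paper's argument: the nontrivial inclusion is obtained by noting that $g\cdot\gamma$ is again analytic and applying Lemma \ref{corgleich} to the two analytic curves $g\cdot\gamma$ and $\gamma$, which agree on $D'$. No issues.
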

\begin{proof}
Since $\gamma$ and $g\cdot \gamma$ are analytic, we have
	\begin{equation*}	
		g\in G_{\gamma|_{B}}\qquad\quad\stackrel{\text{def.}}{\Longrightarrow}\qquad\quad (g\cdot \gamma)|_{B}=\gamma|_{B} 
		\qquad\:\:\stackrel{\text{Lemma } \ref{corgleich}}{\Longrightarrow}\qquad\:\: g\cdot \gamma =\gamma.\qedhere
	\end{equation*}
\end{proof}
\begin{corollary}
\label{fdsfds7}
Let $\wm$ be analytic in $M$, and $\gamma\colon D\rightarrow M$, $\gamma'\colon D'\rightarrow M$ analytic curves. Assume that $\gamma|_B=\gamma'\cp\rho$ holds for a map $\rho\colon D\supseteq B\rightarrow B'\subseteq  D'$, with $B$ an interval. Then, $G_{\gamma'}\subseteq G_\gamma$ holds; as well as $G_\gamma=G_{\gamma'}$ if $\rho$ is a homeomorphism.
\end{corollary}
\begin{proof}
	For each $g\in G_{\gamma'}$ we have 
	\begin{equation*}
		g\cdot \gamma|_B=g\cdot (\gamma'\cp \rho)=\gamma'\cp \rho=\gamma|_B\qquad\stackrel{\text{Lemma } \ref{lemma:stabi}}{\Longrightarrow}\qquad g\in G_\gamma,
	\end{equation*}		 
	hence $G_{\gamma'}\subseteq G_\gamma$. Moreover, if $\rho$ is a homeomorphism, then $B'$ is an interval with  $\gamma'|_{B'}=\gamma\cp \rho^{-1}$. The same argument as above then shows $G_\gamma\subseteq G_{\gamma'}$, hence $G_\gamma=G_{\gamma'}$.
\end{proof}
\begin{corollary}
\label{fsdfsfdfs}
Let $\wm$ be analytic in $M$. 
If $\gamma\colon D\rightarrow M$ is an analytic immersion, then 
\begin{align}
\label{stabiconji}
	g\cdot \gamma\cpsim \gamma\qquad\quad\Longrightarrow\qquad\quad g^{-1}\cdot q\cdot g\in G_\gamma\qquad\forall\: q\in G_\gamma.
\end{align}
\end{corollary}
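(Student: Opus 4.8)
The plan is to unwind the definition of $\cpsim$ and to exploit that an element of $G_\gamma$ acts as the identity on $\im[\gamma]$. First I would note that $g\cdot\gamma$ is again an analytic immersion with the same domain $D=\dom[\gamma]$, since $\wm_g$ is an analytic diffeomorphism of $M$. Hence the hypothesis $g\cdot\gamma\cpsim\gamma$ furnishes open intervals $J\subseteq D$ and $J'\subseteq D$ with $(g\cdot\gamma)(J)=\gamma(J')$; in particular $(g\cdot\gamma)(J)\subseteq\im[\gamma]$. I would not even need the reparametrization $\rho$ supplied by Lemma \ref{lemma:BasicAnalyt2} here, only this image inclusion.

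Next, fix $q\in G_\gamma$. By Definition \ref{defStabbi} we have $q\cdot\gamma(t)=\gamma(t)$ for every $t\in D$, so $q$ fixes each point of $\im[\gamma]$, hence each point of $(g\cdot\gamma)(J)$. Thus for every $s\in J$ we get $q\cdot\bigl(g\cdot\gamma(s)\bigr)=g\cdot\gamma(s)$, i.e.\ $(qg)\cdot\gamma(s)=g\cdot\gamma(s)$ since $\wm$ is a left action. Applying the injective map $\wm_{g^{-1}}$ to both sides yields $(g^{-1}qg)\cdot\gamma(s)=\gamma(s)$ for all $s\in J$, that is, $(g^{-1}qg)\cdot\gamma|_J=\gamma|_J$.

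Finally, since $J\subseteq D$ is an interval and $\gamma$ is analytic, Lemma \ref{lemma:stabi} gives $G_\gamma=G_{\gamma|_J}$, so $(g^{-1}qg)\cdot\gamma|_J=\gamma|_J$ forces $g^{-1}qg\in G_\gamma$, which is the claim. I do not anticipate a genuine obstacle: the argument is essentially bookkeeping together with the analyticity propagation already packaged into Lemma \ref{lemma:stabi} (itself a consequence of Lemma \ref{corgleich}). The only points needing a little care are that $g\cdot\gamma$ really has the same domain $D$ as $\gamma$, so that the witnessing interval $J$ sits inside $\dom[\gamma]$, and the elementary identity $q\cdot(g\cdot\gamma)=(qg)\cdot\gamma$ together with injectivity of $\wm_{g^{-1}}$.
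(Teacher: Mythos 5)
Your argument is correct and is essentially the paper's own proof: both use the witnessing intervals from $g\cdot\gamma\cpsim\gamma$ to see that $q\in G_\gamma$ fixes $g\cdot\gamma(t)$ for $t\in J$, deduce $(g^{-1}\cdot q\cdot g)\cdot\gamma|_J=\gamma|_J$, and conclude via Lemma \ref{lemma:stabi}. Your observation that only the inclusion $(g\cdot\gamma)(J)\subseteq\im[\gamma]$ is needed, not the diffeomorphism $\rho$ itself, matches what the paper implicitly does.
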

\begin{proof}
Let $J,J'\subseteq I$ be open intervals with $g\cdot \gamma(J)=\gamma(J')$; and let $q\in G_\gamma$ be fixed. Since to each $t\in J$, there exists some $t'\in J$ with $g\cdot \gamma(t)=\gamma(t')$, we have
\begin{align*}
q\cdot (g\cdot \gamma|_J)= g\cdot \gamma|_J
\qquad\quad&\stackrel{\phantom{\text{Lemma} \ref{lemma:stabi}}}{\Longrightarrow}\qquad\quad 
(g^{-1}\cdot q\cdot g)\cdot \gamma|_J=\gamma|_J\\
&\stackrel{\text{Lemma} \ref{lemma:stabi}}{\Longrightarrow}\qquad\quad\hspace{0.7pt} g^{-1}\cdot q\cdot g\in G_\gamma.\qedhere
\end{align*}
\end{proof}
\begin{lemma}
\label{stabbiii}
Assume that $\wm$ is sated and analytic in $M$. Let $\gamma\colon K\rightarrow M$ be an analytic embedding, and let $L=[\tau,\ell]\subseteq K\equiv[\tau,k]$ with $\tau<\ell$. Then, the following implication holds:  
\begin{align}
 g\cdot \gamma(L)\subseteq \gamma(K)\quad\:\text{for}\quad\: g\in G_{\gamma(\tau)} \qquad\quad\Longrightarrow\qquad\quad g\in G_{\gamma}.
\end{align}
\end{lemma}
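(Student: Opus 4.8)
The plan is to reparametrize $g\cdot\gamma$ against $\gamma$ near $\tau$ and then use that $\gamma(\tau)$ is sated to force the reparametrization to be the identity. Since $\wm_g$ is an analytic diffeomorphism of $M$, the curve $g\cdot\gamma\colon K\to M$ is again an analytic embedding; fix an analytic embedding $\wt\gamma\colon I\to M$ extending $\gamma$, so that $(I,\wt\gamma)$ is an embedded analytic submanifold. As $(g\cdot\gamma)(L)\subseteq\gamma(K)\subseteq\wt\gamma(I)$, the argument in the proof of Lemma \ref{lemma:BasicAnalyt2} provides an analytic map $\rho:=\wt\gamma^{-1}\cp(g\cdot\gamma)|_L\colon L\to K$ with $g\cdot\gamma|_L=\gamma\cp\rho$, and $\rho(\tau)=\wt\gamma^{-1}(g\cdot\gamma(\tau))=\wt\gamma^{-1}(\gamma(\tau))=\tau$ because $g\in G_{\gamma(\tau)}$. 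Being continuous and injective on an interval, $\rho$ is strictly monotonic, and since $\rho(\tau)=\tau$ is the left endpoint of both $L$ and $K$, $\rho$ must be increasing.

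The key step is the following sub-claim: if $h\in G_{\gamma(\tau)}$ and $\sigma\colon[\tau,m]\to K$ is continuous and increasing with $\sigma(\tau)=\tau$, $h\cdot\gamma|_{[\tau,m]}=\gamma\cp\sigma$, and $\sigma(t)<t$ for all $t$ in some $(\tau,\varepsilon]$ with $\varepsilon\le m$, then we obtain a contradiction with satedness of $\gamma(\tau)$. Indeed, after shrinking $\varepsilon$ we have $\sigma([\tau,\varepsilon])\subseteq[\tau,\sigma(\varepsilon)]\subseteq[\tau,\varepsilon]$, so all iterates $\sigma^n(t)$ with $t\in[\tau,\varepsilon]$ stay in $[\tau,\varepsilon]\subseteq[\tau,m]$, and induction on the relation $h\cdot\gamma|_{[\tau,m]}=\gamma\cp\sigma$ gives $h^n\cdot\gamma(t)=\gamma(\sigma^n(t))$ for all $n\in\NN$. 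The sequence $\{\sigma^n(t)\}_{n\in\NN}$ is non-increasing and bounded below by $\tau$; its limit is a fixed point of $\sigma$ in $[\tau,\varepsilon]$, hence equals $\tau$, the only fixed point there since $\sigma<\id$ on $(\tau,\varepsilon]$. Thus $h^n\cdot\gamma(t)\to\gamma(\tau)$, and choosing two distinct $t_1,t_2\in(\tau,\varepsilon]$ the points $\gamma(t_1)\neq\gamma(t_2)$ are both different from $\gamma(\tau)$, yet $\lim_n h^n\cdot\gamma(t_1)=\gamma(\tau)=\lim_n h^n\cdot\gamma(t_2)$, contradicting satedness of $\gamma(\tau)$ along the sequence $g_n:=h^n$.

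Now I apply the sub-claim. Suppose $\rho\neq\id_L$; then $\rho-\id$, being analytic and not identically zero on the interval $L$, has constant sign on some $(\tau,\varepsilon]$. If $\rho(t)<t$ there, the sub-claim with $h=g$, $\sigma=\rho$, $m=l$ gives a contradiction. If $\rho(t)>t$ there, then from $g\cdot\gamma|_L=\gamma\cp\rho$ we obtain $g^{-1}\cdot\gamma|_{[\tau,\rho(l)]}=\gamma\cp\rho^{-1}$, where $\rho^{-1}\colon[\tau,\rho(l)]\to[\tau,l]\subseteq K$ is continuous, increasing, fixes $\tau$, and satisfies $\rho^{-1}(s)<s$ near $\tau$; so the sub-claim with $h=g^{-1}$, $\sigma=\rho^{-1}$ again yields a contradiction. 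Hence $\rho=\id$ on a neighbourhood of $\tau$ in $L$, so $\rho=\id_L$ by analyticity, whence $g\cdot\gamma|_L=\gamma|_L$; since $L$ has non-empty interior, Lemma \ref{corgleich} gives $g\cdot\gamma=\gamma$, i.e.\ $g\in G_\gamma$.

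I do not expect a genuine obstacle here: the one point needing care is the bookkeeping that the iterates $\sigma^n$ never leave the interval on which $h\cdot\gamma=\gamma\cp\sigma$ holds — handled by passing to the $\sigma$-invariant interval $[\tau,\varepsilon]$ — together with the verification that $\lim_n\sigma^n(t)=\tau$ exactly, which uses that $\sigma$ has no fixed point in $(\tau,\varepsilon]$. The conceptually decisive move is to realize that satedness should be invoked with the ``geometric'' sequence $h^n$ and two test points $\gamma(t_1),\gamma(t_2)$ lying on the curve itself.
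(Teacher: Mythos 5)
Your proof is correct and follows essentially the same route as the paper's: both set up the increasing reparametrization $\rho=\gamma^{-1}\cp(g\cdot \gamma)$ fixing $\tau$, and both contradict satedness by iterating $g$ (or $g^{-1}$) so that two distinct points on the curve are pushed to a common limit. The only cosmetic difference is that the paper argues with interleaved decreasing (resp.\ increasing) sequences converging to some point $v$, using only continuity and monotonicity of $\rho$, whereas you invoke analyticity of $\rho$ to make $\rho(t)-t$ of constant sign on a one-sided neighbourhood of $\tau$ and place the contradiction at $\gamma(\tau)$ itself.
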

\begin{proof}
Since $\gamma$ (hence $g\cdot \gamma$) is an embedding with $g\cdot\gamma(\tau)=\gamma(\tau)$, the condition $g\cdot \gamma(L)\subseteq \gamma(K)$ implies that
\begin{align*}
	g\cdot \gamma(L)=\gamma(L')\quad \text{holds for some} \quad L'=[\tau,\ell']\subseteq K\quad\text{with}\quad\tau<\ell'.
\end{align*}
Replacing $g$ by $g^{-1}$ if necessary, we can assume that  $L'\subseteq L$ holds. We choose $\rho\colon L\rightarrow L'$ as in Lemma \ref{lemma:BasicAnalyt2}, i.e., $\rho$ is an analytic diffeomorphism with $(g\cdot \gamma)|_L=\gamma|_{L'}\cp \rho$;  specifically, $\rho=(\gamma|^{\im[\gamma]})^{-1}\cp(g\cdot \gamma|_{L'})$. 
Then, $\rho$ is strictly increasing (positive) as $\rho(\tau)=\tau$ holds. The claim now follows if we show $\rho=\id_L$ (i.e., $L'=L$ and $g\cdot \gamma|_L=\gamma|_L$), as then $g\in G_{\gamma|_L}=G_\gamma$ holds by Lemma \ref{lemma:stabi}: 
\begingroup
\setlength{\leftmargini}{12pt}
\begin{itemize}
\item[$\triangleright$]
We have $\rho(t)\geq t$ for all $t\in L$; hence, $L=L'$ as  $\rho(\ell)=\ell'\leq \ell$ holds by $L'\subseteq L$ and positivity of $\rho$. 

In fact, assume that $\rho(t)<t$ holds for some $t\in (\tau,\ell]$. We fix $\rho(t)<s<t$, and obtain  $\rho(s)<\rho(t)<s<t$ as 
 $\rho$ is strictly increasing.    
	Applying $\rho$ successively, we obtain strictly decreasing sequences $\{\rho^n(s)\}_{n\in \NN}, \{\rho^n(t)\}_{n\in \NN}\subseteq (\tau,\ell]$ with $\rho^{n+1}(t)<\rho^n(s)<\rho^n(t)$ for all $n\in \NN$.  
	Let $v\in [\tau,\ell]$ denote their common limit. We obtain  
	\begin{align*}
	\textstyle\lim_n g^n\cdot\gamma(t)=\lim_n \gamma(\rho^n(t))= \gamma(v)=\textstyle\lim_n \gamma(\rho^n(s))=\textstyle\lim_n g^n\cdot\gamma(s)  
	\end{align*}
	which contradicts that $\gamma(v)$ is sated, as  $\gamma(t)\neq \gamma(s)$ holds by injectivity of $\gamma$ and $t\neq s$.
\item[$\triangleright$]
We have $\rho(t)=t$ for all $t\in L$, i.e.\ $\rho=\id_L$.

	In fact, assume that $\rho(t)>t$ holds for some $t\in (\tau,\ell)$. We fix $t<s<\rho(t)$ and obtain $t<s<\rho(t)<\rho(s)$ as $\rho$ is strictly increasing; whereby $\rho(t),\rho(s)\in L'=L$ holds by the previous point.  
		Applying $\rho$ successively, we obtain strictly increasing sequences $\{\rho^n(s)\}_{n\in \NN}, \{\rho^n(t)\}_{n\in \NN}\subseteq (\tau,\ell]$ with $\rho^n(t)<\rho^n(s)<\rho^{n+1}(t)$ for all $n\in \NN$. The same argument as in the previous point, yields a contradiction to satedness of $\wm$.\qedhere
\end{itemize}
\endgroup
\end{proof}

\subsection{Exponential Curves}
\label{mncmxcxooo}
In this subsection, we discuss the basic properties of the following class of analytic curves:
\begin{definition}
\label{dffdfdfdztzzuzu}
Let $\wm$ be analytic in $G$. An analytic curve $\gamma\colon D\rightarrow M$ is said to be {\bf exponential} \deff $\gamma=\gag\cp \rho$ holds for some $x\in M$, $\g\in \mg$, and an analytic map  $\rho\colon D\rightarrow \RR$. In this case, we say that $\gamma$ is exponential w.r.t.\ $(x,\g)$; or, alternatively, exponential w.r.t\ $(x,\g,\rho)$ if it helps to clarify the argumentation.  
\end{definition}

\subsubsection{Basic Properties}
In the following, let $\wm$ be analytic in $G$. 
The next remark covers the case of constant curves:
\begin{remark}[Constant Curves]
\label{remldskjdkjds}
Let $\gamma\colon D\ni t\mapsto y\in M$ be constant.
\begingroup
\setlength{\leftmargini}{15pt}
{
\renewcommand{\theenumi}{\sf\alph{enumi})} 
\renewcommand{\labelenumi}{\theenumi}
\begin{enumerate}
\item
If $\gamma$ is exponential w.r.t.\ $(x,\g,\rho)$, then one of the following two situations holds: 
\begingroup
\setlength{\leftmarginii}{12pt}
\begin{itemize}
\item
	$\g\in \mg_x$: \hspace{5.5pt}\quad $x=\gag\cp\rho=\gamma=y$, and  $\rho\colon D\rightarrow \RR$ an arbitrary analytic map.
	\vspace{1pt}
\item
	$\g\in \mg\backslash \mg_x$:\quad Lemma \ref{lemma:expeig} and the product rule yield $\dot\rho=0$. Hence,  $\rho=\tau\in \RR$ is constant,  with
	$$y=\gamma=\gag(\tau)=\exp(\tau\cdot \g)\cdot x\qquad\quad \Longrightarrow\qquad\quad x\in \exp(\RR\cdot \g)\cdot y.$$
\end{itemize}
\endgroup   
\item
Conversely, $\gamma$ is exponential w.r.t.\ $(x,\g,\rho)$ if we choose:  
\begingroup
\setlength{\leftmarginii}{12pt}
\begin{itemize}
\item
 $x=y$,\hspace{55.2pt} $\g\in \mg_x$,\hspace{17pt} $\rho\colon D\rightarrow \RR$  an arbitrary analytic map. 
 \vspace{1pt}
\item
$x\in \exp(\RR\cdot \g)\cdot y$,\quad  
$\g\in \mg\backslash \mg_x$,\quad $\rho\colon D\ni t\mapsto -\tau\in \RR$ constant, for $\tau\in \RR$ with $x=\exp(\tau\cdot \g)\cdot y$. 
\hspace*{\fill}$\ddagger$ 
\end{itemize}
\endgroup  
\end{enumerate}}
\endgroup 
\end{remark}
\noindent
Next, Lemma \ref{lemma:expeig} and Lemma \ref{corgleich} yield the following statement:
\begin{lemma}
\label{podspodsapopo}
Let $\wm$ be analytic in $G$, $\gamma=\gag\cp \rho$ as in Definition \ref{dffdfdfdztzzuzu}, and $\wt{D}\subseteq D$ an interval. Then, $\gamma|_{\wt{D}}$ is an analytic immersion \deff $\rho|_{\wt{D}}$ is an analytic diffeomorphism and $\g\in \mg\backslash \mg_x$ holds.
\end{lemma} 
\begin{proof}
	The proof is elementary, and can be found in Appendix \ref{appA8}.
\end{proof}
\noindent
We furthermore have the following statement:\footnote{Confer also Lemma 5.6.5) in \cite{MAX} for the immersive pointwise proper case.}
\begin{lemma}
\label{vd}
Let $\wm$ be analytic in $G$; and let $\gamma\colon D \rightarrow M$ be  analytic such that $\gamma(t)$ is sated for each $t\in D$. Let $\kappa\colon D\supseteq \dom[\kappa]\rightarrow \RR$ be an analytic map ($\dom[\kappa]$ an interval), with maximal analytic extension $\ovl{\kappa}$.  Then, the following implication holds:
\begin{align*}
	\gamma|_{\dom[\kappa]}= \gag\cp \kappa\qquad\quad\Longrightarrow \qquad\quad \gamma=\gag\cp\ovl{\kappa}|_{D}.
\end{align*}
\end{lemma}
\begin{proof}
Confer Appendix \ref{appA9}.
\end{proof}

\begin{remark}
We want to give an idea what might happen if $\wm$ is not sated: 
\vspace{4pt}

\noindent
Let $G\equiv\RR_{>0}$, $M\equiv \RR^n$, and $\wm\colon G\times M\ni (\lambda,x)\mapsto \lambda\cdot x\in M$ the multiplicative action. Then, $x\in M$ is sated \deff $x\neq 0$ holds (see Example \ref{dsassayyasa}.\ref{dsassayyasa2}), and the exponential map of $G$ is given by 
$$\exp\colon \mg\cong\RR\ni \lambda\mapsto \e^{\lambda}\in G=\RR_{>0} 
\quad\text{so that}\quad \gamma_\lambda^x\colon \RR \ni t\mapsto \e^{t\cdot\lambda}\cdot x\in M\quad\text{holds for all}\quad \lambda\in \mg,\: x \in M.  
$$  
For $x\neq 0$, $\lambda> 0$ and $\gamma\colon \RR\ni t\mapsto t\cdot x\in M$ we have
\begin{align*}
	\im[\gamma]=\RR\cdot x&\supset \RR_{>0}\cdot x=\im[\gamma_\lambda^x]\\[3pt]
	\gamma|_{(0,\infty)}&=\gamma_\lambda^x\cp (1\slash \lambda\cdot\ln).
\end{align*}
By the second line, we have $\gamma|_{(0,\infty)}=\gamma_\lambda^x\cp\kappa$ for the analytic diffeomorphism $\kappa:=1\slash \lambda\cdot\ln\colon (0,\infty)\rightarrow\RR$, but the first line shows that $\gamma=\gamma_\lambda^x\cp\ovl{\kappa}$ cannot hold (additionally observe  $\ovl{\kappa}=\kappa$).\footnote{Simply put, $\im[\gamma]\nsubseteq \im[\gamma_\lambda^x]$ holds, because $\lim_{t\rightarrow -\infty} \e^{t\cdot\lambda}\cdot x=0\in M$ exists  (which is possible,  because $0$ is not sated).} 
\hspace*{\fill}$\ddagger$
\end{remark}
\subsubsection{Uniqueness}
\label{sdjvsdghddssad}
We finally want to clarify w.r.t.\ which $(x,\g)$ a given analytic curve can be exponential. 
Since the constant case is already covered by Remark \ref{remldskjdkjds}, we can restrict our discussions to the non-constant case in the following. We now show the following statement:
\begin{proposition}
\label{classsiilie}
Let $\wm$ be sated and separately analytic; and let $\gamma\colon D\rightarrow M$ be non-constant analytic. Then, $\gamma$ is exponential w.r.t.\ some $(x,\g)$ \deff $\gamma$ is exponential w.r.t.\ $(y,\q)$ for all $y\in \exp(\RR\cdot \g)\cdot x$ and $\q\in \RR_{\neq 0}\cdot \g+\mg_\gamma$.
\end{proposition}
\noindent
For the proof of Proposition \ref{classsiilie}, we need the following observations (the proofs of the technical statements used  are provided at the end of this subsection): 
\vspace{6pt}

\noindent
Let $\wm$ be separately analytic; and let $\gamma\colon D\rightarrow M$ be non-constant analytic. Then, 
there exists an open interval $J\subseteq D$, such that $\gamma|_J$ is an (analytic) embedding (Corollary \ref{dfdsasasasassa}) hence immersive. Assume now that
\begin{align}
\label{ds09ds09ds09oidsoidsoidskjdslkdslkdsdsdsdsdsds}
 \gamma=\gag\cp\rho
 \qquad\text{holds for an analytic map}\qquad \rho\colon D\rightarrow \RR. 
\end{align} 
\vspace{-15pt}

\noindent
We observe the following:
\begingroup
\setlength{\leftmargini}{15pt}
{
\renewcommand{\theenumi}{\alph{enumi})} 
\renewcommand{\labelenumi}{\theenumi}
\begin{enumerate}
\item
\label{opsdopdsopsd000}
$\g\in \mg\backslash\mg_x$ holds (hence $\gag$ is immersive by Lemma \ref{lemma:expeig}), since  elsewise $\gamma$ is constant by \eqref{ds09ds09ds09oidsoidsoidskjdslkdslkdsdsdsdsdsds}. 
\item
\label{opsdopdsopsd001}
$\rho|_J\colon J\rightarrow \rho(J)=:J'$ is a diffeomorphism, as immersive by \eqref{ds09ds09ds09oidsoidsoidskjdslkdslkdsdsdsdsdsds} and the chain rule. 

Notably, 
\eqref{pofdpofdpo} shows that $\rho|_J$ is uniquely determined by its  value $\rho(t)$ for any fixed $t\in J$.  
Since $\gamma|_J$ is injective, Lemma \ref{lemma:expeig} and Lemma \ref{corgleich} imply that for $\upsilon\colon  B\rightarrow \RR$ analytic with $B\subseteq J$ an interval, we have
\begin{align*}
	\gamma|_B=\gag\cp \upsilon \qquad\quad\Longleftrightarrow\qquad\quad \ovl{\upsilon}=\Delta + \ovl{\rho}\quad\:\text{for some}\quad\: \Delta\in \ppi(x,\g).
\end{align*} 
\vspace{-15pt}
\item
\label{opsdopdsopsd11}
Corollary \ref{fdsfds7} shows $\mg_{\gag}=\mg_\gamma$, since $\rho|_J$ is a homeomorphism.
\item
\label{opsdopdsopsd13}
Assume that $\gamma=\gagg\big|_D$ holds for $y\in M$ and $\q= \lambda\cdot \g + \ccc$ with $\lambda\neq 0$ and $\ccc\in \mg_\gag$. Then, Corollary \ref{fdpofpofdpodf} together with \ref{opsdopdsopsd000} shows 
\begin{align*}
	\gagg=\gag\cp \ovl{\rho}\qquad\text{with}\qquad\ovl{\rho}\colon \RR\ni  t\mapsto \ovl{\rho}(0) + \lambda\cdot t\in \RR
	\qquad\text{hence}\qquad \im[\gagg]=\im[\gag].
\end{align*} 
Moreover, if $\wm$ is sated, then     
 Lemma \ref{ldldsldldsalldsakdshfdsf} shows that $\q\in \RR_{\neq 0}\cdot \g + \mg_\gag$ is already implied by $\gamma=\gagg\big|_D$ (observe $\q\in \mg\setminus\mg_y$ as $\gamma$ is assumed to be non-constant). 
\end{enumerate}}
\endgroup
\begin{proof}[Proof of Proposition \ref{classsiilie}]
Let $J\subseteq D$ be an open Interval such that $\gamma|_J$ is an  embedding.
\begingroup
\setlength{\leftmargini}{12pt}
{
\begin{itemize}
\item
\label{opsdopdsopsd2}	
If $\gamma$ is exponential w.r.t.\ $(x,\g,\rho)$ and $(y,\q,\rho')$, then we have
\vspace{-11pt}
\begin{align*}
\gag\cp\rho|_J=\gamma=\gagg\cp\rho'|_J\qquad\quad&\stackrel{\rm \ref{opsdopdsopsd001},\ref{opsdopdsopsd11}}{\Longrightarrow}\qquad\quad \gagg\big|_{\rho'(J)}=\gag\cp\overbrace{(\rho\cp \rho'^{-1})}^{\displaystyle=:\kappa}\quad\:\:\wedge\quad\:\:  \mg_\gag=\mg_\gamma\\
&\hspace{0.5pt}\stackrel{\rm\ref{opsdopdsopsd13}}{\Longrightarrow}\qquad\quad  \q\in \RR_{\neq 0}\cdot \g\: +\:\mg_\gag\:=\: \RR_{\neq 0}\cdot \g\: +\:\mg_\gamma\\
&\hspace{132pt}\text{and}\\[1pt]
&\phantom{\Longrightarrow}\qquad\qquad\quad\:\: y\in \im[\gagg]=\im[\gag]=\exp(\RR\cdot \g)\cdot x
\end{align*}
\vspace{-25pt}

\noindent
with $\kappa$ an analytic diffeomorphism. 
\item
Assume that $\gamma$ is exponential w.r.t.\ $(x,\g,\rho)$, hence $\mg_\gag\stackrel{\rm\ref{opsdopdsopsd11}}{=}\mg_\gamma$. Then, Corollary \ref{dsosdosdpods} shows
\begin{align*}
	\forall\:\tau\in \RR,\: \lambda\in \RR_{\neq 0},\: \ccc\in \mg_\gamma 
	\colon\qquad \gamma_{\lambda\cdot \g +\ccc}^{\gamma(\tau)}=\gag\cp\kappa[\tau,\lambda]\quad\:\:&\text{with}\quad\:\: \kappa[\tau,\lambda]\colon \RR\ni t\mapsto \tau+\lambda\cdot t\in \RR\qquad\qquad\\[1pt]
	&\hspace{-2pt}\text{hence}\\
	\gamma=\gag\:\cp\:&\rho=\gamma_{\lambda\cdot \g +\ccc}^{\gamma(\tau)}\cp(\kappa[\tau,\lambda]^{-1}\cp \rho)
\end{align*}
\vspace{-20pt}

\noindent
with $\kappa[\tau,\lambda]^{-1}\cp \rho$ an analytic diffeomorphism.  
\qedhere
\end{itemize}}
\endgroup
\end{proof}
\noindent
We now finally provide the statements used:
\begin{lemma}
\label{kljklvjkvjklvjklxcv}
Let $\wm$ be separately analytic; and let $\gamma\equiv \gag$ for $x\in M$ and $\g\in \mg$. Then, 
\begin{align*}
	 \exp(t\cdot (\lambda\cdot \g + \ccc))\cdot \gamma(\tau) = \underbrace{\exp(t\cdot \lambda\cdot \g)\cdot\gamma(\tau)}_{\displaystyle\gamma(\tau + t\cdot \lambda)}\qquad\quad  \forall\: t,\tau,\lambda\in \RR,\:\: \ccc\in \mg_\gamma.
\end{align*}  
\vspace{-17pt}
\end{lemma}
\begin{proof}
Confer Appendix \ref{appA10}.
\end{proof}
\begin{corollary}
\label{dsosdosdpods}
Let $\wm$ be separately analytic; and let $\gamma\equiv \gag$ with $x\in M$ and $\g\in \mg$. Then, 
\begin{align*}
	\gamma^{\gamma(\tau)}_{\lambda\cdot \g+\ccc}(t)= \gamma(\tau + \Delta + \lambda\cdot t)\qquad\quad\forall\: t,\tau,\lambda\in \RR,\:\: \Delta\in \ppi(x,\g),\:\: \ccc\in \mg_{\gamma}.
\end{align*}
\end{corollary}
\begin{proof}
This is clear from Lemma \ref{kljklvjkvjklvjklxcv} and the definition of $\ppi(x,\g)$. 
\end{proof}
\begin{corollary}
\label{fdpofpofdpodf}
Let $\wm$ be separately analytic; and let $\gamma\equiv \gag$ with  $x\in M$ and $\g\in \mg\setminus \mg_x$.     
Assume that 
\begin{align*}
	\gamma^{y}_{\lambda\cdot \g+\ccc}\big|_D=\gamma\cp\rho\quad\text{holds for}\quad y\in M,\:\:\lambda\neq 0,\:\: \ccc\in \mg_{\gamma}\quad\text{and}\quad \rho\colon D\rightarrow \RR\quad\text{analytic}. 
\end{align*} 
Then, 
we have\:
 	$\gamma^{y}_{\lambda\cdot \g+\ccc}=\gamma\cp \ovl{\rho}$\: with\:
	 $\ovl{\rho}\colon \RR\ni  t\mapsto \ovl{\rho}(0) + \lambda\cdot t\in \RR$. 
\end{corollary}
\begin{proof}
	For $s\in D$ fixed, we have 
	\begin{align*}
		\underbrace{\exp(s\cdot (\lambda\cdot \g+\ccc))\cdot y}_{\displaystyle\gamma^{y}_{\lambda\cdot \g+\ccc}(s)}\he=\he\underbrace{\exp(\rho(s)\cdot \g)\cdot x}_{\displaystyle\gag(\rho(s))}\qquad&\stackrel{\phantom{\text{Corollary }\ref{dsosdosdpods}}}{\Longrightarrow}\qquad y= \exp(-s\cdot (\lambda\cdot \g+\ccc))\cdot\underbrace{\exp(\rho(s)\cdot \g)\cdot x}_{\displaystyle\gag(\rho(s))}\\[-13pt]
		&\stackrel{\hspace{2pt}\text{Lemma }\ref{kljklvjkvjklvjklxcv}}{\Longrightarrow}\qquad y= \exp(\overbrace{(\rho(s)-s\cdot \lambda)}^{\displaystyle=:\tau}\he\cdot\he \g)\cdot x=\gag(\tau)\equiv\gamma(\tau)\\[2pt]
		&\stackrel{\text{Corollary }\ref{dsosdosdpods}}{\Longrightarrow}\qquad \gamma^{y}_{\lambda\cdot \g+\ccc}=\gamma^{\gamma(\tau)}_{\lambda\cdot \g+\ccc}=\gamma\cp\kappa
	\end{align*}
	for  
$\kappa\colon \RR\ni  t\mapsto \tau + \lambda\cdot t\in \RR$. Since $\kappa(s)=\rho(s)$ holds by definition, we obtain  
	\begin{align*}
   \begin{array}{@{}lr@{}}
       \gamma^y_{\lambda\cdot \g+\ccc}\big|_D \psim_{s,\rho(s)} \gamma|_{\rho(D)}\quad\:\text{w.r.t.}\quad\: \kappa|_D\\
	\gamma^y_{\lambda\cdot \g+\ccc}\big|_D \psim_{s,\rho(s)} \gamma|_{\rho(D)}\quad\:\text{w.r.t.}\quad\: \rho
        \end{array}\bigg\} 
        \qquad\quad\stackrel{\eqref{pofdpofdpo}}{\Longrightarrow}\qquad\quad\kappa|_{D}=\rho\qquad\quad\Longrightarrow\qquad\quad \kappa=\ovl{\rho}
\end{align*}
as both $\gamma$ and $\gamma^y_{\lambda\cdot \g+\ccc}$ are  immersive. 
\end{proof}
\begin{lemma}
\label{ldldsldldsalldsakdshfdsf}
Let $\wm$ be sated and separately analytic; and let $x\in M$ and $\g\in \mg\setminus\mg_x$ be such that
\begin{align*}
	\gagg\big|_D=\gag\cp\rho\quad\text{holds for}\quad y\in M,\:\: \q\in \mg\backslash\mg_y\quad\text{and}\quad \rho\colon D\rightarrow \RR\quad\text{analytic}.
\end{align*} 
Then, $\q=\lambda\cdot \g + \ccc$ holds for unique $\lambda\neq 0$ and  $\ccc\in \mg_\gag$.\footnote{Hence, we have $\gagg=\gag\cp \ovl{\rho}$ with $\ovl{\rho}\colon \RR\ni t\mapsto \ovl{\rho}(0)+ \lambda\cdot t\in \RR$, by Corollary \ref{fdpofpofdpodf}).}
\end{lemma}
\begin{proof}
The claim follows from Lemma \ref{stabbiii} and Lemma \ref{vd}, cf.\ Appendix \ref{appA11}.
\end{proof}

\section{The Classification}
\label{skjsdghfsd}
In this section, we prove our classification result Theorem \ref{classi}, stating that an analytic curve is either exponential or free if $\wm$ is regular and separately analytic. Then, under the milder assumption that $\wm$ is sated and analytic in $M$, free curves will be shown to be discretely generated by the symmetry group in Sect.\ \ref{disgencur}. 
This section is organized as follows.
\begingroup
\setlength{\leftmargini}{12pt}
\begin{itemize}
\item
In Sect.\ \ref{dpodspodspo}, we show that if $\wm$ is sated and  analytic in $G$, then an analytic immersive curve is exponential if it admits a certain  approximation property (see Definition \ref{Def:LC}).\footnote{More specifically, we show that if $\wm$  is analytic in $G$, then an analytic immersive curve is locally exponential if it admits the mentioned approximation property. The rest then follows from Lemma \ref{vd}.} 
\item	
In Sect.\ \ref{regcasee}, we introduce the notion of a free curve, and show that if $\wm$ is sated and analytic in $M$, then each analytic immersive curve that is not free has a certain self-similarity property (see Definition \ref{contgen}). 
\item
In Sect.\ \ref{regcase}, under the assumption that $\wm$ is regular,    this self-similarity property is shown to be equivalent to the approximation property introduced in Sect.\ \ref{dpodspodspo}.   
This result is already stated in the beginning of Sect.\ \ref{regcasee}, and is used there to prove the classification Theorem \ref{classi}.
\end{itemize}
\endgroup
\subsection{Exponential Curves}
\label{dpodspodspo}
In this subsection, we prove the following statement. 
\begin{lemma}
\label{liecur}
Let $\wm$ be sated and analytic in $G$, and let $\gamma\colon I\rightarrow M$ be an analytic immersion. Then, $\gamma$ is exponential if $\gamma$ is $\tau$-exponential for some $\tau\in I$. 	
\end{lemma}
\noindent
The used notion is defined as follows:
\begin{definition}
\label{Def:LC}
Let $\gamma\colon I\rightarrow M$ be analytic immersive, and let $\tau\in I$. Then, $\gamma$ is said to be {\bf $\boldsymbol{\tau}$-exponential} 
\deff there exists a {\sf faithful} sequence $G\backslash G_{\gamma(\tau)}\supseteq \{g_n\}_{n\in \NN}\rightarrow e$, such that $\tau$ is an accumulation point of 
\begin{align*}
	\T:=\{\tau< t\in I\:|\: \gamma(\tau)\barrow \gamma(t)\}.
\end{align*} 
\begingroup
\setlength{\leftmargini}{12pt}
\begin{itemize}
\item  
{\rm faithful} means that 
 there exist sequences\he\footnote{Observe that the property $\lim_n g_n=e$ is automatically implied by  \eqref{dskjdskjdskjkjdsdszudszudszus76d76ds76ds76ds76ds76dssdds}.} 
\begin{align}
\label{dskjdskjdskjkjdsdszudszudszus76d76ds76ds76ds76ds76dssdds}
\begin{split}
	(0,\infty)\supseteq \{\lambda_n\}_{n\in \NN}\rightarrow 0
	\qquad&\text{and}\qquad \mg\backslash \mg_{\gamma(\tau)}\supseteq \{\g_n\}_{n\in \NN}\rightarrow \g\in  \mg\backslash \mg_{\gamma(\tau)} \qquad\quad\\[3pt]
	&\text{with}\\[3pt]
	g_n=\exp&(\lambda_n \cdot \g_n)\qquad\forall\: n\in \NN.
\end{split}
\end{align} 
\item 
$\gamma(\tau)\barrow \gamma(t)$ means that for each $n_0\in \NN$ and  $\epsilon>0$ with $\tau< t-\epsilon$, there exist $n\geq n_0$ and $m\geq 1$ with\footnote{Simply put, $\gamma(t)$ can be ``arbitrarily well'' approximated by successive shifts of $\gamma(\tau)$ through $\gamma((\tau,t])$ by some ``arbitrarily small'' $g_n$.}
\begin{align}
\label{lieprop}
	(g_{n})^k\cdot \gamma(\tau) \in \gamma((\tau,t])\qquad\forall\:k=1,\dots,m\qquad\quad\text{as well as}\qquad\quad (g_{n})^m\cdot \gamma(\tau)\in \gamma((t-\epsilon,t]).
\end{align}
\end{itemize}
\endgroup  
\end{definition} 
\noindent
Obviously, the above definition is local in the sense that $\gamma$ is $\tau$-exponential \deff $\gamma|_J$ is $\tau$-exponential for some open interval $J\subseteq I$ containing $\tau$. 
Now, Lemma \ref{liecur} is immediate from Lemma \ref{vd} and the following statement:
\begin{lemma}
\label{lieth}
Assume that $\wm$ is analytic in $G$. 
If $\gamma\colon I\rightarrow M$ is $\tau$-exponential for some $\tau\in I$, then $\gamma|_J$ is exponential for some open interval $J\subseteq I$ containing $\tau$.
\end{lemma}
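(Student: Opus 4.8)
\textbf{Proof strategy for Lemma \ref{lieth}.}
The plan is to show that the approximation property in Definition \ref{Def:LC} forces $\gamma$, near $\tau$, to reparametrize the Lie-algebra generated curve $\gamma_{\g}^x$ with $x=\gamma(\tau)$. Fix a faithful sequence $\{g_n=\exp(\lambda_n\cdot\g_n)\}_{n\in\NN}\subseteq G\backslash G_x$ with $\lambda_n\to 0$ and $\g_n\to\g\in\mg\backslash\mg_x$, so that $\gamma_{\g}^x$ is analytic immersive by Lemma \ref{lemma:expeig}. Choose an open interval $J_0\ni\tau$ on which $\gamma$ is an embedding, and a small $\delta>0$ with $[\tau,\tau+\delta]\subseteq J_0$, chosen small enough that $\gamma_{\g}^x|_{[0,\delta']}$ is an embedding for a suitable $\delta'$. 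The first step is the \emph{local comparison}: since $\tau$ is an accumulation point of $T=\{\tau<t\in I\mid x\rightarrow\gamma(t)\}$, pick $t\in T$ close to $\tau$, and use \eqref{lieprop} to produce, for large $n$ and suitable $m=m(n)$, the chain $(g_n)^k\cdot x\in\gamma((\tau,t])$ for $k=1,\dots,m$ with $(g_n)^m\cdot x$ within $\epsilon$ of $\gamma(t)$. I would then pass to the limit $n\to\infty$ while the step size $\lambda_n$ goes to $0$, with $m(n)\lambda_n$ staying bounded (this boundedness is exactly the content of the chain condition: $m$ successive $g_n$-shifts land inside the fixed compact arc $\gamma([\tau,t])$, so $m\lambda_n$ cannot blow up), to obtain that $\gamma_{\g}^x(s)\in\overline{\gamma([\tau,t])}$ for all $s$ in an interval $[0,\sigma]$; more precisely, the discrete orbit points $(g_n)^k\cdot x=\exp(k\lambda_n\g_n)\cdot x$ become dense in an arc of $\gamma_{\g}^x$, and since they all lie on $\gamma([\tau,t])$ which is closed, we get $\gamma_{\g}^x([0,\sigma])\subseteq\gamma([\tau,t])$ for some $\sigma>0$.

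The second step is to upgrade this inclusion of images to an identification of curves. Once $\gamma_{\g}^x([0,\sigma])\subseteq\im[\gamma]$ with $x=\gamma_{\g}^x(0)=\gamma(\tau)$ an accumulation point, Lemma \ref{lemma:BasicAnalyt1} gives open intervals on which $\gamma$ and $\gamma_{\g}^x$ have a common image arc, and Lemma \ref{lemma:BasicAnalyt2} then yields an analytic diffeomorphism $\rho$ with $\gamma|_{J_1}=\gamma_{\g}^x\cp\rho$ on some open $J_1\ni\tau$. This already shows $\gamma|_{J_1}$ is Lie — but I must be careful that the matching arc actually contains a neighbourhood of $\tau$ in $\gamma$, not just a one-sided piece; this is where $x\rightarrow\gamma(t)$ for $t>\tau$ gives the piece $\gamma((\tau,t])$, and one argues that the reparametrization extends through $\tau$ (using that $\rho$ is monotone and $\gamma_{\g}^x$ is an embedding near $0$, so $\rho$ extends to an analytic diffeomorphism on a full neighbourhood of $\tau$, the image landing in $\im[\gamma_{\g}^x]$ on one side and extendable on the other by Lemma \ref{corgleich}-type uniqueness). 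Finally, invoke satedness: every $\gamma(t)$ is sated (this hypothesis is available since $\wm$ is sated in the standing context — strictly, Proposition \ref{liecur} assumes $\wm$ sated and Lemma \ref{lieth} is used under that hypothesis), so Lemma \ref{volldrinnen} promotes $\gamma|_{J_1}=\gamma_{\g}^x\cp\rho$ to $\gamma|_J=\gamma_{\g}^x\cp\overline{\rho}|_J$ for the larger $J$; in any case $\gamma|_J$ is Lie for an open $J\ni\tau$, which is the claim.

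\textbf{Main obstacle.} The delicate point is the limiting argument in the first step: controlling the number of shifts $m=m(n)$ as $n\to\infty$ and extracting from the discrete chains $\{(g_n)^k\cdot x\}_{k\le m(n)}$ a genuine sub-arc of $\gamma_{\g}^x$ lying inside $\im[\gamma]$. One has to rule out the chains "escaping" — e.g. wrapping around if $\gamma_{\g}^x$ is cyclic, or the products $\exp(k\lambda_n\g_n)$ failing to converge to $\exp(s\g)$ because $\g_n\to\g$ only in the limit. Faithfulness is precisely engineered to handle this: $g_n=\exp(\lambda_n\g_n)$ with $\lambda_n\to 0$ and $\g_n\to\g$ ensures $\exp(k(n)\lambda_n\g_n)\to\exp(s\g)=\gamma_{\g}^x(s)$ whenever $k(n)\lambda_n\to s$, by continuity of $\exp$ and of the action; and the condition $(g_n)^k\cdot x\in\gamma((\tau,t])$ for \emph{all} $k\le m$ confines the whole chain to a fixed compact arc, which both bounds $m\lambda_n$ and, since $\gamma$ is an embedding there, lets one transport the accumulation back through $\gamma^{-1}$. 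Making this density-and-limit argument precise — choosing $t\in T$, $\epsilon\to 0$, $n\to\infty$ in the right order and keeping track of which side of $\tau$ everything lives on — is the technical heart of the proof; the rest is an application of the embedding lemmas and Lemma \ref{volldrinnen}.
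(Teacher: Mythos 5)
Your route is genuinely different from the paper's. Both proofs ultimately reduce to the same key fact — that $x=\gamma(\tau)$ is an accumulation point of $\im[\gamma]\cap\im[\gamma^x_{\g}]$, after which Lemmas \ref{lemma:BasicAnalyt1} and \ref{lemma:BasicAnalyt2} hand you the reparametrization on a two-sided open $J\ni\tau$ (your worry about one-sidedness is already absorbed by Lemma \ref{lemma:BasicAnalyt1}, which produces open intervals around the accumulation point; the detour through extending $\rho$ is unnecessary). But you argue for this fact \emph{directly}, by showing the discrete chains fill in an initial arc $\gamma^x_{\g}((0,\sigma])\subseteq\gamma([\tau,t])$, whereas the paper argues by contradiction: if $x$ were not such an accumulation point, some compact sub-arc $\delta(K)$, $K\subseteq(0,l]$, would lie at positive distance $d$ from $\im[\gamma]$; by uniform convergence $\delta_n(K)\cap\im[\gamma]=\emptyset$ for large $n$, so the chain (which by \eqref{lieprop} stays in $\im[\gamma]$) is automatically capped at $m<m(n)$ with $m(n)\lambda_n\in K$, forcing $m\lambda_n\in L$ and hence $\gamma(T)\subseteq\delta(L)$ — contradiction. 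The contradiction hypothesis is what buys the paper its control on the chain length for free; your direct version has to earn the analogous control by hand, which is exactly the point where your proposal is imprecise.

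Concretely: the justification you give for the "boundedness" of $m\lambda_n$ — that the chain is confined to the compact arc $\gamma([\tau,t])$ — establishes the wrong bound, and not even that correctly. Confinement to a compact set does \emph{not} bound the parameter length $m\lambda_n$ from above when $\gamma^x_{\g}$ is cyclic (the chain may wrap around the compact orbit arbitrarily often while staying inside $\gamma([\tau,t])$). Fortunately an upper bound is not what your argument needs; wrapping is harmless, since you only use the portion $k\lambda_n\le\sigma$ of the chain. What you \emph{do} need, and never derive, is a lower bound $\liminf_n m(n)\lambda_n\ge 2\sigma>0$: without it the arc $[0,\sigma]$ degenerates to $\{0\}$ and Step 1 produces nothing. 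This bound does hold, but it comes from the \emph{endpoint} condition rather than the confinement condition: $(g_n)^{m}\cdot x=\exp(m\lambda_n\cdot\g_n)\cdot x\in\gamma((t-\epsilon,t])$, and $\gamma([t-\epsilon,t])$ has positive distance from $x$ since $\gamma$ is an embedding and $\tau<t-\epsilon$, while $\exp(s_n\cdot\g_n)\cdot x\rightarrow x$ whenever $s_n\rightarrow 0$ and $\g_n\rightarrow\g$. With that supplied (fix one $\epsilon$, let $n_0\rightarrow\infty$ to extract the subsequence; for $s\in(0,\sigma]$ take $k_n$ with $k_n\lambda_n\rightarrow s$, $k_n\le m(n)$, so that $(g_n)^{k_n}\cdot x\rightarrow\gamma^x_{\g}(s)\in\gamma([\tau,t])$ by closedness), your Step 1 is correct and the rest of the proof goes through.
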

\begin{proof}
By locality, we can assume that $\gamma$ is equicontinuous as well as an embedding (Corollary \ref{dfdsasasasassa}), and furthermore that $\im[\gamma]$ is contained in a fixed chart $(O,\psi)$ around $x\equiv\gamma(\tau)$. 
Since $\wm_x\cp \exp$ is continuous and since $\lim_n\g_n=\g$ holds, there exists $0<\ell<\pii(x,\g)$ and $q_0\in \NN$ such that the images of 
\begin{align*}
 \delta:=\gag|_{L}\qquad\quad\text{and}\qquad\quad \delta_n:=\gamma_{\g_n}^x|_{L}\qquad\forall\:n\geq q_0\qquad\qquad\text{with}\qquad\qquad L:=[0,\ell]
\end{align*}
are contained in $O$. Hence, we can assume that $M=\psi(O)$ holds, and that the topology on $M$ is determined by the euclidean norm $\|\cdot\|\colon \RR^{\dim(M)}\rightarrow [0,\infty)$. Then, $\{\delta_n\}_{n\geq q_0}\rightarrow \delta$ converges uniformly on $L$, because $\exp$ is continuous and since $\{\g_n\}_{n\in \NN}\rightarrow \g$ holds. 
\vspace{6pt}

\noindent
Now, since $\gamma$ and $\delta$ are analytic embeddings\footnote{Let $\epsilon>0$ with $\ell + 2\epsilon<\pii(x,\g)$. Then, $\gag|_{(-\epsilon,\ell+\epsilon)}$ is analytic; as well as an embedding, because $\gamma|_{[-\epsilon,\ell+\epsilon]}$ is an embedding by  injectivity and continuity as well as by compactness of $[-\epsilon,\ell+\epsilon]$.}, the claim follows from Lemma \ref{lemma:BasicAnalyt1} and Lemma \ref{lemma:BasicAnalyt2} once we have shown that $x$ is an accumulation point of $\im[\delta]\cap \im[\gamma]$. 
We prove this statement by contradiction.  
Specifically, we show that  
if $x$ is not an accumulation point of $\im[\delta]\cap \im[\gamma]$, then there exists an open interval $J\subseteq\RR$ with $t\in J\subseteq I$ and $\gamma(\T\cap J)\subseteq \im[\delta]$. Since $\tau$ is an accumulation point of $\T\cap J$, this contradicts the assumption    
that $x\equiv \gamma(\tau)$ is not an accumulation point of $\im[\delta]\cap \im[\gamma]$.
\vspace{6pt}

\noindent 
Assume thus now that $x$ is not an accumulation point of $\im[\delta]\cap \im[\gamma]$:
\vspace{-2pt}
\begingroup
\setlength{\leftmargini}{12pt}
\begin{enumerate}
\item[$\triangleright$]	
 There exists a  compact interval $K\subseteq (0,\ell]$ with $\delta(K)\cap \im[\gamma]=\emptyset$. Hence, there exists an open interval  $J\subseteq\RR$ with $t\in J\subseteq I$, such that  $d:=\dist(\delta(K),\gamma(J))>0$ holds.
 \item[$\triangleright$]
 Since $\{\delta_n\}_{n\geq q_0}\rightarrow \delta$ converges uniformly, we can increase $q_0$ such that $\|\delta-\delta_n\|_\infty< d$ holds for all $n\geq q_0$, hence $\delta_n(K)\cap \gamma(J)=\emptyset$ for all $n\geq q_0$.
\item[$\triangleright$]
Since $\lim_n \lambda_n=0$ holds, we can increase $q_0$ such that
\begin{align}
\label{kjdskjkjsdiudsiudsiusdiusdds98ds9898dds98dsdscxcx}
\forall\: n\geq q_0\colon \:\:\: \exists\:  m(n)\in \NN_{>0}\colon\:\:\: m(n)\cdot \lambda_n \in K\:\:\:\text{hence}\:\:\: \delta_n(m(n)\cdot \lambda_n)\notin \gamma(J).
\end{align}
\end{enumerate}
\endgroup
\noindent
Let now $\Delta>0$ be given: 
\begingroup
\setlength{\leftmargini}{12pt}
\begin{itemize}
\item  
We choose $n_0\geq q_0$ with $\|\delta-\delta_{n}\|_\infty < \Delta$ for all $n\geq n_0$. 
\item
We choose $\tilde{\epsilon}>0$ such that:
\hspace*{\fill} (equicontinuity of $\gamma$)
\begin{align}
\label{sdlkdskjdsnmnmdsnsdhjdsdzuds87ds87sd87ds87dsdsds}
|t-t'|< \tilde{\epsilon}\quad\text{for}\quad t,t'\in  I\qquad\quad\Longrightarrow\qquad\quad\|\gamma(t)-\gamma(t')\|<\Delta.\qquad\quad
\end{align}
\end{itemize}
\endgroup 
\noindent
For $t\in \T\cap J$, we choose $0<\epsilon \leq  \tilde{\epsilon}$ with $\tau< t-\epsilon$, and let $n\geq n_0$ and $m\geq 1$ be as in \eqref{lieprop}. Then, we have
\vspace{-8pt}
	\begin{align*}
	\|\gamma(t)-\delta_{n}(m\cdot \lambda_{n})\|&=
		\|\gamma(t)-\exp(\lambda_{n}\cdot \g_n)^m\cdot x\|=\|\gamma(t)-(g_{n})^m\cdot \gamma(\tau)\|\stackrel{\eqref{lieprop},\:\eqref{sdlkdskjdsnmnmdsnsdhjdsdzuds87ds87sd87ds87dsdsds}}{<}\Delta,
	 \end{align*}
	 whereby $m\cdot \lambda_n\in L= \dom[\delta_n]$ holds,   because $m(n)\cdot\lambda_n \in K\subseteq L$ implies $m< m(n)$: 
	 $$
	\text{In fact,}\quad 	m(n)\leq m\quad\stackrel{\eqref{lieprop}}{\Longrightarrow}\quad \gamma(J)\ni (g_n)^{m(n)}\cdot \gamma(\tau)= \delta_n(m(n)\cdot \lambda_n),\quad\text{which contradicts}\quad \eqref{kjdskjkjsdiudsiudsiusdiusdds98ds9898dds98dsdscxcx}.
	 $$  
	 We thus have
	 \vspace{-6pt}
	 \begin{align}
	 \label{oidsoidsoidsds98ds98ds98ds98ds98sddsdsdssddsds}
	 	\|\gamma(t)-\delta(m\cdot \lambda_{n})\|&\leq \|\gamma(t)-\delta_{n}(m\cdot \lambda_n)\|+ \|\delta_{n}(m\cdot \lambda_{n})-\delta(m\cdot \lambda_n)\|< 2\Delta.
	\end{align}	
Since 
\eqref{oidsoidsoidsds98ds98ds98ds98ds98sddsdsdssddsds} holds for each  
$\Delta>0$,   
there exists $\{s_n\}_{n\in \NN}\subseteq L$ with $\gamma(t)=\lim_n \delta(s_n)$. Passing to a subsequence if necessary, we can assume $\lim_n s_n=s \in L$, hence $\gamma(t)=\lim_n \delta(s_n)=\delta(s)\in \im[\delta]$. 
Since $t\in \T\cap J$ was arbitrary, we have $\gamma(\T\cap J)\subseteq \im[\delta]$.	
\end{proof}

\subsection{Free Curves}
\label{regcasee}
In this subsection, we introduce the concept of a free curve,  and provide the basic facts (cf.\ Sect.\ \ref{podspoipoipds}) that we will need in Sect.\ \ref{regcase} to prove Proposition \ref{classii} that we already use in the next subsection (Sect.\ \ref{idsoidsoisoidsds98ds9898ds9dsdsdssdsds}) to establish our classification result Theorem \ref{classi}. 
\begin{proposition}
\label{classii}
	If $\wm$ is regular and separately analytic, then an analytic immersive curve is either exponential or free.
\end{proposition}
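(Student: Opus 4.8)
The plan is to show that an analytic immersive curve which is not free is necessarily Lie, and for this I would first reduce to an analytic immersion $\gamma\colon I\rightarrow M$ on an \emph{open} interval. Given an arbitrary analytic immersive curve $\gamma\colon D\rightarrow M$, fix an open interval $J\subseteq\innt[D]$, so that $\gamma|_J$ is such an immersion. If $\gamma|_J$ is free, then the subinterval of $J\subseteq D$ witnessing this exhibits $\gamma$ as free; and if $\gamma|_J=\gag\cp\rho$ is Lie, then, since $\gamma$ is non-constant (being immersive) and $\wm$ is regular, hence sated, Lemma \ref{volldrinnen} upgrades this to $\gamma=\gag\cp\ovl{\rho}|_{D}$, so $\gamma$ is Lie. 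So from now on let $\gamma\colon I\rightarrow M$ be an analytic immersion on an open interval which is \emph{not} free: no subinterval $D'\subseteq I$ makes $\gamma|_{D'}$ a free segment. By Lemma \ref{lemma:stabi} (which identifies $G_{\gamma|_{D'}}$ with $G_\gamma$) this is equivalent to: every interval $D'\subseteq I$ carries some $g\in G\backslash G_\gamma$ with $g\cdot\gamma|_{D'}\cpsim\gamma|_{D'}$.

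\textbf{Step 1: self-similarity from condition i).} Fix $\tau\in I$ and put $x:=\gamma(\tau)$. Using only satedness, I would promote the above localised self-overlaps to a genuine self-similarity at $\tau$: a sequence $\{g_n\}_{n\in\NN}\subseteq G\backslash G_x$ with $g_n\cdot\gamma\cpsim\gamma$ for all $n$ and with $g_n\cdot x\in\im[\gamma]$ tending to $x$ from one (suitably chosen) side of $\tau$. The mechanism is an accumulation argument over a family of intervals shrinking to $\tau$: were the witnessing group elements bounded away from the relevant stabiliser, a subsequence would in the limit confine $\gamma$ near $\tau$ to a single $G$-orbit (via Lemma \ref{gdfgfgf}), whence $\gamma$ would already be Lie; satedness is exactly what excludes the degenerate case where $\gamma$ returns to $x$ along two different directions. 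In the paper this is Lemma \ref{HL} and Corollary \ref{seque} (cf.\ Lemma 5.19.2 in \cite{MAX}); along the way Lemma \ref{stabbiii} is used to preserve the relation $g_n\cdot\gamma\cpsim\gamma$ while one maneuvers the overlaps so that $g_n\cdot x$ sits on $\im[\gamma]$.

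\textbf{Step 2: from self-similarity to the Lie property at $\tau$, using conditions i) and ii).} Here the full strength of regularity enters. From $g_n\cdot x\to x$ with $g_n\notin G_x$, stability provides $\{h_n\},\{h_n'\}\subseteq G_{[x]}$ such that, along a subsequence, $h_ng_nh_n'\to g_\infty\in G_x$; replacing $g_n$ by $g_\infty^{-1}h_ng_nh_n'$ gives a sequence tending to $e$, still outside $G_x$ and still witnessing overlaps of $\gamma$ (the factors lie in $G_{[x]}$, so fix $x$, and their effect on the overlap is controlled as in Lemma \ref{gdfgfgf}). Writing $g_n=\exp(\xi_n)$ with $\xi_n\to0$, and setting $\lambda_n:=\|\xi_n\|$, $\g_n:=\xi_n/\lambda_n$, a subsequence has $\g_n\to\g$ with $\|\g\|=1$; because the overlaps force $g_n$ to shift $x$ along $\gamma$ by an amount comparable to $\lambda_n$, the limiting direction $\g$ points along $\gamma$ at $\tau$, so $\g\notin\mg_x$ and $\{g_n\}$ is \emph{faithful}. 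Finally one checks the approximation \eqref{lieprop}: iterating $g_n$ from $x$ traces the points $(g_n)^k\cdot x$ monotonically forward along $\gamma$, so that for a cofinal set of $t>\tau$ the inequalities in \eqref{lieprop} hold; thus $\tau$ accumulates the set $T$ of Definition \ref{Def:LC}, i.e.\ $\gamma$ is Lie at $\tau$. Proposition \ref{liecur}, applicable since $\wm$ is sated, then shows that $\gamma$ is Lie.

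\textbf{Main obstacle.} The delicate part is Step 2: producing the faithful sequence --- in particular certifying that the limiting direction $\g$ does not lie in $\mg_x$ --- and verifying the uniformity encoded in \eqref{lieprop} (that the iterates $(g_n)^k\cdot x$ neither overshoot $\gamma(t)$ nor stall before reaching it). Both need i) and ii) together: satedness prevents $\gamma$ from approaching $x$ along two directions, while stability is what keeps the renormalised elements $\exp(\lambda_n\g_n)$ from escaping to infinity, so that their flow directions converge. Once the hypotheses of Definition \ref{Def:LC} are met, the remaining passage to a global Lie parametrisation is purely local and is already furnished by Lemma \ref{lieth} inside Proposition \ref{liecur}.
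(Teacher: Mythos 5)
Your overall architecture is the paper's: reduce to an immersion on an open interval, show that a non-free immersion is continuously generated at each point using satedness (Lemma \ref{HL}, Corollary \ref{seque}, i.e.\ Proposition \ref{prop:free2}), then use stability to upgrade continuous generation to the approximation property of Definition \ref{Def:LC}, and finish with Proposition \ref{liecur} and Lemma \ref{volldrinnen}. However, your Step 2 has a genuine gap at exactly the point you flag as delicate: the faithfulness of the limiting direction. You write $g_n=\exp(\xi_n)$, normalize $\g_n:=\xi_n/\|\xi_n\|$ in $\mg$, and assert $\g=\lim_n\g_n\notin\mg_x$ ``because the overlaps force $g_n$ to shift $x$ by an amount comparable to $\lambda_n$.'' Neither half of this is established, and the conclusion can fail: when $G_\gamma$ is nontrivial, $\xi_n$ may have a dominant component in $\mg_\gamma\subseteq\mg_x$ and only a vanishingly small transverse component, in which case the normalized limit lands in $\mg_x$ and the sequence is not faithful. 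The paper circumvents this by normalizing not in $\mg$ but in the quotient $\mq=\mh\slash\mg_\gamma$ (with $H$ the closure of the group generated by $O_\gamma$), writing $[g_n]=\exp_\mq(\lambda_n\cdot\q_n)$ with $\|\q_n\|=1$, pulling back via a local section $s$, and then proving $\g=\dd_{[e]}s(\q)\notin\mg_x$ \emph{by contradiction}: if $\g\in\mg_x$ held, Lemma \ref{approx}.\ref{approx5} would force $\exp(t\cdot\g)\in G_\gamma$ for small $t$, contradicting $\dd_e\pi(\g)=\q\neq 0$. That contradiction argument (and Lemma \ref{approx} behind it) is the real content of the step, and nothing in your sketch replaces it.

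Two further points in your Step 2 need repair. First, conjugating by the stability-produced $h_n,h_n'\in G_{[x]}$ only preserves the overlap relations once one knows $G_{[x]}\subseteq G_\gamma$, which requires first establishing $\im[\gamma]\subseteq G\cdot\gamma(\tau)$; this is the purpose of the paper's Step II (via Lemma \ref{gdfgfgf}), and it is a preparatory normalization, not — as your Step 1 suggests — a configuration in which ``$\gamma$ would already be Lie.'' Second, left-multiplying by $g_\infty^{-1}$ with $g_\infty\in G_x$ destroys the relation $g_n\cdot\gamma\cpsim\gamma$ unless one first shows $g_\infty\in G_\gamma$; proving this is the paper's Step V and itself relies on the positivity and right-shift normalizations of Steps III and IV (together with Lemma \ref{stabbiii}), none of which appear in your outline. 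These same normalizations also underlie the monotonicity of the iterates $(g_n)^k\cdot x$ that you invoke when verifying \eqref{lieprop}.
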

\noindent
\noindent
The used notion is defined as follows:
\begin{definition}
\label{fdf}
Let $\wm$ be analytic in $M$.
\begingroup
\setlength{\leftmargini}{12pt}
\begin{itemize}
\item
An analytic immersion $\gamma\colon D\rightarrow M$ is said to be a \textbf{free segment} \defff the following equivalence holds:
	\begin{align*}
		g\cdot\gamma\cpsim \gamma\quad\text{for}\quad g\in G\qquad\quad\Longleftrightarrow\qquad\quad g\in G_\gamma.
	\end{align*}
	Obviously, the restriction of a free segment to an interval is a free segment as well.
\item
An analytic curve $\gamma\colon D\rightarrow M$ is said to be {\bf free} \deff there exists an interval $B\subseteq D$, such that $\gamma|_{B}$ is a free segment. Obviously,
\vspace{-4pt}
\begingroup
\setlength{\leftmarginii}{12pt}
\begin{itemize}
\item[$\ast$]
$\gamma$ is non-constant if $\gamma$ is free. 
\item[$\ast$]
$\gamma$ is free if $\gamma|_{D'}$ is free for some an interval $D'\subseteq D$.
\end{itemize}
\endgroup
\end{itemize}
\endgroup
\end{definition}
\subsubsection{The Classification Result}
\label{idsoidsoisoidsds98ds9898ds9dsdsdssdsds}
From Proposition \ref{classii}, we easily obtain our classification result:
\begin{theorem}
\label{classi}
If $\wm$ is regular and separately analytic, then an analytic curve is either exponential or free; whereby the uniqueness statement from Proposition \ref{classsiilie} holds in the first case.
\end{theorem}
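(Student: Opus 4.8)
\textbf{Proof proposal for Theorem \ref{classi}.}

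The plan is to reduce the statement about arbitrary analytic curves to the already-established Proposition \ref{classii} about analytic \emph{immersive} curves, by exploiting analyticity to control the locus where $\dot\gamma$ vanishes. So let $\gamma\colon D\rightarrow M$ be an analytic curve, and assume $\gamma$ is not free; the goal is to show $\gamma$ is Lie. First I would dispose of the constant case: if $\gamma$ is constant, it is Lie by the remark following \eqref{sdfdshjjww}. So assume $\gamma$ is non-constant. Then, as noted in Subsection \ref{sdjvsdghddssad}, the zero set $Z=\{t\in D\:|\:\dot\gamma(t)=0\}$ of the analytic map $\dot\gamma$ consists of isolated points, so $\gamma$ is an analytic immersion on some open interval $I\subseteq D$.

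The key step is then the following dichotomy applied to $\gamma|_I$: by Proposition \ref{classii} (using that $\wm$ regular implies $\wm$ sated), $\gamma|_I$ is either free or Lie. It cannot be free: if $\gamma|_{D'}$ were a free segment for some interval $D'\subseteq I\subseteq D$, then $\gamma$ itself would be free by Definition \ref{fdf}, contrary to assumption. Hence $\gamma|_I$ is Lie, i.e.\ $\gamma|_I=\gag\cp\rho$ for some $x\in M$, some $\g\in\mg$, and some analytic $\rho\colon I\rightarrow D'\subseteq\RR$. Since $\gamma|_I$ is non-constant, $\g\notin\mg_x$, so $\gag$ is a non-constant analytic curve with $\gag(t)$ sated for every $t$ (as $\wm$ is sated, every point is sated). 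Now I invoke Lemma \ref{volldrinnen}: from $\gamma|_{\dom[\rho]}=\gag\cp\rho$ and maximality of the analytic extension $\ovl{\rho}$, we obtain $\gamma=\gag\cp\ovl{\rho}|_D$ on all of $D$, so $\gamma$ is Lie w.r.t.\ $x$ and $\g$. (Strictly, one applies Lemma \ref{volldrinnen} to the maximal analytic extension of $\gamma$ and then restricts; the hypothesis that $\gamma(t)$ be sated for all $t$ in its domain is automatic.)

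To finish, I would check that the two alternatives are mutually exclusive — a curve cannot be both free and Lie. If $\gamma=\gag\cp\rho$ with $\gamma$ non-constant, then on an open interval where $\rho$ is a diffeomorphism $\gamma$ is a reparametrized piece of $\gag$, and shifting by $\exp(t\cdot\g)$ gives $g\cdot\gamma\cpsim\gamma$ with $g\notin G_\gamma$ in general (concretely, $\exp(s\cdot\g)\cdot\gag=\gag\cp(\cdot+s)$ shows $\gag$, hence $\gamma$, is never a free segment on any subinterval once $\g\notin\mg_\gamma$, and $\g\in\mg_\gamma$ would force $\gamma$ constant); thus $\gamma$ is not free. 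Finally, the uniqueness clause is nothing new: it is exactly Corollary \ref{classsiilie}, which applies since $\wm$ regular implies $\wm$ sated and $\gamma$ is non-constant analytic in the Lie case. The main obstacle I anticipate is purely bookkeeping: making sure the passage from the immersive piece $\gamma|_I$ to all of $D$ via Lemma \ref{volldrinnen} is stated against the correct (maximal) domains, and handling the degenerate endpoints of $D$; but no genuinely new argument is required beyond the lemmas already in hand.
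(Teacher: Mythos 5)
Your proposal is correct and follows essentially the same route as the paper: reduce to an immersive restriction $\gamma|_I$ away from the isolated zero set of $\dot\gamma$, apply Proposition \ref{classii} there, propagate Lieness to all of $D$ via Lemma \ref{volldrinnen}, and settle mutual exclusivity by the observation underlying Lemma \ref{grtfsghrtrez}. The uniqueness clause is, as you say, just Corollary \ref{classsiilie}, so nothing is missing.
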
 
\begin{proof}
Each constant curve is exponential, but not free as it cannot admit any immersive subcurve. 
Assume thus that $\gamma\colon D\rightarrow M$ is non-constant analytic.  
Then, $\gamma|_J$ is analytic immersive for some open interval $J\subseteq D$; hence, either exponential or free by Proposition \ref{classii}:
\begingroup
\setlength{\leftmargini}{12pt}
\begin{itemize}
\item
If $\gamma|_J$ is exponential, then $\gamma$ is exponential by Lemma \ref{vd}. Consequently, each analytic immersive subcurve of $\gamma$ is exponential, so that $\gamma$ cannot be free by Proposition \ref{classii}. 
\item
If $\gamma|_J$ is free, then $\gamma$ is free. Moreover, $\gamma$ cannot be exponential, because elsewise $\gamma|_J$ would be exponential, which contradicts Proposition \ref{classii} as $\gamma|_J$ is free by assumption.\qedhere
\end{itemize}
\endgroup
\end{proof}
\begin{corollary}
\label{fdgf}
Let $\wm$ be regular and separately analytic. If $\gamma\colon D\rightarrow M$ is free, then each  
subcurve of $\gamma$ is free.
\end{corollary}
\begin{proof}
If  $\gamma|_{D'}$ is not free with $D'\subseteq D$ an interval, then $\gamma|_{D'}$ is exponential by Theorem \ref{classi}, hence $\gamma$ is exponential by Lemma \ref{vd}. This, however,  contradicts   
Theorem \ref{classi} as $\gamma$ is free by assumption.
\end{proof}

\begin{remark}
\label{rtrevgf}
In Sect.\ \ref{disgencur}, we will show that each free analytic immersion $\gamma\colon I\rightarrow M$ admits a natural decomposition into countably many free segments that are mutually (and uniquely) related to each other by the group action. Now, if $\gamma$ is free and non-constant analytic, then $Z:=\{t\in I\:|\: \dot\gamma(t)=0\}$ consists of isolated points and admits no limit point in $I$. Hence, we have\:\footnote{Concerning the second point:\: If $\cn_-=-\infty$ holds, then $\cn_-\leq n$ means $n\in \ZZ$; and analogously for $\cn_+$.} 
\begingroup
\setlength{\leftmargini}{12pt}
\begin{itemize}
\item
\vspace{-3pt}
$Z=\emptyset$\:\:\: \deff\:\: $\gamma$ is analytic immersive.
\item
$Z\neq \emptyset$\:\:\:  \deff\:\: $Z=\{t_n\}_{\cn_-\leq n\leq \cn_+}$ holds with $-\infty\leq \cn_-\leq 0\leq \cn_+\leq \infty$ and $t_m< t_n$ for $m<n$ such that 
\begingroup
\setlength{\leftmarginii}{11pt}
\begin{itemize}
\item[$*$]
\vspace{-3pt}
$\cn_-=-\infty$ implies that for each $t\in I$, we have $t_n<t$ for some $\cn_-\leq n\leq \cn_+$.
\item[$*$]
\vspace{2pt}
$\cn_+=\phantom{-}\infty$ implies that for each $t\in I$, we have $t< t_n$ for some $\cn_-\leq n\leq \cn_+$.
 \end{itemize}
\endgroup
 \end{itemize}
\endgroup
\noindent
Now, the restriction of $\gamma$ to the connected components of $I\backslash Z$ 
is analytic immersive by the choice of $Z$, as well as free by Corollary \ref{fdgf}. Our decomposition results for analytic immersions thus apply to each of these subcurves separately. The problem is then to figure out in which way the corresponding decompositions glue together at the splitting points $t_n$. Alternatively, one could also drop the immersivity assumption in   Definition \ref{fdf} as well as in the definition of $\cpsim$ (see Sect.\ \ref{fdggdrere}), and then go through the argumentation in Sect.\ \ref{disgencur}. But, then one will have difficulties with certain uniqueness statements proven there, as those rely on Lemma \ref{sdffsd}, which in turn relies on Corollary \ref{dfdgttrgf}. 
For this observe that a non-constant analytic curve can ``inverse its direction'' at the points $t_n$\footnote{Specifically, this means that $\gamma|_{(t_n,s)}=\gamma\cp \rho$ holds for a negative analytic diffeomorphism $\rho\colon I\supseteq (t_n,s)\rightarrow (s',t_n)\subseteq I$ for certain $s'<t_n<s$. For instance, one obtains such a curve $\gamma$ if one composes an analytic immersive curve $(-\epsilon,r^2)\rightarrow M$ (for $r,\epsilon>0$) with the analytic map $(-r, r) \ni t\mapsto t^2\in [0,r^2)$.} (which is impossible at the points in $I\backslash Z$ by Corollary \ref{dfdgttrgf}). This issue, however, should be of rather combinatorical nature, because if $\gamma$ is immersive on $(s',t_n)$ and $(t_n,s)$, and ``inverses its direction'' at $t_n$, then $\gamma|_{(s',t_n)}$ is either a subcurve of $\gamma|_{(t_n,s)}$ or vice versa. In any case, the crucial point to be clarified first is whether the following statement holds or not:
\vspace{6pt}

\noindent
``Let $\gamma\colon I\rightarrow M$ and $\gamma'\colon I'\rightarrow M$ be non-constant analytic as well as non-immersive only at $t\in I$ and $t'\in I'$, respectively. Assume  that  $\gamma|_{I\:\cap\: (-\infty,t)}=\gamma'\cp \rho$ holds for a positive analytic diffeomorphism $\rho\colon I\cap (-\infty,t)\rightarrow I'\cap (-\infty ,t')$, and that both $\gamma$ and $\gamma'$ do not ``inverse their direction'' at $t$ and $t'$, respectively. Then, $\gamma|_{(t,t+\epsilon)}=\gamma'\cp \kappa$ holds for a positive analytic diffeomorphism $\kappa\colon (t,t+\epsilon)\rightarrow (t',t'+\epsilon')$ with $\epsilon,\epsilon'>0$.''
\hspace*{\fill}$\ddagger$
\end{remark}
\subsubsection{Elementary Properties}
\label{podspoipoipds}
In this subsection, we collect some elementary facts concerning free curves that we will need in Sect.\ \ref{regcase} to prove Proposition \ref{classii}. We start with the following straightforward observation.
\begin{lemma}
\label{grtfsghrtrez}
Assume that $\wm$ is separately analytic; and let $\gamma\colon D\rightarrow M$ be an analytic immersion. If $\gamma$ is exponential, then $\gamma$ is not free.
\end{lemma}
\begin{proof}
Let $\gamma$ be exponential, i.e., $\gamma=\gag\cp \rho$ holds for an analytic diffeomorphism ($\gamma$ is immersive) $\rho\colon D\rightarrow D'\subseteq \RR$. Then, $G_\gamma=G_\gag$ holds by Corollary \ref{fdsfds7}, and we have
\begin{align}
\label{sdffgsgaayx}
\exp(s\cdot \g)\cdot \gag(t)=\gag(s+t)\qquad\quad \forall\: s,t\in \RR.
\end{align}
Let now $B\subseteq D$ be a fixed interval. Then, there exists a compact interval $K\subseteq B$, such that  $\gamma|_K$ is an embedding. Then, $\gag\big|_{\rho(K)}$ is an embedding; and we write $L:= \rho(K)\equiv [r,r+2\epsilon]$ for $r\in \RR$ and $\epsilon>0$. Since $\gag\big|_L$ is injective, \eqref{sdffgsgaayx} implies $g:= \exp(\epsilon\cdot \g)\notin G_\gamma$ as well as 
\begin{align*}
g\cdot \gag((r,r+\epsilon))\stackrel{\eqref{sdffgsgaayx}}{=}\gag((r+\epsilon,r+2\epsilon))
\qquad\quad&\Longrightarrow\qquad\quad g\cdot \gag|_{L}\cpsim \gag|_L\\
		   &\Longrightarrow\qquad\quad g\cdot \gamma|_{B}\cpsim \gamma|_{B}.
\end{align*} 
Hence, $\gamma$ cannot be free as the interval $B\subseteq D$ was arbitrary. 
\end{proof}
\begin{definition}
\label{contgen}
	Assume that $\wm$ in analytic in $M$. Let $\gamma\colon I\rightarrow M$ be an analytic immersion, and let $\tau\in I$.   
Then, $\gamma$ is said to be {\bf continuously generated at $\boldsymbol{\tau}$} \deff to each compact interval of the form $[\tau,k]\subseteq I$, there exists some $g\in G\backslash G_{\gamma(\tau)}$ with $g\cdot \gamma|_{[\tau,k]}\cpsim \gamma|_{[\tau,k]}$.
\end{definition}
\noindent
Obviously, the above definition is local in the sense that $\gamma$ is continuously generated at $\tau$ \deff $\gamma|_J$ is continuously generated at $\tau$ for some open interval $J\subseteq I$ containing $\tau$. 

We will conclude from Lemma \ref{seque}.\ref{seque2} further below:
\begin{corollary}
\label{prop:free2} 
Let $\wm$ be sated and analytic in $M$, and let  $\gamma\colon I\rightarrow M$ be an analytic immersion. If $\gamma$ is not free, then $\gamma$ is  
continuously generated at each $\tau\in I$.
\end{corollary}
\noindent 
To establish this result, we first need to prove the following lemma.
\begin{lemma}
\label{HL}
Let $\wm$ be sated and analytic in $M$; and let  
 $\gamma\colon I\rightarrow M$ be an analytic embedding, $\tau\in I$, as well as $K\equiv[\tau,k]\subseteq I$ and  $K'\equiv[\tau,k']\subseteq I$. Then, 
\begin{align*}
	g\cdot \gamma|_K \cpsim \gamma|_{K'}\quad\:\text{for}\quad\: g\in G_{\gamma(\tau)}\backslash G_\gamma\qquad\quad\Longrightarrow\qquad\quad g\cdot \gamma(k) \in \gamma(K').
\end{align*} 
\end{lemma}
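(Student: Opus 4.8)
The plan is to reparametrize the overlap guaranteed by $g\cdot\gamma|_K\cpsim\gamma|_{K'}$ and then push it all the way out to the endpoint $k$, using the compact reparametrization machinery of Lemma~\ref{lem:maxextKomp}; satedness will be needed only to dispose of one remaining case, via Lemma~\ref{stabbiii}. Write $\delta:=g\cdot\gamma$, which is again an analytic embedding on $K$ since $\wm_g$ is an analytic diffeomorphism of $M$; in particular $\delta|_K$ is injective, and $\delta(\tau)=g\cdot\gamma(\tau)=\gamma(\tau)$ because $g\in G_{\gamma(\tau)}$. From $\delta|_K\cpsim\gamma|_{K'}$ we obtain a unique analytic diffeomorphism $\rho\colon J\rightarrow J'$ on open subintervals of $\innt[K],\innt[K']$ with $\delta|_J=\gamma\cp\rho$, so Lemma~\ref{lem:maxextKomp} applied to $\delta$ and $\gamma|_{K'}$ shows that $\CM:=K\cap\ovl{\rho}^{-1}(K')$ is a compact interval $[c',c]$ with $\delta|_{\CM}=\gamma\cp\ovl{\rho}|_{\CM}$, satisfying
\begin{align*}
\tau<c'\quad\Longrightarrow\quad \ovl{\rho}(c')\in\{\tau,k'\},\qquad\qquad c<k\quad\Longrightarrow\quad \ovl{\rho}(c)\in\{\tau,k'\}.
\end{align*}
Since $g\cdot\gamma(k)=\delta(k)=\gamma(\ovl{\rho}(k))\in\gamma(K')$ as soon as $k\in\CM$, it suffices to prove $c=k$.

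So assume $c<k$, hence $\ovl{\rho}(c)\in\{\tau,k'\}$. If $\ovl{\rho}(c)=\tau$, then $\delta(c)=\gamma(\tau)=\delta(\tau)$, so injectivity of $\delta|_K$ forces $c=\tau$, whence $\CM=\{\tau\}$ is not an interval---a contradiction. Thus $\ovl{\rho}(c)=k'$. Recalling from the proof of Lemma~\ref{lem:maxextKomp} that $\ovl{\rho}$ is defined and strictly monotone on an open interval around $c$: if $\ovl{\rho}$ were decreasing there, then points slightly to the right of $c$ would be mapped into $\innt[K']$ and hence would lie in $\CM$, contradicting $c=\sup\CM$. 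So $\ovl{\rho}$ is increasing near $c$; then $\ovl{\rho}(c')\leq\ovl{\rho}(c)=k'$, and if $c'>\tau$ the boundary condition forces $\ovl{\rho}(c')=\tau$ (the value $k'$ being excluded by strict monotonicity and $c'\neq c$), so $\delta(c')=\gamma(\tau)=\delta(\tau)$ would give $c'=\tau$, a contradiction; hence $c'=\tau$. Now $\gamma(\ovl{\rho}(\tau))=\delta(\tau)=\gamma(\tau)$ together with injectivity of $\gamma|_{K'}$ gives $\ovl{\rho}(\tau)=\tau$, so $\ovl{\rho}\colon[\tau,c]\rightarrow[\tau,k']$ is an increasing bijection and $g\cdot\gamma([\tau,c])=\gamma([\tau,k'])$.

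At this point satedness enters: applying Lemma~\ref{stabbiii} to the analytic embedding $\gamma|_{[\tau,\max(k,k')]}$ (an embedding, being a restriction of $\gamma|_I$) with $L:=[\tau,c]$ and our $g\in G_{\gamma(\tau)}$ yields $g\in G_{\gamma|_{[\tau,\max(k,k')]}}=G_\gamma$ by Lemma~\ref{lemma:stabi}, contradicting $g\notin G_\gamma$. Hence $c=k$, which proves the claim.

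The only genuinely delicate step is this last case, where an increasing "full overlap'' produces a nontrivial partial self-symmetry of an initial segment fixing $\gamma(\tau)$, which satedness (through Lemma~\ref{stabbiii}) rules out unless $g\in G_\gamma$; everything else is routine bookkeeping with Lemma~\ref{lem:maxextKomp} and the injectivity of the embeddings $\delta|_K$ and $\gamma|_{K'}$.
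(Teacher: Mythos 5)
Your proof is correct and follows essentially the same route as the paper's: both rest on Lemma \ref{lem:maxextKomp} to control $\CM=[c',c]$ and the boundary values of $\ovl{\rho}$, use injectivity of $g\cdot\gamma$ to exclude $\ovl{\rho}(c)=\tau$ when $c<k$, and invoke satedness through Lemma \ref{stabbiii} to eliminate the increasing full-overlap case anchored at $\tau$. The only difference is organizational — you case-split on the value of $\ovl{\rho}(c)$ rather than on the sign of $\dot\rho$ — which changes nothing of substance.
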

\begin{proof}
 Let $\rho\colon K\supseteq J\rightarrow J'\subseteq K'$ be an analytic diffeomorphism with $g\cdot \gamma|_J=\gamma\cp \rho$. 
Illustratively, then the following two situations can occur, depending on whether $\rho$ is positive or negative:
	\vspace{-5pt}
	
	  \hspace{45pt}
      \begin{tikzpicture}
      \draw (-1.3,0.2) node {\({\dot\rho>0\colon}\)};
		\draw[-,line width=1pt,color=red] (0,0) .. controls (0.1,0.6) and (0.7,0.4) .. (1,0.1);
		\draw[-,line width=1pt,color=red] (1,0.1) .. controls (1.1,0) and (1.2,0) .. (1.3,0);
		\draw[->,line width=1pt,color=red] (1.4,0) .. controls (1.5,0) and (1.7,0.2) .. (1.7,0.4);
		\draw[->,line width=1.2pt] (0,0) -- (2,0);
		\filldraw[black] (0,0) circle (2pt);
		\draw[color=red] (1.5,0.5) node {\(_{k}\)};
		\draw (2,-0.3) node {\(_{k'}\)};
		\draw (-0.2,-0.02) node {\(_\tau\)};
		\draw (2.25,0) node {\(\gamma\)};
		\draw[color=red] (2.25,0.5) node {\(g\cdot\gamma\)};	
		\end{tikzpicture}   
		\qquad\qquad \qquad\quad	
    \begin{tikzpicture}	
    \draw (-1.3,0.2) node {\({\dot\rho<0\colon}\)};
		\draw[-,line width=1pt,color=red] (0,0) .. controls (0,1.2) and (2,0.4) .. (1.4,0);
		\draw[->,line width=1pt,color=red] (1.1,0) .. controls (1,0) and (0.7,0) .. (0.6,0.25);
		\draw[->,line width=1.2pt] (0,0) -- (2,0);
		\filldraw[black] (0,0) circle (2pt);
		\draw[color=red] (0.4,0.17) node {\(_{k}\)};	
		\draw (2,-0.3) node {\(_{k'}\)};
		\draw (-0.2,-0.02) node {\(_\tau\)};
		\draw (2.25,0) node {\(\gamma.\)};
		\draw[color=red] (2,0.5) node {\(g\cdot \gamma\)};	
	\end{tikzpicture}
	\vspace{3pt}	
				
\noindent
More precisely, let $[c',c]\equiv\CM:=K\cap \ovl{\rho}^{-1}(K')$ be as in Lemma \ref{lem:maxextKomp}. Then, 
\begin{align*}
	\dot\rho>0\qquad\stackrel{(*)}{\Longrightarrow}\qquad c'=\tau=\ovl{\rho}(c')\qquad\Longrightarrow\qquad g\cdot \gamma([\tau,c])\subseteq \gamma([\tau,k'])\:\:\quad\stackrel{g\:\in\: G_{\gamma(\tau)}\:\:\wedge\:\:\text{Lemma } \ref{stabbiii}}{\Longrightarrow}\:\:\quad g\in G_\gamma,
\end{align*}
	which contradicts the assumption $g\notin G_\gamma$. The implication $(*)$  is obtained as follows:
\begingroup
\setlength{\leftmargini}{12pt}
\begin{itemize}
\item[$\triangleright$]
	If $\tau<c'$, then Lemma \ref{lem:maxextKomp} together with $\dot\rho>0$ implies $\ovl{\rho}(c')=\tau$; which contradicts injectivity of $g\cdot \gamma$ as   
\begin{align*}
	\ovl{\rho}(c')=\tau\quad\wedge\quad g\in G_{\gamma(\tau)}\qquad\quad\Longrightarrow\qquad\quad g\cdot \gamma(c')=\gamma(\ovl{\rho}(c'))=\gamma(\tau)=g\cdot \gamma(\tau).
\end{align*}
\item[$\triangleright$]	
	We have $c'=\tau$ by the previous point, as $\tau\leq c'$ holds by the definition of $C$. We obtain
	\begin{align*}	
	\gamma(\tau)=g\cdot \gamma(\tau)=g\cdot \gamma(c')= \gamma(\ovl{\rho}(c'))\qquad\stackrel{\gamma\text{ injective}}{\Longrightarrow}\qquad \tau=\ovl{\rho}(c').
	\end{align*}
\end{itemize}
\endgroup
\noindent
We thus have shown that $\dot\rho<0$ holds. Then, $c=k$ holds;     because
\begin{align*}
\dot\rho<0\quad\wedge\quad c<k\qquad\stackrel{\text{Lemma }  \ref{lem:maxextKomp}}{\Longrightarrow} \qquad\ovl{\rho}(c)=\tau\qquad\:\:\Longrightarrow\qquad\:\: 
g\cdot \gamma(\tau)=\gamma(\tau)=\gamma(\ovl{\rho}(c))=g\cdot\gamma(c),
\end{align*}
which contradicts injectivity of $\gamma$ as $\tau\leq c'<c$. We   obtain
\begin{equation*}
	 g\cdot \gamma(k)\stackrel{c\:=\: k}{=}g\cdot \gamma(c)=\gamma(\ovl{\rho}(c))\in \gamma(K').\qedhere
\end{equation*}
\end{proof}
\begin{lemma}
\label{seque}
Assume that $\wm$ is sated and analytic in $M$. Let $\gamma\colon I\rightarrow M$ be an analytic embedding, and let $\tau\in I$ be fixed. Then, the following assertions hold:
\begingroup
{
\renewcommand{\theenumi}{\arabic{enumi})} 
\renewcommand{\labelenumi}{\theenumi}
\setlength{\leftmargini}{16pt}
\begin{enumerate}
\item
\label{seque1} 
Let $\{g_n\}_{n\in \NN}\subseteq G\backslash G_\gamma$ as well as   $\{k_n\}_{n\in \NN}\subseteq I\cap (\tau,\infty)$ be strictly decreasing with $\lim_n k_n=\tau$, such that  
\begin{align}
\label{gffdfdaluoi}
	g_n\cdot \gamma|_{[\tau,k_n]}\cpsim \gamma|_{[\tau,k_n]}\qquad\quad\forall\: n\in \NN.
\end{align}
Then, there exists $m\in \NN$ with $\{g_n\}_{n\geq m}\subseteq  G\backslash G_{\gamma(\tau)}$.	
\item
\label{seque2} 
Let $\tau<d \in I$ be given  such that $\gamma|_{[\tau,k]}$ is not a free segment for each $\tau<k\leq d$. Then, $\gamma$ is continuously generated at $\tau$.
\end{enumerate}}
\endgroup
\end{lemma}
\begin{proof}
\begingroup
\setlength{\leftmargini}{16pt}
\begin{enumerate}
\item
Assume that the claim is wrong. Then, passing to a subsequence if necessary, we can assume that $\{g_n\}_{n\in \NN}\subseteq G_{\gamma(\tau)}\backslash G_\gamma$ holds. 
For $n\in \NN$ and $k\in[k_n,k_0]$, we obtain from \eqref{gffdfdaluoi} and Lemma \ref{HL} (with $K\equiv[\tau,k]$ and $K'\equiv[\tau,k_n]$):
\begin{align*}
g_n\cdot \gamma|_{[\tau,k]}\stackrel{\eqref{gffdfdaluoi}}{\cpsim}\gamma|_{[\tau,k_n]}
\qquad\stackrel{\text{Lemma } \ref{HL}}{\Longrightarrow}\qquad 
g_n\cdot \gamma(k)\in \gamma([\tau,k_n]).
\end{align*}
This shows $g_n\cdot \gamma([k_n,k_0])\subseteq \gamma([\tau,k_n])$ for all $n\in \NN$; hence
\begin{align*}
g_n\cdot \gamma([k_1,k_0])\subseteq g_n\cdot \gamma([k_n,k_0])\subseteq \gamma([\tau,k_n])\qquad\quad \forall\: n\geq 1,
\end{align*}
which contradicts that $\gamma(\tau)$ is sated (as $\gamma$ is continuous).
\item
We choose $\{k_n\}_{n\in \NN}\subseteq (\tau,d]\subseteq I$  strictly decreasing with $\lim_nk_n=\tau$, as well as $\{g_n\}_{n\in \NN}\subseteq  G\backslash G_\gamma$ with $g_n\cdot \gamma|_{[\tau,k_n]}\cpsim \gamma|_{[\tau,k_n]}$ for all $n\in \NN$. Then,  Part \ref{seque1} yields $m\in \NN$ with $\{g_n\}_{n\geq m}\subseteq  G\backslash G_{\gamma(\tau)}$, from which the claim is clear.\qedhere
\end{enumerate}
\endgroup
\end{proof}
\noindent
We are ready for the proof of Corollary \ref{prop:free2}.
\begin{proof}[Proof of Corollary \ref{prop:free2}]
Given $\tau\in I$, we fix an open interval $J\subseteq I$,   such that $\tilde{\gamma}:=\gamma|_J$ is an analytic embedding (Corollary \ref{dfdsasasasassa}). Then $\tilde{\gamma}$ is not free, since elsewise $\gamma$ would be free (see Definition \ref{fdf}). Thus, $\tilde{\gamma}$ is continuously generated at $\tau$ by Lemma \ref{seque}.\ref{seque2}, hence $\gamma$ is continuously generated at $\tau$ by locality. 
\end{proof}
\subsection{The Regular Case}
\label{regcase}
In this subsection, we prove Proposition \ref{classii}. For this, it suffices to show the following statement.
\begin{proposition}
\label{fdfdsfds}
Let $\wm$ be sated and separately analytic; and let $\gamma\colon I\rightarrow M$ be continuously generated at $\tau\in I$.  If $\gamma(\tau)$ is stable, then $\gamma$ is exponential. 
\end{proposition}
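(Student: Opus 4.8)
The plan is to reduce, via Proposition~\ref{liecur}, to showing that $\gamma$ is Lie at $\tau$ in the sense of Definition~\ref{Def:LC}; since Lieness at $\tau$ and all the ingredients below are local, I would first pass to an open subinterval and assume that $\gamma$ is an equicontinuous embedding whose image lies in a chart on which the topology is induced by a euclidean norm, and write $x:=\gamma(\tau)$. The argument then has four parts: extract overlap elements from continuous generation; orient them ``forward from $\tau$'' and convert them into iterations marching along $\gamma$; use satedness to exclude degenerate behaviour of these iterations; and finally use stability at $x$ to upgrade the overlap elements into an honest faithful sequence.

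For the first three parts I would proceed as follows. Choosing $k_n\downarrow\tau$ with $[\tau,k_n]\subseteq I$, Definition~\ref{contgen} hands me, for each $n$, some $g_n\in G\backslash G_x$ with $g_n\cdot\gamma|_{[\tau,k_n]}\cpsim\gamma|_{[\tau,k_n]}$. Fix $k\in I$ with $k>\tau$; for large $n$ the relation $g_n\cdot\gamma|_{[\tau,k]}\cpsim\gamma|_{[\tau,k]}$ holds with maximal reparametrisation $\ovl{\rho}_n$, and Lemma~\ref{lem:maxextKomp} together with \eqref{ddddd} controls $C_n:=[\tau,k]\cap\ovl{\rho}_n^{-1}([\tau,k])$. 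Since $g_n\cdot x\neq x$ forces $g_n\cdot\gamma([\tau,k])\neq\gamma([\tau,k])$, Corollary~\ref{cgdcfgd} (reading off the initial-segment alternative, replacing $g_n$ by $g_n^{-1}$ or feeding the endpoint alternative through the same machinery if necessary), combined with the argument of Lemma~\ref{HL} and Corollary~\ref{dfdgttrgf}/Lemma~\ref{stabbiii} to rule out the orientation-reversing case near $\tau$, gives $g_n\cdot x=\gamma(\sigma_n)$ for some $\tau<\sigma_n\le k_n$ with the overlap orientation-preserving. Hence $\sigma_n\downarrow\tau$, $g_n\cdot x\to x$, and the iterates satisfy $g_n^{\,j}\cdot x=\gamma(\sigma_n^{(j)})$ with $\tau<\sigma_n^{(1)}<\sigma_n^{(2)}<\dots$ as long as they stay in $[\tau,k]$. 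Satedness now forbids the increasing sequence $\{\sigma_n^{(j)}\}_j$ from converging to some interior $\sigma^*\in(\tau,k]$: that would yield $g_n^{\,j}\cdot x\to\gamma(\sigma^*)$ and $g_n^{\,j+1}\cdot x=g_n^{\,j}\cdot(g_n\cdot x)\to\gamma(\sigma^*)$ with $g_n\cdot x=\gamma(\sigma_n)\neq x=\gamma(\tau)$, contradicting satedness of $\gamma(\sigma^*)$ exactly as in Lemma~\ref{setsdffds}. Therefore the iterates of $g_n$ sweep $\gamma$ forward from $\tau$ past every prescribed $t\in(\tau,k)$, which is precisely condition~\eqref{lieprop} for the sequence $\{g_n\}$ at each such $t$; so the set $T$ of Definition~\ref{Def:LC} contains $(\tau,k)$ and accumulates at $\tau$.

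It therefore only remains to make $\{g_n\}$ faithful, i.e.\ to arrange $g_n\to e$ with $g_n=\exp(\lambda_n\cdot\g_n)$, $\lambda_n\to0$, $\g_n\to\g\in\mg\backslash\mg_x$. Here I would invoke stability at $x$: from $\{g_n\}\subseteq G\backslash G_x$ with $g_n\cdot x\to x$ there are $p_n,q_n\in G_{[x]}$ with $f_n:=p_n\cdot g_n\cdot q_n\to g_\infty\in G_x$ along a subsequence, and since $p_n,q_n$ fix every point of the orbit $G\cdot x$ while all iterates $g_n^{\,j}\cdot x$ lie on $G\cdot x$, an induction gives $f_n^{\,j}\cdot x=g_n^{\,j}\cdot x=\gamma(\sigma_n^{(j)})$, so $\{f_n\}$ retains the sweeping property. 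The key remaining point is to show $g_\infty\in G_\gamma$: then $g_\infty^{-1}\cdot f_n\to e$ lies in $G\backslash G_x$, acts on $\gamma$ exactly as $f_n$ does, hence has the same sweeping iterates, and is the desired faithful sequence. For $g_\infty\in G_\gamma$ I would use that the curve points $\gamma(\sigma_n^{(j)})=f_n^{\,j}\cdot x$ hit $G\cdot x$ densely in $(\tau,k)$, deduce via Lemma~\ref{gdfgfgf} (and a bootstrap on the iterates, producing a genuine null sequence in $G\backslash G_x$ moving a fixed curve point along $\gamma$) that $\gamma$ is locally contained in $G\cdot x$, and then observe that $g_\infty\in G_x$ with $g_\infty\cdot\gamma$ overlapping $\gamma$ forward from $\tau$ forces $g_\infty\in G_\gamma$ by Lemma~\ref{stabbiii}. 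Finally, $\g\notin\mg_x$ (equivalently $\gamma_{\g}^x$ non-constant) follows by comparing the uniform limit $\gamma_{\g_n}^x\to\gamma_{\g}^x$ on compacts with the fact that $g_n^{\,j}\cdot x=\gamma_{\g_n}^x(j\cdot\lambda_n)$ must reach points of $\gamma$ bounded away from $x$: were $\g\in\mg_x$, then $\gamma_{\g}^x\equiv x$ and these iterates would be trapped near $x$, cf.\ Lemma~\ref{lemma:expeig}. Having shown $\gamma$ is Lie at $\tau$, Proposition~\ref{liecur} (applicable since $\wm$ is sated) promotes this to ``$\gamma$ is Lie''.

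The step I expect to be the main obstacle is this last one — turning the overlap elements, which a priori need not tend to $e$, into a faithful sequence. Satedness alone controls the geometry of the marching iterates along $\gamma$, but only condition~ii) (stability) lets one replace the $g_n$ by convergent group elements; and the delicate, circular-looking point is then showing that the resulting limit $g_\infty$ stabilises the whole curve, which I would break by first establishing, through Lemma~\ref{gdfgfgf}, that $\gamma$ is locally swallowed by the orbit $G\cdot x$, and only then applying Lemma~\ref{stabbiii}.
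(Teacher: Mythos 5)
Your first three stages (extracting overlap elements from continuous generation, orienting them, and using satedness to force the iterates $(g_n)^j\cdot x$ to march along $\gamma$ past any prescribed $t$) follow the paper's Steps I--IV and Lemma \ref{approx} in spirit, and the stability-based reduction to a sequence converging in $G$ is essentially the paper's Step V. The genuine gap is in the last stage, which you yourself flag as the main obstacle: producing a \emph{faithful} sequence. Having arranged $g_n\to e$, you implicitly normalize in $G$, writing $g_n=\exp(\lambda_n\cdot\g_n)$ with $\|\g_n\|=1$ and extracting $\g_n\to\g$; but nothing prevents the dominant direction of approach of $g_n$ to $e$ from lying along $\mg_\gamma$. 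Concretely, if $G_\gamma$ is positive-dimensional and $g_n\approx\exp(\mu_n\cdot\vc+\nu_n\cdot\vv)$ with $\vc\in\mg_\gamma$, $\vv\notin\mg_x$ and $\nu_n\slash\mu_n\to 0$, then $g_n\notin G_\gamma$, all your sweeping properties hold, yet the normalized limit is $\g=\vc\in\mg_\gamma\subseteq\mg_x$, so the sequence is not faithful. Your proposed contradiction --- that $\g\in\mg_x$ would trap the iterates near $x$ --- does not close this, because the uniform convergence $\gamma_{\g_n}^x\to\gamma_\g^x$ only controls iterates with $j\cdot\lambda_n$ in a fixed compact set, while reaching $\gamma(t)$ for $t$ bounded away from $\tau$ may require $p(n)\cdot\lambda_n\to\infty$; in the example above the iterates are genuinely not trapped. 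This is precisely why the paper first forms the quotient $Q=H\slash G_\gamma$ (for $H$ the closure of the group generated by $O_\gamma$, normality of $G_\gamma$ coming from Corollary \ref{fsdfsfdfs}), normalizes $[g_n]=\exp_\mq(\lambda_n\cdot\q_n)$ \emph{there}, lifts via a local section to get $\g_n\to\g$ with $\dd_e\pi(\g)=\q\neq 0$, and then shows via Lemma \ref{approx}.\ref{approx5} that $\g\in\mg_x$ would force $\exp(t\cdot\g)\in G_\gamma$ for small $t$, i.e.\ $\g\in\mg_\gamma$ --- a contradiction that is only available because of the quotient. Note also that the lift replaces $g_n$ by $g_n\cdot h_n$ with $h_n\in G_\gamma$, so one must re-verify the sweeping property for the modified sequence, which is the $h\in G_\gamma$ clause in Lemma \ref{approx}.\ref{approx4}.

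A secondary issue: your claim that the overlaps can be taken orientation-preserving near $\tau$ does not follow from Corollary \ref{dfdgttrgf} or Lemma \ref{HL} alone, since those only constrain elements of $G_{\gamma(\tau)}$ or self-relations of a single curve; a $g_n\notin G_{\gamma(\tau)}$ may genuinely reverse orientation (e.g.\ a rotation by $\pi$). The paper handles this in Step III by passing to the products $g_{n+1}\cdot g_{n+2}$, which are positive, and then separately arguing that infinitely many of these products lie outside $G_{\gamma(\tau)}$.
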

\noindent
In fact, Proposition \ref{classii}  immediately follows   if we combine Proposition \ref{fdfdsfds} with  Corollary \ref{prop:free2} and Lemma \ref{grtfsghrtrez}:
\begin{proof}[Proof of Proposition \ref{classii}]
\begingroup
\setlength{\leftmargini}{12pt}
\begin{itemize}
\item
If $\gamma\colon D\rightarrow M$ is exponential, then $\gamma$ is not free by Lemma \ref{grtfsghrtrez}. 
\item
Assume that $\gamma$ is not free, and let $I\subseteq D$ a  fixed open interval. Then, $\gamma|_I$ is not free (since elsewise $\gamma$ would be free), hence continuously generated at some (each) $\tau\in I$ by Corollary \ref{prop:free2}. Thus,  $\gamma|_I$ is exponential by Proposition \ref{fdfdsfds}, hence $\gamma$ is exponential by Lemma \ref{vd}. \qedhere
\end{itemize}
\endgroup 
\end{proof}
\noindent
Before we start to the prove of Proposition \ref{fdfdsfds}, we  now first want to discuss what might happen if the point $\gamma(\tau)$ is not stable: 
\begin{example}
\label{podspodspopodsdsa}
We consider the situation in Example \ref{dsassayyasa}.\ref{dsassayyasa3} for $\lambda$ irrational; hence, $\wm$ is sated but no point is stable. Then, $\gamma\colon \RR\ni t\mapsto (1,\e^{2\pi t\cdot \I}) \in \mathbb{T}^2$ is continuously generated at $0$ but not exponential: 
\begingroup
\setlength{\leftmargini}{12pt}
\begin{itemize}
\item
$\gamma$ cannot be exponential, because $\im[\gamma]$ is not contained in the orbit of $e\equiv (1,1)\in \mathbb{T}^2$ under $\wm$. To see this, let $\mu\in \RR$ be such that $1,\lambda,\mu$ are $\QQ$-independent. We have the following implications:
\begin{align*}
\im[\gamma]\subseteq \RR\cdot e\qquad\Longrightarrow\qquad \gamma(\mu)=\wm(t,e)\quad\text{for some}\quad t\in \RR\qquad\Longrightarrow\qquad(1,\e^{2\pi \mu\cdot \I})=(\e^{2\pi t\cdot \I},\e^{2\pi t\lambda\cdot \I}),
\end{align*}
whereby $1= \e^{2\pi t\cdot \I}$ implies $t\in \ZZ$, so that  
$\e^{2\pi \mu\cdot \I}=\e^{2\pi t\lambda\cdot \I}$ implies that $1,\lambda,\mu$ are $\QQ$-dependent.
\item
$\gamma$ is continuously generated at $0$. To see this, we set $v:=\e^{2\pi\lambda\cdot \I}$ and observe the following:
\begingroup
\setlength{\leftmarginii}{12pt}
\begin{itemize}
\item
$\{v^n\}_{n\in \ZZ}\subseteq \UE$ is dense by Kronecker's theorem, as $1,\lambda$ are $\QQ$-independent. 
\item
$\forall\: n\in \ZZ\colon$\quad\:\:$n\cdot \gamma(t)=(\e^{2\pi n\cdot\I},\e^{ 2\pi n\lambda\cdot \I} \cdot \e^{ 2\pi t\cdot \I} )=(1,v^n\cdot\e^{2\pi t\cdot \I })$\hspace*{\fill}$(*)$
\end{itemize}
\endgroup 
\noindent  
Let now $0<k<1$ be given. We set $\epsilon:=k/3$, and find (by  denseness) some $n\in \ZZ$ and $-\epsilon<s<\epsilon$ with $v^n=\e^{2\pi s\cdot \I}$. Then, we have\hspace*{\fill}(observe $[s+\epsilon,s+2\epsilon]\subseteq [0,k]$ for the second implication)
\begin{align*}
	n\cdot \gamma(t)\stackrel{(*)}{=}\gamma(s+t)\:\:\quad\forall\: t\in \RR\qquad\quad&\Longrightarrow\qquad\quad n\cdot \gamma([\epsilon,2\epsilon])=\gamma([s+\epsilon,s+2\epsilon])\\[4pt]
	&\Longrightarrow\qquad\quad  n\cdot \gamma|_{[0,k]}\cpsim \gamma|_{[0,k]}\quad\text{with}\quad n\in G\backslash G_{\gamma(0)},
\end{align*}
\vspace{-22pt}

\noindent
which proves the claim. 
\hspace*{\fill}$\ddagger$

\end{itemize}
\endgroup
\end{example}
\noindent
\subsubsection{Some Facts and Definitions}
\label{saposapoaposksalksasa}
\begin{convention}
\label{kjdskjdskjd}
For the rest of this section, we assume that $\wm$ is  analytic $M$. 
\end{convention}
\begin{definition}
\label{dslkdslklkdskldsdsds09ds0909ds09dsdscxcx}
Let $\gamma\colon I\rightarrow M$ be an analytic embedding, and  let $\tau\in I$. A $(\gamma,\tau)$-collection is a family  
$\{(g_n,K_n,J_n)\}_{n\in \NN}$ with
\begin{align*}
g_n\in G,\qquad 
K_n\text{ a compact interval with } \tau\in K_n,\qquad
J_n \text{ an open interval with } \tau\in J_n
\end{align*} 
for all $n\in \NN$,   
such that  
\begin{align}
\label{conni}
\begin{split}
\textstyle\bigcap_{n\in \NN}K_n =\{\tau\}\qquad\quad
&\text{holds with}\qquad\quad
I \supset K = K_0\supset J_0 \supset K_1\supset J_1 \supset  K_2\supset J_2\supset  \dots
\qquad\qquad\\[4pt]
%
&\text{as well as}\qquad\quad 
g_n\cdot \gamma|_{J_n}\cpsim \gamma|_{J_n}\qquad\quad \forall\: n\in \NN.
\end{split}
\end{align}
\end{definition}
\begin{remark}
\label{jsakjsakjs} 
In the situation of Definition \ref{dslkdslklkdskldsdsds09ds0909ds09dsdscxcx},  
let $\iota\colon \NN\rightarrow\NN$ be strictly increasing. 
\begingroup
{
\setlength{\leftmargini}{17pt}
\renewcommand{\theenumi}{{\arabic{enumi}})} 
\renewcommand{\labelenumi}{\theenumi}
\begin{enumerate}
\item
\label{jsakjsakjsb}
The family $\{(g_{\iota(n)},K_n,J_n)\}_{n\in \NN}$ is again a $(\gamma,\tau)$-collection, because for $n,k\in \NN$ we have 
\begin{align*}
	J_{n+k}\subseteq J_{n}\qquad\quad\text{hence}\qquad\quad g_{n+k}\cdot \gamma|_{J_n}\cpsim \gamma|_{J_n}.
\end{align*}
\vspace{-20pt}

\item
\label{ddiudsdsmncxycxc}
The family  $\{(g_{\iota(n)},K_{\iota(n)},J_{\iota(n)})\}_{n\in \NN}$ is again a $(\gamma,\tau)$-collection. Moreover, 
 if $\{(g_{n},K_n,J_n)\}_{n\in \NN}$ admits one of the following additional properties
\begin{align}
\label{fdfdgfgf}
g_n\cdot \gamma(\tau)\in \gamma(J_n)\hspace{42pt}\qquad\quad &\forall\: n\in \NN,\\[4pt]
\label{eq:schachtelung}
	g_{n+1}\cdot \gamma(K_{n+1})\subseteq \gamma(J_{n})\subseteq \gamma(K_{n})\subseteq\gamma(K)\quad\qquad &\forall\: n\in \NN,\\[4pt]
\label{ldslsddsoidsoid}
	g_n\cdot \gamma(K_n)\subseteq \gamma(K)\hspace{46pt}\qquad\quad &\forall\: n\in \NN,
\end{align}
\noindent
then the same holds true for (the subsequence) $\{(g_{\iota(n)},K_{\iota(n)},J_{\iota(n)})\}_{n\in \NN}$.\footnote{Of course, this means that the corresponding equation  holds with $g_\ell\equiv g_{\iota(\ell)}$, $K_\ell\equiv K_{\iota(\ell)}$, $J_\ell\equiv J_{\iota(\ell)}$ for $\ell\in \NN$.} \hspace*{\fill}$\ddagger$
\end{enumerate}}
\endgroup
\end{remark}
\noindent
\begin{lemma}
\label{lemma:simpliiiiii}
Let $\wm$ be sated; and let $\{(g_n,K_n,J_n)\}_{n\in \NN}$ be a  $(\gamma,\tau)$-collection. Then, for each compact neighbourhood $A\subseteq I$ of $\tau$, there exists $m\in \NN$ and a compact neighbourhood $B\subseteq I$ of $\tau$ with $g_n\cdot \gamma(B)\subseteq \gamma(A)$ for all $n\geq m$.
\end{lemma}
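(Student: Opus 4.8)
The plan is to exploit satedness of $\wm$ together with the relations \eqref{conni}, in order to rule out the only bad scenario: that for some compact neighbourhood $A$ of $\tau$, no matter how small we take the candidate neighbourhood $B$, there are infinitely many $n$ with $g_n\cdot\gamma(B)\nsubseteq\gamma(A)$. First I would reduce to a convenient setting: since $\gamma$ is an embedding (by assumption a)) and $I$ is bounded, I may also assume — shrinking $I$ if necessary — that $\gamma$ extends to an analytic embedding on a slightly larger open interval, so that $\ovl\gamma$ is an embedding near $\tau$ and all the reparametrization lemmas of Subsection~\ref{repari} apply.

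Next I would analyze a single relation $g_n\cdot\gamma|_{J_n}\cpsim\gamma|_{J_n}$. By Lemma~\ref{lemma:BasicAnalyt2}, this gives a unique analytic diffeomorphism $\rho_n$ on some open subinterval of $J_n$ with $g_n\cdot\gamma=\gamma\cp\rho_n$ there; let $\ovl{\rho_n}$ be its maximal analytic immersive extension. The key point is to control the set $C_n:=K_n\cap\ovl{\rho_n}^{-1}(A)$ and its image $\ovl{\rho_n}(C_n)$. Here Lemma~\ref{lem:maxextKomp} (or its non-compact refinements, Lemma~\ref{dgfggfg}) tells us that $\ovl{\rho_n}$ can only ``fall off the edge'' of $A$ at the endpoints of $A$: if $t\in\innt[K_n]$ is close enough to $\tau$, then either $\ovl{\rho_n}$ maps a whole one-sided neighbourhood $[\tau,t]$ or $[t,\tau]$ of $\tau$ into $A$, or else $\ovl{\rho_n}(t)$ is an endpoint of $A$. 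I would then argue that the second alternative, occurring for a subsequence with $t_n\to\tau$ and with $g_n\cdot\gamma(t_n)\in\partial(\gamma(A))$ staying bounded away from $\gamma(\tau)$ (since $\gamma(t_n)\to\gamma(\tau)$ but the image endpoint of $A$ does not), forces the existence of sequences $g_n\cdot\gamma(\tau')=g_n\cdot\gamma(\tau')$ accumulating at two distinct points as in the definition of satedness — more precisely, $\lim_n g_n\cdot\gamma(t_n) = $ one endpoint of $\gamma(A)$ while simultaneously $\lim_n g_n\cdot\gamma(\tau)=\gamma(\tau)$ (as $g_n\in G\backslash G_{\gamma(\tau)}$ but $\gamma(t_n)\to\gamma(\tau)$ forces, after passing to a subsequence, $g_n\cdot\gamma(\tau)\to\gamma(\tau)$ by continuity only if $g_n$ themselves converge — which is exactly where I must be careful).

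Actually, the cleaner route, and the one I would carry out, is: choose $B\subseteq\innt[A]$ compact with $\tau\in\innt[B]$, and suppose for contradiction that for infinitely many $n$ there is $b_n\in B$ with $g_n\cdot\gamma(b_n)\notin\gamma(A)$. Using \eqref{conni} pick $t_n\in J_n\cap B$ with $g_n\cdot\gamma(t_n)\in\gamma(J_n)\subseteq\gamma(B)\subseteq\gamma(A)$; so along the one-sided path in $K_n$ from $t_n$ to $b_n$, the curve $s\mapsto g_n\cdot\gamma(s)$ leaves $\gamma(A)$, hence crosses $\partial(\gamma(A))=\gamma(\{a_-,a_+\})\cup(\text{image of the two sub-arc ends})$. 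Combined with $\ovl{\rho_n}$ being monotone and defined as far as it stays inside, this pins $\ovl{\rho_n}$ to hit an endpoint of $A$ at some $c_n$ with $\tau< c_n$ (or $c_n<\tau$) and $c_n\to\tau$ as $n\to\infty$, since $K_n\to\{\tau\}$. Then $\gamma(\ovl{\rho_n}(c_n))=g_n\cdot\gamma(c_n)$ with $\ovl{\rho_n}(c_n)\in\partial A$ a fixed point of $\gamma$, while $\gamma(c_n)\to\gamma(\tau)=x$ and — applying the relation once more near $\tau$ — $g_n\cdot x$ must also be close to $x$ for large $n$; running this against satedness of $x$ (two distinct limit points $x$ and $\gamma(a_\pm)\neq x$ both reached as $\lim_n g_n\cdot(\cdot)$) gives the contradiction. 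The main obstacle is the bookkeeping needed to guarantee that $\ovl{\rho_n}(c_n)$ lands on an endpoint of $A$ and not on some interior point where the image arc $g_n\cdot\gamma$ merely exits $\gamma(A)$ transversally without $\ovl{\rho_n}$ being defined there — this is precisely what Lemma~\ref{lem:maxextKomp} and the case analysis of Lemma~\ref{dgfggfg} are designed to handle, so the proof is essentially an assembly of those lemmas plus satedness, with the delicate part being to feed the right subintervals into them uniformly in $n$.
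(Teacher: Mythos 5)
You reach for the right toolbox (Lemma \ref{lem:maxextKomp}, Corollary \ref{cgdcfgd}, satedness), but the argument as outlined does not close, for two reasons. First, you fix a single compact $B\subseteq \innt[A]$ and try to show that this particular $B$ works. The exit points $c_n$ you produce then live in the fixed interval $B$ (they are endpoints of $B\cap \ovl{\rho}_n^{-1}(A)$, somewhere between $t_n\in J_n$ and $b_n\in B$), and there is no reason for them to converge to $\tau$; the justification ``since $K_n\to\{\tau\}$'' is a non sequitur because $c_n$ is not constrained to lie in $K_n$. The paper's proof instead uses the full strength of the negation: if \emph{no} $B$ works, then each candidate $B=K_j$ fails for infinitely many $n$, and a diagonal choice gives an increasing $\iota$ with $g_{\iota(n)}\cdot\gamma(K_n)\nsubseteq\gamma(A)$. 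Feeding the \emph{shrinking} intervals $K_n$ (rather than a fixed $B$) into Corollary \ref{cgdcfgd} forces one of the two fixed nondegenerate arcs $\gamma([a',k_m'])$ or $\gamma([k_m,a])$ to be contained in $g_{\iota(n)}\cdot\gamma(K_n)$ for infinitely many $n$, and it is precisely the shrinking of $\gamma(K_n)$ to $\{\gamma(\tau)\}$ that then yields the contradiction.

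Second, your satedness step is inverted. Satedness forbids two \emph{distinct} points $y\neq z$, both different from $x$, with $\lim_n g_n\cdot y=x=\lim_n g_n\cdot z$; it says nothing about a family $g_n\cdot\gamma(c_n)$ approaching two different limits, which is what you invoke (``two distinct limit points $\dots$ both reached as $\lim_n g_n\cdot(\cdot)$''). Tracking only the endpoint $\gamma(a_\pm)$ of $\gamma(A)$ produces at most one candidate point ($g_n^{-1}\cdot\gamma(a_\pm)=\gamma(c_n)$), and the assertion that $g_n\cdot x$ must be close to $x$ does not follow from anything available here without stability, which is deliberately not used in this lemma. The paper obtains the two required points for free, because an entire arc $\gamma([a',k_m'])$ (hence any two distinct points on it) is pushed by $g_{\iota(n)}^{-1}$ into $\gamma(K_n)\to\{\gamma(\tau)\}$. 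Without the shrinking-$B$ device and without a second source point, the contradiction you aim for cannot be reached.
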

\begin{proof}
The proof is elementary, and can be found in Appendix \ref{appB1}. 
\end{proof}
\begin{corollary}
\label{lemma:simpli}
Let $\wm$ be sated; and let $\{(g_n,K_n,J_n)\}_{n\in \NN}$ be a  $(\gamma,\tau)$-collection. 
\begingroup{
\setlength{\leftmargini}{16pt}
\renewcommand{\theenumi}{{\arabic{enumi}})} 
\renewcommand{\labelenumi}{\theenumi}
\begin{enumerate}
\item
\label{lemma:simpli1}
For each compact neighbourhood $L\subseteq I$ of $\tau$, there exists $n_0\in \NN$ with $g_n\cdot \gamma(K_n)\subseteq \gamma(L)$ for all $n\geq n_0$. 
\item
\label{lemma:simpli2}
There exists $n_0\in \NN$ and a compact neighbourhood $L\subseteq K$ of $\tau$ with $g_n\cdot \gamma(L)\subseteq \gamma(K)$ for all $n\geq n_0$.
\end{enumerate}}
\endgroup
\end{corollary}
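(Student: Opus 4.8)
The plan is to read off both statements directly from Lemma \ref{lemma:simpliiiiii}, which already furnishes, for any prescribed compact target neighbourhood of $\tau$, a compact source neighbourhood that is mapped into it under $\gamma$ by cofinitely many of the $g_n$. The only extra ingredient needed is the elementary fact that the shrinking family $\{K_n\}_{n\in \NN}$ with $\bigcap_{n}K_n=\{\tau\}$ is eventually contained in any fixed neighbourhood of $\tau$.

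For Part \ref{lemma:simpli1}, I would apply Lemma \ref{lemma:simpliiiiii} with $A:=L$ to obtain a compact neighbourhood $B\subseteq I$ of $\tau$ and some $m\in \NN$ with $g_n\cdot \gamma(B)\subseteq \gamma(L)$ for all $n\geq m$. Since $\{K_n\backslash \innt[B]\}_{n\in \NN}$ is a decreasing family of compact sets with empty intersection (because $\bigcap_n K_n=\{\tau\}\subseteq \innt[B]$), some $K_{n_1}$, hence every $K_n$ with $n\geq n_1$, is contained in $B$. Then $n_0:=\max\{m,n_1\}$ works, as $g_n\cdot \gamma(K_n)\subseteq g_n\cdot \gamma(B)\subseteq \gamma(L)$ for all $n\geq n_0$.

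For Part \ref{lemma:simpli2}, I would apply Lemma \ref{lemma:simpliiiiii} with $A:=K$ --- recall that $K=K_0$ is a compact neighbourhood of $\tau$ contained in $I$ --- to get a compact neighbourhood $B\subseteq I$ of $\tau$ and $n_0\in \NN$ with $g_n\cdot \gamma(B)\subseteq \gamma(K)$ for all $n\geq n_0$. Setting $L:=B\cap K$, which is again a compact neighbourhood of $\tau$ and satisfies $L\subseteq K$, we obtain $g_n\cdot \gamma(L)\subseteq g_n\cdot \gamma(B)\subseteq \gamma(K)$ for all $n\geq n_0$, as required.

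There is essentially no real obstacle here; this is a bookkeeping corollary of Lemma \ref{lemma:simpliiiiii}. The single point deserving (minor) care is the passage from ``$B$ is a neighbourhood of $\tau$'' to ``$K_n\subseteq B$ for all large $n$'', which genuinely uses compactness of the $K_n$ together with $\bigcap_n K_n=\{\tau\}$, rather than merely some metric notion of $K_n\to\{\tau\}$.
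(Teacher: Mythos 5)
Your proposal is correct and follows exactly the paper's own route: Part 1 is Lemma \ref{lemma:simpliiiiii} with $A=L$ plus the observation that the nested $K_n$ with $\bigcap_n K_n=\{\tau\}$ eventually lie in $B$, and Part 2 is the same lemma with $A=K$ and $L:=B\cap K$. Your compactness justification of the eventual containment $K_n\subseteq B$ is a correct spelling-out of a step the paper leaves implicit.
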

\begin{proof}
\begingroup
\setlength{\leftmargini}{16pt}
\begin{enumerate}
\item
Apply Lemma \ref{lemma:simpliiiiii} to $A\equiv L$; and choose $n_0\geq m$ such large that $K_n\subseteq B$ holds for all $n\geq  n_0$.
\item
Apply Lemma \ref{lemma:simpliiiiii} to $A\equiv K$; and set $L:= B\cap A$ as well as $n_0:= m$. \qedhere
\end{enumerate}
\endgroup
\end{proof}
\noindent
\subsubsection{{Some Technical Preparation}}
In this subsection, we derive some first statements from our discussions in Sect.\ \ref{saposapoaposksalksasa} that we will also need in the proof of Lemma \ref{lemmfreeneig} in Sect.\ \ref{pofdpofdpofdpofdpofd}. We now start with some further definitions.  
\begin{definition}
\label{dshdefetezezeeeewwewe434343}
A $(\gamma,\tau)$-approximation is a $(\gamma,\tau)$-collection  $\{(g_n,K_n,J_n)\}_{n\in \NN}$  
with $\{g_n\}_{n\in \NN}\subseteq G\backslash G_{\gamma(\tau)}$.    
\begingroup
\setlength{\leftmargini}{16pt}
{
\renewcommand{\theenumi}{\arabic{enumi})} 
\renewcommand{\labelenumi}{\theenumi}
\begin{enumerate}
\item
\label{dshdefetezezeeeewwewe4343431}
Let $\{(g_n,K_n,J_n)\}_{n\in \NN}$ be a $(\gamma,\tau)$-approximation such that \eqref{fdfdgfgf} holds. Then, $g_n\notin G_{\gamma(\tau)}$ implies
\begin{align*}
 g_n\cdot \gamma(\tau)=\gamma(\tau+\Delta_n)\quad\text{with}\quad \tau+\Delta_n\in J_n\subseteq K \quad\text{for some unique}\quad \Delta_n\neq 0.
\end{align*}
We say that $g_n$ shifts $\tau$ to the left\slash right \deff $\Delta_n<0\slash\Delta >0$ holds.
\item
\label{dshdefetezezeeeewwewe4343432}
Let $\{(g_n,K_n,J_n)\}_{n\in \NN}$ be a $(\gamma,\tau)$-approximation such that \eqref{ldslsddsoidsoid} holds. Then, $g_n\cdot \gamma|_{J_n}=\gamma\cp\rho_n$ holds for the analytic diffeomorphism (Lemma \ref{lemma:BasicAnalyt2})
\begin{align*}
	\rho_n= (\gamma|^{\im[\gamma]})^{-1}\cp (g_n\cdot \gamma|_{J_n})\colon J_n\rightarrow I_n\subseteq K\subseteq I\qquad\quad \forall\: n\in \NN
\end{align*}
as $\gamma$ and $g_n\cdot \gamma|_{J_n}$ are analytic embeddings. 
We say that $g_n$ is positive\slash  negative \deff 
$\rho_n$ is positive\slash  negative. 
\end{enumerate}}
\endgroup
\end{definition}
\noindent
The following statement is a straightforward consequence of  Remark \ref{jsakjsakjs} and Corollary \ref{lemma:simpli}.\ref{lemma:simpli1}.
\begin{lemma}
\label{dds}
Let $\wm$ be sated; and let $\gamma\colon I\rightarrow M$ be an analytic embedding that is continuously generated at $\tau\in I$. Then, there exists a $(\gamma,\tau)$-approximation $\{(g_n,K_n,J_n)\}_{n\in \NN}$ such that \eqref{fdfdgfgf},  \eqref{eq:schachtelung}, \eqref{ldslsddsoidsoid} hold.
\end{lemma}
\begin{proof}
The proof is elementary, and can be found in Appendix \ref{appB2}.
\end{proof}
\begin{remark}
\label{opdspodspodspodspodsds0909ds09dsdsdsdsds}
Assume that we are in the situation of Lemma \ref{dds}, i.e., 
 $\{(g_n,K_n,J_n)\}_{n\in \NN}$ is a $(\gamma,\tau)$-approximation such that \eqref{fdfdgfgf},  \eqref{eq:schachtelung}, \eqref{ldslsddsoidsoid} hold.  
 We have $g_n\cdot \gamma(J_n)\subseteq \gamma(J_{n-1})\subseteq \gamma(I)$ for each $n \geq 1$ by \eqref{eq:schachtelung}.  Hence, Definition \ref{dshdefetezezeeeewwewe434343}.\ref{dshdefetezezeeeewwewe4343432} implies that  
$g_n\cdot \gamma|_{J_n}=\gamma\cp\rho_n$ holds for the (unique) analytic diffeomorphism 
\begin{align*}
	\rho_n= (\gamma|^{\im[\gamma]})^{-1}\cp (g_n\cdot \gamma|_{J_n})\colon J_n\rightarrow I_n\subseteq J_{n-1}\subseteq I\qquad
	\text{for each}\qquad n\geq 1.
\end{align*}
\vspace{-33pt}

\hspace*{\fill}$\ddagger$
\end{remark}  
\noindent
Combining Remark \ref{opdspodspodspodspodsds0909ds09dsdsdsdsds}  with Corollary \ref{lemma:simpli}.\ref{lemma:simpli1} and Lemma \ref{seque}.\ref{seque1}, we obtain the following statement.
\begin{lemma}
\label{podspods}
Let $\wm$ be sated; and let $\gamma\colon I\rightarrow M$ be an analytic embedding that is continuously generated at $\tau\in I$. Then, there exists a $(\gamma,\tau)$-approximation $\{(g_n,K_n,J_n)\}_{n\in \NN}$ such that \eqref{fdfdgfgf} and  \eqref{ldslsddsoidsoid} hold, with $g_n$ positive for each $n\in \NN$.
\end{lemma}
\begin{proof}
Confer Appendix \ref{appB4}. 
\end{proof}

\subsubsection{$\boldsymbol{\delta}$-Approximations}
\begin{convention}
For the rest of this section, we assume that $\wm$ is  sated and analytic in $M$. 
Moreover, $\delta\colon \ovl{I}\rightarrow M$ denotes a fixed analytic immersion that is continuously generated at $\ovl{\tau}\in \ovl{I}$, with $x\equiv\delta(\ovl{\tau})$ additionally stable. 
\end{convention}
\begin{definition}
\label{ddskjdsjkdsjd}
A $\delta$-approximation $(\gamma,\rho)\vee \{(g_n,K_n,J_n)\}_{n\in \NN}$ is a $(\gamma,\tau)$-approximation $\{(g_n,K_n,J_n)\}_{n\in \NN}$ together with an analytic diffeomorphism 
$$\rho\colon \ovl{I}\supseteq I'\rightarrow I''\subseteq \dom[\gamma]\qquad\text{such that}\qquad \delta|_{I'}=\gamma\cp \rho \qquad\text{and} \qquad \rho(\ovl{\tau})=\tau\qquad\text{holds},
$$ 
for $I'$ an open interval containing $\ovl{\tau}$ as well as $I''$ a bounded open interval containing $\tau$. 
\end{definition} 
\begin{remark}
\label{remmmma}
Proposition \ref{fdfdsfds} follows immediately from  Lemma \ref{vd} and Lemma \ref{liecur}  once we have shown that if $\wm$ is sated and analytic in $M$, then there exists a $\delta$-approximation $(\gamma,\rho)\vee \{(g_n,K_n,J_n)\}_{n\in \NN}$ such hat $\gamma$ is $\tau$-exponential. \hspace*{\fill}$\ddagger$
\end{remark}
\noindent
Since $\delta$ is continuously generated at $\ovl{\tau}$,   Lemma \ref{podspods} provides the following statement.
\begin{corollary}
\label{dffssdffds}
There exists a $\delta$-approximation $(\gamma,\rho)\vee \{(g_n,K_n,J_n)\}_{n\in \NN}$ such that \eqref{fdfdgfgf} and  \eqref{ldslsddsoidsoid} hold,   
with $g_n$ positive for each $n\in \NN$.
\end{corollary}
\begin{proof}
We fix a bounded open interval $I'\subseteq \ovl{I}$ with $\ovl{\tau}\in I'$ such that $\gamma:= \delta|_{I'}$ is an analytic embedding (Corollary \ref{dfdsasasasassa}), and set $I'':=I'$ as well as $\rho:= \id_{I'}$.
Then, the claim is immediate from Lemma \ref{podspods}.
\end{proof}
\noindent
During this section, we will perform several modifications to the $\delta$-approximation obtained in Corollary \ref{dffssdffds}, and eventually end up with a $\delta$-approximation as mentioned in Remark \ref{remmmma}.
\begin{remark}
\label{kldlkfdklfd}
Given a $\delta$-approximation $(\gamma,\rho)\vee \{(g_n,K_n,J_n)\}_{n\in \NN}$, by   
  ``passing to a subsequence'' we mean to 
 pass to the $\delta$-approximation $(\gamma,\rho)\vee \{(g_{\iota(n)},K_{\iota(n)},J_{\iota(n)})\}_{n\in \NN}$ for some strictly increasing $\iota\colon \NN\rightarrow \NN$  (see Remark \ref{jsakjsakjs}.\ref{ddiudsdsmncxycxc}). 
Evidently, ``passing to a subsequence'' does not change the properties of the $\delta$-approximation constructed in Corollary \ref{dffssdffds}; and the same is true for the properties of a $\delta$-approximation that we will obtain in Lemma \ref{pofpofdofdop}, Lemma \ref{daaddsd}, and Lemma \ref{popofdpofd} below.\hspace*{\fill}$\ddagger$
\end{remark}

\begin{remark}
\label{podspoddsa}
Let $(\gamma,\rho)\vee \{(g_{n},K_n,J_n)\}_{n\in \NN}$ be a $\delta$-approximation as in Corollary \ref{dffssdffds}, as well as $J\subseteq I''$ an open interval with $\tau\in J$. Then, passing to a subsequence and restricting $\gamma$, we can additionally assume that $\gamma$ is defined on $J$. For this,  
 we just have to fix $n_0\in \NN$ with $K_n\subseteq J$ for each $n\geq n_0$, set $\iota\colon \NN\ni n\mapsto n_0+n\in \NN$, and   consider $(\gamma|_J,\rho|_{I'\he\cap\he \rho^{-1}(J)})\vee \{(g_{\iota(n)},K_{\iota(n)},J_{\iota(n)})\}_{n\in \NN}$.
\hspace*{\fill}$\ddagger$
\end{remark}
\noindent
Combining Corollary \ref{dffssdffds} with Remark \ref{podspoddsa} and Lemma \ref{gdfgfgf}, we obtain from stability of $x\equiv\delta(\ovl{\tau}) = \gamma(\tau)$:
\begin{lemma}
\label{pofpofdofdop}
There exists a $\delta$-approximation $(\gamma,\rho)\vee \{(g_n,K_n,J_n)\}_{n\in \NN}$ such that \eqref{fdfdgfgf} and  \eqref{ldslsddsoidsoid} hold, with $\im[\gamma]\subseteq G\cdot \gamma(\tau)$ as well as $g_n$ positive for each $n\in \NN$.
\end{lemma}
\begin{proof} 
Confer Appendix \ref{appB3}. 
\end{proof}
\noindent
Passing to a subsequence and ``inverting the direction of $\gamma$'' if necessary, we obtain the following statement.
\begin{lemma}
\label{daaddsd}
There exists a $\delta$-approximation $(\gamma,\rho)\vee \{(g_{n},K_n,J_n)\}_{n\in \NN}$ with  \eqref{fdfdgfgf} and    \eqref{ldslsddsoidsoid} and $\im[\gamma]\subseteq G\cdot \gamma(\tau)$, such that each $g_n$ is positive and shifts $\tau$ to the right.
\end{lemma}
\begin{proof}
The proof is elementary but technical, and can be found in Appendix \ref{appB5}.
\end{proof}
\subsubsection{Convergence}
We now finally modify the $\delta$-approximation obtained in Lemma \ref{daaddsd} such that additionally $\lim_n g_n=e$ holds;  and then prove some properties for such $\delta$-approximations that we will need for the proof of Proposition \ref{fdfdsfds}  in Sect.\ \ref{sddssdsd}. 
\begin{lemma}
\label{popofdpofd}
There exists a $\delta$-approximation as in Lemma \ref{daaddsd}, such that additionally $\lim_n g_n=e$ holds.
\end{lemma}
\begin{proof}
Let $(\gamma,\rho)\vee \{(g_{n},K_n,J_n)\}_{n\in \NN}$ be as in Lemma \ref{daaddsd}, and recall that $x\equiv \gamma(\tau)$ holds. 
\begingroup
\setlength{\leftmargini}{14pt}
{
\renewcommand{\theenumi}{\sf\small\alph{enumi})} 
\renewcommand{\labelenumi}{\theenumi}
\begin{enumerate}
\item
\label{fdskjkjsdkjkjdskjdsiuewiuewiuewiuiuew1}
Obviously, we can replace $\{g_n\}_{n\in \NN}$ by $\{h_n\cdot g_n\cdot h_n'\}_{n\in \NN}$ for sequences $\{h_n\}_{n\in \NN}, \{h'_n\}_{n\in \NN}\subseteq G_{\gamma}$, without affecting any of the properties that we have established so far.\footnote{For instance, if \eqref{ldslsddsoidsoid} holds, then also $(h_n\cdot g_n\cdot h_n')\cdot \gamma(K_n)=h_n\cdot (g_n\cdot \gamma(K_n))\subseteq h_n\cdot \gamma(K)=\gamma(K)$.} 
\item
\label{fdskjkjsdkjkjdskjdsiuewiuewiuewiuiuew2}
$G_{[x]}\subseteq G_\gamma$ holds by $\im[\gamma]\subseteq G\cdot x$, and $x$ is stable 
  with $\lim_n g_n\cdot x=x$ by \eqref{fdfdgfgf}. 
  \vspace{3pt}
  
  Hence, we can modify $\{g_n\}_{n\in \NN}$ as described in \ref{fdskjkjsdkjkjdskjdsiuewiuewiuewiuiuew1}, and then pass to a subsequence (recall Remark \ref{kldlkfdklfd}) to achieve that 
  $\lim_n g_n =g\in G_{x}$ exists.
\end{enumerate}}
\endgroup
\noindent  
It suffices to show that $g\in G_\gamma$ holds; because, according to  \ref{fdskjkjsdkjkjdskjdsiuewiuewiuewiuiuew1}, then we can replace  $g_n$ by $g_n\cdot g^{-1}$ for each $n\in \NN$  to achieve $G\backslash G_{\gamma(\tau)}\supseteq \{g_n\}_{n\in \NN}\rightarrow e$.  
\begingroup
\setlength{\leftmargini}{12pt}
\begin{itemize}
\item[$\triangleright$]	
By Corollary \ref{lemma:simpli}.\ref{lemma:simpli2}, there exists a compact neighbourhood 
$L\subseteq K$ of $\tau$ and $n_0\in \NN$, such that $g_n\cdot \gamma(L)\subseteq \gamma(K)$ holds for all $n\geq n_0$. 
For each $n\geq n_0$, Lemma \ref{lemma:BasicAnalyt2} thus yields a unique analytic diffeomorphism 
\vspace{-3pt}
$$\kappa_n\colon L\rightarrow L_n\subseteq K\qquad\quad \text{with}\qquad\quad   
 g_n\cdot \gamma|_L=\gamma\cp \kappa_n.$$
 \vspace{-19pt} 
\item[$\triangleright$]
Let $n'_0\geq n_0$ be such large that $J_n\subseteq L$ holds for each $n\geq n'_0$. 
Then, $\dot\kappa_n>0$ holds for 
each $n\geq n'_0$, because $g_n$ is positive with $\kappa_n|_{J_n}=\rho_n$ by uniqueness. Hence, $\kappa$ is strictly monotonic increasing.
\item[$\triangleright$]
Since $g_n$ shifts $\tau$ to the right, for $L=[\ell',\ell]$, $K=[k',k]$, and $n\geq n'_0$, we  have $g_n\cdot \gamma([\tau,\ell])\subseteq \gamma([\tau,k])$.
\item[$\triangleright$]
For each $t\in [\tau,\ell]$ and $n\geq n_0'$, we thus have $g_n\cdot \gamma(t)\in \gamma([\tau,k])$, hence
\begin{align*} 
	\textstyle g\cdot \gamma(t)=\lim_n g_n\cdot \gamma(t)\in \gamma([\tau,k]).
\end{align*}
	This shows $g\cdot \gamma([\tau,\ell])\subseteq \gamma([\tau,k])$  with $g\in G_{\gamma(\tau)}$; so that Lemma \ref{stabbiii} yields $g\in G_{\gamma}$. \qedhere
\end{itemize}
\endgroup
\end{proof}
\noindent
The next lemma collects some important properties of a $\delta$-approximation as in Lemma \ref{popofdpofd}. 
\begin{lemma}
\label{approx} 
Let $(\gamma,\rho)\vee \{(g_{n},K_n,J_n)\}_{n\in \NN}$ be as in Lemma \ref{popofdpofd}, write $K= K_0=[a,b]$, and let   $K'=[a',b']$ be a compact interval as well as $I'$ an open interval with       
 $K\subseteq I'\subseteq K'\subseteq I=\dom[\gamma]$. For each $n\in \NN$, let $p(n)\in \NN_{>0}\cup \{\infty\}$ be maximal with\he\footnote{Since $\gamma$ is an embedding, this is equivalent to that $(g_n)^p\cdot \gamma(J_n)\subseteq K'$ holds for all $0\leq p\leq p(n)$.}
 \hspace*{\fill}(we write $0\leq p\leq \infty$ \deff $p\in \NN$ holds)
\begin{align*}
	(g_n)^p\cdot \gamma(J_n)= \gamma(I_{n,p})
	\qquad\quad\forall\: 0\leq p\leq p(n)
\end{align*}
for necessarily unique open intervals $I_{n,p}=(i'_{n,p},i_{n,p})\subseteq K'$, hence
\begin{align}
\label{fgopopfgopfg}
\hspace{5pt}(g_n)^p\cdot \gamma(\tau)=\gamma(\tau_{n,p})\quad\:\:\text{for unique}\quad\:\:\tau_{n,p}\in I_{n,p}\subseteq K' \qquad\quad \forall\: 0\leq p\leq p(n).
\end{align}
Then, the following assertions hold:
\begingroup
\setlength{\leftmargini}{16pt}
{
\renewcommand{\theenumi}{\arabic{enumi})} 
\renewcommand{\labelenumi}{\theenumi}
\begin{enumerate}
\item
\label{approx1}
We have $p(n)\geq 1$ for each $n\in \NN$, as well as
\begin{align}
\label{propies}
\tau=\tau_{n,0}< \tau_{n,1}<\tau_{n,2} <{\dots}<\tau_{n,p(n)}\qquad\quad\text{and}\qquad\quad
\tau_{n,p+1}\in I_{n,p}\qquad\forall\:\:0\leq p \leq p(n)-1. 
\end{align}
\vspace{-15pt}
\item
\label{approx2}
We have $p(n)< \infty$ for each $n\in \NN$.
\item
\label{approx3}
There exists $d\in \NN$, with $b< i_{n,p(n)}$ for all $n\geq d$.
\item
\label{approx4}
For each $t\in (\tau,b]$, $n_0\in \NN$, and $\epsilon>0$ with $\tau< t-\epsilon$, there exist some $n\geq n_0$ and $1\leq m\leq p(n)$ with $\tau_{n,m}\in(t-\epsilon, t]$; hence
\begin{align*}
	(g_{n}\cdot h)^k\cdot \gamma(\tau) \in \gamma((\tau,t])\qquad\forall\:k=1,\dots,m\qquad\quad\text{as well as}\qquad\quad (g_{n}\cdot h)^m\cdot \gamma(\tau)\in \gamma((t-\epsilon,t])
\end{align*}
holds for each $h\in G_\gamma$.
\item
\label{approx5}
Let $\{q(n)\}_{n\in \NN}\subseteq \NN$ be such that $q(n)\leq p(n)$ holds for infinitely many $n\in \NN$. Then,
\begin{align}
\label{fhjjghhhg}
\textstyle\lim_n (g_n\cdot h_n)^{q(n)}=g\in G_{\gamma(\tau)}\quad\text{for}\quad \{h_n\}_{n\in \NN}\subseteq G_\gamma\qquad\quad \Longrightarrow\qquad\quad g\in G_\gamma.
\end{align}
\end{enumerate}}
\endgroup
\end{lemma}
\begin{proof}
Confer  
Appendix \ref{appB6}. 
\end{proof}

\subsubsection{Proof of Proposition \ref{fdfdsfds}} 
\label{sddssdsd}
Let $(\gamma,\rho)\vee \{(g_{n},K_n,J_n)\}_{n\in \NN}$ be as in Lemma \ref{popofdpofd}; and recall Remark \ref{remmmma}, i.e., to prove Proposition \ref{fdfdsfds}, it suffices to show that $\gamma$ is $\tau$-exponential.  
For this, we will tacitly use  in the following that Lemma \ref{approx} also applies to each subsequence of $(\gamma,\rho)\vee \{(g_{n},K_n,J_n)\}_{n\in \NN}$ by Remark \ref{kldlkfdklfd}.

\begin{proof}[Proof of Proposition \ref{fdfdsfds}]
Let $H\subseteq G$ denote the closure of the group generated by $O_\gamma:=\{g\in G\:|\: g\cdot \gamma\cpsim \gamma\}\subseteq G$. Then, $G_\gamma\subseteq H$ is a normal subgroup, as $g^{-1}\cdot q \cdot g \in G_\gamma$ holds for all $q\in G_\gamma$ and $g\in O_\gamma$ by Corollary \ref{fsdfsfdfs}. Therefore, $Q:=H\slash G_\gamma$ is a Lie group; and the canonical projection $\pri\colon H\rightarrow Q$ is a Lie group homomorphism with $\ker[\dd_e\pri]=\mg_\gamma$. 
Moreover, there exists a  
smooth local section $s\colon Q\supseteq V\rightarrow U\subseteq H$ defined on an open neighbourhood $V\subseteq Q$ of $[e]$ such that $s([e])=e$ and $\pri\cp s=\id_V$ holds, hence $\dd_e\pri\cp \dd_{[e]} s=\id_\mq$.
\vspace{6pt}

\noindent
Let $\exp$ and $\exp_\mq$ denote the exponential maps of $G$ and $Q$, respectively;\footnote{Also observe that $\exp|_\mh$ is the exponential map of $H$, for $\mh$ the Lie algebra of $H$.} and let $W\subseteq \mq$ be an open neighbourhood of $0_\mq$ such that $\exp_\mq|_W\colon W\rightarrow V'\subseteq V$ is a homeomorphism to an open neighbourhood $V'$ of $[e]\in Q$. We observe that $\{g_n\}_{n\in \NN}\subseteq H\backslash G_{\gamma(\tau)}\subseteq H\backslash G_{\gamma}$ holds by \eqref{conni} (recall $\{g_n\}_{n\in \NN}\subseteq G\backslash G_{\gamma(\tau)}$ by definition of a $(\gamma,\tau)$-approximation), and write $[g_n]\equiv \pri(g_n)\neq [e]$ for each $n\in \NN$. Since $\pri$ is continuous, $\lim_n g_n=e$ implies  
$\lim_n\he [g_n]=[e]$; so that, passing to a subsequence if necessary, we can assume that $\{[g_n]\}_{n\in \NN}\subseteq V'$ holds. We proceed as follows: 
\begingroup
\setlength{\leftmargini}{12pt}
\begin{itemize}
\item
Let $\|\cdot \|$ be a fixed norm on $\mq$. 
Since $\{[g_n]\}_{n\in \NN}\subseteq V'$ holds, we have
\begin{align*}
	[g_n]=\exp_\mq(\lambda_n\cdot \q_n)\quad\:\text{for some}\quad\: \q_n\in \mq\quad\:\text{with}\quad\: \|\q_n\|=1\quad\:\text{and}\quad\: \lambda_n> 0\qquad\quad\forall\: n\in \NN;
\end{align*}
hence $\lim_n \lambda_n=0$, as $\lim_n [g_n]=[e]$ holds and $\exp_\mq|_W$ is a homeomorphism. 
\item
By compactness of the unit sphere, we can 
pass to a subsequence to achieve that $\lim_n \q_n=\q\in \mq\neq 0$ exists (with $\|\q\|=1$). We set \hspace*{\fill}($\mh$ the Lie algebra of $H$)
\begin{align*}
	\g:=\dd_{[e]}s(\q)\in \mh\subseteq \mg\qquad\quad\text{as well as}\qquad\quad \g_n:=\dd_{[e]}s(\q_n)\in \mh\subseteq \mg\qquad\forall\: n\in \NN,
\end{align*}
so that $\lim_n \g_n=\g$ holds by continuity of $\dd_{[e]} s$. 
\item
Since $\pri$ is a Lie group homomorphism, for each $n\in \NN$, we have
\begin{align}
\label{opsddsofyadsfhdfs}
\begin{split}
	\pri(\underbrace{\exp(\lambda_n\cdot \g_n)}_{\displaystyle =: \tilde{g}_n})=\exp_\mq(\lambda_n\cdot \underbrace{\dd_e\pri(\g_n)}_{\displaystyle=\q_n})=[g_n]\qquad\:\: &\Longrightarrow \qquad\:\: \tilde{g}_n= g_n \cdot h_n\:\:\:\text{for some}\:\:\: h_n\in G_\gamma\\[-13pt]
	&\Longrightarrow \qquad\:\: \tilde{g}_n\in G\setminus G_{\gamma(\tau)}\\[5pt]
	&\Longrightarrow \qquad\:\: \g_n\in \mg\setminus \mg_{\gamma(\tau)},
\end{split}
\end{align}
where we have used $\dd_e\pri\cp \dd_{[e]} s=\id_\mq$ on left side, $g_n\in G\setminus G_{\gamma(\tau)}$ in the second implication, as well as $\mg_{\gamma(\tau)}=\{\h\in \mg\:|\: \exp(\RR\cdot \h)\subseteq G_{\gamma(\tau)}\}$ in the third implication.
\end{itemize}
\endgroup
\noindent
Now, Lemma \ref{approx}.\ref{approx4} together with the first line (right side) in \eqref{opsddsofyadsfhdfs} shows \hspace*{\fill}(recall Definition \ref{Def:LC})  
\begin{align*}
\qquad\quad&\gamma(\tau)\barrows \gamma(t)\qquad\forall\: t\in (\tau,b] \subseteq K=[a,b]\subseteq I=\dom[\gamma]
\\[5pt]
\Longrightarrow\qquad\quad &\hspace{6pt}(\tau,b]\subseteq \T=\{\tau< t\in I\:|\: \gamma(\tau)\barrows \gamma(t)\}\\[5pt]
\Longrightarrow\qquad\quad &\hspace{6pt}\:\:\tau  \:\text{ is an  accumulation point of }\: \T.
\end{align*} 
It thus remains to show that $\g\in \mg\setminus\mg_{\gamma(\tau)}$ holds; because then $\{\tilde{g}_n\}_{n\in \NN}$ is faithful, hence $\gamma$ is $\tau$-exponential (so that the claim follows from Remark \ref{remmmma}). 
For this, it suffices to show the implication
\begin{align*}
	\g\in \mg_{\gamma(\tau)}\qquad\quad&\Longrightarrow\qquad\quad g_t:=\exp([0,\ell]\cdot \g)\subseteq  G_\gamma\quad\text{for some}\quad \ell>0,
\end{align*}
because the right side implies $\g\in \mg_\gamma$, which contradicts  $\dd_e\pri(\g)=\q\neq 0_\mq$. 
\vspace{6pt}

\noindent
Assume thus that $\g\in \mg_{\gamma(\tau)}$ holds, and let $O$ be a neighbourhood of $x\equiv \gamma(\tau)$ as well as $\epsilon>0$ such that $O\cap \gamma((b-\epsilon,b])=\emptyset$ holds (observe that $\gamma$ is injective). 
Since $\wm_{x}\cp \exp$ is continuous and since $\lim_n \g_n =\g$ holds, there exist $n_0\in \NN$ and $\ell>0$, such that the image of  $\delta_n:=\gamma_{\g_n}^x|_{[0,\ell]}$ is contained in $O$ for each $n\geq n_0$. We fix $t\in [0,\ell]$, and conclude $g_t\in G_\gamma$ as follows: 
\begingroup
\setlength{\leftmargini}{12pt}
\begin{itemize}
\item[$\triangleright$]	
For each  $n\in \NN$, we choose $q(n)\in \NN$ maximal with $q(n)\cdot \lambda_n \leq t$. Then, $\lim_n\lambda_n=0$ together with $\lim_n\g_n=\g$ implies 
\vspace{-9pt}
\begin{align}
\label{oidsoidslksdlkewlkewnmnmewew98ew98ew989ewewe}
\textstyle\lim_n q(n)\cdot \lambda_n\cdot \g_n= t\cdot \g.
\end{align}
\item[$\triangleright$]	
Since $\g\in \mg_{\gamma(\tau)}$ holds by assumption, we obtain
\vspace{-6pt}
\begin{align*}
	G_{\gamma(\tau)}\ni g_t=\exp(t\cdot \g)\stackrel{\eqref{oidsoidslksdlkewlkewnmnmewew98ew98ew989ewewe}}{=}\textstyle   \lim_n\exp(q(n)\cdot \lambda_n\cdot \g_n)=\lim_n (\tilde{g}_n)^{q(n)}\stackrel{\eqref{opsddsofyadsfhdfs}}{=} \lim_n (g_n\cdot h_n)^{q(n)}.
\end{align*}
Lemma \ref{approx}.\ref{approx5} thus yields $g_t\in G_\gamma$ once we have shown that $q(n)\leq p(n)$ holds  for infinitely many $n\in \NN$:
\begingroup
\setlength{\leftmarginii}{12pt}
\begin{itemize}
\item
By Lemma \ref{approx}.\ref{approx4},  
for infinitely many $n\geq n_0$, there exists $1\leq m(n)\leq p(n)$ with  
\begin{align*}
	\underbrace{(g_{n})^k\cdot \gamma(\tau) \in \gamma((\tau,b])\qquad\forall\:k=1,\dots,m(n)}_{(*)}\qquad\quad\text{as well as}\qquad\quad \underbrace{(g_{n})^{m(n)}\cdot \gamma(\tau)\in \gamma((b-\epsilon,b]).}_{(**)}
\end{align*}
\vspace{-5pt}
\item
For each such $n\in \NN$, we necessarily have $q(n)< m(n)\leq p(n)$; because $m(n)\leq q(n)$ implies
\begin{align*}
 0\leq m(n)\cdot \lambda_n&\leq  q(n)\cdot \lambda_n\leq t\leq \ell\\[3pt]
 \text{h}&\text{ence}\\[-5pt]
\gamma((b-\epsilon,b])\stackrel{(**)}{\ni}(g_n)^{m(n)}\cdot \gamma(\tau)\stackrel{(*)}{=}(\tilde{g}_n)^{m(n)}\cdot \gamma(\tau)&=\exp(m(n)\cdot \lambda_n\cdot \g_n)\cdot \gamma(\tau)=\delta_n(m(n)\cdot \lambda_n)\in  O, 
\end{align*}
which contradicts $O\cap \gamma((b-\epsilon,b])=\emptyset$.
\qedhere
\end{itemize}
\endgroup
\end{itemize}
\endgroup
\end{proof}

\section{Decompositions}
\label{disgencur}
In the previous section, we have shown that an analytic curve is exponential \deff it is not free, provided that $\wm$ is regular and separately analytic. In this section, we show that if $\wm$ is sated and analytic in $M$, then each free immersive $\gamma\colon D\rightarrow M$ is discretely generated by the symmetry group -- Roughly speaking, this means that $\gamma$ can be naturally decomposed into free segments that are mutually (and uniquely) related by the group action. 
\begin{convention}
\label{nmsnmdsnmdsddscxcxcx}
In this section, $\wm\colon G\times M\rightarrow M$ always denotes a left action that is analytic in $M$.
\end{convention}
\noindent
At this point, the reader might recall the notions and statements  in   Sect.\ \ref{repari}.
\subsection{Basic Facts and Definitions}
\label{sfd}
We now first want to make the stated decomposition results  a little bit more clear. For this, we let $\gamma\colon I\rightarrow M$ be free immersive, with $\gamma|_D$ a free segment for  $D\subset I$ properly contained in $I$. We additionally assume that $g\cdot \gamma|_J= \gamma\cp \rho$ holds for some $g\in G\backslash G_\gamma$ and an analytic diffeomorphism $\rho\colon I\supseteq D\supseteq J\rightarrow J'\subseteq I\backslash D$. Then, 
\begin{align*}
g\cdot \gamma|_\CM=\gamma\cp \ovl{\rho}|_\CM\qquad\:\:\text{holds for}\qquad\:\: \CM:=D\cap \ovl{\rho}^{-1}(I)\qquad\:\:\text{with}\qquad\:\:
\ovl{\rho}(\CM)\subseteq I\backslash \innt[D]
\end{align*}
as $\ovl{\rho}(C)\cap \innt[D]\neq \emptyset$ implies $g\cdot\gamma|_D \cpsim \gamma|_D$, hence $g\in G_{\gamma|_D}=G_\gamma$ (Lemma \ref{lemma:stabi}). 
Now, one can ask the question whether the intervals $D$ and $\ovl{\rho}(C)$ can be brought together 
by a suitable choice of $g$, i.e., whether $g\in G\backslash G_\gamma$ can be chosen such that $D$ and $\ovl{\rho}(C)$ share a common boundary point $s\in I$. 
One finds immediately that this cannot happen  if $\gamma|_{I'}$ is a free segment for an open interval $I'\subseteq I$ that contains the closure of $D$ in $I$ -- just because then   $s\in I'$ implies  $g\cdot\gamma|_{I'} \cpsim \gamma|_{I'}$, hence $g\in G_{\gamma|_{\tilde{I}}}=G_\gamma$ (Lemma \ref{lemma:stabi}). 
This motivates the following definitions:
\begin{definition}
\label{asffd}
Let $\gamma\colon D \rightarrow M$ be an analytic immersion. An interval $A\subseteq D$ is said to be
\begingroup
\setlength{\leftmargini}{12pt}
\begin{itemize}
\item
	free \hspace{17.8pt} (w.r.t.\ $\gamma$)\:\: \deff\:\:$\gamma|_A$ is a free segment,
\item
	maximal (w.r.t.\ $\gamma$) \:\:\deff\:\:{}$A$ is free such that there does not exist a free interval $A'\subseteq D$ with $A\subset A'$.
\end{itemize}
\endgroup
\end{definition}
\noindent
Obviously, each subinterval of a free interval is free; and we additionally observe: 
\begingroup
\setlength{\leftmargini}{12pt}
\begin{itemize}
\item
If $A\subseteq D$ is free, then the closure $\ovl{A}\subseteq D$ of $A$ in $D$ is free, because 
\begin{align}
\label{dfdffdfdfdfd}
	 g\cdot\gamma|_{\ovl{A}}\cpsim \gamma|_{\ovl{A}}\qquad\quad\Longrightarrow\qquad\quad g\cdot\gamma|_{A}\cpsim  \gamma|_{A}\qquad\quad\Longrightarrow\qquad\quad g\in G_\gamma.
\end{align}
In particular, each maximal interval $A\subseteq D$ is automatically closed in $D$.
\item
Lemma \ref{lemma:stabi} yields
\begin{align}
\label{ldslkdlkdsd09s09d09ds09ds09ds09dssddsdscxcxcx}
	A\subseteq D'\subseteq D\quad\text{free w.r.t.}\quad \gamma|_{D'}  \qquad\:\:\:\stackrel{ G_{\gamma|_{{D'_{}}}}=\: G_\gamma\:\:}{\Longleftrightarrow}\qquad\:\: A \quad \text{free w.r.t.\ to}\quad \gamma.
\end{align} 
\end{itemize}
\endgroup
\noindent 
Moreover, Zorn's lemma provides the following statement:
\begin{lemma}
\label{maxim}
Let $\gamma\colon D \rightarrow M$ be an analytic immersion, and let  $D'\subseteq D$ be free. Then, there exists $A\subseteq D$ maximal with $D'\subseteq A$.
\end{lemma}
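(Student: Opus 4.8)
The plan is to apply Zorn's lemma to the partially ordered set of free intervals containing $D'$, ordered by inclusion. First I would let $\mathcal{F}$ denote the collection of all intervals $A$ with $D'\subseteq A\subseteq D$ that are free w.r.t.\ $\gamma$, partially ordered by set inclusion. This set is non-empty since $D'\in \mathcal{F}$ by hypothesis. The goal is to produce a maximal element of $\mathcal{F}$, which by the definition of maximal interval (and the remark that being free w.r.t.\ $\gamma|_{D'}$ versus $\gamma$ coincides) will be exactly the maximal $A$ we seek.

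The key step is to verify the chain condition: every totally ordered subset $\{A_\alpha\}_{\alpha\in \Lambda}\subseteq \mathcal{F}$ has an upper bound in $\mathcal{F}$. I would take $A:=\bigcup_{\alpha\in \Lambda}A_\alpha$. Since the $A_\alpha$ form a chain of intervals each containing the common interval $D'$, their union $A$ is again an interval with $D'\subseteq A\subseteq D$; in particular $A$ has non-empty interior, so it qualifies as an interval in the sense of the paper's conventions. It remains to check that $A$ is free, i.e.\ that $\gamma|_A$ is a free segment. So suppose $g\cdot \gamma|_A \cpsim \gamma|_A$ for some $g\in G$; I must show $g\in G_\gamma$. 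Unwinding the definition of $\cpsim$, there are non-empty open intervals $J,J'\subseteq A$ on which $\gamma$ is an embedding with $g\cdot \gamma(J)=\gamma(J')$. Now $J$ is a bounded open interval; pick any compact subinterval $K\subseteq J$ with non-empty interior. Since $K$ is compact and $\{A_\alpha\}$ is a chain of intervals whose union contains $K$, one of them, say $A_{\alpha_0}$, already contains $K$ — here one uses that a chain of intervals covering a compact interval must have a single member containing it, which follows because the endpoints of $K$ lie in members of the chain and the larger of those two members (they are comparable) contains all of $K$. Then, with $\rho:=\gamma^{-1}\cp(g\cdot\gamma|_{\innt[K]})$, shrinking to an open subinterval we get $g\cdot \gamma|_{J_0}\cpsim\gamma$ for some open $J_0\subseteq A_{\alpha_0}$ on which $\gamma$ is an embedding, so $g\cdot\gamma|_{A_{\alpha_0}}\cpsim \gamma|_{A_{\alpha_0}}$; since $A_{\alpha_0}\in\mathcal{F}$ is free this forces $g\in G_{\gamma|_{A_{\alpha_0}}}=G_\gamma$ by Lemma \ref{lemma:stabi}. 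Hence $A$ is free, $A\in \mathcal{F}$, and $A$ is an upper bound for the chain.

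With the chain condition established, Zorn's lemma furnishes a maximal element $A\in\mathcal{F}$. By construction $D'\subseteq A\subseteq D$ and $\gamma|_A$ is a free segment; and $A$ is maximal w.r.t.\ $\gamma$ in the sense of Definition \ref{asffd}, because any free interval $A''\subseteq D$ properly containing $A$ would still contain $D'$, hence lie in $\mathcal{F}$, contradicting maximality of $A$ there. This completes the argument.

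The only point requiring genuine care — and hence the main obstacle — is the verification that the union of a chain of free intervals is again free; the rest is a routine Zorn argument. The subtlety is that freeness is phrased via the relation $\cpsim$, which only sees the behaviour of $\gamma$ on small open subintervals, so one must localize the hypothesis $g\cdot\gamma|_A\cpsim\gamma|_A$ down to a compact piece that already sits inside one member of the chain, and then invoke Lemma \ref{lemma:stabi} to transfer the resulting stabilizer conclusion back up. I would present that localization step explicitly, since it is where the compactness of subintervals of $J$ and the linear-order structure of the chain are used.
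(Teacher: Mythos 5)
Your proof is correct and follows essentially the same route as the paper: a Zorn's lemma argument on the collection of free intervals containing $D'$, where the union of a chain is free because the hypothesis $g\cdot\gamma|_A\cpsim\gamma|_A$ localizes to a single member of the chain, after which Lemma \ref{lemma:stabi} gives $g\in G_\gamma$. (The paper asserts this localization without detail; your explicit compactness argument is fine, though for completeness you should place a compact piece of the \emph{image} interval $J'$, not only of $J$, inside a single chain member --- which follows the same way by comparability of the chain.)
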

\begin{proof}
	Let $\MK$ denote the set of all free intervals $C\subseteq D$ with $D'\subseteq C$. 
We order $\MK$ by inclusion; and observe that a chain $\ML$ in $\MK$ has the upper bound 
	$B:=\bigcup_{C\in \ML} C$. In fact, $B$ is free because $g\cdot \gamma|_B \cpsim  \gamma|_B$ implies $g\cdot \gamma|_{C} \cpsim  \gamma|_{C}$ for some $C\in \ML$, hence $g\in G_\gamma$ by Lemma \ref{lemma:stabi}. Thus, by Zorn's lemma, the set of maximal elements in $\MK$ is non-empty, which proves the claim.
\end{proof}
\begin{example}
\label{gfddgfikuhjwaq}
Let $\gamma\colon B\rightarrow M$ be immersive and free. 
\begingroup
\setlength{\leftmargini}{16pt}
{
\renewcommand{\theenumi}{\alph{enumi})} 
\renewcommand{\labelenumi}{\theenumi}
\begin{enumerate}
\item
\label{gfddgfikuhjwaq1}
If $D\subset B$ is maximal, then $\gamma|_{D}$ is not self-related.
\vspace{-6pt}

\begin{proof}
Let $\ovl{\gamma}\colon I\rightarrow M$ denote the maximal analytic immersive extension of $\gamma$; and assume that the claim is wrong, i.e., that $D$ is maximal and $\gamma|_{D}$ is self-related.  
\begingroup
\setlength{\leftmarginii}{12pt}
\begin{itemize}
\item[$-$]
	Since $D\subset B$ is closed in $B$ by maximality, there exists 
\begin{align*}
	\tau\in \underbrace{B\cap \{\sup(D),\inf(D)\}}_{\subseteq\: I\:\cap\: \{\sup(D),\:\inf(D)\}}\qquad\text{with}\qquad (\tau-\epsilon,\tau+\epsilon)\subseteq B\qquad\text{for some}\qquad \epsilon >0. 
\end{align*}
	Lemma \ref{sdffsd}.\ref{sdffsd1}  then shows that 
\begin{align*}
	\gamma|_{J} = \gamma\cp\rho_\tau \quad\text{holds for an analytic diffeomorphism}\quad\rho_\tau\colon  D\supseteq J\rightarrow J'\subseteq B\subseteq I \quad\text{with}\quad \tau\in J'. 
\end{align*}
\item[$-$]
It follows that the interval  $D\subset D':= D\cup J'\subseteq B$ is free 
w.r.t.\ $\gamma$, which  
contradicts maximality of $D$.
\vspace{-10pt}

\noindent
In fact, if $g\cdot \gamma|_{D'}\cpsim \gamma|_{D'}$ holds for some $g\in G$, then we necessarily have 
\begin{align*}
	g\cdot \gamma|_{D}\cpsim \gamma|_{D}\qquad\:\:\text{or}\qquad\:\: g\cdot \gamma|_{J'}\cpsim \gamma|_{J'}\qquad\:\:\text{or}\qquad\:\: g\cdot \gamma|_{D}\cpsim \gamma|_{J'}\qquad\:\:\text{or}\qquad\:\: g\cdot \gamma|_{J'}\cpsim \gamma|_{D}.
\end{align*}
In each of theses cases, we have $g\cdot \gamma|_{D}\cpsim \gamma|_{D}$ (use $\gamma|_{J} = \gamma\cp\rho_\tau$ in the last three cases), hence $g\in G_\gamma$ as $D$ is free w.r.t.\ $\gamma$. 
This shows that $D'$ is free w.r.t.\ $\gamma$.
\qedhere
\end{itemize}
\endgroup
\end{proof} 
\item
\label{gfddgfikuhjwaq2}
If $\gamma\colon D\rightarrow M$ is a self-related free segment, then $\ovl{\gamma}\colon I\rightarrow M$ is a self-related free segment.
\vspace{-6pt}

\begin{proof} 
Since $D$ is free w.r.t\ $\ovl{\gamma}$ by \eqref{ldslkdlkdsd09s09d09ds09ds09ds09dssddsdscxcxcx}, Lemma \ref{maxim} provides $A\subseteq I$ maximal w.r.t\ $\ovl{\gamma}$ with $D\subseteq A$. Now, if $\ovl{\gamma}$ is not a free segment, then necessarily $A\subset I$ holds. Since    $\ovl{\gamma}|_A$ is self-related  (as $(\ovl{\gamma}|_A)|_D=\gamma$ is self-related), this contradicts Part \ref{gfddgfikuhjwaq1}.    
\end{proof}
\vspace{-5pt}

{\sf Example:} Let $\wm$ be a  left action on $M= \RR^2\cong\mathbb{C}$. Then, $\gamma\colon \RR\ni t\mapsto \e^{\I t}\in \CC\cong \RR^2$ is a free segment     
\deff
\phantom{{\sf Example:} }\he$\gamma|_{(-\epsilon,2\pi)}$ is a free segment  for each $\epsilon>0$. \hspace*{\fill}$\ddagger$
\end{enumerate}}
\endgroup
\end{example}
\noindent
We now briefly want to outline the results obtained in this section. For this, let $\gamma\colon I\rightarrow M$ be immersive and free, and $A\subseteq I$ maximal. Then, (as we have seen above) $A$ is closed in $I=(i',i)$,  hence  of the form $(i',i)$ or $(i',\tau]$ or $[\tau,i)$ or $[c,c']$ (compact). In the case $A=I$ there is nothing to show,  because then $I$ is the only maximal interval. In the case $A\subset I$ ($\gamma$ is free but not a free segment) we will find the following statements under the additional assumption that $\wm$ is sated:
\begingroup
\setlength{\leftmargini}{12pt}
\begin{itemize}
\item
If $\gamma$ admits no compact maximal interval, then there exists $\tau\in I$ uniquely determined such that $(i',\tau]$ and $[\tau,i)$ are the only maximal intervals, whereby
\begin{align*}
	g\cdot \gamma|_{(i',\tau]}\psim \gamma|_{[\tau,i)}\qquad\text{holds for}\qquad [g]\in G\slash G_\gamma\qquad\text{unique}.
\end{align*}
\item
If $\gamma$ admits a compact maximal interval $A=[a_-,a_+]$, then there exist unique classes $[g_{-1}],[g_{1}]\in G\slash G_\gamma$ as well as unique intervals $A_{-1},A_1$ closed in $I$ (and maximal if compact) such that $g_{\pm 1}\cdot \gamma|_A\psim \gamma|_{A_{\pm 1}}$ and $A\cap A_{\pm 1} =\{a_\pm\}$ holds. Inductively,  we will construct a decomposition of $I$ into free intervals closed in $I$ such that the corresponding subcurves are related to $\gamma|_A$ in an  analogous way. The two different cases that can occur (positive and negative case) are discussed in Sect.\ \ref{jsdjklsdjklsd}.
\end{itemize}
\endgroup
\noindent
Before we can formulate the above statements precisely, we  first need to provide some further facts and definitions; which is the content of Sect.\ \ref{ouisduoisduiosd} and Sect.\ \ref{cnccbcyxmcxyxcycyx}.
\subsubsection{Decompositions}
\label{ouisduoisduiosd}
Let $\CN$ denote the set of all subsets of $\ZZ$ that are of the form\footnote{If $\cn_-=-\infty$ holds, then $\cn_-\leq n$ means $n\in \ZZ$. Analogously, if $\cn_+=\infty$ holds, then $n\leq \cn_+$ means $n\in \ZZ$.} 
\begin{align*}
	\cn=\{n\in \ZZ_{\neq 0}\: | \: \cn_{-} \leq n \leq \cn_+\}\qquad\text{for}\qquad  \cn_-,\cn_+ \in \ZZ_{\neq 0}\cup \{-\infty,\infty\} \qquad\text{with}\qquad \cn_-< 0< \cn_+.
\end{align*}
\begin{definition}
\label{dskskjsdkjdskjdsds98ds98dsdsdsdsds}
A decomposition of an interval $D$ is a family $\{a_n\}_{n\in \cn}\subseteq \innt[D]$ with $\cn\in \CN$, such that $a_m<a_{n}$ holds for all $\cn\ni m < n\in \cn$. We  
set
\begin{align*}
A_n:=[a_{n-1},a_{n}]\quad\text{ for }\quad \cn_-< n\leq -1, \qquad\quad A_0:=&\:[a_{-1},a_1],\qquad\quad A_n:=[a_n,a_{n+1}]\quad\text{ for }\quad 1\leq n<\cn_+\\[2pt]
\text{as }&\text{well as}\\[1pt]
A_{\cn_-}:=D\cap(-\infty,a_{\cn_-}]\:\:\: \text{if}\:\:\: \cn_-\neq -\infty \:\:\: \text{holds} \qquad\quad&\text{and}\qquad\quad 
A_{\cn_+}:=D\cap [a_{\cn_+},\infty)\:\:\: \text{if}\:\:\: \cn_+\neq \infty\:\:\: \text{holds}. 
\end{align*}
\end{definition}
\noindent
Next, given a fixed analytic curve $\gamma\colon D\rightarrow M$, we let $[g]$ denote the class of $g\in G$ in $G\slash G_\gamma$; and observe  that $G\slash G_{\gamma}=G\slash G_{\gamma|_{D'}}$ holds for each interval $D'\subseteq D$ by Lemma \ref{lemma:stabi}. 
The following definitions are central: 
\begin{definition}
\label{fdsafdsdfs}
Let $\gamma\colon I\rightarrow M$ be immersive and free.
\begingroup
\setlength{\leftmargini}{16pt}
{
\renewcommand{\theenumi}{\arabic{enumi})} 
\renewcommand{\labelenumi}{\theenumi}
\begin{enumerate}
\item
\label{fdsafdsdfs1}
Let $\tau\in I=(i',i)$ be given such that $(i',\tau]$ and $[\tau,i)$ are free intervals.
\begingroup
\setlength{\leftmarginii}{12pt}
\begin{itemize}
\item
A {\bf $\boldsymbol{\tau}$-decomposition} of $\gamma$ is  a class $[g]\subseteq G_{\gamma(\tau)}\setminus G_\gamma$, such that\hspace{2pt}\footnote{We note that \eqref{dsoioidsoidslkdslkdsdsds8787ds87ds874324343} already implies  $[g]\subseteq G\setminus G_\gamma$ (as 
$\gamma$ is injective on a neighbourhood of $\tau$) as well as $[g] \subseteq G_{\gamma(\tau)}$ (by our conventions in Sect.\ \ref{repari}), hence $[g]\subseteq  G_{\gamma(\tau)}\setminus G_\gamma$.  
In particular,  we must have $G_{\gamma(\tau)}\neq \{e\}$, as $g\notin G_\gamma \ni e$.}  
\begin{align}
\label{dsoioidsoidslkdslkdsdsds8787ds87ds874324343}
g\cdot \gamma|_{(i',\tau]}\psim \gamma|_{[\tau,i)}
\quad \text{w.r.t.} \quad\mu.
\end{align}
\item
The $\boldsymbol{\tau}$-decomposition $[g]$ is said to be {\bf faithful} \deff\he $g'\cdot \gamma|_{(i',\tau]} \cpsim \gamma$ w.r.t.\ $\rho$ implies that 
\begin{align*}
\text{either}\qquad\:\:\:[g']=[e]\quad\wedge\quad\ovl{\rho}|_{(i',\tau]}=\id_{(i',\tau]}\qquad\:\:\:\text{or}\qquad\:\:\: [g']=[g]\quad\wedge\quad \ovl{\rho}|_{\dom[\mu]}=\mu\qquad\:\:\:\text{holds.}
\end{align*}
\end{itemize}
\endgroup
\item
\label{fdsafdsdfs2}
Let $A\subseteq I$ be compact and free. 
\begingroup
\setlength{\leftmarginii}{12pt}
\begin{itemize}
\item
An {\bf $\boldsymbol{A}$-decomposition} of $\gamma$ is a pair $(\{a_n\}_{n\in \cn},\{[g_n]\}_{n\in \cn})$ with $\{a_n\}_{n\in \cn}$ a decomposition of $I$ and $\{g_n\}_{n\in \cn}\subseteq G$, such that 
$A_0= A$ and\: $[g_{\pm1}]\neq [e]$ holds, as well as 
\begin{align}
\label{sdfdsf}	
g_n\cdot \gamma|_{A}\psim \gamma|_{A_n}\quad\text{w.r.t.}\quad \mu_n\qquad\quad\forall\:  n\in \cn.
\end{align} 
We set $\mu_0:=\id_{A}$ and $g_0:= e$. 
\item
An $A$-decomposition $(\{a_n\}_{n\in \cn},\{g_n\}_{n\in \cn})$ is said to be {\bf faithful} \deff the following implication holds:
\begin{align*}
	g\cdot \gamma|_A \cpsim \gamma\:\:\:\text{w.r.t.}\:\:\: \rho\qquad\Longrightarrow \qquad 
	[g]=[g_n]\:\:\:\text{and}\:\:\: \ovl{\rho}|_{\dom[\mu_n]}=\mu_n 
	\:\:\:\text{for some unique}\:\:\: n\in \cn\cup \{0\}	
\end{align*}
\end{itemize}
\endgroup
Given a fixed $A$-decomposition $(\{a_n\}_{n\in \cn},\{[g_n]\}_{n\in \cn})$  
of $\gamma$, then   
we define
\begin{align}
\label{ffffs}
\begin{split} 
	 h_n:= g_{n}\cdot g_{n+1}^{-1}\quad\text{for}\quad \cn_-\leq n\leq -2,\qquad h_{\pm 1}&:= g_{\pm 1},\qquad
	h_n:= g_{n}\cdot g_{n-1}^{-1}\quad\text{for}\quad 2\leq n\leq \cn_+,
	\\[2pt]
\text{s}&\text{o that}\\[1pt]
	h_n\cdot \gamma|_{A_{n+1}}\psim\gamma|_{A_n}\quad\text{for}\quad \cn_-\leq n\leq -1\qquad\:\:&\:\text{and}\qquad\:\: h_n\cdot \gamma|_{A_{n-1}}\psim\gamma|_{A_n}\quad\text{for}\quad 1\leq n\leq \cn_+.
\end{split}
\end{align}
\end{enumerate}}
\endgroup
\end{definition}

\subsubsection{Elementary Properties}
\label{cnccbcyxmcxyxcycyx}
Before we prove the existence of decompositions, we now first discuss their elementary properties. 

\begin{remark}
\label{odspdspodspodspods}
Assume that we are in the situation of Definition \ref{fdsafdsdfs}.\ref{fdsafdsdfs2}. 
\begingroup
\setlength{\leftmargini}{12pt}
\begin{itemize}
\item
We will see in Lemma \ref{eoder}.\ref{eoder2} below that $[g_{\pm1}]\neq [e]$ and $g_{\pm 1}\cdot \gamma|_A\psim \gamma|_{A_{\pm 1}}$ already imply that $A$ is maximal. 
\item
Conversely, if we would require $A$ to be maximal right from the beginning, then $[g_{\pm1}]\neq [e]$ would follow from $g_{\pm 1}\cdot \gamma|_A\psim \gamma|_{A_{\pm 1}}$.
\vspace{4pt}
 
In fact, assume that $A$ is maximal; and that $[g_1]=[e]$ holds, hence $\gamma|_A\psim \gamma|_{A_{1}}$. Then, if $g\cdot \gamma|_{A\cup A_1}\cpsim \gamma|_{A\cup A_1}$ holds for some $g\in G$, we necessarily have 
\begin{align*}
	g\cdot \gamma|_{A}\cpsim \gamma|_{A}\qquad\:\:\text{or}\qquad\:\: g\cdot \gamma|_{A_1}\cpsim \gamma|_{A_1}\qquad\:\:\text{or}\qquad\:\: g\cdot \gamma|_{A}\cpsim \gamma|_{A_1}\qquad\:\:\text{or}\qquad\:\: g\cdot \gamma|_{A_1}\cpsim \gamma|_{A}.
\end{align*} 
In each of theses cases, we have $g\cdot \gamma|_{A}\cpsim \gamma|_{A}$ (use $\gamma|_A\psim \gamma|_{A_{1}}$ in the last three cases), hence $g\in G_\gamma$. This shows that $A\cup A_1$ is free, which contradicts that $A$ is maximal. The same argumentation yields a contradiction to the assumption $[g_{-1}]= [e]$.\hspace*{\fill}$\ddagger$
\end{itemize}
\endgroup
\end{remark}
\noindent  
Similar arguments as in (the second point in) Remark \ref{odspdspodspodspods} yield the following statements:
\begin{lemma}
\label{eoder}
Let $\gamma\colon I\rightarrow M$ be immersive and free.
\begingroup
\setlength{\leftmargini}{16pt}
{
\renewcommand{\theenumi}{\arabic{enumi})} 
\renewcommand{\labelenumi}{\theenumi}
\begin{enumerate}
\item
\label{eoder1}
If $[g]$ is a $\tau$-decomposition of $\gamma$, then we have the implication:
\begin{align*}
	A\subseteq I \quad\text{free}\qquad\quad\Longrightarrow\qquad\quad A\subseteq (i',\tau]\quad\vee\quad A\subseteq [\tau,i)
\end{align*}
In particular, the intervals $(i',\tau]$ and $[\tau,i)$ are maximal, and the only maximal intervals.
\item
\label{eoder2}
If $(\{a_n\}_{n\in \cn},\{[g_n]\}_{n\in \cn})$ is an $A$-decomposition of $\gamma$, then $A$ is maximal. Moreover, if $\wm$ is sated, then
\begingroup
\setlength{\leftmarginii}{16pt}
\begin{itemize}
\item[a)]
\vspace{-4pt}
$\cn_-=-\infty$ implies that for each $t\in I$, we have $a_n<t$ for some $n\in \cn$.
\item[b)]
\vspace{2pt}
$\cn_+=\phantom{-}\infty$ implies that for each $t\in I$, we have $t< a_n$ for some $n\in \cn$.
 \end{itemize}
\endgroup
\item
\label{eoder3}
If $[g]$ is a $\tau$-decomposition of $\gamma$, then there cannot exist any other decomposition of $\gamma$. 
\end{enumerate}}
\endgroup
\end{lemma}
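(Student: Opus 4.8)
The plan is to treat the three parts in order, since later parts lean on earlier ones, and to exploit throughout that $G_{\gamma|_{D'}}=G_\gamma$ (Lemma \ref{lemma:stabi}), that free intervals are closed in $I$ (already observed after Definition \ref{asffd}), and the relation-tracking machinery of Subsection \ref{repari}, together with Corollary \ref{dfdgttrgf} and Lemma \ref{sdffsd}.

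For Part \ref{eoder1}: Since $[g]$ is a $\tau$-decomposition, $g\cdot\gamma|_{(i',\tau]}\psim\gamma|_{[\tau,i)}$ holds; in particular $(i',\tau]$ and $[\tau,i)$ are free by definition. To see they are maximal, suppose some free $A'$ properly contains $(i',\tau]$; then $A'$ is an interval closed in $I$ containing $(i',\tau]$ and some point $>\tau$, so $[\tau,\tau+\epsilon]\subseteq A'$ for some $\epsilon>0$. Now $g\cdot\gamma|_{(i',\tau]}\psim\gamma|_{[\tau,i)}$ means (cf.\ the notation at the end of Subsection \ref{repari}) that $g\cdot\gamma$ and $\gamma$ overlap across $\tau$ via the diffeomorphism $\mu$; restricting $\mu$ to a small compact neighbourhood $J\ni\tau$ inside $A'$ gives $g\cdot\gamma|_J\cpsim\gamma|_{J'}$ with $J,J'\subseteq A'$, hence $g\cdot\gamma|_{A'}\cpsim\gamma|_{A'}$, hence $g\in G_{\gamma|_{A'}}=G_\gamma$, contradicting $[g]\neq[e]$. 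The same argument rules out a free $A'$ properly containing $[\tau,i)$, and any free interval meeting both $(i',\tau)$ and $(\tau,i)$ contains a neighbourhood of $\tau$ and is handled identically. So $(i',\tau]$, $[\tau,i)$ are maximal, and no other maximal interval can exist because any free interval is contained in one of these two (a free interval cannot straddle $\tau$, by the argument just given, so it lies in $(i',\tau]$ or in $[\tau,i)$).

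For Part \ref{eoder2}: Maximality of $A=A_0$ follows as in the Remark after Definition \ref{fdsafdsdfs}: if $A$ were not maximal, some free $A'\supsetneq A$ exists; using $g_{\pm1}\cdot\gamma|_A\psim\gamma|_{A_{\pm1}}$ together with $[g_{\pm1}]\neq[e]$ one derives, exactly as in that Remark (case analysis on how a translate overlaps $A\cup A_{\pm1}$), that $g_{\pm1}\in G_\gamma$, a contradiction. For a) and b): if $\cn_-=-\infty$, then by definition $I=\bigcup_{n\in\cn}A_n$ and the $A_n$ for $n\to-\infty$ are the compact blocks $[a_{n-1},a_n]$, so their union is $(\inf I,\ a_{-1}]$ minus nothing — more precisely, any $t\in I$ with $t\le a_{-1}$ lies in some $A_n$ with $n<0$, forcing $a_{n-1}<t$, i.e.\ $a_m<t$ for $m=n-1\in\cn$; and if $t>a_{-1}$ then already $a_{-1}<t$. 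Symmetrically for b).

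For Part \ref{eoder3}: Suppose $[g]$ is a $\tau$-decomposition and some other decomposition also exists. A second $\tau'$-decomposition with $\tau'\neq\tau$ is impossible because, by Part \ref{eoder1} applied to each, the set of maximal intervals would be $\{(i',\tau],[\tau,i)\}$ and simultaneously $\{(i',\tau'],[\tau',i)\}$. A second $\tau$-decomposition $[g'']\neq[g]$ is impossible: both give $g\cdot\gamma|_{(i',\tau]}\psim\gamma|_{[\tau,i)}$ and $g''\cdot\gamma|_{(i',\tau]}\psim\gamma|_{[\tau,i)}$, so $g^{-1}g''\cdot\gamma|_{J}\cpsim\gamma|_{J}$ for $J$ a neighbourhood of $\tau$ (composing the two overlaps and invoking the uniqueness of the diffeomorphism from \eqref{dsffdffd}), whence $g^{-1}g''\in G_\gamma$, i.e.\ $[g'']=[g]$. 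Finally, an $A$-decomposition for some compact free $A$ cannot coexist with a $\tau$-decomposition: by Part \ref{eoder2} such an $A$ is maximal, hence $A$ is one of the maximal intervals, but by Part \ref{eoder1} every maximal interval is $(i',\tau]$ or $[\tau,i)$, neither of which is compact (both are half-open, since $i',i\notin I$) — contradiction. This exhausts the possibilities, since a decomposition of a free curve is by definition either a $\tau$-decomposition or an $A$-decomposition.

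The main obstacle I anticipate is the bookkeeping in Part \ref{eoder1} and the uniqueness step in Part \ref{eoder3}: one must be careful that "$g\cdot\gamma|_{(i',\tau]}\psim\gamma|_{[\tau,i)}$" unpacks (via the three-case definition at the end of Subsection \ref{repari}) into an honest $\cpsim$-relation on a genuine \emph{open} interval around $\tau$ so that Lemma \ref{lemma:stabi} can be applied to conclude membership in $G_\gamma$; the positivity/negativity of the comparison diffeomorphism $\mu$ at $\tau$ (controlled by Corollary \ref{dfdgttrgf}) is what guarantees the overlap is two-sided and not degenerate. Everything else is a routine assembly of already-established facts.
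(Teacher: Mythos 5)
Most of your proposal tracks the paper's own argument: Part \ref{eoder1} (any free interval straddling $\tau$ would force $g\cdot\gamma|_A\cpsim\gamma|_A$, hence $[g]=[e]$), the maximality claim in Part \ref{eoder2} (via the Remark after Definition \ref{fdsafdsdfs}), and Part \ref{eoder3} (an $A$-decomposition is excluded because a compact maximal interval cannot coexist with the conclusion of Part \ref{eoder1}, and two $\tau$-decompositions force $g\cdot\gamma|_{(i',\tau]}\cpsim g'\cdot\gamma|_{(i',\tau]}$, hence $[g']=[g]$) are all essentially the paper's proof.

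However, your treatment of a) and b) in Part \ref{eoder2} has a genuine gap. You write that ``by definition $I=\bigcup_{n\in\cn}A_n$'' --- but this is not part of Definition \ref{fdsafdsdfs}. A decomposition of $I$ is merely a strictly increasing family $\{a_n\}_{n\in\cn}\subseteq\innt[I]$; when $\cn_-=-\infty$ there is no block $A_{\cn_-}=I\cap(-\infty,a_{\cn_-}]$, and nothing in the definition prevents $\lim_{n\to-\infty}a_n=t$ for some $t>i'$, in which case $\bigcup_{n}A_n=(t,\dots]$ fails to cover $(i',t]$. (The covering statement $I=\bigcup_n A_n$ appears only in the informal summary in the introduction, as a \emph{consequence} of this lemma.) So a) and b) are precisely the nontrivial content here, and they are where satedness of $\wm$ enters: if $\lim_{n\to-\infty}a_n=t>i'$, then for every $\epsilon>0$ the compact blocks $A_n$ with $n$ sufficiently negative are contained in $[t,t+\epsilon)$, whence $g_n\cdot\gamma(A)\subseteq\gamma(A_n)\subseteq\gamma([t,t+\epsilon))$; choosing a subsequence with $\epsilon=1/n$ squeezes the images of two distinct points of $A$ toward $\gamma(t)$, contradicting that $\gamma(t)$ is sated. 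Your proposal as written assumes the conclusion of a) and b) rather than proving it.
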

\begin{proof}
Confer Appendix \ref{appC1}.
\end{proof}
\noindent
Moreover, from Lemma \ref{sdffsd} and Lemma \ref{sshift} we obtain the following two statements.
\begin{lemma}
\label{taufaith}
Each $\tau$-decomposition is faithful. 
\end{lemma}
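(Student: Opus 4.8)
The plan is to exploit two facts already available about a $\tau$-decomposition $[g]$ with associated diffeomorphism $\mu$: that $g\in G_{\gamma(\tau)}$ and that $\mu$ is the unique (orientation reversing, $\tau\mapsto\tau$) reparametrisation realising $g\cdot\gamma|_{(i',\tau]}\psim\gamma|_{[\tau,i)}$, and that by Lemma \ref{eoder}.\ref{eoder1} the intervals $(i',\tau]$ and $[\tau,i)$ are the only maximal intervals of $\gamma$. So suppose $g'\cdot\gamma|_{(i',\tau]}\cpsim\gamma$ w.r.t.\ an analytic diffeomorphism $\rho\colon J\rightarrow J'$, let $\ovl{\rho}$ be its maximal analytic immersive extension, and set $\CM:=(i',\tau]\cap\ovl{\rho}^{-1}(I)$ and $\CM':=\ovl{\rho}(\CM)$, so that $g'\cdot\gamma|_\CM=\gamma\cp\ovl{\rho}|_\CM$ and $\CM$ is maximal with this property (cf.\ Subsection \ref{fdggdrere}); note $\CM,\CM'$ have non-empty interior. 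The first step is to observe that $\CM'$ is a \emph{free} interval: since $g'\cdot\gamma|_{(i',\tau]}$ is a free segment (a translate of the free segment $\gamma|_{(i',\tau]}$), so is its restriction $g'\cdot\gamma|_\CM$, and hence so is $\gamma|_{\CM'}=(g'\cdot\gamma|_\CM)\cp(\ovl{\rho}|_\CM)^{-1}$. As the only maximal intervals are $(i',\tau]$ and $[\tau,i)$, this forces $\CM'\subseteq(i',\tau]$ or $\CM'\subseteq[\tau,i)$ (not both, $\CM'$ not being a singleton). This splits the proof into two cases.

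In the case $\CM'\subseteq(i',\tau]$, the overlap $g'\cdot\gamma(\CM)=\gamma(\CM')$ witnesses $g'\cdot\gamma|_{(i',\tau]}\cpsim\gamma|_{(i',\tau]}$, so freeness of $\gamma|_{(i',\tau]}$ and Lemma \ref{lemma:stabi} give $g'\in G_\gamma$, i.e.\ $[g']=[e]$; in particular $g'\cdot\gamma=\gamma$, hence $\gamma|_\CM=\gamma\cp\ovl{\rho}|_\CM$. It then suffices to show $\ovl{\rho}|_\CM=\id_\CM$, since by analyticity and maximality this forces $\ovl{\rho}=\id_\RR$ and thus $\ovl{\rho}|_{(i',\tau]}=\id_{(i',\tau]}$. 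If the witnessing intervals $J,J'$ meet, then injectivity of $\gamma$ on $J'$ already forces $\rho=\id$ on $J\cap J'$, hence $\ovl{\rho}=\id$ by Lemma \ref{corgleich}. If $J\cap J'=\emptyset$, then $\gamma|_{(i',\tau]}$ would be self related, and one rules this out using Corollary \ref{dfdgttrgf} (applied to $\ovl{\gamma}$ on a compact interval carrying the resulting negative self-reparametrisation), Lemma \ref{setsdffds} (in the positive case, after iterating $\ovl{\rho}$), Lemma \ref{sdffsd}, and the fact that $\gamma|_I$ cannot be free (else $I$ would be a maximal interval, contradicting Lemma \ref{eoder}.\ref{eoder1}); the same analysis also disposes of a non-trivial $\ovl{\rho}$ reaching $\partial I$ at an end of $\CM$.

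In the case $\CM'\subseteq[\tau,i)$, the overlap $g'\cdot\gamma(\CM)=\gamma(\CM')$ gives $g'\cdot\gamma|_{(i',\tau]}\cpsim\gamma|_{[\tau,i)}$; comparing this overlap (which, by maximality of $\CM$, reaches towards $\tau$) with the one realising $g\cdot\gamma|_{(i',\tau]}\psim\gamma|_{[\tau,i)}$ — both occurring inside the free segment $\gamma|_{[\tau,i)}$ — yields $g'\cdot\gamma|_{(i',\tau]}\cpsim g\cdot\gamma|_{(i',\tau]}$, whence $(g^{-1}g')\cdot\gamma|_{(i',\tau]}\cpsim\gamma|_{(i',\tau]}$ and so $g^{-1}g'\in G_\gamma$ by freeness and Lemma \ref{lemma:stabi}; that is $[g']=[g]$ and $g'\cdot\gamma=g\cdot\gamma$. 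Finally, $\ovl{\rho}$ and $\mu$ are both analytic immersive reparametrisations of the single immersion $g'\cdot\gamma|_{(i',\tau]}=g\cdot\gamma|_{(i',\tau]}$ onto $\gamma$, with images in $[\tau,i)$; by the uniqueness of such reparametrisations once a matching base point is fixed (Lemma \ref{lemma:BasicAnalyt1}, Lemma \ref{lemma:BasicAnalyt2} and the discussion in Subsection \ref{repari}) they agree wherever both are defined, so $\dom[\mu]\subseteq\dom[\ovl{\rho}]$ and $\ovl{\rho}|_{\dom[\mu]}=\mu$, as required.

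The main obstacle is the concluding step of each case: showing that the \emph{extended} reparametrisation $\ovl{\rho}$ really is $\id$ on all of $(i',\tau]$ (Case A), resp.\ really coincides with $\mu$ on all of $\dom[\mu]$ (Case B). This requires careful control of the orientation of $\ovl{\rho}$ and of the mutual position of $\CM$, $\CM'$ and $\dom[\mu]$, and in Case A the elimination — via Corollary \ref{dfdgttrgf}, Lemma \ref{setsdffds} and Lemma \ref{sdffsd} — of a self related $\gamma|_{(i',\tau]}$; the book-keeping of half-open versus compact domains (as in the three bullet cases defining $\psim$ in Subsection \ref{repari}) is the delicate part, but it is exactly the kind of reparametrisation analysis already set up in Subsection \ref{repari}.
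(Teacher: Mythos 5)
Your skeleton matches the paper's: split according to whether the overlap of $g'\cdot\gamma|_{(i',\tau]}$ with $\gamma$ lands in $(i',\tau]$ (giving $[g']=[e]$) or in $[\tau,i)$ (giving $[g']=[g]$), then identify the reparametrisation. The dichotomy and the class identification are essentially right (though in Case B, when $\im[\mu]=[\tau,j)\subsetneq[\tau,i)$, the two overlaps inside $\gamma([\tau,i))$ need not intersect, and the paper then gets $g'\cdot g^{-1}\in G_\gamma$ rather than $g^{-1}\cdot g'\in G_\gamma$ and has to pass through \eqref{stabiconji} — a sub-case you should not skip). The real gap is in the final step of each case, which is where the content of faithfulness actually lives.

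In Case B you deduce $\ovl{\rho}|_{\dom[\mu]}=\mu$ from ``uniqueness of reparametrisations once a matching base point is fixed'', but you never exhibit a matching base point: \eqref{dsffdffd} applies only after you know $\ovl{\rho}(t)=\mu(t)$ for some common $t$, and a priori $\rho$ could realise a \emph{shifted} overlap of $g'\cdot\gamma|_{(i',\tau]}$ with $\gamma|_{[\tau,i)}$ that does not send $\tau$ to $\tau$ — ruling out precisely such extra overlaps is what the lemma asserts, so the appeal to uniqueness begs the question. The paper closes this with Lemma \ref{sshift}: if $J\nsubseteq\dom[\mu]$, or $J\subseteq\dom[\mu]$ with $\rho\neq\mu|_J$, then $\delta|_{(i',\tau]}$ (resp.\ $\delta|_{[\tau,i)}$) is self related, whence Lemma \ref{sdffsd} yields $\delta|_{(i',\tau]}\cpsim\delta|_{(\tau,\tau+\epsilon)}$; composing with $g\cdot\delta|_{(i',\tau]}\psim\delta|_{[\tau,i)}$ gives $g\cdot\delta|_{(i',\tau]}\cpsim\delta|_{(i',\tau]}$, hence $[g]=[e]$, contradicting the definition of a $\tau$-decomposition. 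In Case A you do reach for self relatedness, but the contradiction you name — ``$\gamma|_I$ cannot be free'' — is not reachable from Corollary \ref{dfdgttrgf}, Lemma \ref{setsdffds} and Lemma \ref{sdffsd} alone: self relatedness of $\gamma|_{(i',\tau]}$ only produces an overlap of $\gamma|_{(i',\tau]}$ with $\gamma|_{[\tau,i)}$, which by itself contradicts nothing about freeness; upgrading that to ``$I$ is free'' is Corollary \ref{ffadssaxy}, whose proof already uses faithfulness, so that route is circular. The correct contradiction is again $[g]=[e]$, exactly as above. With these two repairs your argument collapses onto the paper's proof.
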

\begin{proof}
Confer Appendix \ref{appC2}.
\end{proof}
\begin{lemma}
\label{dsfdsffds}
Let $\wm$ be sated; and let $\gamma\colon I\rightarrow M$ be immersive and free. If $A$ is a compact free interval, then  there exists at most one $A$-decomposition of $\gamma$; and this $A$-decomposition is faithful if it exists.\footnote{In other words: If $\wm$ is sated, then an $A$-decomposition $(\{a_n\}_{n\in \cn},\{[g_n]\}_{n\in \cn})$ of $\gamma$ is automatically faithful, as well as  unique in the sense that the index set $\cn\in \CN$, the reals $\{a_n\}_{n\in \cn}$, the classes $\{[g_n]\}_{n\in \cn}$, and the analytic diffeomorphisms $\{[\mu_n]\}_{n\in \cn}$ are already uniquely determined by the compact free interval $A$ (which is necessarily maximal by Lemma \ref{eoder}.\ref{eoder2}).} 
\end{lemma}
\begin{proof}
Confer Appendix \ref{appC3}.
\end{proof}
\subsection{Existence of Decompositions}
\label{pofdpofdpofdpofdpofd}
After we have discussed 
 the elementary properties of decompositions in Sect.\ \ref{sfd}, we now prove their existence. We first treat the situation where a compact maximal interval exists; and then discuss the other (non-compact) case. For this, we will need the following two statements:
\begin{lemma}
\label{freemax}
Let $\gamma\colon I \rightarrow M$ and $\gamma'\colon I' \rightarrow M$ be analytic immersions, such that $g\cdot \gamma|_D = \gamma'\cp \rho$ holds for some $g\in G$ and some analytic diffeomorphism $\rho\colon I\supseteq D\rightarrow D'\subseteq I'$. 
\begingroup
\setlength{\leftmargini}{16pt}
{
\renewcommand{\theenumi}{{\arabic{enumi}})} 
\renewcommand{\labelenumi}{\theenumi}
\begin{enumerate}
\item
\label{freemax1}
	If $D$ is free w.r.t.\ $\gamma$, then $D'$ is free w.r.t.\ $\gamma'$.
\item
\label{freemax2}
	If $D$ is compact and maximal w.r.t.\ $\gamma$, then $D'$ is compact and maximal w.r.t.\ $\gamma'$. 	
\end{enumerate}}
\endgroup
\end{lemma}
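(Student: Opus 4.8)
The plan is to reduce everything to the defining property of a free segment (Definition \ref{fdf}) together with the invariance of the relation $\cpsim$ under the group action and under reparametrization, which was recorded just before Lemma \ref{lem:maxextKomp}. For part \ref{freemax1}, I would argue as follows. Suppose $h\cdot \gamma'|_{D'}\cpsim \gamma'|_{D'}$ for some $h\in G$; I must show $h\in G_{\gamma'}$, equivalently $h\cdot \gamma'|_{D'}=\gamma'|_{D'}$. Since $g\cdot \gamma|_D=\gamma'\cp \rho$ with $\rho\colon D\to D'$ an analytic diffeomorphism, we have $\gamma'|_{D'}=g\cdot\gamma\cp\rho^{-1}$ on $D'$, hence $\gamma'|_{D'}\cpsim g\cdot\gamma|_D$; conjugating the hypothesis by $g$ and using $g\cdot\gamma'\cpsim\gamma'\Leftrightarrow\gamma'\cpsim\gamma'$ style equivalences, I get $(g^{-1}hg)\cdot\gamma|_D\cpsim\gamma|_D$. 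Because $D$ is free w.r.t.\ $\gamma$, this forces $g^{-1}hg\in G_\gamma=G_{\gamma|_D}$ (Lemma \ref{lemma:stabi}), i.e.\ $(g^{-1}hg)\cdot\gamma|_D=\gamma|_D$. Applying $g\cdot(-)$ and precomposing with $\rho^{-1}$ then gives $h\cdot\gamma'|_{D'}=g\cdot\gamma\cp\rho^{-1}=\gamma'|_{D'}$, as desired. One small bookkeeping point to be careful about: the chain of $\cpsim$-equivalences should be written with the explicit open subintervals $J\subseteq D$, $J'=\rho(J)$ on which the curves are embeddings, so that the diffeomorphism supplied by Lemma \ref{lemma:BasicAnalyt2} matches up with $\rho|_J$; but this is routine given the discussion in Subsection \ref{fdggdrere}.

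For part \ref{freemax2}, compactness of $D'=\rho(D)$ is immediate since $\rho$ is a homeomorphism (indeed an analytic diffeomorphism) on the compact interval $D$. Freeness of $D'$ is part \ref{freemax1}. So it only remains to prove maximality: if $D'\subseteq B'\subseteq I'$ is a free interval properly containing $D'$, I derive a contradiction. The idea is to pull $B'$ back through the relation. Since $\rho\colon D\to D'$ is a diffeomorphism between compact intervals and $D'$ lies in the interior of $B'$ (because the inclusion is proper and $B'$ is an interval — here one uses that $D'$ is compact, hence at least one of its endpoints is an interior point of $B'$; if both endpoints of $D'$ were endpoints of $B'$ then $B'=D'$), the maximal analytic immersive extension $\ovl\rho$ is defined on a neighbourhood of $D$ in $I$, and $g\cdot\gamma=\gamma'\cp\ovl\rho$ holds on the interval $C:=I\cap\ovl\rho^{-1}(B')$, which is an interval strictly containing $D$. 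By part \ref{freemax1} applied to $g^{-1}\cdot\gamma'|_{B'}=\gamma\cp\ovl\rho^{-1}$ (with the roles of $\gamma,\gamma'$ swapped), the interval $\ovl\rho^{-1}(B')\supseteq C\supsetneq D$ is free w.r.t.\ $\gamma$, contradicting maximality of $D$ w.r.t.\ $\gamma$.

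The main obstacle I anticipate is not conceptual but the careful handling of the extension $\ovl\rho$ and the claim that $\ovl\rho^{-1}(B')$ genuinely contains an interval strictly larger than $D$: one must check that $\ovl\rho$ extends past the endpoints of $D$, which is exactly where "$D'$ in the interior of $B'$" is used, and one must be sure the pulled-back set is an interval and still contains $D$. This is precisely the type of reasoning carried out in the proof of Lemma \ref{lem:maxextKomp} (monotonicity of $\ovl\rho$, defined on an open interval around each point of the closure), so I would invoke that lemma's argument rather than reprove it. Everything else — the $\cpsim$-manipulations, the use of Lemma \ref{lemma:stabi} to pass between $G_\gamma$ and $G_{\gamma|_D}$, and the symmetry $g\cdot\gamma|_D=\gamma'\cp\rho\Leftrightarrow g^{-1}\cdot\gamma'|_{D'}=\gamma\cp\rho^{-1}$ — is formal.
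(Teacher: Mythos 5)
Your proposal is correct and follows essentially the same route as the paper: part 1 is the identical chain of $\cpsim$-conjugations reducing to freeness of $D$ via $g^{-1}hg\in G_\gamma$, and part 2 likewise extends $\rho$ to $\ovl{\rho}$ on a neighbourhood of the compact $D$ and applies part 1 with the roles of $\gamma,\gamma'$ swapped to transport the larger free interval back, contradicting maximality of $D$. The only (cosmetic) difference is that the paper pushes a free interval $D''\supset D'$ forward through $\ovl{\rho}^{-1}|_{J'}$, whereas you pull $B'$ back directly; your minor imprecisions (e.g.\ restricting the relation $g^{-1}\cdot\gamma'=\gamma\cp\ovl{\rho}^{-1}$ to $\ovl{\rho}(C)$ before invoking part 1) are exactly the bookkeeping the paper handles by shrinking $J$ so that $\ovl{\rho}(J)\subseteq I'$.
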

\begin{proof}
The proof is elementary, and can be found in Appendix \ref{appC4}.
\end{proof}

\begin{lemma}
\label{lemmfreeneig}
Let $\wm$ be sated, $\delta\colon [t',t]\rightarrow M$ immersive and free, and $[a',a]\subseteq [t',t]$ a free interval. Then, the following implications hold:
\begin{align}
\label{fre1}
	a<t \hspace{4pt}\qquad&\Longrightarrow \qquad [a,k]\quad\:\text{ is free for some}\quad a<k\leq t,\\
\label{fre2}
	t'<a' \qquad&\Longrightarrow \qquad [k',a']\quad\text{is free for some}\quad t'\leq k'<a'.
\end{align}
\end{lemma}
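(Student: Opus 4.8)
The plan is to prove \eqref{fre1}, since \eqref{fre2} follows by the same argument applied to the curve $t \mapsto \gamma(t'+t-t)$, i.e. after reversing the parametrization (which does not affect freeness). So assume $a < t$. The idea is to start from the maximal free interval containing $[a',a]$ that we get from Lemma \ref{maxim}, and to argue that it cannot terminate exactly at the point $a$.

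First I would invoke Lemma \ref{maxim} to obtain a maximal interval $A \subseteq [t',t]$ with $[a',a] \subseteq A$. Since $A$ is closed in $[t',t]$ (as noted right after Definition \ref{asffd}) and contains $[a',a]$, it is of the form $[c',c]$ (or a half-open/closed interval meeting an endpoint of $[t',t]$, but in all cases it is closed and contains $a$ in its closure). The key dichotomy is whether $a < c$ or $a = c$. In the first case we are done immediately: $[a,c] \subseteq A$ is a subinterval of a free interval, hence free, and $a < c \le t$, so $k := c$ works. So the real content is to rule out $c = a$, i.e. to show $A$ cannot have $a$ as its right endpoint while $a < t$.

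Suppose then that $c = a < t$, so $A = [c', a]$ is maximal but $a \in \mathrm{int}[t',t]$. By maximality, $[c', a + \epsilon]$ is not free for every $\epsilon > 0$ with $a + \epsilon \le t$; that is, there is $g \in G \setminus G_\gamma$ (equivalently $g \notin G_{\gamma|_{[c',a+\epsilon]}}$ by Lemma \ref{lemma:stabi}) with $g \cdot \gamma|_{[c',a+\epsilon]} \cpsim \gamma|_{[c',a+\epsilon]}$. On the other hand $g \notin G_{\gamma|_{[c',a]}}$ would force $g \cdot \gamma|_{[c',a]}$ not to be $\cpsim$-related to $\gamma|_{[c',a]}$, but $[c',a]$ is free; so in fact the overlap forcing non-freeness of $[c',a+\epsilon]$ must ``use'' the little bit $(a, a+\epsilon]$. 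I would make this precise by picking open intervals $J, J' \subseteq [c', a+\epsilon]$ with $g \cdot \gamma(J) = \gamma(J')$ on which $\gamma$ is an embedding, and arguing — via Lemma \ref{sshift} and Lemma \ref{sdffsd} applied to $\delta := \gamma|_{[c',a+\epsilon]}$ restricted suitably — that this relation, once it cannot be confined to $[c', a]$, produces a self-relation of $\gamma$ that overlaps $(a, a+\epsilon]$; combining with freeness of $[c', a]$ then yields $g \in G_\gamma$, a contradiction. The technical heart, and the main obstacle, is bookkeeping the maximal analytic immersive extension $\ovl{\rho}$ of $\rho := \gamma^{-1} \circ (g \cdot \gamma|_J)$ and the interval $\CM := [c',a+\epsilon] \cap \ovl{\rho}^{-1}([c',a+\epsilon])$: one must show (using \eqref{ddddd} of Lemma \ref{lem:maxextKomp}, Corollary \ref{dfdgttrgf}, and the sign analysis of $\dot{\ovl{\rho}}$ from Subsection \ref{repari}) that $\ovl{\rho}(\CM)$ cannot be contained in $[c',a]$, so that the relation genuinely extends past $a$, and then to transfer it back to a genuine overlap witnessing $g \in G_{\gamma|_{[c',a]}} = G_\gamma$.

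An alternative, perhaps cleaner, route for ruling out $c = a$: apply Lemma \ref{lemmfreeneig}'s hypotheses symmetrically. Since $a < t$, the point $a$ lies in the interior of $\dom[\gamma]$, so $\gamma$ is an embedding near $a$, and $[c', a]$ being maximal means every proper right-extension fails freeness. I would then take a decreasing sequence $k_n \downarrow a$ with $a < k_n \le t$, get $g_n \in G \setminus G_\gamma$ with $g_n \cdot \gamma|_{[c',k_n]} \cpsim \gamma|_{[c',k_n]}$, and run the argument of Corollary \ref{seque}.\ref{seque1} (with the roles of the endpoints reflected): by Lemma \ref{HL} applied appropriately, either $g_n \in G_{\gamma(a)}$ for all large $n$ — in which case Lemma \ref{stabbiii} forces $g_n \in G_\gamma$, contradiction — or one produces successive translates $g_n \cdot \gamma([a, k_0]) \subseteq \gamma([a, k_n])$ collapsing towards $\gamma(a)$, contradicting that $\gamma(a)$ is sated. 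Either way $c = a$ is impossible, so $c > a$ and $k := c$ proves \eqref{fre1}; then \eqref{fre2} follows by the reflection $t \mapsto t' + t - \cdot$ as noted. The main obstacle throughout is the same: carefully extracting, from ``$[c',a+\epsilon]$ is not free'', an overlap that provably involves the sliver beyond $a$ rather than living inside the free part $[c',a]$ — this is exactly where Lemmas \ref{sshift}, \ref{sdffsd}, \ref{HL} and \ref{stabbiii} together with satedness do the work.
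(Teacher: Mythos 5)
Your overall framing contains a genuine error: the dichotomy you set up does not match the statement to be proved. Ruling out $c=a$ (i.e.\ showing that the maximal interval $A=[c',c]\supseteq[a',a]$ extends strictly past $a$) would establish that $[a',k]$ is free for some $k>a$, which is a \emph{strictly stronger} claim than \eqref{fre1} — and it is false in general. Concretely, take Example~\ref{dfggf}: $\SO(2)$ acting by rotations on $\RR^2$ and $\gamma(s)=(s,s^3)$ restricted to $[t',t]=[-1,1]$, with $[a',a]=[-1,0]$. Here $[-1,0]$ is free and maximal (for every $\epsilon>0$ the rotation $g_\pi$ satisfies $g_\pi\cdot\gamma|_{[-1,\epsilon]}\cpsim\gamma|_{[-1,\epsilon]}$ while $g_\pi\notin G_\gamma$), so $c=a=0<t$; yet \eqref{fre1} holds because $[0,k]$ is free for every $0<k\le 1$. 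The free interval promised by the lemma starts afresh at $a$ and need not contain $[a',a]$. Consequently both of your routes break down at the same spot: in the first, the relation witnessing non-freeness of $[c',a+\epsilon]$ can legitimately pair an open interval inside $[c',a]$ with one inside $(a,a+\epsilon]$ (the $\tau$-decomposition situation), which yields no contradiction; in the second, $g_n\in G_{\gamma(a)}$ does \emph{not} put you in the hypotheses of Lemma~\ref{stabbiii}, because $g_n$ may flip a final segment of $[c',a]$ onto an initial segment of $[a,k_n]$ rather than map $\gamma([a,l])$ into $\gamma([a,k])$ — again exactly what $g_\pi$ does in the example.

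The paper instead negates the actual conclusion: assume $[a,k]$ is not free for \emph{every} $a<k\le t$. By Corollary~\ref{seque}.\ref{seque2} this makes $\gamma|_I$ continuously generated at $\tau:=a$ (for $I\subseteq(a',t)$ an open interval on which $\gamma$ embeds), so one can run Steps~I and~III of Subsection~\ref{regcase} to produce positive $g_n\in G\setminus G_{\gamma(\tau)}$ with $g_n\cdot\gamma(\tau)\to\gamma(\tau)$. The free interval $[a',a]$ sitting to the \emph{left} of $\tau$ is then what produces the contradiction: if some $g_n$ shifts $\tau$ left, or if its translate of $[l',\tau]$ reaches left of $\tau$, then $g_n\cdot\gamma|_{[a',a]}\cpsim\gamma|_{[a',a]}$ forces $g_n\in G_\gamma$; otherwise $g_n\cdot\gamma([l',\tau])\subseteq\gamma([\tau,\tau_n])$ with $\tau_n\to\tau$ contradicts satedness of $\gamma(\tau)$. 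So the role of $[a',a]$ is not as the seed of the free interval you are looking for, but as the obstruction that prevents continuous generation at $a$; your proposal would need to be restructured around this point.
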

\begin{proof}
Confer Appendix \ref{appC5}.
\end{proof}
\subsubsection{The Compact Case} 
As already mentioned above, we now prove the existence of an $A$-decomposition for the case that a given  free analytic immersive curve admits a compact maximal interval. The particular cases that can occur are discussed in detail in Sect.\ \ref{jsdjklsdjklsd}. 
\begin{proposition}
\label{prop:shifttrans}
Let $\wm$ be sated, $\gamma\colon [t',t]\rightarrow M$ immersive and free, and $[a',a]\subseteq [t',t]$ maximal. 
\begingroup
\setlength{\leftmargini}{15pt}
\begin{enumerate}
\item
\label{prop:shifttrans1}
If $a<t$ holds,  
then there exists $[e]\neq [g]\in G\slash G_\gamma$ uniquely determined by the condition
\begin{align}
\label{dsoioidsoisdoidsds98sd98ds98s98ddsdsdsdsds1}
g\cdot \gamma|_{[a',a]}\cpsim \gamma|_{[a,k]} \qquad\quad\forall\: a<k\leq t,
\end{align}
\vspace{-28pt}

and we have
\vspace{-3pt}
\begin{align}
\label{gl11}
	\text{ either}\:\qquad\quad  g\cdot \gamma|_{[a',s]}\psim_+ \gamma|_{[a,s']}\qquad&\text{for some}\qquad a'<s\leq a<s'\qquad\quad\qquad\quad\:\\[2pt]
\label{gl12}
	\text{ or}\qquad\qquad g\cdot \gamma|_{[s,a]}\hspace{2pt}\psim_- \gamma|_{[a,s']}\qquad&\text{for some}\qquad\qquad\:  s<a<s'.
\end{align}
\item
\label{prop:shifttrans2}
If $t'<a'$ holds, then there exists $[e]\neq [g']\in G\slash G_\gamma$ uniquely determined by the condition
\begin{align}
\label{dsoioidsoisdoidsds98sd98ds98s98ddsdsdsdsds2}
g'\cdot \gamma|_{[a',a]}\cpsim \gamma|_{[k',a']}\qquad\quad\forall\: t'\leq k'<a',
\end{align}
\vspace{-28pt}

and we have
\vspace{-3pt}
\begin{align}
\label{gl21}
	\text{ either}\:\qquad\quad g'\cdot \gamma|_{[s,a]}\:\hspace{1pt}\psim_+ \gamma|_{[s',a']}\quad\:\:\:\:&\text{for some}\qquad s'<a'\leq s<a\qquad\quad\qquad\quad\\[2pt]
\label{gl22}
	\text{ or}\qquad\qquad g'\cdot \gamma|_{[a',s]}\psim_- \gamma|_{[s',a']}\quad\:\:\:\:&\text{for some}\qquad   s'<a'<s.
\end{align}
\end{enumerate}
\endgroup
\end{proposition} 
\begin{remark}
\label{fdfdffddffddffddf}
Figuratively speaking,
\begingroup
\setlength{\leftmargini}{12pt}
\begin{itemize}
\item 
\eqref{gl11} means that $g$\: ``right shifts''  
the segment 
$\gamma|_{[a',s]}$ to $\gamma|_{[a,s']}$.
\item
\eqref{gl12} means that $g$\: ``flips'' the segment $\gamma|_{[s,a]}$ at $\gamma(a)$ to $\gamma|_{[a,s']}$. 
\item 
\eqref{gl21} means that $g'$ ``left shifts'' the segment $\gamma|_{[s,a]}$ to $\gamma|_{[s',a']}$.
\item
\eqref{gl22} means  that $g'$ ``flips'' the segment $\gamma|_{[a',s]}$ at $\gamma(a')$ to $\gamma|_{[s',a']}$. 
\end{itemize}
\endgroup
\end{remark}
\begin{proof}[Proof of Proposition]
It suffices to prove Part \ref{prop:shifttrans1}, because Part \ref{prop:shifttrans2} follows analogously (or alternatively from  Part \ref{prop:shifttrans1} and  
Lemma \ref{freemax}, just by replacing $\gamma$ by $\gamma \cp \inv$ for $\inv\colon  [t',t]\ni s\mapsto t+t'-s\in [t',t]$). 
\vspace{6pt}

\noindent
Assume now first that there exists  $[e]\neq [g]\in G\slash G_\gamma$ with \eqref{dsoioidsoisdoidsds98sd98ds98s98ddsdsdsdsds1},  such that \eqref{gl11} or \eqref{gl12} holds:
\begingroup
\setlength{\leftmargini}{11pt}
\begin{itemize}
\item
Then automatically either \eqref{gl11} or \eqref{gl12} holds; because, if both \eqref{gl11} and \eqref{gl12} hold, then we have $\gamma|_{[a',r']}\psim_-\gamma|_{[r,a]}$ for certain $a'<r'$ and $r<a$, which contradicts Corollary \ref{dfdgttrgf} as $\gamma$ is immersive.
\item 
For $q\in G$, we have the implications
\begin{align*}
	q\cdot \gamma|_{[a',a]}\cpsim \gamma|_{[a,k]}\qquad\forall\: a<k\leq t\qquad\qquad&\Longrightarrow\qquad\qquad q\cdot \gamma|_{[a',a]}\cpsim g\cdot \gamma|_{[a',a]}\\[2pt]
	&\Longrightarrow\qquad\qquad \hspace{29pt}[q]=[g]
\end{align*}
(in the first step we have used \eqref{gl11}\slash\eqref{gl12},  and in the second step we have used that $[a',a]$ is free). 
This proves the uniqueness statement.
\end{itemize}
\endgroup
\noindent
Since both \eqref{gl11} and \eqref{gl12} obviously imply   \eqref{dsoioidsoisdoidsds98sd98ds98s98ddsdsdsdsds1}, it 
remains to show that there exists $[e]\neq [g]\in G\slash G_\gamma$ such that  \eqref{gl11} or \eqref{gl12} holds. We observe the following:
\begingroup
\setlength{\leftmargini}{11pt}
\begin{itemize}
\item
By Lemma \ref{lemmfreeneig}  
 and Corollary \ref{dfdsasasasassa}, 
there exists $a<\lambda\leq t$ such that $\gamma|_{[a,\lambda]}$ is a free segment 
 and an embedding.  
We fix
$\{t_n\}_{n\in \NN}\subseteq (a,\lambda]$ strictly decreasing with $\lim_n t_n=a$.
\item
The interval $[a',t_n]$ is not free for each $n\in \NN$, because $[a',a]$ is maximal. Hence, there exists 
$$
\{g_n\}_{n\in \NN}\subseteq G\backslash G_\gamma\qquad\text{with}\qquad g_n\cdot \gamma|_{[a',t_n]}\cpsim \gamma|_{[a',t_n]}\qquad\text{for all}\qquad n\in \NN.
$$ 
\end{itemize}
\endgroup 
\noindent
We observe the following:
\begingroup
\setlength{\leftmargini}{11pt}
\begin{itemize}
\item[$\triangleright$]
For each $n\in\NN$, we necessarily have
\begin{align*}
	g_n\cdot \gamma|_{[a',a]}\cpsim \gamma|_{[a,t_n]}\qquad\quad\text{or}\qquad\quad g_n\cdot \gamma|_{[a,t_n]}\cpsim \gamma|_{[a',a]},
\end{align*}
because 
 $[a',a]$ and $[a,t_n]$ are free intervals and $g\in G\setminus G_\gamma$ holds. 
\item[$\triangleright$]
Hence, replacing $g_n$ by $g_n^{-1}$ if necessary, we can assume that for each $n\in \NN$, we have 
\begin{align*}
 \gamma|_{[a,t_n]} \cpsim g_n\cdot\gamma|_{[a',a]} \qquad\:\: \stackrel{\text{Lemma }\ref{lemma:BasicAnalyt2}}{\Longrightarrow} \qquad\:\:  (\gamma|_{[a',\lambda]})|_{J_n}=g_n\cdot\gamma|_{[a',a]}\cp \rho_n
\end{align*}
for an analytic diffeomorphism $\rho_n\colon [a',\lambda] \supseteq [a,t_n]\supseteq J_n\rightarrow J'_n\subseteq [a',a]$.	
\item[$\triangleright$]
For each $n\in \NN$, we define\hspace*{\fill} (observe that $C_n$ is compact by Lemma \ref{lem:maxextKomp} )
\begin{align*}
	[c_n',c_n]\equiv\CM_n:= [a',\lambda]\cap \ovl{\rho}_n^{-1}([a',a])\qquad\quad\text{as well as}\qquad\quad L_n:=\ovl{\rho}_n(C_n)\subseteq [a',a].
\end{align*}
Then, Lemma \ref{corgleich} yields
\vspace{-5pt}
\begin{align}
\label{zitiifizu}
	(\gamma|_{[a',\lambda]})|_{C_n}=g_n\cdot \gamma|_{[a',a]}\cp \ovl{\rho}_n|_{C_n}\qquad\quad\stackrel{(*)}{\Longrightarrow}\qquad\quad c_n'\in [a,t_n).\hspace{35pt}
\end{align}
For the implication $(*)$, observe that we necessarily have $c_n'\leq \inf(J_n)< t_n$. Moreover, if $c_n'<a$ holds, then the  left side of \eqref{zitiifizu} implies   
$\gamma|_{[a',a]}\cpsim g_n\cdot\gamma|_{[a',a]}$, hence $g_n\in G_\gamma$ which contradicts the choices. 
\item[$\triangleright$]
The right side of \eqref{zitiifizu} in particular shows\hspace*{\fill}($a'<a$)
\begin{align}
\label{podspodsds}
	a'<c_n'\qquad\forall\: n\in \NN\qquad\quad\stackrel{\eqref{ddddd}}{\Longrightarrow}\qquad\quad \ovl{\rho}_n(c'_n)\in \{a',a\}\qquad\forall\: n\in \NN.
\end{align} 
\end{itemize}
\endgroup
\noindent 
Now, there are two different cases that can occur:
\vspace{8pt}

\noindent
{\bf Case I} 
\vspace{6pt}

\noindent
Assume that $\bigcup_{n\in \NN}\{[g_n]\}$ is finite. Then, passing to a subsequence, we can assume that $[g_n]=[g]$ holds for all $n\in \NN$, for some $[e]\neq [g]\in G\slash G_\gamma$. Then, \eqref{zitiifizu} yields \hspace*{\fill}($a\leq c_n'<c_n\leq \lambda$)
\begin{align}
\label{rtgffg}
(\gamma|_{[a,\lambda]})|_{C_n}=g\cdot \gamma|_{[a',a]}\cp \ovl{\rho}_n|_{C_n}\qquad\quad \forall\:n\in \NN.
\end{align}
By \eqref{podspodsds}, we can pass to a subsequence to achieve that $\ovl{\rho}_n(c'_n)=b$ holds for all $n\in \NN$, for some $b\in \{a',a\}$. Then, since $\gamma|_{[a,\lambda]}$ is injective with $c_n'\in [a,t_n]\subseteq  [a,\lambda]$, we obtain 
\begin{align*}
	\gamma|_{[a,\lambda]}(c_n')\stackrel{\eqref{rtgffg}}{=}g\cdot \gamma(b)\qquad \forall\: n\in \NN\qquad\quad\Longrightarrow\qquad\quad  c_0'=c'_n\qquad \forall\: n\in \NN\qquad\stackrel{\lim_n t_n=a}{\Longrightarrow}\qquad c_0'=a.
\end{align*}
This shows $a=c_0'$ as well as $\ovl{\rho}_0(a)=\ovl{\rho}_0(c_0')=b$. Since $\ovl{\rho}_0(c_0)\in [a',a]$ holds, we obtain:
\begingroup
\setlength{\leftmargini}{11pt}
\begin{itemize}
\item
If $b=a'$ holds, then we have $\ovl{\rho}_0(a)=a'$; hence, $\gamma|_{[a,c_0]}=g\cdot \gamma|_{[a',\ovl{\rho}_0(c_0)]}\cp\ovl{\rho}_0|_{[a,c_0]}$ by \eqref{rtgffg} -- implying  
 \eqref{gl11}. 
 \item
If $b=a$ holds,\hspace{2.8pt} then we have $\ovl{\rho}_0(a)=a$;\hspace{2.8pt} hence, $\gamma|_{[a,c_0]}=g\cdot \gamma|_{[\ovl{\rho}_0(c_0),a]}\:\cp\ovl{\rho}_0|_{[a,c_0]}$ by \eqref{rtgffg} -- implying   
 \eqref{gl12}. 
\end{itemize}
\endgroup
\vspace{8pt}

\noindent
{\bf Case II} 
\vspace{6pt}

\noindent
Assume that $\bigcup_{n\in \NN}\{[g_n]\}$ is infinite. Then,   passing to a subsequence, we can assume that $[g_n]\neq [g_m]$ holds for all $\NN\ni n\neq m\in \NN$. Then, $C_n\cap C_m$ cannot contain an open interval  for $n\neq m$; because, if $J \subseteq C_n\cap C_m$ is an open interval such that $\gamma|_J$ is an embedding (Corollary \ref{dfdsasasasassa}), then we have
\begin{align*}
	(g_n\cdot \gamma)(\ovl{\rho}_n(J))=\gamma(J)=(g_m\cdot \gamma)(\ovl{\rho}_m(J))\qquad&\Longrightarrow\qquad g_m^{-1}\cdot g_n \cdot \gamma|_{[a',a]}\cpsim \gamma|_{[a',a]} \\ \qquad&\Longrightarrow\qquad g_m^{-1}\cdot g_n \in G_\gamma,
\end{align*}
hence $m=n$ by the choice of $\{g_n\}_{n\in \NN}$. For $\NN\ni n\neq m\in \NN$, we thus have  
\begin{align*}
 \text{either}\qquad\quad c'_n<c_n\leq c'_m<c_m\qquad\quad \text{or}\qquad\quad\ c'_m<c_m\leq c'_n<c_n.
\end{align*}
Additionally \eqref{zitiifizu} yields
\begin{align*}
a\leq c'_n<c_n\leq \lambda\qquad\text{with}\qquad c_n'<t_n\qquad\text{for all}\qquad n\in \NN. 
\end{align*}
Since $\lim_n t_n = a$ holds,  we thus can pass to a subsequence  to achieve  
\begin{align*}
	a<c'_{n+1}<c_{n+1}\leq c_n' <c_n<\lambda \qquad\quad\forall\: n\in \NN.
\end{align*}
\begingroup
\setlength{\leftmargini}{18pt}
\begin{itemize}
\item[(i)]
We  have $\lim_n c_n'=a$, as  $\lim_n t_n=a$ holds, and because $a\leq c'_{n}<t_{n}$ holds for each $n\in \NN$.
\item[(ii)]
For each $n\in \NN$, we have  
\begin{align*}
C_{n+1}\subseteq [a,c_{n}']\subseteq (a',\lambda) \qquad\quad&\stackrel{\eqref{ddddd}}{\Longrightarrow}\qquad\quad L_{n+1}= \ovl{\rho}(C_{n+1})=[a',a]\\[3pt]
&\Longrightarrow\qquad\quad 
\hspace{7pt} g_{n+1}\cdot \gamma([a',a])=\gamma(C_{n+1})\subseteq \gamma([a,c'_{n}]).
\end{align*}
\end{itemize}
\endgroup
\noindent
Since $\lim_n c'_{n}=a$ holds by Point (i), Point (ii) contradicts that $\wm$ is sated. \qedhere
\end{proof}
\noindent
Now, let $\wm$ is sated; and let $\gamma\colon I\rightarrow M$ be free immersive, as well as $[a',a]\subseteq (i',i)=I$ maximal: 
\begingroup
\setlength{\leftmargini}{11pt}
\begin{itemize}
\item
Proposition \ref{prop:shifttrans} applied to $\gamma|_{[t',t]}$ for some fixed $i'<t'<a'<a<t<i$ shows:
\begingroup
\setlength{\leftmarginii}{11pt}
\begin{itemize}
\item
We have either \eqref{gl11} or \eqref{gl12}.
\item
We have either \eqref{gl21} or \eqref{gl22}.
\end{itemize}
\endgroup
\item
It is clear from the discussion in Sect.\ \ref{repari} that the following implications hold\footnote{For instance, the implication in the first line follows from Lemma \ref{dgfggfg} with $\bullet\equiv +$, $D\equiv[a,a']$, $D'\equiv [a,i)$, $\gamma\equiv g\cdot \gamma|_{D}$, $\gamma'\equiv \gamma|_{D'}$, some fixed $a'<t<s$, and $t'=\rho(t)$ for $\rho\colon K\equiv [a',s]\rightarrow [a,s']\equiv K'$ the unique (positive) analytic diffeomorphism that corresponds to the relation \eqref{gl11}.}
\begin{align*}
\begin{array}{lllllll}
\eqref{gl11}&\quad\Longrightarrow\quad &\text{we either have}\qquad \hspace{3pt}g\cdot \gamma|_{[a',j')}&\hspace{-6pt}\psim_+ \gamma|_{[a,i)} \quad &\text{or}\qquad  \hspace{3pt}g\cdot \gamma|_{[a',a]}&\hspace{-6pt}\psim_+ \gamma|_{[a,b]},\quad&\text{}\\[4pt]
\eqref{gl12}&\quad\Longrightarrow\quad &\text{we either have}\qquad\hspace{3pt} g\cdot \gamma|_{(j,a]}&\hspace{-6pt}\psim_- \gamma|_{[a,i)} \quad &\text{or}\qquad  \hspace{3pt}g\cdot \gamma|_{[a',a]}&\hspace{-6pt}\psim_- \gamma|_{[a,b]},\quad&\text{}\\[4pt]
\eqref{gl21}&\quad\Longrightarrow\quad &\text{we either have}\qquad g'\cdot \gamma|_{(j,a]}&\hspace{-6pt}\psim_+ \gamma|_{(i',a']} \quad &\text{or}\qquad  g'\cdot \gamma|_{[a',a]}&\hspace{-6pt}\psim_+ \gamma|_{[b',a']},\quad&\text{}\\[4pt]
\eqref{gl22}&\quad\Longrightarrow\quad &\text{we either have}\qquad g'\cdot \gamma|_{[a',j')}&\hspace{-6pt}\psim_- \gamma|_{(i',a']} \quad &\text{or}\qquad  g'\cdot \gamma|_{[a',a]}&\hspace{-6pt}\psim_- \gamma|_{[b',a']},\quad&\text{}
\end{array}(\natural)
\end{align*}
for certain unique $a'\leq j',j\leq a$ and $i'<b'<a'<a<b<i$.
\end{itemize}
\endgroup
\noindent
Now, in the cases on the right side of $(\natural)$, the intervals $[a,b]$ and $[b',a']$ are maximal by Lemma \ref{freemax}.\ref{freemax2}. Hence, we can apply the above arguments inductively to conclude the following: 
\begin{corollary}
\label{sdffdfdexi}
Let $\wm$ be sated; and let $\gamma\colon I\rightarrow M$ be immersive and free. Then, each compact maximal interval $A\subseteq I$ admits an  $A$-decomposition of $\gamma$.
\end{corollary}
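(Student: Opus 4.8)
The plan is to iterate the construction carried out in the paragraph immediately preceding the corollary. Fix the compact maximal interval $A=[a_{-1},a_1]\subseteq I=(i',i)$, set $A_0:=A$, and produce the points $\{a_n\}_{n\in\cn}$ in two independent runs: one to the right of $a_1$, one to the left of $a_{-1}$. The left run is obtained from the right one by replacing $\gamma$ with $\gamma\cp\inv$ (equivalently, by invoking Proposition \ref{prop:shifttrans}.\ref{prop:shifttrans2} in place of \ref{prop:shifttrans}.\ref{prop:shifttrans1}), so I only describe the right run.

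\emph{Right run.} At stage $1$ I apply Proposition \ref{prop:shifttrans}.\ref{prop:shifttrans1} to $\gamma|_{[t',t]}$ with $i'<t'<a_{-1}<a_1<t<i$ and to the interval $A_0$, which is maximal w.r.t.\ $\gamma|_{[t',t]}$ since it is maximal w.r.t.\ $\gamma$. Combined with the discussion in Subsection \ref{repari} (precisely the four implications stated before the corollary), this yields $h_1\in G$ with $[h_1]\in\GGG(\gamma)$ and exactly one of: (terminal) $h_1\cdot\gamma|_{A_0}\psim\gamma|_{[a_1,i)}$, in which case I set $\cn_+:=1$, $A_1:=[a_1,i)=I\cap[a_1,\infty)$, $g_1:=h_1$ and stop the run; or (non-terminal) $h_1\cdot\gamma|_{A_0}\psim_\pm\gamma|_{[a_1,a_2]}$ for some $a_1<a_2<i$, in which case $A_1:=[a_1,a_2]$ is compact and, by Lemma \ref{freemax}.\ref{freemax2}, maximal w.r.t.\ $\gamma$; I set $g_1:=h_1$, let $\mu_1\colon A_0\rightarrow A_1$ be the associated diffeomorphism, and pass to stage $2$. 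At stage $m\geq2$ (reached only if stage $m-1$ was non-terminal) the interval $A_{m-1}=[a_{m-1},a_m]$ is compact, maximal, and has $i'<a_{m-1}<a_m<i$, so the same step applies: I obtain $h_m$ and either the terminal case $h_m\cdot\gamma|_{A_{m-1}}\psim\gamma|_{[a_m,i)}$ (set $\cn_+:=m$, $A_m:=[a_m,i)$) or the non-terminal case $h_m\cdot\gamma|_{A_{m-1}}\psim_\pm\gamma|_{[a_m,a_{m+1}]}$ with $A_m:=[a_m,a_{m+1}]$ compact maximal. In all cases I put $g_m:=h_m\cdot g_{m-1}$ (so $g_1=h_1\cdot g_0$ with $g_0:=e$).

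The one genuine verification is $g_n\cdot\gamma|_A\psim\gamma|_{A_n}$, proved by induction. Stage $1$ is immediate from the above. For the step, assume $g_{m-1}\cdot\gamma|_{A_0}=\gamma|_{A_{m-1}}\cp\mu_{m-1}$ with $\mu_{m-1}\colon A_0\rightarrow A_{m-1}$ a diffeomorphism. In the non-terminal case $h_m\cdot\gamma|_{A_{m-1}}=\gamma|_{A_m}\cp\nu_m$ with $\nu_m\colon A_{m-1}\rightarrow A_m$ a diffeomorphism, so $g_m\cdot\gamma|_{A_0}=h_m\cdot(\gamma|_{A_{m-1}}\cp\mu_{m-1})=(h_m\cdot\gamma|_{A_{m-1}})\cp\mu_{m-1}=\gamma|_{A_m}\cp(\nu_m\cp\mu_{m-1})$, and $\mu_m:=\nu_m\cp\mu_{m-1}\colon A_0\rightarrow A_m$ witnesses $g_m\cdot\gamma|_A\psim\gamma|_{A_m}$. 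In the terminal case $\nu_m$ is defined only on a half-open subinterval $E\subset A_{m-1}$ with $E\cap\partial A_{m-1}$ a singleton and $\im[\nu_m]=[a_m,i)$; then $\mu_{m-1}^{-1}(E)\subset A_0$ is half-open with $\mu_{m-1}^{-1}(E)\cap\partial A_0$ a singleton, and composing as above shows $g_m\cdot\gamma|_A\psim\gamma|_{A_m}$ via the ``$D$ compact, $D'$ half-open'' clause of $\psim$.

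Finally I run the left construction symmetrically, producing $a_{-2}>a_{-3}>\dots$, $\cn_-$, and group elements $g_{-1}=h_{-1}$, $g_{-m}=h_{-m}\cdot g_{-(m-1)}$ with $[g_{-1}]\in\GGG(\gamma)$ and $g_{-n}\cdot\gamma|_A\psim\gamma|_{A_{-n}}$. Collecting everything, with $\cn_-,\cn_+\in\ZZ_{\neq0}\sqcup\{-\infty,\infty\}$ the indices at which the two runs stop, put $\cn:=\{n\in\ZZ_{\neq0}\:|\:\cn_-\leq n\leq\cn_+\}\in\CN$. Then $\{a_n\}_{n\in\cn}\subseteq\innt[I]$ is strictly increasing, the intervals attached to it by the conventions preceding Definition \ref{fdsafdsdfs} coincide with the $A_n$ built above, $A_0=A$, $[g_{\pm1}]\neq[e]$, and $g_n\cdot\gamma|_A\psim\gamma|_{A_n}$ for all $n\in\cn$; hence $(\{a_n\}_{n\in\cn},\{[g_n]\}_{n\in\cn})$ is an $A$-decomposition of $\gamma$. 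The main difficulty is purely organizational: keeping the four shift/flip $\times$ finite/infinite cases and the index conventions aligned, and checking the composition of diffeomorphisms in the terminal half-open step; all substantive content is supplied by Proposition \ref{prop:shifttrans} and Lemma \ref{freemax}. Note that coverage $I=\bigcup_{n\in\cn}A_n$ need not be established here, as it is not required by Definition \ref{fdsafdsdfs}.\ref{fdsafdsdfs2} (and in any case follows a posteriori from Lemma \ref{eoder}.\ref{eoder2}).
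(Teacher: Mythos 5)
Your proposal is correct and follows essentially the same route as the paper: the paper's proof likewise applies Proposition \ref{prop:shifttrans} together with the four implications preceding the corollary inductively in both directions, obtains the intervals $A_n$ and elements $h_n\in G\backslash G_\gamma$ with $h_n\cdot\gamma|_{A_{n\mp1}}\psim\gamma|_{A_n}$, and sets $g_n:=h_n\cdot{\dots}\cdot h_{\pm1}$, which is exactly your recursion $g_m:=h_m\cdot g_{m-1}$. Your explicit verification of $g_n\cdot\gamma|_A\psim\gamma|_{A_n}$ by composing the diffeomorphisms (including the half-open terminal case) and your remark on coverage merely spell out details the paper leaves implicit.
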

\begin{proof}
Applying the above arguments inductively, we obtain a decomposition $\{a_n\}_{n\in \cn}$ of $I$ with $A_0= A$  together with a family $\{h_n\}_{\cn}\subseteq G\backslash G_\gamma$, such that 
\begin{align*}
h_n\cdot \gamma|_{A_{n+1}}\psim \gamma|_{A_{n}}\quad\text{for all}\quad \cn_-\leq n\leq -1\qquad\quad\text{and}\qquad\quad h_n\cdot \gamma|_{A_{n-1}}\psim \gamma|_{A_{n}}\quad\text{for all}\quad 1\leq n\leq \cn_+
\end{align*}
holds. 
The desired $A$-decomposition of $\gamma$ is then given by $(\{a_n\}_{n\in \cn},\{[g_n]\}_{n\in \NN})$, with
\begingroup
\setlength{\leftmargini}{12pt}
\begin{itemize}
\item
$g_n:=h_n\cdot {\dots}\cdot h_{-1}$\:\: for all\:\: $\cn_-\leq n\leq -1$,
\item
$g_n:=h_n\cdot {\dots}\cdot h_{1}$\:\:\hspace{6.2pt} for all\:\:\hspace{7.3pt} $1\leq n\leq \cn_+$.\qedhere
\end{itemize}
\endgroup
\end{proof}
\subsubsection{The Non-Compact Case}
In the previous subsection, we have shown the existence of decompositions for the case that a given free immersive curve admits a compact maximal interval. In this  subsection, we use Lemma \ref{lemmfreeneig} and Proposition \ref{prop:shifttrans} to prove  that a free immersive curve that is not a free segment by itself either admits a compact maximal interval or a (necessarily unique) $\tau$-decomposition. For this, we need the following (technical) statement:
\begin{lemma}
\label{asaqwqw}
Let $\wm$ be sated, and $\delta\colon I\rightarrow M$ immersive and free. 
\begingroup
\setlength{\leftmargini}{16pt}
\begin{enumerate}
\item
\label{asaqwqw1}
Let $[a',a]$ be maximal w.r.t.\ $\gamma:= \delta|_{[t',t]}$ with $t'= a'$ and $a<t\in I$. Additionally assume that the condition \eqref{gl11} holds for $\gamma$  with $[e]\neq [g]\in G\slash G_\gamma$. Then, $[a',a]$ is maximal w.r.t.\ $\delta$.
\item
\label{asaqwqw2}
Let $[a',a]$ be maximal w.r.t.\ $\gamma:= \delta|_{[t',t]}$ with  $I\ni t'<a'$ and $t=a$. Additionally assume that the condition \eqref{gl21} holds for $\gamma$ with $[e]\neq [g']\in G\slash G_\gamma$. Then, $[a',a]$ is maximal w.r.t.\ $\delta$. 
\end{enumerate}
\endgroup
\end{lemma}
\begin{proof}
Confer Appendix \ref{appC6}. 
\end{proof}
\begin{proposition}
\label{shifty}
Let $\wm$ be sated, and $\delta\colon I\rightarrow M$ immersive and free. If $\delta$ is not a free segment by itself, then $\delta$ either admits a (necessarily unique) $\tau$-decomposition or a compact maximal interval.
\end{proposition}
\begin{proof}
First, it is clear from Lemma \ref{eoder}.\ref{eoder3} that only one of the mentioned situations can occur. Hence, we only have to prove   
  that the non-existence of a compact maximal interval implies the existence of a $\tau$-decomposition of $\delta$. Assume thus now that $\delta$ admits no compact maximal interval. Then, there necessarily exists some $\tau_-\in I$ such that $(i',\tau_-]$ is maximal or some $\tau_+\in I$ such that $[\tau_+,i)$ is maximal: 
\begingroup
\setlength{\leftmargini}{11pt}
\begin{itemize}
\item
Since $\delta$ is free, Lemma  \ref{maxim} provides a maximal interval $A\subseteq I$. Then $A$ is necessarily closed in $I$   by our discussions in the beginning of Sect.\ \ref{sfd}  (see \eqref{dfdffdfdfdfd}).  
\item
Since $I$ is not free by assumption, we must have $A\subset I$. Since $A$ is closed in $I$ as well as non-compact by assumption, we  
necessarily have $A=(i',\tau_-]$ for some $\tau_-\in I$ or $A=[\tau_+,i)$ for some $\tau_+\in I$.
\end{itemize}
\endgroup  
\noindent
We claim that there exists some $\tau\in I$, such that both $(i',\tau]$ and $[\tau,i)$ are free intervals:
\vspace{6pt}

\noindent
\textit{Proof of the Claim.}
We only discuss the case $A=[\tau_+,i)$ (the case $A=(i',\tau_-]$ is treated analogously):
\vspace{-4pt}
\begingroup
\setlength{\leftmargini}{12pt}
\begin{itemize}
\item
Let $i'<t'<\tau_+<t<i$ be fixed: 
\begingroup
\setlength{\leftmarginii}{12pt}
\begin{itemize}
\item[$\triangleright$]
Then, $[\tau_+,t]$ is free w.r.t.\ $\delta$, as $A$ is free and $[\tau_+,t]\subseteq A$. 
Hence, $[\tau_+,t]$ is free w.r.t\ $\gamma:= \delta|_{[t',t]}$ by \eqref{ldslkdlkdsd09s09d09ds09ds09ds09dssddsdscxcxcx}. 
\item[$\triangleright$]
Lemma \ref{lemmfreeneig}  yields some $t'<s<\tau_+$ such that $[s,\tau_+]$ is  free w.r.t.\ $\gamma$. Then, $[s,\tau_+]$ is  free w.r.t.\ $\delta$ by \eqref{ldslkdlkdsd09s09d09ds09ds09ds09dssddsdscxcxcx}. 
\item[$\triangleright$]
Lemma \ref{maxim} provides some (w.r.t\ $\delta$) maximal 
 $B\subseteq I$  with $[s,\tau_+]\subseteq B$.
\end{itemize}
\endgroup 
\item
Then, $B$ is necessarily of the form $(i',\tau']$ for some $\tau_+\leq \tau'$, because: 
\begingroup
\setlength{\leftmarginii}{12pt}
\begin{itemize}
\item 
$I$ is not free, and $B$ is closed in $I$ as maximal (see \eqref{dfdffdfdfdfd}),
\item
 $\delta$ admits no compact maximal interval (by assumption), 
\item
$B$ cannot be of the form $[\tau'',i)$ for $\tau''\leq s<\tau_+$, because $A=[\tau_+,i)$ is maximal.
\end{itemize}
\endgroup
\item
We set $\tau:=\tau_+$. Then, $[\tau,i)=A$ is free by construction, and $(i',\tau]$ is free because $(i',\tau_+]\subseteq B$ holds.
\hspace*{\fill} 
$\mathsmaller{\square}$
\end{itemize}
\endgroup
\noindent
It remains to show that $g\cdot \gamma|_{(i',\tau]}\psim \gamma|_{[\tau,i)}$ holds for some $g\in G\backslash G_\delta$. Since $(i',\tau]$, $[\tau,i)$ are free, we have:
\begin{align}
\label{kjdskjdskjdskjds98ds98ds98ds98sd9898dsdsdssd}
 i'<a'<\tau<t<i\qquad\quad\Longrightarrow\qquad\quad [a',\tau]\quad\text{is free}\qquad\wedge\qquad
 [\tau,t]\quad\text{is free.}
\end{align} 
We fix $\tau<t<i$, and proceed as follows: 
\begingroup
\setlength{\leftmargini}{12pt}
\begin{itemize}
\item
Assume that there exists  $i'<a'<\tau$, such that $[a',\tau]$ is maximal w.r.t.\ $\gamma:= \delta|_{[t',t]}$ for $t'=a'$: 
\begingroup
\setlength{\leftmarginii}{12pt}
\begin{itemize}
\item[$\triangleright$]
Since $[a',\tau]$ is compact,  $[a',\tau]$ cannot be maximal w.r.t.\ $\delta$ ($\delta$ admits no compact maximal interval). 
\item[$\triangleright$]
Hence, Proposition \ref{prop:shifttrans} together with  
Lemma \ref{asaqwqw} implies that    
\eqref{gl12} holds with $a\equiv\tau$ there, i.e.,   
there exist $g\in  G\backslash G_\gamma=G\backslash G_\delta$ and $s< \tau<s'$, with 
\begin{align*}
g\cdot \gamma|_{[s,\tau]}\psim_{\tau,\tau}\gamma|_{[\tau,s']}\qquad\Longrightarrow\qquad
	g\cdot \delta|_{[s,\tau]}\psim_{\tau,\tau}\delta|_{[\tau,s']}\quad\:\: \stackrel{\text{Sect.\ }\ref{repari}}{\Longrightarrow}\quad\:\:  g\cdot \delta|_{(i',\tau]}\psim\delta|_{[\tau,i)}.
\end{align*} 
\end{itemize}
\endgroup
\item
Assume that the assumption in the previous point is not fulfilled, i.e., 
 that $[a',\tau]$ is not maximal w.r.t.\ $\delta|_{[a',t]}$ for all $i'<a'<\tau$. We fix $\{a'_n\}_{n\in \NN}\subseteq (i',\tau)$ decreasing with $\lim_n a'_n=i'$, and proceed as follows:  
\begingroup
\setlength{\leftmarginii}{12pt}
\begin{itemize}
\item[$\triangleright$]
For each $n\in \NN$, we fix $A_n\subseteq [a'_n,t]$ maximal w.r.t.\ $\delta|_{[a'_n,t]}$ such that $[a'_n,\tau]\subseteq A_n$ holds (Lemma \ref{maxim} and the left side of \eqref{kjdskjdskjdskjds98ds98ds98ds98sd9898dsdsdssd}). 
\item[$\triangleright$]
Then, for each $n\in \NN$, we have  $A_n=[a'_n,t_n]$ for some $\tau<t_n\leq t$, and $A_n$ is free w.r.t\ $\delta$ by \eqref{ldslkdlkdsd09s09d09ds09ds09ds09dssddsdscxcxcx}. 
\item[$\triangleright$]
If $t_n=t$ holds for all $n\in \NN$, then $(i',t]=\bigcup_{n\in \NN}A_n$ is free (same argument as in the proof of Lemma \ref{maxim}); which contradicts maximality of $(i',\tau]$, as $\tau<t$ holds.
\item[$\triangleright$]
Consequently, there exist $a'<\tau<a<t$, such that $[a',a]$ is maximal w.r.t.\ $\gamma\equiv\delta|_{[a',t]}$. Since $[a',a]$ cannot be maximal w.r.t.\ $\delta$ ($\delta$ admits no compact maximal interval),  \eqref{gl12} holds by Proposition \ref{prop:shifttrans} and 
Lemma \ref{asaqwqw}.  
 Since $a'<\tau<a<t$ holds, we obtain
\begin{align*}
\eqref{gl12}\qquad
\Longrightarrow\qquad g\cdot \gamma|_{[\tau,a]}\cpsim \gamma|_{[a,t]}
\qquad\Longrightarrow\qquad
	g\cdot \delta|_{[\tau,t]} =g\cdot \gamma|_{[\tau,t]}\cpsim \gamma|_{[\tau,t]}=\delta|_{[\tau,t]} 
\end{align*}
with $g\in G\backslash G_\gamma= G\backslash G_\delta$; which 
contradicts that $[\tau,t]$ is free, by the right side of \eqref{kjdskjdskjdskjds98ds98ds98ds98sd9898dsdsdssd}.\qedhere
\end{itemize}
\endgroup
\end{itemize}
\endgroup
\end{proof}
\begin{example}
\label{dfggf}
Let $G=\SO(2)$ act via rotations on $M= \RR^2$. This  action is sated by Remark \ref{remmmmmi}.\ref{regp3} (even regular as proper by compactness of $\SO(2)$). 
Let $g_\pi\in \SO(2)$ denote the rotation by the angle $\pi$. Then, 
\begin{align*}
	\gamma\colon (-\infty,\infty)\rightarrow \RR^2,\qquad 
	t \mapsto (t,t^3)
\end{align*}  
admits the $0$-decomposition $[g_\pi]$: \hspace*{\fill} (\:$\tau\equiv 0$, $(i',\tau]\equiv (-\infty,0]$, $[\tau,i)\equiv [0,\infty)$) 
\begingroup
\setlength{\leftmargini}{12pt}
\begin{itemize}
\item 
$
g_\pi\cdot \gamma|_{(-\infty,0]}\psim \gamma|_{[0,\infty)}\:\:  \text{ holds\:\: w.r.t.\:\: the\:\: analytic\:\: diffeomorphism } \:\: \mu\colon (-\infty,0]\ni t\mapsto -t\in  [0,\infty).
$ 
\item 
The intervals $(-\infty,0]$, $[0,\infty)$ are free, because we have the equivalence:
\begin{align*}
g\cdot \gamma(t)=\gamma(t')\qquad\text{for}\qquad  t,&\: t'\neq 0\qquad\text{and}\qquad g\in \SO(2)\\[3pt]
&\Longleftrightarrow\\[2pt]
\text{either}\qquad g=g_\pi\:\:\:\wedge\:\:\: t'=-&t\quad\:\:\:\:\text{or}\qquad  g=\id_{\RR^2}\:\:\:\wedge\:\:\: t=t'\hspace{40pt}
\end{align*}
Notably,  maximality of these intervals is clear from:
\begin{align*}
\forall\:\epsilon>0\colon\qquad g_\pi\cdot \gamma|_{(-\infty,\epsilon)}\cpsim \gamma|_{(-\infty,\epsilon)}\qquad\wedge\qquad g_\pi\cdot \gamma|_{(-\epsilon,\infty)}\cpsim \gamma|_{(-\epsilon,\infty)}
\end{align*}
\vspace{-35pt}

\hspace*{\fill}$\ddagger$
\end{itemize}
\endgroup
\end{example}
\subsection{Compact Decompositions}
\label{jsdjklsdjklsd}
This subsection is dedicated to a detailed analysis of the situation where an immersive free curve  
admits a compact maximal interval. 
For this, we assume that  $\wm$ is sated (and analytic in $M$) in the following. 

\subsubsection{Basic Properties}
We start our investigations with the following elementary observations.
\begin{remark}
\label{fdnmfnmdnmfdfdkjkjfdkjfdiuiufdiufdfd98fd98fd98fdfdfd}
Let $\wm$ be sated, $\gamma\colon I\rightarrow M$ immersive and free, and $B=[b',b]$ a compact maximal interval. Let $B_-,B_+\subseteq I$ be intervals that are closed in $I$ with $\sup(B_-)=b'$ and $\inf(B_+)=b$, such that  
\begin{align}
\label{oidsoidsoidsoidsoidsoidsds0ds09ds0909ds}
g_-\cdot \gamma|_B \psim \gamma|_{B_-}\qquad\quad\text{as well as}\qquad\quad g_+\cdot \gamma|_B \psim \gamma|_{B_+}
\end{align} 
holds for certain $[g_\pm]\neq [e]$.\footnote{By faithfulness, $B_\pm$ are the intervals $B_{\pm 1}$ that correspond to the unique $B$-decomposition of $\gamma$, and we have $[g_\pm]=[g_{\pm 1}]$.} Then the following two equivalences hold:
\begin{align}
\label{wassadpp}
\begin{split}
	g_+\cdot \gamma|_B\psim_+ \gamma|_{B_+} \qquad\quad&\Longleftrightarrow\qquad\quad g_-\cdot \gamma|_B\psim_+ \gamma|_{B_-}\\
	g_+\cdot \gamma|_B\psim_- \gamma|_{B_+} \qquad\quad&\Longleftrightarrow\qquad\quad g_-\cdot \gamma|_B\psim_- \gamma|_{B_-}
\end{split}
\end{align}
In fact, by faithfulness of the $B$-decomposition of $\gamma$, the first line is equivalent to the second line (under the assumption that \eqref{oidsoidsoidsoidsoidsoidsds0ds09ds0909ds} holds). Moreover, 
the first line is immediate from the following two (more general)  implications:
\begin{align}
\label{wassad1}
	g_+\cdot \gamma|_B\psim_+ \gamma|_{B_+} \qquad\quad&\Longrightarrow\qquad\quad g_-\cdot \gamma|_B\psim_+ \gamma|_{B_-}\quad\wedge\quad [g_-]=[g_+^{-1}]\\
\label{wassad2}
	g_-\cdot \gamma|_B\psim_+ \gamma|_{B_-} \qquad\quad&\Longrightarrow\qquad\quad g_+\cdot \gamma|_B\psim_+ \hspace{0.5pt}\gamma|_{B_+}\quad\wedge\quad [g_+]=[g_-^{-1}]
\end{align} 
\begin{proof}[Proof of the Implications \eqref{wassad1} and \eqref{wassad2}]
We only prove the implication \eqref{wassad1}, because the implication \eqref{wassad2} follows analogously. We have\he\footnote{For the second implication observe that Lemma \ref{lemma:BasicAnalyt1} implies $g_+^{-1}\cdot \gamma|_{J} \psim_{b,b'} \gamma|_{J'}$ w.r.t.\ a positive analytic diffeomorphism, for certain open intervals   $J,J'\subseteq I$ with $b\in J$ and $b'\in J'$.} 
\begin{align*}
g_+\cdot \gamma|_B\psim_+ \gamma|_{B_+}\qquad&\stackrel{\phantom{\text{Lemma } \ref{lemma:BasicAnalyt1}}}{\Longrightarrow}\qquad 
	g_+^{-1}\cdot \gamma|_{[b,b+\delta]} \psim_{b,b'} \gamma|_{[b',b'+\delta']}\:\text{ for certain }\: \delta,\delta'>0
		\\[2pt]
	&\stackrel{\text{Lemma } \ref{lemma:BasicAnalyt1}}{\Longrightarrow}\qquad \he g_+^{-1}\cdot \gamma|_{[b-\epsilon,b]} \psim_{b,b'} \gamma|_{[b'-\epsilon',b']}\:\text{ for certain }\: \epsilon,\epsilon'>0,
\end{align*}
whereby the second line implies 
\eqref{wassad1} by faithfulness of the (unique) $B$-decomposition of $\gamma$.    
\end{proof}
\end{remark}
\noindent
Let $\wm$ be sated; and let $\gamma\colon I\rightarrow M$ be  immersive and free with  $A$-decomposition $(\{a_n\}_{n\in \cn},\{[g_n]\}_{n\in \cn})$. According to \eqref{wassadpp} in Remark \ref{fdnmfnmdnmfdfdkjkjfdkjfdiuiufdiufdfd98fd98fd98fdfdfd},   we have
\begin{align*}
\text{either}\qquad\:\: &g_{\pm 1}\cdot \gamma|_A\psim_+ \gamma|_{A_{\pm 1}}\qquad\stackrel{\text{\rm def.}}{\Longleftrightarrow}\qquad A\:\:\text{is}\:\:\text{\bf positive}
\\
\text{or}\qquad\:\: &g_{\pm 1}\cdot \gamma|_A\psim_- \gamma|_{A_{\pm 1}}\qquad\stackrel{\text{\rm def.}}{\Longleftrightarrow}\qquad A\:\:\text{is}\:\:\text{\bf negative}.
\end{align*}
We will furthermore use the notation
\begin{align*}
	\psim_{-^n}\equiv \psim_-\quad \deff\quad |n|\quad \text{is odd}\qquad\qquad\text{as well as}\qquad\qquad \psim_{-^n}\equiv \psim_+\quad \deff \quad|n|\quad \text{ is even}.
\end{align*}
\begin{lemma}
\label{fdggf}
Let $\wm$ be sated; and let $\gamma\colon I\rightarrow M$ be free immersive with $A$-decomposition $(\{a_n\}_{n\in \cn},\{[g_n]\}_{n\in \cn})$. 
Then, the following assertions hold: 
\begingroup
{
\renewcommand{\theenumi}{\arabic{enumi})} 
\renewcommand{\labelenumi}{\theenumi}
\setlength{\leftmargini}{16pt}
\begin{enumerate}
\item
\label{fdggf1}
If $A$ is positive\slash negative, then each compact $A_n$ is positive\slash  negative, i.e., we have  
(recall \eqref{ffffs})
\begin{align*}
	 h_n\cdot \gamma|_{A_{n+1}}\psim_{+\slash -} \gamma|_{A_{n}}\quad\text{for}\quad \cn_-\leq n\leq -1\qquad \text{and}\qquad  h_n\cdot \gamma|_{A_{n-1}}\psim_{+\slash -} \gamma|_{A_{n}}\quad\text{for}\quad 1\leq n \leq\cn_+. 
\end{align*}
Consequently, for each $n\in \cn$, we have 
\begin{align*}
\hspace{11pt}g_n\cdot \gamma|_A\:\psim_+\: \gamma|_{A_n}\:\:\:\text{if}\:\:\: A\:\:\: \text{is positive}\qquad\quad\text{and}\qquad\quad  g_n\cdot \gamma|_A\psim_{-^n} \gamma|_{A_n}\:\:\text{if}\:\:\: A\:\:\:\text{is negative}.
\end{align*}
\item
\label{fdggf2}
If $A$ is negative and $D\subseteq I$ free, then  
 $D\subseteq A_n$ holds for some (necessarily unique) $\cn_-\leq n\leq\cn_+$. In particular, the intervals $\{A_n\}_{\cn_-\leq n\leq\cn_+}$ are maximal, and the only maximal intervals. 
\item
\label{fdggf3}
If $A$ is positive\slash negative, then each compact maximal interval is $positive\slash negative$.
\end{enumerate}}
\endgroup
\end{lemma}
\begin{proof}
\begingroup
\setlength{\leftmargini}{16pt}
\begin{enumerate}
\item
By Lemma \ref{freemax}, each $A_n$ is free, and each compact $A_n$ is maximal. Now, if $B,B_\pm\subseteq I$ are compact intervals and $g_\pm\in G$, then we evidently have the implications: 
\begin{align*}
g_{\pm}\cdot \gamma|_B\psim_+ \gamma|_{B_\pm}\quad\text{w.r.t}\quad \mu_{\pm}
\qquad\:\:&\Longrightarrow\qquad\quad g^{-1}_\pm\cdot \gamma|_{B_\pm}\psim_+ \gamma|_{B}\quad\text{w.r.t}\quad \mu_{\pm}^{-1}\\
g_{\pm}\cdot \gamma|_B\psim_- \gamma|_{B_\pm}\quad\text{w.r.t}\quad \mu_{\pm}
\qquad\:\:&\Longrightarrow\qquad\quad g^{-1}_\pm\cdot \gamma|_{B_\pm}\psim_- \gamma|_{B}\quad\text{w.r.t}\quad \mu_{\pm}^{-1}
\end{align*}
The first statement thus follows inductively from faithfulness of each $A_n$-decomposition for $\cn_-<n<\cn_+$ and the equivalences in  \eqref{wassadpp}.
The second statement is then just clear from (recall \eqref{ffffs})
\begin{align*}
g_n=h_n\cdot {\dots}\cdot h_{-1} \qquad\forall\: \cn_-\leq n\leq -1\qquad\quad\text{as well as}\qquad\quad g_n=h_n\cdot {\dots}\cdot h_{1}\qquad\forall\: 1\leq n\leq \cn_+.
\end{align*} 
\item
Part \ref{fdggf1} shows that  
 $A_n$ is negative for all $\cn-<n<\cn_+$. Hence, for each $n\in \cn$, we have\footnote{If $1\leq n<\cn_+$ holds, choose $h\equiv h_n$; and if $\cn_-<n\leq -1$ holds, choose $h\equiv h_n^{-1}$.}
\begin{align*} 
	h\cdot \gamma|_{[a_n-\epsilon,a_n]}\psim_{a_n,a_n} \gamma|_{[a_n,a_n+\epsilon']}\quad\text{for certain}\quad h\in  G\backslash G_\gamma\quad\text{and}\quad \epsilon,\epsilon'>0.
\end{align*} 
Consequently, $a_n\notin \innt[D]$ holds for all 
$n\in \cn$, as $D$ is free; so that  
the claim is clear from Lemma \ref{eoder}.\ref{eoder2}.
\item
If $A$ is negative, then each compact maximal interval necessarily  equals some compact $A_n$ by Part \ref{fdggf2},  is thus negative by Part \ref{fdggf1}. 
Hence, there cannot exist any negative interval if $A$ is positive and vice versa.\qedhere
\end{enumerate}
\endgroup
\end{proof}
\noindent
Lemma \ref{fdggf} motivates the following definition.
\begin{definition}
\label{pofdpodfpofdopdf}
Let $\wm$ be sated, and $\gamma\colon I\rightarrow M$  immersive and free. 
We say that $\gamma$ is {\bf positive\slash negative} \deff $\gamma$ admits a positive\slash negative interval; with what each other compact maximal interval is positive\slash negative by Lemma \ref{fdggf}.\ref{fdggf3}. 
\end{definition}
\subsubsection{The Positive Case}
In this subsection, we discuss the situation where $\gamma\colon I\rightarrow M$ is positive. 
\begin{proposition}
\label{sdfdfdlla}
Let $\wm$ be sated; and let $\gamma\colon I\rightarrow M$ be  positive. 
\begingroup
{
\renewcommand{\theenumi}{\arabic{enumi})} 
\renewcommand{\labelenumi}{\theenumi}
\setlength{\leftmargini}{16pt}
\begin{enumerate}
\item
\label{sdfdfdlla1}
There exists $[e]\neq [h]\in G\slash G_\gamma$ unique, such that 
for each  
$A$-decomposition $(\{a_n\}_{n\in \cn},\{[g_n]\}_{n\in \cn})$ of $\gamma$ ($A\subseteq I$ compact maximal, hence positive by Lemma \ref{fdggf}.\ref{fdggf3}) we have  
\begin{align}
\label{dslkdslkdsoidspodspodsopsd09ds09ds0dsdsdssdds}
[h_n]=[h^{\sign(n)}]\qquad\quad\forall\: n\in \cn.
\end{align}
\vspace{-25pt}

\noindent
In particular, we have
\begin{align}
\label{dslfofodpfd}
	 [g_n]=[h^n]\qquad\forall\: n\in \cn\qquad\quad\text{hence}\qquad\quad
	  h^n \cdot \gamma|_A\psim_+ \gamma|_{A_n}    \qquad\forall\: n\in \cn. 
\end{align}
\vspace{-16pt}
\item
\label{sdfdfdlla2}  
The following assertions hold:
\vspace{-4pt}
\begingroup
\setlength{\leftmarginii}{12pt}
\begin{itemize}
\item
Given $\cn_-<n \leq 0$ and $b'\in \innt[A_{n-1}]$, then  there exists $b\in \innt[A_{n}]$\hspace{2.7pt} such that $[b',b]$ is positive.
\item
Given $0\leq n<\cn_+$ and $b\in \innt[A_{n+1}]$,\hspace{3pt} then there exists $b'\in \innt[A_{n}]$ such that $[b',b]$ is positive.
\end{itemize}
\endgroup
\noindent
In particular, for each $t\in I$, there exists a positive interval $A_t$ with $t\in \innt[A_t]$.
\end{enumerate}}
\endgroup
\end{proposition}
\begin{proof}
\begingroup
{
\renewcommand{\theenumi}{\arabic{enumi})} 
\renewcommand{\labelenumi}{\theenumi}
\setlength{\leftmargini}{16pt}
\begin{enumerate}
\item
If such a class $[h]$ exists, then it is necessarily unique by faithfulness. Moreover, the right side of \eqref{dslfofodpfd} is immediate from the left side of \eqref{dslfofodpfd} as well as Lemma \ref{fdggf}.\ref{fdggf1} (second statement). 

Let now $A\subseteq I$ be a fixed positive interval with corresponding $A$-decomposition $(\{a_n\}_{n\in \cn},\{[g_n]\}_{n\in \cn})$. 
We define $h:=g_1$, and observe the following:
\vspace{-6pt} 
\begingroup
\setlength{\leftmarginii}{12pt}
\begin{itemize}
\item[$*$]
We can assume that $[h_n]=[h^{\sign(n)}]$ holds for some $1\leq n< \cn_+$, as we have $h_1\stackrel{\eqref{ffffs}}{=}g_1\stackrel{\rm def.}{=}h=h^{\sign(1)}$. 
 Now,  $h_n\cdot \gamma|_{A_{n-1}}\psim_+ \gamma|_{A_n}$ holds by Lemma \ref{fdggf}.\ref{fdggf1}; hence, we have
\begin{align*}
\\[-19pt]
h^{-1}_{n}\cdot \gamma|_{A_{n}} \psim_+ \gamma|_{A_{n-1}} \qquad\stackrel{\eqref{wassad2}}{\Longrightarrow}\qquad 
[h_{n+1}]=[(h_n^{-1})^{-1}]=[h_n]=[h^{\sign(n)}].
\end{align*} 
Thus,   
\eqref{dslkdslkdsoidspodspodsopsd09ds09ds0dsdsdssdds} follows inductively for all $1\leq n\leq \cn_+$.
\vspace{4pt}
\item[$*$]
We can assume that $[h_n]=[h^{\sign(n)}]$ holds for some $\cn_-<n\leq -1$, as we have  
\vspace{-4pt}
$$[h_{-1}]\stackrel{\eqref{ffffs}}{=}[g_{-1}]\stackrel{\eqref{wassad1}}{=}[g_1^{-1}]\stackrel{\rm def.}{=}[h^{-1}]=[h^{\sign(-1)}].$$ 
Now,  $h_n\cdot \gamma|_{A_{n+1}}\psim_+ \gamma|_{A_n}$ holds by Lemma \ref{fdggf}.\ref{fdggf1}; hence, we have
\begin{align*}
\\[-19pt]
h^{-1}_{n}\cdot \gamma|_{A_{n}} \psim_+ \gamma|_{A_{n+1}} \qquad
\stackrel{\eqref{wassad1}}{\Longrightarrow}\qquad
 [h_{n-1}]=[(h_n^{-1})^{-1}]=[h_n]=[h^{\sign(n)}].
\end{align*}  
Thus,   
\eqref{dslkdslkdsoidspodspodsopsd09ds09ds0dsdsdssdds} follows inductively for all $\cn_-\leq n\leq 1$.
\end{itemize}
\endgroup
\noindent
The left side of \eqref{dslfofodpfd} now follows by induction:
\vspace{-6pt} 
\begingroup
\setlength{\leftmarginii}{12pt}
\begin{itemize}
\item[$*$] 
We can assume that $[g_n]=[h^n]$ holds for some $1\leq n< \cn_+$, because we have $[g_1]\stackrel{\eqref{ffffs}}{=}[h_1]\stackrel{\eqref{dslkdslkdsoidspodspodsopsd09ds09ds0dsdsdssdds}}{=}[h]$. Now, $h_{n+1}=h\cdot q$ holds for some $q\in G_\gamma$ by \eqref{dslkdslkdsoidspodspodsopsd09ds09ds0dsdsdssdds}, and $g_n=h^n\cdot q'$ holds for some $q'\in G_\gamma$ by assumption. We  obtain \hspace*{\fill}($h^n\cdot \gamma=(h^n\cdot q')\cdot \gamma=g_n\cdot \gamma\cpsim \gamma$ in the last step)
\vspace{-5pt}
\begin{align*}
	[g_{n+1}]\stackrel{\eqref{ffffs}}{=}[h_{n+1}\cdot g_{n}]=[(h\cdot q) \cdot (h^{n}\cdot q')]=[h^{n+1} \cdot  (h^{-n}\cdot  q \cdot h^{n})]\stackrel{\eqref{stabiconji}}{=}[h^{n+1}].
\end{align*} 
Thus, \eqref{dslfofodpfd} follows inductively for all $1\leq n\leq \cn_+$. 
\item[$*$]
We can assume that $[g_n]=[h^n]$ holds for some $\cn_-<n\leq 1$, because we have $[g_{-1}]\stackrel{\eqref{ffffs}}{=}[h_{-1}]\stackrel{\eqref{dslkdslkdsoidspodspodsopsd09ds09ds0dsdsdssdds}}{=}[h^{-1}]$. Now,  
$h_{n-1}=h^{-1}\cdot q$ holds for some $q\in G_\gamma$ by \eqref{dslkdslkdsoidspodspodsopsd09ds09ds0dsdsdssdds}, and $g_n=h^n\cdot q'$ holds for some $q'\in G_\gamma$ by assumption. We  obtain
\hspace*{\fill}($h^n\cdot \gamma=(h^n\cdot q')\cdot \gamma=g_n\cdot \gamma\cpsim \gamma$ in the last step)

\vspace{-15pt}
\begin{align*}
	[g_{n-1}]\stackrel{\eqref{ffffs}}{=}[h_{n-1}\cdot g_{n}]=[(h^{-1}\cdot q) \cdot (h^{n}\cdot q')]=[h^{n-1} \cdot  (h^{-n}\cdot  q \cdot h^{n})]\stackrel{\eqref{stabiconji}}{=}[h^{n-1}].
\end{align*}
Thus, \eqref{dslfofodpfd} follows inductively for all $\cn_-\leq n\leq -1$. 
\end{itemize}
\endgroup
\noindent
Finally, let $B=[b',b]$ be positive with $B$-decomposition $(\{b_n\}_{n\in \cn'},\{[g'_n]\}_{n\in \cn'})$. It remains to  show that then necessarily  $[h']=[h]$ holds for $h':=g_1'$. For this, we first observe the following:
\begingroup
\setlength{\leftmarginii}{15pt}
\begin{itemize}
\item[i)]
$B$ can neither be contained in $A_{\cn_-}$ (if $\cn_-\neq \infty$ holds) nor be contained in $A_{\cn_+}$ (if $\cn_+\neq \infty$ holds); just because (in the respective cases) these intervals are non-compact as well as free by Lemma \ref{freemax}.\ref{freemax1}, and  $B$ is compact maximal.
\item[ii)]
If $B\subseteq A_n$ or $A_n\subseteq B$ holds for some $\cn_-<n<\cn_+$, then we automatically have $B=A_n$ by maximality of $B$ and $A_n$ (see Lemma \ref{freemax}.\ref{freemax2}). 
\end{itemize}
\endgroup
\noindent
Now, if $B=A_n$ holds for some $\cn_-<n<\cn_+$, then $[h']=[h]$ is clear from faithfulness of the $B$-decomposition, as well as from \eqref{dslkdslkdsoidspodspodsopsd09ds09ds0dsdsdssdds} (i.e.\ $h\cdot \gamma|_B = h\cdot\gamma|_{A_n} \psim_+ \gamma|_{A_{n+1}}$). 

In the other case, 
i) and ii) imply that there exist some compact $A_n\equiv [a',a]$ such that 
\begin{align*}
\text{either}\qquad\quad b'<a'< b<a\qquad\quad\text{or}\qquad\quad a'< b'<a<b\qquad\quad\text{holds.}
\end{align*}
Now, we have the following implications:
\begin{align*}
b'<a'< b<a\qquad&\stackrel{\text{ii)}}{\Longrightarrow} \qquad \hspace{11pt}B_1=[b,i)\qquad\hspace{1pt}\vee\hspace{11pt}\qquad B_1=[b,\wt{b}]\hspace{2pt}\quad\text{for}\quad b<a<\wt{b}\\[-4pt]
a'< b'<a<b\qquad&\stackrel{\text{ii)}}{\Longrightarrow} \qquad A_{n+1}=[a,i)\qquad\vee\qquad A_{n+1}=[a,\wt{a}]\quad\text{for}\quad a<b<\wt{a}.
\end{align*} 
\begingroup
\setlength{\leftmarginii}{11pt}
\begin{itemize}
\item[$\bullet$]
In the first case, $h\cdot \gamma|_{A_n}\psim \gamma|_{A_{n+1}}$ implies $h\cdot \gamma|_B \cpsim \gamma|_{B_1}$, hence \hspace{3.8pt}$[h]=[g_1']=[h']$ by faithfulness of the $B$-decomposition.
\item[$\bullet$]
In the second case, $h'\cdot \gamma|_B\psim \gamma|_{B_1}$ implies $h'\cdot \gamma|_{A_n} \cpsim \gamma|_{A_{n+1}}$. Then, $[h']=[h]$ is immediate from faithfulness of the $A_n$-decomposition, as well as from \eqref{dslkdslkdsoidspodspodsopsd09ds09ds0dsdsdssdds} (i.e.\ $h\cdot\gamma|_{A_n} \psim_+ \gamma|_{A_{n+1}}$).
\end{itemize}
\endgroup
\item
We only prove the 
second statement, because the first statement follows analogously. 
For this, let $0\leq n<\cn_+$ and $b\in \innt[A_{n+1}]$ be given; and let $\mu\colon A_n\supseteq \dom[\mu]\rightarrow A_{n+1}$ denote the unique analytic diffeomorphism that corresponds to the relation $h\cdot \gamma|_{A_n}\psim_+ \gamma|_{A_{n+1}}$ (hence $\mu$ is positive).\footnote{By faithfulness, $\mu$ is the diffeomorphism $\tilde{\mu}_1$ that correspond to the unique $A_n$-decomposition $(\{\tilde{a}_n\}_{n\in \tilde{\cn}},\{[\tilde{g}_n]\}_{n\in \tilde{\cn}})$ of $\gamma$, and we have $[\tilde{g}_1]=[h]$.} 
Let $b< c \in A_{n+1}$. Then, we have
\begin{align}
\label{sdsd}
\begin{split}
h\cdot \gamma|_{[a_n,b']}\psim_+ \gamma|_{[a_{n+1},b]}&\quad \text{w.r.t.}\quad \mu|_{[a_n,b']}\quad\text{for}\quad b':=\mu^{-1}(b)\in \innt[A_n]\\
h\cdot \gamma|_{[b',c']}\psim_+ \gamma|_{[b,c]}\hspace{14pt}&\quad \text{w.r.t.}\quad \mu|_{[b,\tilde{b}]}\hspace{8.6pt}\quad\text{for}\quad c':=\mu^{-1}(\tilde{c})\in A_n.
\end{split}
\end{align}
Together with $a_n<b'<a_{n+1}<b$, this implies that the compact interval  $B:=[b',b]$ is maximal if it is  free. Assume thus that $B$ is not free, i.e., that 
$g\cdot \gamma|_B\cpsim \gamma|_B$ holds for some $g\in G\backslash G_\gamma$: 
\vspace{4pt}

\noindent
The intervals  $[b', a_{n+1}]\subseteq A_n$ and $[a_{n+1},b]\subseteq A_{n+1}$ are free, because $A_n,A_{n+1}$ are free by Lemma \ref{freemax}. 
Hence, $g\cdot \gamma|_B\cpsim \gamma|_B$ implies that there exists $p\in \{-1,1\}$ with
\begin{align*}
g^{p}\cdot \gamma|_{[b',a_{n+1}]}\cpsim \gamma|_{[a_{n+1},b]}\quad\hspace{2pt} \text{w.r.t.}\quad \rho
\qquad\:&\Longrightarrow \qquad\: g^p\cdot \gamma|_{A_n}\cpsim \gamma|_{A_{n+1}}\quad\hspace{4pt} \text{w.r.t.}\quad \rho\\
&\stackrel{(*)}{\Longrightarrow} \qquad\: \hspace{3.8pt} h\cdot \gamma|_{A_n}\psim_+ \gamma|_{A_{n+1}}\quad \text{w.r.t.}\quad \ovl{\rho}|_{\dom[\mu]}=\mu\\[3pt]
&\Longrightarrow \qquad\: \hspace{3.8pt} \ovl{\rho}\:\text{ is\: positive};
\end{align*}
whereby in $(*)$ we have used  
faithfulness of the $A_n$-decomposition of $\gamma$.  
Since $\dom[\rho]\subseteq [b',a_{n+1}]$ and $\im[\rho]\subseteq [a_{n+1},b]$ holds, there exists  
some $t\in (b', a_{n+1})$ with $\ovl{\rho}(t)=\rho(t)\in (a_{n+1},b)$. This, however, contradicts that $\ovl{\rho}$ is positive with $\ovl{\rho}(b')=\mu(b')=b$.
\qedhere
\end{enumerate}}
\endgroup 
\end{proof}
\begin{example}
\label{dsajsu}
We let $G\equiv \RR$ act on $M\equiv\RR^2$ via 
$$
\wm\colon G\times M\rightarrow M,\qquad (t,(x,y))\mapsto(t+x,y)
$$ 
and consider the analytic immersion $\gamma\colon \RR\ni t\mapsto (t,\sin(t))\in  M$:  
\begingroup
\setlength{\leftmargini}{12pt}
\begin{itemize}
\item
The action $\wm$ is regular as pointwise proper, hence sated (alternatively apply Remark \ref{remmmmmi}.\ref{regp3}). Then, $[t,t+2\pi]$ is positive for each $t\in \RR$; and the class $[h]$ is given by $[2\pi]$. 
\item
We consider the restriction $\tilde{\gamma}:=\gamma|_{(0,\infty)}$, and observe the following: 
\begingroup
\setlength{\leftmarginii}{12pt}
\begin{itemize}
\item
$\tilde{\gamma}$  
is not a free segment, and admits the non-compact maximal interval $(0,2\pi]$.
\item 
$\tilde{\gamma}$ admits no $\tau$-decomposition, just because $G_x=\{e\}$ holds for each $x\in M$. 
\end{itemize}
\endgroup
 Hence, Proposition \ref{shifty}  shows that there must exist a compact maximal interval; which is indeed the case for the intervals $[t,t+2\pi]$ with $t>0$. \hspace*{\fill}$\ddagger$
\end{itemize}
\endgroup
\end{example}
\subsubsection{The Negative Case}
We now finally discuss the case where $\gamma\colon I\rightarrow M$ is negative. Also in this situation, the classes $[g_{-1}]$ and $[g_1]$ can be related to each other (see Example \ref{sdjksdkjsd}); but, in general this need not be the case (see Example \ref{sepocvofgvifg}).  

\begin{example}
\label{sdjksdkjsd}
We let $G\equiv \{-1,1\}$ act on $M\equiv \RR^2$ via 
$$
\wm\colon G\times M\rightarrow M,\qquad (\pm 1,(x_1,x_2))\mapsto (x_1,- x_2),
$$ 	 
and consider the analytic immersion $\gamma\colon \RR\ni t\rightarrow (\cos(t),\sin(t))$.
\begingroup
\setlength{\leftmargini}{12pt}
\begin{itemize}
\item
The action $\wm$ is regular as pointwise proper, hence sated (alternatively apply Remark \ref{remmmmmi}.\ref{regp3}).  
\item
	The curve $\gamma$  
	is negative, and admits the 
	$A\equiv [0,\pi]$-decomposition $(\{a_n\}_{n\in \cn},\{[g_n]\}_{n\in \cn})$ with $g_n=(-1)^n$ for all $n\in \cn=\ZZ_{\neq 0}$  as well as
\begin{align*}
a_n=(n+1)\cdot \pi\qquad \forall\: n\leq -1 \qquad\qquad&\text{and}\qquad\qquad 
		a_n=n\cdot \pi\qquad\forall\: 1\leq n\\[1pt]
\text{h}&\text{ence}\\[1pt]
		A_n=[n\cdot \pi,(n+1)\cdot\pi]\qquad \forall\: n\leq -1, \qquad\quad A_0&=[0,\pi],\qquad\quad 
		A_n=[n\cdot \pi,(n+1)\cdot \pi]\qquad\forall\: 1\leq n.\end{align*}
		In particular, we have $[g_{-1}]=[-1]=[g_1]$.
\end{itemize}
\endgroup
\noindent
Notably, the restriction $\gamma|_{(-\pi,\pi)}$ admits the $\tau\equiv 0$-decomposition $[g_{-1}]$.
\end{example}
\begin{example}
\label{sepocvofgvifg}
\noindent

\vspace{-6pt}
\begingroup
{
\renewcommand{\theenumi}{\sf\alph{enumi})} 
\renewcommand{\labelenumi}{\theenumi}
\setlength{\leftmargini}{15pt}
\begin{enumerate}
\item
\label{sepocvofgvifg1}
Let the euclidean group $G\equiv \RR^2 \rtimes \SO(2)$ act canonically on $M\equiv \RR^2$; and let $\gamma\colon \RR\ni t\mapsto (t,\sin(t))\in M$. 
\begingroup
\setlength{\leftmarginii}{12pt}
\begin{itemize}
\item
\label{sepocvofgvifg2}
The action $\wm$ is regular as it is pointwise proper, hence sated (alternatively apply Remark \ref{remmmmmi}.\ref{regp3}). 
\item
The interval $A\equiv [0,\pi]$ is negative; and $[g_{-1}]$ and $[g_{1}]$ are the classes of the rotations by the angle $\pi$ around $(0,0)$ and $(\pi,0)$, respectively. 
\end{itemize}
\endgroup
\noindent
Notably, the restriction $\gamma|_{(-\pi,\pi)}$ admits the $\tau\equiv 0$-decomposition $[g_{-1}]$. \hspace*{\fill}$\ddagger$
\item
\label{sepocvofgvifgdsdsds}
Let the euclidean group $G\equiv \RR^3 \times \SO(3)$ act canonically on $M\equiv \RR^3$; and let $\gamma\colon \RR\ni t\mapsto (t,\sin(t),0)\in M$.
\begingroup
\setlength{\leftmarginii}{12pt}
\begin{itemize}
\item
The action $\wm$ is regular as it is pointwise proper, hence sated (alternatively apply Remark \ref{remmmmmi}.\ref{regp3}). 
\item
The interval $A\equiv [0,\frac{\pi}{2}]$ is negative; and $[g_{-1}]$ and $[g_{1}]$ are the classes of the rotations by the angle $\pi$ around the axis 
$\{0\}\times \{0\}\times \RR$ and the axis $\{\frac{\pi}{2}\}\times\RR\times \{0\}$, respectively.      
\end{itemize}
\endgroup
\noindent
Notably, the restriction $\gamma|_{(-\frac{\pi}{2},\frac{\pi}{2})}$ admits the $\tau\equiv 0$-decomposition $[g_{-1}]$.
\end{enumerate}}
\endgroup
\end{example}
\noindent
In any case, each class $[g_n]$ can be expressed in terms of the classes $[g_{-1}]$ and $[g_1]$. In fact, let 
\begin{align}
\label{sdfgd}
\sigma\colon \ZZ_{\neq 0}\rightarrow \{-1,1\},\qquad n\mapsto 
\begin{cases} 
	(-1)^{n-1} &\mbox{for }\: n > 0 \\ 
	(-1)^n & \mbox{for }\: n < 0. 
\end{cases} 
\end{align} 
Then, we have the following statement:
\begin{proposition}
\label{trhdhg}
Let $\wm$ be sated; and let $\gamma\colon I\rightarrow M$ be  negative with $A$-decomposition $(\{a_n\}_{n\in \cn},\{[g_n]\}_{n\in \cn})$ ($A\subseteq I$ compact maximal, hence negative by Lemma \ref{fdggf}.\ref{fdggf3}).  
\begingroup
{
\renewcommand{\theenumi}{\arabic{enumi})} 
\renewcommand{\labelenumi}{\theenumi}
\setlength{\leftmargini}{16pt}
\begin{enumerate}
\item
\label{trhdhg1}
The intervals $\{A_n\}_{\cn_- \leq n\leq \cn_+}$ are maximal, and the only maximal intervals (Lemma \ref{fdggf}.\ref{fdggf2}).
\item
\label{trhdhg2}
For each $n\in \cn$, we have
\hspace*{\fill}(left side by Lemma \ref{fdggf}.\ref{fdggf1})
\begin{align}
\label{sdsdffghhh}
g_n\cdot \gamma|_A\psim_{-^n}\gamma|_{A_n}\qquad\text{with}\qquad 
[g_n]=\underbrace{[g_{\sigma(1\he\cdot\: \sign(n))}\cdot g_{\sigma(2\he\cdot\: \sign(n))}\cdot  {\dots}\cdot g_{\sigma(|n|\he\cdot\:  \sign(n))}]}_{\displaystyle [g_{\sigma(\sign(n))}\cdot {\dots}\cdot g_{\sigma(n)}]}.
\end{align} 
\vspace{-19pt}
\item
\label{trhdhg3}
We have\hspace{1pt}  
 $[g_{\pm 1}]\subseteq  G_{\gamma(a_{\pm 1})}\setminus G_\gamma$\hspace{1pt} as well as\hspace{1pt} $[g^{-1}_{\pm 1}]=[g_{\pm1}]$.
\end{enumerate}}
\endgroup
\end{proposition}
\noindent
It remains to show the right side of \eqref{sdsdffghhh} as well as Part \ref{trhdhg3}. 
We start with the following observations:
\begin{remark}
\label{fdfdsasasaasxafdfd}
Let $\gamma\colon I\rightarrow M$ be free and immersive; and set $O_\gamma:=  \{g\in G\:|\: g\cdot \gamma\cpsim \gamma\}$. 
\begingroup
\setlength{\leftmargini}{16pt}
{
\renewcommand{\theenumi}{{\rm\alph{enumi})}} 
\renewcommand{\labelenumi}{\theenumi}
\begin{enumerate}
\item
\label{fdfdsasasaasxafdfda}
Let  $h\in G\backslash G_\gamma$ and $a\in I$ be given, with  
$h\cdot \gamma|_{[a-\epsilon,a]}\psim_{a,a}\gamma|_{[a,a+\epsilon']}$.   
Then, the following implication holds:
\begin{align*}
[a,a+\epsilon']\:\:\: \text{is\: free}	
	\qquad\stackrel{{\rm Appendix\:\:\ref{appC7}}}{\Longrightarrow}\qquad [h]=[h^{-1}]\quad\hspace{8pt}
\end{align*} 
\vspace{-17pt}
\item
\label{fdfdsasasaasxafdfdb}
Let $\wm$ be sated; and assume that the following holds:
\begingroup
\setlength{\leftmarginii}{11pt}
\begin{itemize}
\item
$A_-=[a_-,a]$ and $A_+=[a,a_+]$ are negative, and we are given  $h\in G\backslash G_\gamma$ with \hspace*{\fill}(\:$[h]\stackrel{\ref{fdfdsasasaasxafdfda}}{=}[h^{-1}]$\he)
\vspace{-4pt} 
\begin{align}
\label{dskjdskjdsiudsiudsiudsiudsiuds87878787iuiu1}
	h\cdot \gamma|_{A_-}\psim_- \gamma|_{A_+}\qquad\quad\stackrel{\ref{fdfdsasasaasxafdfda}}{\Longleftrightarrow}\qquad\quad h\cdot \gamma|_{A_+}\psim_- \gamma|_{A_-}.
\end{align}
\item
$A_{--},A_{++}\subseteq I$ are closed in $I$ with $A_{--}\cap A_-=\{a_-\}$ and $A_{+}\cap A_{++}=\{a_+\}$, such that
\begin{align}
\label{dskjdskjdsiudsiudsiudsiudsiuds87878787iuiu2}
h_-\cdot \gamma|_{A_-}\psim_- \gamma|_{A_{--}}\qquad\text{and}\qquad h_+\cdot \gamma|_{A_+}\psim_- \gamma|_{A_{++}}\qquad\text{holds for certain}\qquad h_\pm\in G\backslash G_\gamma.
\end{align}
\end{itemize}
\endgroup
\noindent
Then, we have (Appendix \ref{appC7})\: $[h_\pm]=[h\cdot h_\mp\cdot h]$.     
\hspace*{\fill}$\ddagger$
\end{enumerate}}
\endgroup
\end{remark}
\noindent
Assume now that we are in the situation of Proposition \ref{trhdhg}. We observe the following:
\begingroup
\setlength{\leftmargini}{12pt}
\begin{itemize}
\item
Since each compact $A_n$ is negative, Remark \ref{fdfdsasasaasxafdfd}.\ref{fdfdsasasaasxafdfda} shows 
\begin{align}
\label{safgrfgtr}
\:[h_n]=[h_n^{-1}]\qquad\forall\: n\in \cn\qquad\quad \text{hence}\qquad\quad [g_{\pm1}]=[h_{\pm1}]=[h^{-1}_{\pm1}]=[g^{-1}_{\pm1}].
\end{align}
In particular, then $g^{-1}_{\pm 1}\in O_\gamma$ implies the following:
\begin{align}
\label{qforrme2}
	\:[g_{\pm 1}\cdot q\cdot g_{\pm 1}]\stackrel{\eqref{safgrfgtr}}{=}[g_{\pm1}\cdot q\cdot g_{\pm 1}^{-1}]\stackrel{\eqref{stabiconji}}{=}[e]\qquad\quad&\forall\: q\in G_\gamma\\[2pt]
\label{qforrmel}
\stackrel{\text{Induction}}{\Longrightarrow}\qquad\quad	 q^\pm_{n}:=(g_{\mp 1}\cdot g_{\pm 1})^{n}\cdot (g_{\pm 1}\cdot g_{\mp1})^{n}\in G_\gamma\qquad\quad&\forall\: n\in \NN\hspace{40pt}
\end{align}
\item
Remark \ref{fdfdsasasaasxafdfd}.\ref{fdfdsasasaasxafdfdb} shows the following:
\begingroup
\setlength{\leftmarginii}{12pt}
\begin{itemize}
\item
If $\cn_-\leq -2$ holds, then we have
\begin{align}
\label{twerfdff1}
	[h_{-2}]=[g_{-1}\cdot g_{1}\cdot g_{-1}]\qquad\quad\text{as well as}\qquad\quad [h_{n-1}]=[h_{n}\cdot h_{n+1}\cdot h_{n}]\qquad\forall\: \cn_-<n\leq -2       .
\end{align}
\item
If $\cn_+\geq 2$ holds, then we have
\begin{align}
\label{twerfdff2}
	\phantom{-}[h_2]=[g_1\cdot g_{-1}\cdot g_1]\phantom{-} \qquad\quad\text{as well as}\qquad\quad [h_{n+1}]=[h_{n}\cdot h_{n-1}\cdot h_{n}]\qquad\forall\: 2\leq n< \cn_+.\phantom{-}
\end{align}
\end{itemize}
\endgroup
\noindent
We furthermore have the  implications (cf.\ Appendix \ref{appC8})
\begin{align}
\label{kcfdjksdfkjds}
\cn_-\leq -2\qquad\Longrightarrow\qquad g_{1}\cdot g_{-1} \in O_\gamma\qquad\quad\text{and}\qquad\quad \cn_+\geq 2\qquad\Longrightarrow\qquad g_{-1}\cdot g_{1}\in O_\gamma.
\end{align}
\end{itemize}
\endgroup
\begin{proof}[Proof of Proposition \ref{trhdhg}]
\begingroup
{
\renewcommand{\theenumi}{\arabic{enumi})} 
\renewcommand{\labelenumi}{\theenumi}
\setlength{\leftmargini}{16pt}
\begin{enumerate}
\item
Clear from Lemma \ref{fdggf}.\ref{fdggf2}.
\setcounter{enumi}{2}
\item
By definition, we have $[g_{\pm 1}]\subseteq  G\setminus G_\gamma$. Moreover, since $A$ is negative, we have $g_{\pm 1}\cdot \gamma|_A\psim_{a_\pm,a_\pm}\gamma|_{A_{\pm 1}}$, hence $[g_{\pm 1}]\subseteq G_{\gamma(a_{\pm 1})}$. 
\setcounter{enumi}{1}
\item 
The left side of \eqref{sdsdffghhh} is clear from Lemma \ref{fdggf}.\ref{fdggf1}, so that it remains to show the right side of \eqref{sdsdffghhh}.  
\begingroup
\setlength{\leftmarginii}{12pt}
\begin{itemize}
\item[$\triangleright$]
We now first prove the identities
\begin{align}
\label{awee1}
	\:[h_n]&=[g_{-1}\cdot (g_{1}\cdot g_{-1})^{|n|-1}]	\qquad\quad \forall\: \cn_-\leq n\leq -1,\\
\label{awee2}
	[h_n]&=[g_1\cdot (g_{-1}\cdot g_1)^{n-1}]			\hspace{11pt}\qquad\quad \forall\: 1\leq n\leq \cn_+.
\end{align}
These identities are evident for $n=\pm 1$. Moreover, if $\cn_-\leq -2=n$ holds, then \eqref{awee1} is clear from \eqref{twerfdff1}; and, if $n=2 \leq \cn_+$ holds, then \eqref{awee2} is clear from \eqref{twerfdff2}. 
Hence, \eqref{awee1} is clear for $2\leq \cn_-\leq n\leq 1$; and \eqref{awee2} is clear for $1 \leq n\leq \cn_+\leq 2$.   
\vspace{4pt}

To prove \eqref{awee2} (the formula \eqref{awee1} follows  analogously), we thus can assume that $\cn_+\geq 3$ holds; and furthermore that  
there exists $2\leq m< \cn_+$ such that \eqref{awee2} holds for all $1\leq n\leq m$.  
We now show that then \eqref{awee2} also holds $n=m+1$, with what \eqref{awee2} follows by induction: 
\vspace{4pt}

Let  $\alpha:= g_1\cdot (g_{-1}\cdot g_1)^{m-2}$  and $\beta:= g_1\cdot (g_{-1}\cdot g_1)^{m-1}$.  
By induction hypothesis, $h_{m-1}=\alpha\cdot q$ and  $h_m=\beta\cdot q'$ holds for certain $q,q'\in G_\gamma$. Hence, we have 
\begin{align}
\label{dzuzufdssdufds}
\begin{split}
\:[h_{m-1}\cdot h_{m}]&\stackrel{\phantom{\eqref{twerfdff2}}}{=} [\alpha\cdot q\cdot h_m]=[(\alpha\cdot h_m)\cdot (h_m^{-1}\cdot q \cdot h_m)]\\
&\stackrel{\eqref{stabiconji}}{=}[\alpha\cdot h_m]=[\alpha\cdot \beta]=[(g_1\cdot q_{m-2}^+)\cdot (g_1\cdot g_{-1}\cdot g_1)]\\
&\stackrel{\eqref{twerfdff2}}{=} [(g_1\cdot q_{m-2}^+)\cdot h_2]
=[g_1\cdot h_2\cdot (h_2^{-1}\cdot q^+_{m-2}\cdot h_2)]
\stackrel{\eqref{stabiconji},\he\eqref{qforrmel}}{=}[g_1\cdot h_2]\qquad\\
&\stackrel{\eqref{twerfdff2}}{=}[g_1^2\cdot (g_{-1}\cdot g_1)]= [(g_{-1}\cdot g_1) \cdot ((g_{-1}\cdot g_1)^{-1}\cdot g_1^2\cdot (g_{-1}\cdot g_1))]\\
&\stackrel{\eqref{stabiconji}}{=}[g_{-1}\cdot g_1]
\end{split}
\end{align}
whereby in the last step we have used that $(g_{-1}\cdot g_1)\in O_\gamma$ holds by \eqref{kcfdjksdfkjds}, and  that $g_1^2\in G_\gamma$ holds by \eqref{qforrme2}.  
We obtain
\begin{align*}
	[h_{m+1}]&\stackrel{\eqref{twerfdff2}}{=}[h_m\cdot (h_{m-1}\cdot h_m)]\stackrel{\eqref{dzuzufdssdufds}}{=}[h_m\cdot (g_{-1}\cdot g_1)]=[\beta\cdot q'\cdot (g_{-1}\cdot g_1)]\\
&	\stackrel{\phantom{\eqref{twerfdff2}}}{=}[g_1\cdot (g_{-1}\cdot g_1)^m \cdot ((g_{-1}\cdot g_1)^{-1}\cdot q'\cdot (g_{-1}\cdot g_1))]
	\stackrel{\eqref{kcfdjksdfkjds},\he\eqref{stabiconji}}{=}[g_1\cdot (g_{-1}\cdot g_1)^m].
\end{align*}
\item[$\triangleright$]
The right side of \eqref{sdsdffghhh} now follows by induction  from the identities \eqref{awee1} and \eqref{awee2}. 
 In fact, the right side of \eqref{sdsdffghhh} obviously holds for $n=1$. Hence, we can assume that there exists $1\leq m < \cn_+$  such that the right side of \eqref{sdsdffghhh} holds  for all $1\leq n\leq m$: 
\vspace{4pt}

\noindent
Let $\alpha:= g_1\cdot (g_{-1}\cdot g_1)^{m}$. Then,     
$h_{m+1}= \alpha\cdot q$ holds for some $q\in G_\gamma$  by \eqref{awee2}. Hence, we have
\begin{align}
\label{lkdskldslkdslklkdslkdslkdslkdsids98d09ds09ds0909ds09dsds}
[g_{m+1}]\stackrel{\eqref{ffffs}}{=}[h_{m+1}\cdot g_m]
=[\alpha\cdot q \cdot g_m ]=[\alpha\cdot g_m \cdot (g_m^{-1}\cdot q\cdot g_m)]\stackrel{\eqref{stabiconji},\: g_m\he\in\he O_\gamma}{=}	[\alpha\cdot g_m].
\end{align} 
\begingroup
\setlength{\leftmarginiii}{12pt}
\begin{itemize}
\item
If $m=2\cdot k$ is even, then $g_m\stackrel{\eqref{sdsdffghhh}}{=}(g_1\cdot g_{-1})^k$ holds by the induction hypothesis; hence,
\begin{align*}
	[g_{m+1}]&\stackrel{\eqref{lkdskldslkdslklkdslkdslkdslkdsids98d09ds09ds0909ds09dsds}}{=}[\alpha\cdot g_m]=
	[g_1\cdot (g_{-1}\cdot g_1)^{2k}\cdot (g_1\cdot g_{-1})^k]
	=[g_1\cdot (g_{-1}\cdot g_1)^{k}\cdot q_k^+]\stackrel{\eqref{qforrmel}}{=}[g_{\sigma(1)}\cdot {\dots}\cdot g_{\sigma(m+1)}].
\end{align*} 
\item
If $m=2k+1$ is odd, then $g_m\stackrel{\eqref{sdsdffghhh}}{=}(g_1\cdot g_{-1})^k\cdot g_1$ holds by the induction hypothesis; hence,
\begin{align*}
[g_{m+1}]&	\stackrel{\eqref{lkdskldslkdslklkdslkdslkdslkdsids98d09ds09ds0909ds09dsds}}{=}[\alpha\cdot g_m]
=
[g_1\cdot(g_{-1}\cdot g_{1})^{2k+1}\cdot (g_{1}\cdot g_{-1})^k \cdot g_1]\\
&	\stackrel{\phantom{\eqref{lkdskldslkdslklkdslkdslkdslkdsids98d09ds09ds0909ds09dsds}}}{=}[(g_1\cdot g_{-1})^{k+1}\cdot g_1\cdot q_k^+\cdot g_1]
	\stackrel{\eqref{qforrmel},\he \eqref{qforrme2}}{=}[(g_{1}\cdot g_{-1})^{k+1}]=[g_{\sigma(1)}\cdot {\dots}\cdot g_{\sigma(m+1)}].
\end{align*}
\end{itemize}
\endgroup
\noindent
An analogous induction shows that the right side of \eqref{sdsdffghhh} also holds for all $\cn_-\leq n\leq -1$.\qedhere 
\end{itemize}
\endgroup
\end{enumerate}}
\endgroup
\end{proof}
\subsection{Synopsis of the Results}
Combining the results obtained in this section so far, we obtain the following statement. 
\begin{theorem}
\label{sfdknfdhujfd}
Let $\wm$ be sated, and $\gamma\colon I\rightarrow M$ immersive and free. If $\gamma$ is not a free segment by itself, then it either admits a unique $\tau$-decomposition or a compact maximal interval. In the second case, $\gamma$ is either positive or negative, with what the statements in Proposition \ref{sdfdfdlla} or Proposition \ref{trhdhg} hold, respectively. 
\end{theorem}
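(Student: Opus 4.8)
The plan is to assemble Theorem \ref{sfdknfdhujfd} as a direct corollary of the results already established in this section, so the proof is essentially a matter of citing the right lemmas in the right order. First I would invoke Proposition \ref{shifty}: since $\gamma$ is free but not a free segment, it admits either a $\tau$-decomposition or some compact maximal interval; and by Lemma \ref{eoder}.\ref{eoder3} these two alternatives are mutually exclusive, which also gives uniqueness of the $\tau$-decomposition (if it exists, no other decomposition — in particular no compact maximal interval — can occur). This settles the dichotomy.

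Next I would treat the second case. Suppose $\gamma$ admits a compact maximal interval $A$. By Corollary \ref{sdffdfdexi}, $A$ admits an $A$-decomposition $(\{a_n\}_{n\in\cn},\{[g_n]\}_{n\in\cn})$, and by Lemma \ref{dsfdsffds} this decomposition is faithful, hence unique. Now for the edge relations $g_{\pm1}\cdot\gamma|_A\psim\gamma|_{A_{\pm1}}$, the discussion preceding Lemma \ref{fdggf} — specifically the equivalences in \eqref{wassadpp} — shows that the $\psim_+$-case at $+1$ holds iff the $\psim_+$-case at $-1$ holds, and likewise for $\psim_-$; so exactly one of ``$A$ positive'' or ``$A$ negative'' holds. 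Lemma \ref{fdggf}.\ref{fdggf3} then propagates this to every compact maximal interval, so the attribute ``positive'' resp.\ ``negative'' is a well-defined property of $\gamma$ itself, and the two possibilities are exclusive since a positive $A$ forbids any negative interval and vice versa (again Lemma \ref{fdggf}.\ref{fdggf3}). Finally, in the positive case Proposition \ref{sdfdfdlla} applies verbatim, and in the negative case Proposition \ref{trhdhg} applies verbatim, which is exactly the asserted conclusion.

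I do not expect a genuine obstacle here: every ingredient is already proved, and the only care needed is bookkeeping — making sure the exclusivity claims (``$\tau$-decomposition \emph{or} compact maximal interval'', and ``positive \emph{or} negative'') are each backed by an explicit cited statement rather than left implicit. Concretely I would write:

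\begin{proof}
By Proposition \ref{shifty}, $\gamma$ admits a $\tau$-decomposition or some compact maximal interval, and by Lemma \ref{eoder}.\ref{eoder3}, at most one of these situations occurs; in particular, a $\tau$-decomposition, if it exists, is unique. Thus, assume that $\gamma$ admits some compact maximal interval $A$. Then, by Corollary \ref{sdffdfdexi}, there is an $A$-decomposition $(\{a_n\}_{n\in \cn},\{[g_n]\}_{n\in \cn})$ of $\gamma$, which is faithful (hence unique) by Lemma \ref{dsfdsffds}. Now, by \eqref{wassadpp}, exactly one of the relations $g_{\pm 1}\cdot \gamma|_A\psim_+ \gamma|_{A_{\pm 1}}$ or $g_{\pm 1}\cdot \gamma|_A\psim_- \gamma|_{A_{\pm 1}}$ holds, i.e., $A$ is either positive or negative. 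Then, Lemma \ref{fdggf}.\ref{fdggf3} shows that each compact maximal interval of $\gamma$ is positive, or that each such interval is negative; so $\gamma$ is either positive or negative, and these two cases are exclusive. In the first case, the statements in Proposition \ref{sdfdfdlla} apply; and in the second case, those in Proposition \ref{trhdhg} apply.
\end{proof}
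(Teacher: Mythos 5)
Your proposal is correct and follows essentially the same route as the paper: the theorem is stated there as the culmination of Proposition \ref{shifty}, Lemma \ref{eoder}, Corollary \ref{sdffdfdexi}, Lemma \ref{dsfdsffds}, the equivalences \eqref{wassadpp}, Lemma \ref{fdggf}, and Propositions \ref{sdfdfdlla} and \ref{trhdhg}, which is precisely the assembly you carry out. Your write-up merely makes explicit the bookkeeping (exclusivity of the two alternatives and well-definedness of positive/negative) that the paper leaves implicit.
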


\begin{corollary}
\label{sdfggf}
If $\wm$ is sated and free, then a free immersive curve $\gamma\colon I\rightarrow M$ is either a free segment or positive. In particular, for each $t\in I$, there exists $J\subseteq I$ free and open with $t\in J$ such that $g\cdot \gamma(J)\cap \gamma(J)$ is finite for each $g\neq e$.
\end{corollary}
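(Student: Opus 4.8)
The plan is to read the statement off Theorem~\ref{sfdknfdhujfd} together with the fact that for a free action one has $G_x=\{e\}$ for every $x\in M$, hence also $G_\gamma=\bigcap_{t\in I}G_{\gamma(t)}=\{e\}$ and, by Lemma~\ref{lemma:stabi}, $G_{\gamma|_{D'}}=\{e\}$ for every interval $D'\subseteq I$. If $\gamma$ is a free segment there is nothing to prove, so I would assume it is not. Then Theorem~\ref{sfdknfdhujfd} offers exactly two further possibilities, and the point is to exclude both except ``$\gamma$ positive''. First, $\gamma$ cannot admit a $\tau$-decomposition $[g]$: by the footnote to Definition~\ref{fdsafdsdfs}.\ref{fdsafdsdfs1}, such a class satisfies $g\in G_{\gamma(\tau)}$ and $g\neq e$, which is incompatible with freeness of $\wm$.

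It remains to rule out the negative case. Suppose $\gamma$ is negative with a compact maximal (negative) interval $A=A_0=[a_{-1},a_1]$ and $A$-decomposition $(\{a_n\}_{n\in\cn},\{[g_n]\}_{n\in\cn})$, which exists by Corollary~\ref{sdffdfdexi}. By definition of ``negative'' we have $g_1\cdot\gamma|_A\psim_-\gamma|_{A_1}$, and, unwinding the meaning of $\psim_-$ from Subsection~\ref{repari}, the associated analytic diffeomorphism $\mu_1$ is strictly decreasing and maps the common boundary point $a_1$ of $A$ and $A_1$ to itself; in particular $g_1\cdot\gamma(a_1)=\gamma(\mu_1(a_1))=\gamma(a_1)$, so $g_1\in G_{\gamma(a_1)}$ and hence $g_1=e$. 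Then $\gamma|_{\dom[\mu_1]}=\gamma\cp\mu_1$; picking $t\in\dom[\mu_1]$ with $t<a_1$ close to $a_1$, we get $\mu_1(t)>\mu_1(a_1)=a_1>t$, so $t\neq\mu_1(t)$ while both parameters lie in a neighbourhood of $a_1$ on which the immersion $\gamma$ is injective, yet $\gamma(t)=\gamma(\mu_1(t))$ --- a contradiction. (Equivalently, restricting $\gamma$ to $A\cup A_1$, this is precisely the configuration forbidden by Corollary~\ref{dfdgttrgf}.) Hence $\gamma$ is positive, which proves the first assertion.

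For the second assertion, fix $t\in I$. If $\gamma$ is a free segment, then $I$ itself is free; if $\gamma$ is positive, Proposition~\ref{sdfdfdlla} supplies a positive --- hence maximal, hence free --- interval $A_t$ with $t\in\innt[A_t]$. In either case $t$ lies in a free interval, and, shrinking around $t$ and using that the analytic immersion $\gamma$ is locally an embedding, I obtain an open free interval $\tilde J$ with $t\in\tilde J$ on which $\gamma$ is an embedding, together with a compact free interval $J\subseteq\tilde J$ with $t\in J$. I claim $J$ works. If $g\cdot\gamma(J)\cap\gamma(J)$ were infinite for some $g\neq e$, then, being an infinite subset of the compact set $\gamma(J)$, it would have an accumulation point $x$; writing $x=\lim_n\gamma(t_n)=\lim_n g\cdot\gamma(t'_n)$ with $t_n,t'_n\in J$ and passing to convergent subsequences shows $x\in\im[\gamma|_{\tilde J}]\cap\im[g\cdot\gamma|_{\tilde J}]$ and that $x$ is an accumulation point of this intersection. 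By Lemma~\ref{lemma:BasicAnalyt1} applied to the analytic embeddings $\gamma|_{\tilde J}$ and $g\cdot\gamma|_{\tilde J}$, we obtain $g\cdot\gamma|_{\tilde J}\cpsim\gamma|_{\tilde J}$; since $\tilde J$ is free, this forces $g\cdot\gamma|_{\tilde J}=\gamma|_{\tilde J}$, i.e.\ $g\in G_{\gamma|_{\tilde J}}=\{e\}$ by Lemma~\ref{lemma:stabi}, contradicting $g\neq e$.

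The only point requiring a little care is this last reduction: one must choose the free interval small enough that $\gamma$ is genuinely an embedding on an open interval still containing the compact $J$, so that the accumulation-point argument can invoke Lemma~\ref{lemma:BasicAnalyt1} with free, embedded domains. Granting that, everything follows from the quoted results, the substance residing in Theorem~\ref{sfdknfdhujfd} and Proposition~\ref{sdfdfdlla}.
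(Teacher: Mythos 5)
Your proposal is correct and follows essentially the same route as the paper: the paper likewise dismisses $\tau$-decompositions and negative intervals in one line via triviality of the stabilizers (your explicit unwinding of $\psim_-$ to show $g_1\in G_{\gamma(a_1)}$ is exactly the point, and the extra contradiction you derive after concluding $g_1=e$ is redundant since $[g_1]\neq[e]$ already forces $g_1\neq e$), and then obtains $J$ from Proposition \ref{sdfdfdlla}, shrinks to an embedding, and applies Lemma \ref{lemma:BasicAnalyt1} plus $G_\gamma=\{e\}$ exactly as you do. The only cosmetic difference is that the paper takes $J$ to be the open interior of the compact neighbourhood, in keeping with its convention that $J$ denotes an open interval.
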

\begin{proof}
Since $\wm$ is free, it only admits trivial stabilizers; in particular $G_\gamma=\{e\}$ holds. Thus, $\gamma$ can neither admit a  $\tau$-decomposition nor a negative interval, i.e., $\gamma$ is either a free segment or positive by Theorem \ref{sfdknfdhujfd}. 
Let now $t\in I$ fixed. Then, there exists $J'\subseteq I$ free and open with $t\in J'$. This is clear if $\gamma$ is a free segment, and follows from Proposition \ref{sdfdfdlla}.\ref{sdfdfdlla2} if $\gamma$ is positive. 
By Corollary \ref{dfdsasasasassa}, we can shrink $J'$ around $t$ such that $\gamma|_{J'}$ is an embedding; and we fix a compact interval $K\subseteq J'$ with $t\in J:=\innt[K]$. Now, if $g\cdot \gamma(J)\cap \gamma(J)$ is infinite for some $g\in G$, then $\im[g\cdot \gamma|_{J'}]\cap \im[\gamma|_{J'}]$ admits an accumulation point (in $g\cdot \gamma(K) \cap \gamma(K)$ by compactness). Then, Lemma \ref{lemma:BasicAnalyt1} shows $g\cdot \gamma|_{J'}\cpsim \gamma|_{J'}$, hence $g\in G_{\gamma|_{J'}}=G_\gamma=\{e\}$. 
\end{proof}
\begin{example}
\label{pofdpodfoppofdpofd}
Corollary \ref{sdfggf} in particular applies to the situation where a Lie group acts via left multiplication on itself (the corresponding action is sated by  Remark \ref{remmmmmi}.\ref{dslkjkjdskjdskjdskjsdkjkjdskjds87ds87dsa}).\hspace*{\fill}$\ddagger$
\end{example}

\subsection{Arbitrary Domains}
\label{rzerfgd}
To this point, we have only discussed decompositions of free immersive curves $\gamma\colon D\rightarrow M$, with $D$ an open interval. 
This was mainly for technical reasons; because, if $D$ is open, then there are no difficulties concerning the conventions that we have fixed in the end of Sect.\ \ref{repari}, when defining decompositions of free curves.  
However, the general case ($D$ is an arbitrary interval) now follows easily from the statements that already proven -- just by considering 
the maximal immersive analytic extension $\og\colon \oI\rightarrow M$ of $\gamma$, as well as the restriction $\ug := \gamma|_I$ of $\gamma$ to $(i',i)\equiv \uI:=\innt[D]$. Indeed, the key observations are that 
$\og$ and $\ug$ are free if $\gamma$ is free, and that  the following statement holds:
\begin{lemma}
\label{oidsoidsoidsds98ds98ds98dsdsdsdsds}
Let $\gamma\colon D\rightarrow M$ be a free immersive curve. Then, $\gamma$ is a free segment \deff $\ug$ is a free segment. 
\end{lemma}
\begin{proof}
It is clear that $\ug$ is a free segment if $\gamma$ is a free segment. Conversely, if $\ug$ is a free segment, then $\uI$ is free w.r.t.\ $\gamma$ by \eqref{ldslkdlkdsd09s09d09ds09ds09ds09dssddsdscxcxcx}, so that $D$ is free w.r.t.\ $\gamma$ by \eqref{dfdffdfdfdfd} (as $D$ is the closure of $I$ in $D$).
\end{proof}
\noindent
Theorem \ref{sfdknfdhujfd} and Lemma \ref{oidsoidsoidsds98ds98ds98dsdsdsdsds} yield:
\begin{corollary}
\label{ddsf}
Let $\wm$ be sated; and let  $\gamma\colon D\rightarrow M$ be  immersive and free. If $\gamma$ is not a free segment by itself, then $\ug$ either admits a unique $\tau$-decomposition or a compact maximal interval.
\end{corollary}
\subsubsection{$\boldsymbol{\tau}$-Decompositions}
Let $\gamma\colon D\rightarrow M$ be analytic immersive; and let 
 $\tau\in \innt[D]$ be fixed. 
\begingroup
\setlength{\leftmargini}{12pt}
\begin{itemize}
\item
 We define\qquad $D_-:=D\cap(-\infty,\tau]$\qquad as well as\qquad  $D_+:=D\cap [\tau,\infty)$.
 \item
 We write $g\cdot \gamma|_{D_-}\rightarrow \gamma|_{D_+}$ for $g\in G$ \deff there exists a (necessarily unique) analytic diffeomorphism $\mu\colon D_-\supseteq B_-\rightarrow B_+\subseteq D_+$ with $\mu(\tau)=\tau\in B_\pm$, such that 
 $$g\cdot \gamma|_{B_-}= \gamma|_{B_+}\cp \mu\qquad\quad\text{holds with}\qquad\quad B_-=D_-\quad\vee\quad  B_+=D_+.$$ 
If it helps to clarify the argumentation, we also say that $g\cdot \gamma|_{D_-}\rightarrow \gamma|_{D_+}$  holds w.r.t.\ $\mu$.
\end{itemize}
\endgroup 
\noindent
Assume now that $D_\pm$ are free intervals: 
\begingroup
\setlength{\leftmargini}{12pt}
\begin{itemize}
\item
A {\bf $\boldsymbol{\tau}$-decomposition} of $\gamma$ is a class 
$[g]\subseteq G_{\gamma(\tau)}\setminus G_\gamma$,    
such that $g\cdot \gamma|_{D_-}\rightarrow \gamma|_{D_+}$ holds w.r.t.\ $\mu$.

(The same arguments as in Lemma \ref{eoder}.\ref{eoder1} show that then $D_\pm$ are automatically maximal, and the only maximal intervals.)
\item
We say that $[g]$ is {\bf faithful} \deff $g'\cdot \gamma|_{D_-} \cpsim \gamma$  w.r.t.\ $\rho$ implies that  
\begin{align*}
\text{either}\qquad\:\:\: [g']=[e]\quad\wedge\quad\ovl{\rho}|_{D_-}=\id_{D_-}\qquad\:\:\text{or}\qquad\:\:\: [g']=[g]\quad\wedge\quad \ovl{\rho}|_{\dom[\mu]}=\mu\qquad\:\:\:\text{holds.}
\end{align*} 
\end{itemize}
\endgroup
\noindent 
By \eqref{dfdffdfdfdfd} and  \eqref{ldslkdlkdsd09s09d09ds09ds09ds09dssddsdscxcxcx}, the intervals $D_-$ and $D_+$ are free \deff the intervals $(i',\tau]$ and $[\tau,i)$ are free. Moreover, from our discussions in Sect.\ \ref{repari}, we easily obtain the equivalence:
\begin{align}
\label{sdpodspopodspod9898454545kj45kj4545sds09ds09ds}
	g\cdot \gamma|_{D_-}\rightarrow \gamma|_{D_+}\qquad\quad\Longleftrightarrow \qquad\quad g\cdot \gamma|_{(i',\tau]}\psim \gamma|_{[\tau,i)}
\end{align} 
\vspace{-25pt}

\noindent
In fact:
\begingroup
\setlength{\leftmargini}{12pt}
\begin{itemize}
\item[$\triangleright$]
If the left side of \eqref{sdpodspopodspod9898454545kj45kj4545sds09ds09ds} holds w.r.t.\ $\mu$, then the right  side holds w.r.t.\
\begingroup
\setlength{\leftmarginii}{12pt}
\begin{itemize}
\item
$\um:=\mu|_{(i',\tau]}$\:\: if \:\:$B_-=D_-$ holds,
\item
$\um:=\mu|_{(b',\tau]}$\:\: if\:\: $B_-\subset D_-$ is of the form $(b',\tau]$ or $[b',\tau]$.
\end{itemize}
\endgroup
\noindent 
\item[$\triangleright$]
If the right side of \eqref{sdpodspopodspod9898454545kj45kj4545sds09ds09ds} holds w.r.t.\ $\um$, then the left  side holds w.r.t.\
\begingroup
\setlength{\leftmarginii}{12pt}
\begin{itemize}
\item
$\mu:=\ovl{\um}|_{D_-}$ \hspace{6pt}if\: $\im[\um]\subset[\tau,i)$ holds,
\item
$\mu:=\um$\hspace{16pt} \hspace{6pt}if\: $\im[\um]= [\tau,i)=D_+$ holds,
\item
$\mu:=\ovl{\um}|_C$\hspace{7pt} \hspace{6pt}if\: $\im[\um]= [\tau,i)$\: and\: $D_+=[\tau,i]$ holds; where $C$ denotes the closure of $\dom[\um]$ in $D_-$,
\end{itemize}
\endgroup
\noindent 
whereby $\ovl{\um}$ denotes the maximal analytic immersive extension of $\um$.
\end{itemize}
\endgroup  
\noindent
Consequently, we have the equivalence
\begin{align}
\label{oifdoifdoidf98fd09fd0909fd09fdfdfdfdfdffd}
	[g]\:\:\text{is a}\:\: \tau\text{-decomposition of}\:\: \gamma\:\: \text{w.r.t.}\:\:\mu\qquad\:\:\Longleftrightarrow\qquad\:\:
	[g]\:\:\text{is a}\:\: \tau\text{-decomposition of}\:\: \ug\:\: \text{w.r.t.}\:\:\um,
\end{align}
whereby $\mu$ and $\um$ are related to each other as described above. 
 Hence, if $[g]$ is a $\tau$-decomposition of $\gamma$, then  
\begingroup
\setlength{\leftmargini}{12pt}
\begin{itemize}
\item
$[g]$ is unique  
by Lemma \ref{eoder}.\ref{eoder3},
\item
$[g]$ is faithful w.r.t.\ $\gamma$, because $[g]$ is faithful w.r.t.\ $\ug$ by  Lemma \ref{taufaith} and we have the implication:
\begin{align*}
	g'\cdot \gamma|_{D_-} \cpsim \gamma\quad\text{w.r.t.}\quad \rho\qquad\quad\Longrightarrow\qquad\quad g'\cdot \ug|_{(i',\tau]} \cpsim \ug \quad\text{w.r.t.}\quad \rho\qquad\:\:
\end{align*}     
\end{itemize}
\endgroup
\noindent
Finally, \eqref{ldslkdlkdsd09s09d09ds09ds09ds09dssddsdscxcxcx} yields the equivalence: 
\begin{align}
\label{dsdslkdsoidsoidsds09ds09ds09ds09ds0909dsdsds}
 A\:\:\text{compact maximal w.r.t.}\:\:\gamma \qquad\quad\Longleftrightarrow\qquad\quad A\:\:\text{compact maximal w.r.t.}\:\:\ug\qquad 
\end{align}
\begin{corollary}
\label{ddsfdf}
Let $\wm$ be sated; and let $\gamma\colon D\rightarrow M$ be  immersive and free. If $\gamma$ is not a free segment by itself, then it either admits a unique $\tau$-decomposition or a compact maximal interval $A\subseteq \innt[D]$.
\end{corollary} 
\begin{proof}
This is clear from Corollary \ref{ddsf} as well as the equivalences \eqref{oifdoifdoidf98fd09fd0909fd09fdfdfdfdfdffd} and \eqref{dsdslkdsoidsoidsds09ds09ds09ds09ds0909dsdsds}.
\end{proof}
\subsubsection{$\boldsymbol{A}$-Decompositions}
According to Corollary \ref{ddsfdf}, it remains to discuss the situation where the analytic immersion  $\gamma\colon D\rightarrow M$ admits a compact maximal interval $A\subseteq \innt[D]$:
\vspace{6pt}

\noindent
Let $\delta \colon [b',b]\equiv B \rightarrow M$ and $\delta'\colon C\rightarrow M$ be analytic immersions, with $C$ of the form $[c',c]$ or $[c',c)$ or $(c',c]$. We write 
 $\delta \ppsim \delta'$\hspace{0.5pt} \deff\hspace{1.5pt}$\delta|_{B'}=\delta'\cp \rho$ holds for an analytic diffeomorphism $\rho\colon B\supseteq B' \rightarrow C$ with 
\begin{align*}
	B'\cap \{b',b\}\neq \emptyset\qquad\quad\text{as well as}\qquad\quad \rho(B'\cap \{b',b\})\cap (C\backslash \innt[C])\neq \emptyset.
\end{align*} 
In analogy to Definition \ref{fdsafdsdfs}.\ref{fdsafdsdfs2},  we  define:\he\footnote{Deviating from Definition \ref{fdsafdsdfs}.\ref{fdsafdsdfs2}, we have assumed that $A$ is maximal (not only free). However, according to Lemma \ref{eoder}.\ref{eoder2}, this makes no difference.}
\begingroup
\setlength{\leftmargini}{12pt}
\begin{itemize}
\item
An {\bf $\boldsymbol{A}$-decomposition} of $\gamma$ is a pair $(\{a_n\}_{n\in \cn},\{[g_n]\}_{n\in \cn})$ with $\{a_n\}_{n\in \cn}$ a decomposition of $D$ and $\{g_n\}_{n\in \cn}\subseteq G$, such that 
$A_0= A$ and\: $[g_{\pm1}]\neq [e]$ holds, as well as
\begin{align*}
\begin{split}
g_n\cdot \gamma|_{A}\psim \gamma|_{A_n}\:\:\:\text{w.r.t.}&\:\:\: \mu_n\qquad\quad\forall\:  n\in \cn\setminus\{\cn_-,\cn_+\}\\[2pt]
&\text{and}\\
g_{\cn_\pm}\cdot \gamma|_A\ppsim \gamma|_{A_{\cn_\pm}}\:\:\:\text{w.r.t.}&\:\:\:\mu_{\cn_\pm}\:\:\:\text{if}\:\:\:\cn_\pm\neq \pm\infty\:\:\:\text{holds}. 
\end{split}
\end{align*}
We set $\mu_0:=\id_{A}$ and $g_0:= e$. 
\item
An $A$-decomposition $(\{a_n\}_{n\in \cn},\{g_n\}_{n\in \cn})$ is said to be {\bf faithful} \deff the following implication holds:
\begin{align*}
	g\cdot \gamma|_A \cpsim \gamma\:\:\:\text{w.r.t.}\:\:\: \rho\qquad\Longrightarrow \qquad 
	[g]=[g_n]\:\:\:\text{and}\:\:\: \ovl{\rho}|_{\dom[\mu_n]}=\mu_n 
	\:\:\:\text{for some unique}\:\:\: n\in \cn\cup \{0\}.	
\end{align*}
\end{itemize}
\endgroup
\begin{remark}
\label{lkdslkdsdsmnmdsnmdsnmdsnmdsdsdsds9999aaaxxx}
Let $\wm$ be sated, $\gamma\colon D\rightarrow M$ analytic immersive, and $A\subseteq \innt[D]$  a compact interval.
\begingroup
\setlength{\leftmargini}{16pt}
{
\renewcommand{\theenumi}{\sf\alph{enumi})} 
\renewcommand{\labelenumi}{\theenumi}
\begin{enumerate}
\item
\label{lkdslkdsdsmnmdsnmdsnmdsnmdsdsdsds9999aaaxxx1}
We have the equivalences:
\vspace{-2pt}
\begin{align*}
	\underbrace{A\:\:\text{maximal w.r.t.}\:\: \og}_{(*)} \qquad\:\:\Longleftrightarrow\qquad\:\:
	\underbrace{A\:\:\text{maximal w.r.t.}\:\: \gamma}_{(**)}
	\qquad\:\:\Longleftrightarrow\qquad\:\: 
	\underbrace{A\:\:\text{maximal w.r.t.}\:\: \ug}_{(***)}
\end{align*}
\vspace{-12pt}

\noindent
In fact, the equivalence \eqref{ldslkdlkdsd09s09d09ds09ds09ds09dssddsdscxcxcx} yields the implications $(*)\:\Rightarrow\: (**)\:\Rightarrow\: (***)$, whereby    
$(*)\:\Leftrightarrow\: (***)$ holds by 
Lemma \ref{freemax}.\ref{freemax2}  with $g\equiv e$, $\rho\equiv \id_A$ applied to $\gamma\equiv \og$ and $\gamma'\equiv \ug$ as well as to $\gamma\equiv \ug$ and $\gamma'\equiv \og$. 
\item
\label{lkdslkdsdsmnmdsnmdsnmdsnmdsdsdsds9999aaaxxx2}
The (unique faithful) $A$-decomposition $\underline{\alpha}$ of $\ug$ arises from the (unique faithful) $A$-decomposition $\overline{\alpha}$ of $\og$ via restriction; i.e., 
if\: $\ovl{\alpha}\equiv (\{\ovl{a}_n\}_{n\in \cn},\{[\ovl{g}_n]\}_{n\in \overline{\cn}})$\: with corresponding diffeomorphisms $\{\ovl{\mu}_n\}_{n\in \ovl{\cn}}$, then 
\begin{align*}
\underline{\alpha}= (\{\ovl{a}_n\}_{n\in \underline{\cn}},\{[\ovl{g}_n]\}_{n\in \underline{\cn}})
	\qquad\quad\text{ho}&\text{lds for}\qquad\quad \underline{\cn}:= \{n\in \overline{\cn}\:|\: \ovl{a}_n\in \innt[D]\}\in \CN\\[1pt]
	&\text{with}\\
	\underline{\mu}_n=\ovl{\mu}_n\:\: \text{ for }\:\: \underline{\cn}_-<n<\underline{\cn}_+\qquad &\he\text{and}\qquad \underline{\mu}_{\cn_{\pm}}\!=\overline{\mu}|_{A_{\cn_\pm}}\:\: \text{if}\quad \underline{\cn}_{\pm} \neq \pm \infty\: \text{ holds.}
\end{align*} 
\end{enumerate}}
\endgroup
\end{remark}
\noindent
A restriction argument analogous to that in Remark \ref{lkdslkdsdsmnmdsnmdsnmdsnmdsdsdsds9999aaaxxx}.\ref{lkdslkdsdsmnmdsnmdsnmdsnmdsdsdsds9999aaaxxx2}, yields the following statement.
\begin{lemma}
\label{dskjdsjskjkdsjsd9898ds98ds98ds98dsdsdsdsds}
Let $\wm$ be sated, $\gamma\colon D\rightarrow M$ analytic immersive, and $A\subseteq \innt[D]$ compact maximal. Then,  there exists a (necessarily unique) faithful $A$-decomposition of $\gamma$, and the points a) and b) in Lemma \ref{eoder}.\ref{eoder2} hold with $I\equiv \innt[D]$ there.
\end{lemma}
\begin{proof}
\begingroup
\setlength{\leftmargini}{12pt}
\begin{itemize}
\item
{\sf Existence:} 
\!\!$A$ is compact maximal w.r.t.\ $\og$ by Remark \ref{lkdslkdsdsmnmdsnmdsnmdsnmdsdsdsds9999aaaxxx}.\ref{lkdslkdsdsmnmdsnmdsnmdsnmdsdsdsds9999aaaxxx1}. Hence, $\og$ admits a unique faithful $A$-decomposition $\ovl{\alpha}\equiv (\{\ovl{a}_n\}_{n\in \ovl{\cn}},\{[\ovl{g}_n]\}_{n\in \overline{\cn}})$ by Lemma \ref{dsfdsffds} and Corollary \ref{sdffdfdexi}.  
Then, the restriction
\begin{align*}
\alpha\equiv (\{\ovl{a}_n\}_{n\in \cn},\{[\ovl{g}_n]\}_{n\in \cn})
	\qquad\quad\text{for}\qquad\quad \cn:= \{n\in \overline{\cn}\:|\: \ovl{a}_n\in \innt[D]\}\in \CN
\end{align*}
is a faithful $A$-decomposition of $\gamma$ (with $\mu_n=\ovl{\mu}_n$ for  $\cn_-<n<\cn_+$ and $\mu_{\cn_{\pm}}=\overline{\mu}|_{A_{\cn_\pm}}$ if ${\cn}_{\pm} \neq \pm \infty$ holds) such that the points a) and b) in Lemma \ref{eoder}.\ref{eoder2} hold with $I\equiv \innt[D]$ there.
\item
{\sf Uniqueness:} Since $\alpha$ is faithful, the same arguments as in Lemma \ref{dsfdsffds} show the uniqueness statement. \qedhere  
\qedhere  
\end{itemize}
\endgroup
\end{proof}
\begin{remark}
The uniqueness statement in Lemma \ref{dskjdsjskjkdsjsd9898ds98ds98ds98dsdsdsdsds} also follows via restriction. Specifically, 
an $A$-decomposition $\alpha$ of $\gamma$ yields an $A$-decomposition $\underline{\alpha}$ of $\ug$, just by restricting the diffeomorphisms $\mu_{\cn_\pm}$ to $\innt[D]\cap A_{\cn_\pm}$ for the case $\cn_\pm \neq \pm \infty$.\footnote{The index set $\cn$ remains the same; because, according to our definition of a decomposition of an arbitrary interval $D$ (see Definition \ref{dskskjsdkjdskjdsds98ds98dsdsdsdsds}), we have $\{a_n\}_{n\in \cn}\subseteq\innt[D]$.}  
Since the diffeomorphisms $\mu_{\cn_\pm}$ are continuous, uniqueness of $\alpha$ is then clear from uniqueness of the $A$-decomposition of $\ug$. 
Notably, $\underline{\alpha}$ coincides with the $A$-decomposition of $\ug$ that we have obtained in Remark \ref{lkdslkdsdsmnmdsnmdsnmdsnmdsdsdsds9999aaaxxx} by restricting the $A$-decomposition $\ovl{\alpha}$ of $\og$ to $\ug$ (this is clear from uniqueness; alternatively from the explicit  construction).\hspace*{\fill}$\ddagger$
\end{remark}
\noindent
Summing up, we have that $\overline{\alpha}$ restricts to $\alpha,\underline{\alpha}$, whereby $\alpha$ restricts to $\underline{\alpha}$.   
\vspace{6pt}

\noindent
Let now $\wm$ be sated, $\gamma\colon D\rightarrow M$  analytic immersive, and $A\subseteq \innt[D]$ compact maximal w.r.t.\ $\gamma$. Then, $A$ is compact maximal w.r.t.\ $\ug$ by Remark \ref{lkdslkdsdsmnmdsnmdsnmdsnmdsdsdsds9999aaaxxx}.\ref{lkdslkdsdsmnmdsnmdsnmdsnmdsdsdsds9999aaaxxx1}, and we define:  
\vspace{-1pt}
$$\gamma\:\:\:\text{{\bf positive\slash negative}}\qquad\quad\stackrel{\rm def.}{\Longleftrightarrow} \qquad\quad \ug\:\:\:\text{positive\slash negative}
$$
We observe the following:
\begingroup
\setlength{\leftmargini}{12pt}
\begin{itemize}
\item
Theorem \ref{sfdknfdhujfd} (or Lemma \ref{fdggf}.\ref{fdggf3}) shows that \ref{sfdknfdhujfd} $\gamma$ is either positive or negative.
\item
Lemma \ref{fdggf}.\ref{fdggf3} implies that
\vspace{-2pt}
\begingroup
\setlength{\leftmarginii}{12pt}
\begin{itemize}
\item
each compact maximal $B\subseteq \innt[D]$ is positive/negative w.r.t.\ $\ug$.
\vspace{2pt} 
\item
each compact maximal $B\subseteq \ovl{I}$ is positive/negative w.r.t.\ $\og$,   
because $B$ is compact maximal w.r.t.\ $\og$ by Remark \ref{lkdslkdsdsmnmdsnmdsnmdsnmdsdsdsds9999aaaxxx}.\ref{lkdslkdsdsmnmdsnmdsnmdsnmdsdsdsds9999aaaxxx1} as well as positive/negative by faithfulness.
\end{itemize}
\endgroup  
\end{itemize}
\endgroup
\noindent
Together with Remark \ref{lkdslkdsdsmnmdsnmdsnmdsnmdsdsdsds9999aaaxxx}.\ref{lkdslkdsdsmnmdsnmdsnmdsnmdsdsdsds9999aaaxxx1},  Proposition \ref{sdfdfdlla} and Proposition \ref{trhdhg} applied to $\og$ and $\ug$ yield the following:
\begin{remark}
\label{oidsoidsoisdoids9dsds98ds9898dsdsdsdsdsds}
Let $\wm$ be sated;  
and let    
 $\gamma\colon D\rightarrow M$ be analytic immersive.
\begingroup
\setlength{\leftmargini}{16pt}
{
\renewcommand{\theenumi}{\sf\alph{enumi})} 
\renewcommand{\labelenumi}{\theenumi}
\begin{enumerate}
\item
\label{aaafdpfdpopofd2}
	If $\gamma$ is negative, then Proposition \ref{trhdhg}   holds (with $D$ instead of $I$ and $A\subseteq \innt[D]$)  
	as follows: 
\begingroup
\setlength{\leftmarginii}{12pt}
\begin{itemize}
\item
Part \ref{trhdhg1} holds in the same form (same proof as in Lemma \ref{fdggf}.\ref{fdggf2}).\footnote{Since $\overline{\alpha}$ restricts to $\alpha$, this alternatively  follows from \eqref{ldslkdlkdsd09s09d09ds09ds09ds09dssddsdscxcxcx} and Lemma \ref{fdggf}.\ref{fdggf2} applied to $\og$.} 
\item	
Part \ref{trhdhg2}  holds in the same form (as $\overline{\alpha}$ restricts to $\alpha$), whereby the left side of \eqref{sdsdffghhh} reads
\begin{align*}	
	g_{\cn_\pm}\cdot \gamma|_A \ppsim\gamma|_{A_{\cn_\pm}}\:\: \:\text{if}\quad \cn_{\pm}\neq \pm\infty\quad\text{holds, with}\quad \mu_{\cn_\pm}\:\:\:\text{positive\slash negative}\:\:\:\: \text{if}\quad \cn_\pm\:\:\:\text{is even\slash odd.}
\end{align*}
\item
Part \ref{trhdhg3} holds in the same form (as $\overline{\alpha}$ restricts to $\alpha$).
\end{itemize}
\endgroup
\item
If $\gamma$ is positive, then Proposition \ref{sdfdfdlla} holds (with $D$ instead of $I$ and $A\subseteq \innt[D]$) as follows:
\begingroup
\setlength{\leftmarginii}{12pt}
\begin{itemize}
\item
Part \ref{sdfdfdlla1} holds in the same form (as $\overline{\alpha}$ restricts to $\alpha$), whereby the right side of 
\eqref{dslfofodpfd} reads 
\begin{align*}
	h^{\cn_\pm}\cdot \gamma|_A \ppsim\gamma|_{A_{\cn_{\pm}}}\:\:\text{with}\quad \dot\mu_{\cn_\pm}>0\quad \text{if}\quad \cn_{\pm}\neq \pm\infty\quad\text{holds.}
\end{align*}
\vspace{-15pt}
\item
Part \ref{sdfdfdlla2} holds in the same form (as $\alpha$ restricts to $\underline{\alpha}$) if the term  ``positive'' is replaced by ``compact maximal (and positive w.r.t.\ $\ug$)'', whereby the last statement holds for each 
$t\in \innt[D]$.
\end{itemize}
\endgroup
Additionally,  
for each $t\in D$, there exists $B_t\subseteq D$ compact maximal (and positive w.r.t.\ $\og$) with $t\in B_t$.
\vspace{-6pt}
\begin{proof}
For $t\in \innt[D]$, the claim is clear from the second point above (and Remark \ref{lkdslkdsdsmnmdsnmdsnmdsnmdsdsdsds9999aaaxxx}.\ref{lkdslkdsdsmnmdsnmdsnmdsnmdsdsdsds9999aaaxxx1}). Assume thus  
$t=\sup(D)\in D$ (the case  $t= \inf(D)\in D$ is treated analogously); and let $A\subseteq \innt[D]$ be positive w.r.t.\ $\ug$, hence positive w.r.t.\ $\og$. Let  $\ovl{\alpha}\equiv (\{\ovl{a}_n\}_{n\in \cn},\{[\ovl{g}_n]\}_{n\in \overline{\cn}})$ denote the $A$-decomposition of $\og$. Since $A\subseteq \innt[D]$ holds, we have 
\begin{align*}
	\text{either}\qquad\quad t=\ovl{a}_n\quad\text{for some}\quad n\geq 2
	\qquad\quad\text{or}\qquad\quad  t\in \innt[A_{n+1}]\quad\text{for some}\quad n\geq 0. 
\end{align*} 
\begingroup
\setlength{\leftmarginii}{12pt}
\begin{itemize}
\item
	In the first case, $A_t:=A_2$ is compact maximal w.r.t.\ $\gamma$ by \eqref{ldslkdlkdsd09s09d09ds09ds09ds09dssddsdscxcxcx}.
\item
	In the second case, the second point in Proposition \ref{sdfdfdlla}.\ref{sdfdfdlla2} (applied to $\og$ with $b\equiv t$) yields $t'\in A_n$ such that $A_t=[t',t]$ is positive w.r.t.\ $\og$, hence compact maximal w.r.t.\ $\gamma$ by \eqref{ldslkdlkdsd09s09d09ds09ds09ds09dssddsdscxcxcx}.
	\qedhere
\end{itemize}
\endgroup 
 \end{proof} 
\end{enumerate}}
\endgroup
\end{remark}

\subsubsection{The Classification}
The above discussions together with Theorem \ref{sfdknfdhujfd}  show the following statement: 
\begin{theorem}
\label{sfdknfdhujfdd}
Let $\wm$ be sated, and $\gamma\colon D\rightarrow M$ immersive and free. If $\gamma$ is not a free segment by itself, then it either admits a unique $\tau$-decomposition or a compact maximal interval contained in the interior of $D$. In the second case, $\gamma$ is either positive or negative, with what the corresponding statements in Remark \ref{oidsoidsoisdoids9dsds98ds9898dsdsdsdsdsds} hold.
\end{theorem}
\noindent
We finally want to state the following variation of Corollary \ref{sdfggf}:
\begin{corollary}
\label{sdfggfsda}
If $\wm$ is sated and free, then a free immersive curve $\gamma\colon I\rightarrow M$ is either a free segment or positive. Moreover, for each $t\in D$, there exists an open interval $J\subseteq \RR$ containing $t$ such that $g\cdot \gamma(J\cap D)\cap \gamma(J\cap D)$ is finite for each $g\neq e$.
\end{corollary}
\begin{proof}
\begingroup
\setlength{\leftmargini}{12pt}
\begin{itemize}
\item
The first statement is clear from Lemma \ref{oidsoidsoidsds98ds98ds98dsdsdsdsds} as well as Corollary \ref{sdfggf} applied to $\ug$.
\item 
The second statement is clear from      
Corollary \ref{sdfggf} applied to $\og$.\qedhere 
\end{itemize}
\endgroup
\end{proof}

\section{Extension: Analytic 1-Manifolds}
\label{ghdhgg} 
Let $(S,\iota)$ be  an {\bf analytic 1-submanifold} of $M$, i.e., $S$ is a  
 connected,  Hausdorff, second countable $1$-dimensional  analytic manifold with boundary,   
and $\iota\colon S\rightarrow M$ is an injective analytic immersion.\footnote{We note that each  connected, Hausdorff, second countable 1-dimensional manifold with boundary  is either homeomorphic to $\UE$ or to an interval \cite{Gale}.} 

Given a chart $(U,\psi)$ of $S$, we always assume that $U\subseteq S$ is open and connected, and that $\iota(U)\subseteq M$ is contained in the domain of a chart of $M$. We furthermore use the convention that $\im[\psi]$ is an open subset of $(-\infty,0]$. 
 Then,
 $(U,\psi)$   
 defines the analytic immersive curve
\begin{align*}
\gamma_\psi\colon D_\psi\equiv \psi(U)\rightarrow M,\qquad t\mapsto \iota\cp \psi^{-1}(t).
\end{align*} 
This raises the question, whether 
the results obtained so far carry over to analytic 1-submanifolds:
\begin{definition}
\label{dslklkdssdodsoisdoisiudzudsds87s87d87dsdssd}
An analytic 1-submanifold $(S,\iota)$ of $M$ is said to be
\begingroup
\setlength{\leftmargini}{12pt}
\begin{itemize}
\item
{\bf exponential}\qquad\:\:$\stackrel{\rm def.}{\Longleftrightarrow}$\qquad\:\:$\wm$ is analytic in $G$\quad\:\:\hspace{3pt}$\wedge$\quad\:\:$\gamma_\psi$ is exponential for some chart $(U,\psi)$.
\item
\hspace{35.9pt} {\bf free}\qquad\:\:$\stackrel{\rm def.}{\Longleftrightarrow}$\qquad\:\:$\wm$ is analytic in $M$\quad\:\:$\wedge$\quad\:\:$\gamma_\psi$ is free for some chart $(U,\psi)$.
\end{itemize}
\endgroup
\end{definition}
\noindent 
In Sect.\ \ref{dffdfddfdf}, we prove the following theorem:
\begin{theorem}
\label{ghfgh}
If $\wm$ is regular and separately analytic, then an analytic 1-submanifold $(S,\iota)$ of $M$ is either exponential or free; whereby the following assertions hold: 
\vspace{-4pt}
\begingroup
\setlength{\leftmargini}{12pt}
\begin{itemize}
\item
$(S,\iota)$ is exponential \deff $\gamma_\psi$ is exponential (and not free) for each chart $(U,\psi)$ of $S$.
\item
$(S,\iota)$ is free \deff $\gamma_\psi$ is free (and not exponential) for each chart $(U,\psi)$ of $S$.
 \end{itemize}
 \endgroup
\end{theorem}
\subsection{The Classification}
\label{dffdfddfdf}
Let $(S,\iota)$ be an analytic 1-submanifold of $M$. In the following, we write $G_x\equiv  G_{\iota(x)}$ for each $x\in S$.   
\begin{lemma}
\label{dffdfdfd}
If $\wm$ is analytic in $M$, then    
the stabilizer 
$\textstyle G_S:= \bigcap_{z\in S}G_{z}$ 
of $S$    
coincides with the stabilizer of  $\gamma_\psi$ for each chart $(U,\psi)$.
\end{lemma}
\begin{proof}
The proof is elementary, and can be found in Appendix \ref{appD1}.
\end{proof}
\begin{lemma}
\label{oisdkdslkdslkdskjdskjsdkjds98ds98ds98dsdsdsds}
If $(U_0,\psi_0)$ is a chart of $S$, then there exist charts $\{(U_n,\psi_n)\}_{n\geq 1}$   of $S$ with $S=\bigcup_{n\in \NN}U_n$ such that $U_n\cap U_{n+1}\neq \emptyset$ holds for all $n\in \NN$.
\end{lemma}
\begin{proof}
The proof is elementary, and can be found in Appendix \ref{appD2}.
\end{proof}
\begin{proposition}
 \label{dfgffhfh}
Let $\wm$ be sated and separately analytic; and let $(S,\iota)$ be  exponential.
\begingroup
\setlength{\leftmargini}{16pt}
{
\renewcommand{\theenumi}{\sf\arabic{enumi})} 
\renewcommand{\labelenumi}{\theenumi}
\begin{enumerate}
\item
 \label{dfgffhfh1}
Each $\gamma_\psi$ is exponential with respect to the same $(x,\g)$, i.e., the following assertions hold:
\vspace{-4pt}
\begingroup
\setlength{\leftmarginii}{12pt}
\begin{itemize}
\item 
There exist $x\in \im[\iota]$ and $\g\in \mg\setminus \mg_{x}$,  such that $\gamma_\psi$ is exponential w.r.t.\ $(x,\mg)$ for each chart $(U,\psi)$.
\item
If $(U,\psi)$ is a chart of $S$, then we have the equivalence: \hspace*{\fill}($\mg_S$ denotes the Lie algebra of $G_S$)
$$
\gamma_\psi\:\: \text{is exponential w.r.t.}\:\: (y,\q)\qquad\quad\Longleftrightarrow\qquad\quad y\in \exp(\RR\cdot \g)\cdot x\quad\wedge\quad \q \in \RR_{\neq 0}\cdot \g +\mg_S
$$
\end{itemize}
\endgroup
\item
 \label{dfgffhfh2}
Exactly one of the following two situations hold:\he\footnote{Both $D$ and $\UE$ are assumed to carry their standard analytic structures.}
\begingroup
\setlength{\leftmarginii}{12pt}
\begin{itemize}
\item
$(S,\iota)$ is analytically diffeomorphic to $\UE$ via
\begin{align}
\label{ghhgghqqqq1}
 \Omega\colon \UE\rightarrow S,\qquad \e^{\I \phi}\mapsto \iota^{-1}(\exp(\phi\cdot \g)\cdot x)
 \end{align}
 with $\g$ rescaled such that $\pii(x,\g)=2\pi$ holds.
 \item
$(S,\iota)$ is analytically diffeomorphic to an interval $D\subseteq \RR$ via
\begin{align}
\label{ghhgghqqqq2}
  \:\:\Omega\colon D\rightarrow S,\qquad  t\mapsto \iota^{-1}(\exp(t\cdot \g)\cdot x).
\end{align}
\end{itemize}
\endgroup
\noindent
In both situations, $\g$ is unique up to addition of elements in $\mg_S$ (with $D$ fixed in the second case). 
\end{enumerate}}
\endgroup
\end{proposition}
\begin{proof}
Confer Appendix \ref{appD3}. 
\end{proof}
\begin{corollary}
\label{lkdslksdlkdslkdsoidsoioidsds78987ds98ds98dsdsdsdsdsdsdss}
Let $\wm$ be sated and separately analytic. An analytic 1-submanifold $(S,\iota)$ of $M$ is exponential \deff it is either 
analytically diffeomorphic to $\UE$ or to some interval $D\subseteq \RR$ via 
\begin{align*}
	\UE\ni \e^{\I \phi}\mapsto \iota^{-1}(\exp(\phi\cdot \g)\cdot x)\in S\qquad\text{or}\qquad D\ni t\mapsto \iota^{-1}(\exp(t\cdot \g)\cdot x)\in S\qquad\text{respectively},
\end{align*}
with $x\in im[\iota]$ as well as $\g\in \mg\backslash \mg_x$ 
unique up to addition of elements in $\mg_S$ (with $D$ fixed in the second case). 
\end{corollary}
\begin{proof}
Confer Appendix \ref{appD4}.
\end{proof}

\begin{proof}[Proof of Theorem \ref{ghfgh}]
The claim is clear from  Proposition \ref{dfgffhfh}.\ref{dfgffhfh1} and Theorem \ref{classi}. 
\end{proof}
\noindent
If $\wm$ is sated and analytic in $M$, then the  last point in Theorem \ref{ghfgh} also holds in the following form: 
\begin{lemma}
\label{dslklkdslkdssdoioidsoidsds09ds09ds09ddsdsds09}
If $\wm$ is sated and analytic in $M$, then $(S,\iota)$ is free \deff $\gamma_\psi$ is free for each chart $(U,\psi)$. 
\end{lemma}
\begin{proof}
Confer Appendix \ref{appD4b}.
\end{proof}
\subsection{Mirror Points}
\label{dskjkjdsdsnmnmdsnmdsds98sd9898dsds}
In this brief subsection, we discuss the concept of Mirror points. 
\begin{definition}
\label{989898oioioihjhjhghg75767676uzhgffddsdadsa}
Assume that $\wm$ is analytic in $M$, and let   
$(S,\iota)$ be an analytic 1-submanifold of $M$.  
Then, $\FP\subseteq S$ denotes the set of all {\bf mirror points} in $S$, i.e., all $z\in S$ such that there exist $g\in G\backslash G_S$, a chart $(U,\psi)$ of $S$, and $J\subseteq \dom[\ovl{\gamma}_\psi]$ open with $\psi(z)\in J$ such that
\vspace{-2pt}
\begin{align}
\label{sdfsdfds}
g\cdot \ovl{\gamma}_\psi|_{(-\infty,\psi(z)]\:\cap\: J}\psim_{\psi(z),\psi(z)} \ovl{\gamma}_\psi|_{J\:\cap\: [\psi(z),\infty)}
\end{align}
holds. 
Evidently, \eqref{sdfsdfds} implies $g\in G_{z}$ (\he{}i.e., $g\in G_z\setminus G_S$, hence $g\neq e$).
\end{definition} 
\begin{lemma}
\label{ghhjgghas}
Let $\wm$ be analytic in $M$, and let $z\in S$ be given such that $\psi'(z)$ is contained in a free open interval $J'\subseteq \dom[\ovl{\gamma}_{\psi'}]$ for some chart $(U',\psi')$ of $S$. Then,  $z\in S\backslash \FP$ holds. 
\end{lemma}
\begin{proof}
Assume that the claim is wrong, i.e., there exist $g\in G\backslash G_S$, $(U,\psi)$, $J\subseteq \dom[\ovl{\gamma}_\psi]$  such that \eqref{sdfsdfds} holds. 
\begingroup
\setlength{\leftmargini}{12pt}
\begin{itemize}
\item
Since $z\in U\cap U'$, Lemma \ref{lemma:BasicAnalyt2} provides   an analytic diffeomorphism $\rho\colon \dom[\ovl{\gamma}_\psi]\supseteq I\rightarrow I'\subseteq \dom[\ovl{\gamma}_{\psi'}]$, with $\psi(z)\in I$, $\psi'(z)\in I'$, $\rho(\psi(z))=\psi'(z)$, and $\ovl{\gamma}_\psi|_I=\ovl{\gamma}_{\psi'}\cp \rho$.
\item
Together with the previous point, \eqref{sdfsdfds} 
 implies $g\cdot \ovl{\gamma}_{\psi'}|_{J'} \cpsim  \ovl{\gamma}_{\psi'}|_{J'}$. This, however, contradicts that $J'$ is free, as $g\notin G_S=G_{\gamma_{\psi'}}$ holds by Lemma \ref{dffdfdfd}.
\qedhere
\end{itemize}
\endgroup   
\end{proof}
\begin{lemma}
\label{ghhjggh}
Let $\wm$ be sated and analytic in $M$. Assume that $(S,\iota)$ is free; and let $(U,\psi)$ be a chart of $S$. 
Then, the following assertions hold:
\begingroup
\setlength{\leftmargini}{16pt}
{
\renewcommand{\theenumi}{\alph{enumi})} 
\renewcommand{\labelenumi}{\theenumi}
\begin{enumerate} 
\item
\label{ghhjggh1}
$\FP\cap U$ 
is (at most) countable. 
\item
\label{ghhjggh2}
For   
each $x\in U \setminus \FP$, there exists a free open interval $\tilde{J}\subseteq \dom[\ovl{\gamma}_\psi]=:I\equiv (i',i)$ with $\psi(x)\in \tilde{J}$.  
\end{enumerate}}
\endgroup
\noindent
Specifically, (exactly) one of the  following situations hold: 
\vspace{-2pt}
\begingroup
\setlength{\leftmargini}{12pt}
\begin{itemize}
\item
$\ovl{\gamma}_\psi$ is a free segment, and $\FP\cap U=\emptyset$. \hspace*{\fill}(\he$\tilde{J}=I$) 
\item
$\ovl{\gamma}_\psi$ admits a $\tau$-decomposition, and $\FP\cap U=  \psi^{-1}(\{\tau\})$.\hspace*{\fill}(\he$\tilde{J}=(i',\tau)$\: or\: $\tilde{J}=(\tau,i)$\he)
\item
$\ovl{\gamma}_\psi$  admits an $A$-decomposition $(\{a_n\}_{n\in \cn},\{[g_n]\}_{n\in \cn})$, and (exactly) one of the following situations hold:
\begingroup
\setlength{\leftmarginii}{17pt}
\begin{itemize}
\item[$(*)$]
$\ovl{\gamma}_\psi$ is positive, and $\FP\cap U=\emptyset$. \hspace*{\fill}(\he$\tilde{J}=\innt[A_t]$\: for some compact maximal\: $A_t\subseteq I$)
\vspace{1pt}
\item[$(*)$]
$\ovl{\gamma}_\psi$ is negative,\he and $\FP\cap U=\{\psi^{-1}(a_n) \:|\: n\in \cn\:\:\text{with}\:\: a_n\in \psi(U)\}$.
\hspace*{\fill}(\he$\tilde{J}=\innt[A_n]$\: for some\: $n\in \cn$)
\end{itemize}
\endgroup
\end{itemize}
\endgroup
\end{lemma}
\begin{proof}
$\ovl{\gamma}_\psi$ is free, because $\gamma_\psi$ is free (Lemma \ref{dslklkdslkdssdoioidsoidsds09ds09ds09ddsdsds09}). Hence, by  Theorem \ref{sfdknfdhujfd}, $\ovl{\gamma}_\psi$ is either a free segment or admits a $\tau$-decomposition or admits an $A$-decomposition: 
\begingroup
\setlength{\leftmargini}{12pt}
\begin{itemize}
\item
If $\ovl{\gamma}_\psi$ is a free segment, then 
$U\subseteq  S\setminus \FP$ holds by Lemma \ref{ghhjgghas}; 
and we can choose  $\tilde{J}= I$ for each $x\in U$.
\item
Assume that $\ovl{\gamma}_\psi$  admits a $\tau$-decomposition:
\vspace{-2pt}
\begingroup
\setlength{\leftmarginii}{12pt}
\begin{itemize}
\item
If $\tau\in \psi(U)$, then 
\eqref{sdfsdfds} holds for $z:=\psi^{-1}(\tau)$, hence $\psi^{-1}(\tau)\in  \FP\cap U$.
\vspace{2pt}
\item
If $z\in \FP\cap U$,  
then Lemma \ref{ghhjgghas} shows $(i',\tau)\not\ni \psi(z)\notin (\tau,i)$ (and $J\subseteq I$ suitably), hence $\psi(z)=\tau$.
\end{itemize}
\endgroup
Thus, $\FP\cap U=  \{\psi^{-1}(\tau)\}$ holds; and, for $x\in U\setminus \FP$, we either have $\psi(x)\in (i',\tau)$ or $\psi(x)\in (\tau,i)$.
\item
$\ovl{\gamma}_\psi$  admits an $A$-decomposition $(\{a_n\}_{n\in \cn},\{[g_n]\}_{n\in \cn})$:
\begingroup
\setlength{\leftmarginii}{17pt}
\begin{itemize}
\item[$(*)$]
If $\ovl{\gamma}_\psi$ is positive, then Proposition \ref{sdfdfdlla}.\ref{sdfdfdlla2} shows that each $t\in I$ is contained in the interior of a compact maximal interval $A_t\subseteq I$. Hence, $\FP\cap U=\emptyset$ holds by Lemma \ref{ghhjgghas}.
\item[$(*)$]
Assume that $\ovl{\gamma}_\psi$ is negative:
\begingroup
\setlength{\leftmarginiii}{12pt}
\begin{itemize}
\item[$-$]
If $a_n\in \psi(U)$ holds for $n\in \cn$, then \eqref{sdfsdfds} holds for $z:=\psi^{-1}(a_n)$ (and $J\subseteq I$ suitably), hence $\psi^{-1}(a_n)\in \FP\cap U$. 
\vspace{2pt}
\item[$-$]
If $z\in \FP\cap U$ holds, then 
Lemma \ref{ghhjgghas} shows $\psi(z)\notin \innt[A_n]$ for all $n\in \cn$. Hence, we necessarily have $\psi(z)=a_n$ for some $n\in \cn$.
\end{itemize}
\endgroup
Thus,  $\FP\cap U=\{\psi^{-1}(a_n) \:|\: n\in \cn\:\:\text{with}\:\: a_n\in \psi(U)\}$ holds; and, for $\in U\setminus \FP$, we necessarily have $\psi(x)\in \innt[A_n]$ for some $n\in \cn$.\qedhere
\end{itemize}
\endgroup
\end{itemize}
\endgroup
\end{proof}
\begin{lemma}
\label{jgjhjh}
Let $\wm$ be sated and analytic in $M$; and let $(S,\iota)$ be free. Then, the following assertions hold:
\begingroup
\setlength{\leftmargini}{16pt}
{
\renewcommand{\theenumi}{\arabic{enumi})} 
\renewcommand{\labelenumi}{\theenumi}
\begin{enumerate} 
\item
\label{jgjhjh1} 
$\FP$ is (at most) countable (even empty if $\wm$ is free).
\item
\label{jgjhjh2} 
For each $x\in S\backslash \FP$, there exists a  neighbourhood $V\subseteq S$ of $x$ such that $g\cdot \iota(V)\cap \iota(V)$ is finite for all $g\in G\backslash G_S$. 
\end{enumerate}}
\endgroup
\end{lemma}
\begin{proof}
\begingroup
\setlength{\leftmargini}{16pt}
{
\renewcommand{\theenumi}{\arabic{enumi})} 
\renewcommand{\labelenumi}{\theenumi}
\begin{enumerate}
\item
If $\wm$ is free, then we necessarily have $\FP=\emptyset$,  because $G_{z}=\{e\}$ holds for each $z\in S$. In general, we can cover $S$ by countably many charts,   
and conclude from Lemma \ref{ghhjggh}.\ref{ghhjggh1} that $\FP$ is countable. 
\item 
Let $(U,\psi)$ be a chart around $x$ (hence, $x\in U\setminus \FP$).  
\vspace{-3pt}
\begingroup
\setlength{\leftmarginii}{12pt}
\begin{itemize}
\item
Lemma \ref{ghhjggh}.\ref{ghhjggh2} yields  a free open interval $\tilde{J}\subseteq \dom[\ovl{\gamma}_\psi]$ with $\psi(x)\in \tilde{J}$. 
\item 
Shrinking $\tilde{J}$ around $\psi(x)$ if necessary, we can assume that $\ovl{\gamma}_\psi|_{\tilde{J}}$ is an embedding. 
\item
We fix a compact neighbourhood $K\subseteq \tilde{J}$ of $\psi(x)$, and set $J:=\innt[K]$ as well as $V:=\psi^{-1}(J\cap D_\psi)$. 
\end{itemize}
\endgroup
\vspace{-5pt}

\noindent
Assume now that $g\cdot \iota(V)\cap \iota(V)$ is infinite. Then, $g\cdot \ovl{\gamma}_\psi(J)\cap \ovl{\gamma}_\psi(J)$ is infinite; hence, $\im[g\cdot \ovl{\gamma}_\psi|_{\tilde{J}}]\cap \im[\ovl{\gamma}_\psi|_{\tilde{J}}]$ admits an accumulation point (in $g\cdot \ovl{\gamma}_\psi(K) \cap \ovl{\gamma}_\psi(K)$ by compactness). Lemma \ref{lemma:BasicAnalyt1} shows 
\vspace{-4pt}
\[
g\cdot \ovl{\gamma}_\psi|_{\tilde{J}}\cpsim \ovl{\gamma}_\psi|_{\tilde{J}} \qquad\:\:\stackrel{\tilde{J}\text{ free}}{\Longrightarrow}\qquad\:\: g\in G_{\ovl{\gamma}_\psi}\hspace{-3pt}\stackrel{\text{Lemma } \ref{lemma:stabi}}{=}G_{\gamma_\psi}\hspace{-3pt}\stackrel{\text{Lemma } \ref{dffdfdfd}}{=}G_S.\qedhere
\] 
\end{enumerate}}
\endgroup
\end{proof}
\noindent
Notably, Lemma \ref{jgjhjh}.\ref{jgjhjh2}   
holds in an analogous form if $x$ is a boundary point of $S$ (also if contained in $\FP$): 

\begin{lemma}
\label{fdfxvcx}
Let $\wm$ be sated and analytic in $M$; and let $(S,\iota)$ be free. If  $z\in S$  is a boundary point of $S$, then   
there exists a  neighbourhood $V\subseteq S$ of $z$ such that $g\cdot \iota(V)\cap \iota(V)$ is finite for each $g\in G\backslash G_S$.
\end{lemma} 
\begin{proof}
Confer Appendix \ref{appD5}.
\end{proof}

\subsection{Outlook -- Decompositions}
To obtain global decomposition results also for analytic 1-submanifolds $(S,\iota)$ of $M$, it seems reasonable to make the definitions more similar to that ones that we have used for analytic immersive curves. Let $\wm$ be analytic in $M$:
\begingroup
\setlength{\leftmargini}{12pt}
\begin{itemize}
\item
We write $g\cdot \iota|_\Sigma\cpsim \iota|_{\Sigma'}$ for segments (connected subsets of $S$ with non-empty interior) $\Sigma,\Sigma'\subseteq S$ \deff\he $g\cdot \iota(\OO)= \iota(\OO')$ holds for open segments $\OO\subseteq \Sigma$ and $\OO'\subseteq \Sigma'$ on which $\iota$ is an embedding.  
\item
We say that $(S,\iota)$ is {\bf free} \deff\he it admits a {\bf free segment}, i.e., a 
  connected subset $\Sigma\subseteq S$ with non-empty interior,   such that the following implication holds:
\begin{align*}
	g\cdot \iota|_\Sigma\cpsim \iota|_\Sigma\quad\text{for}\quad g\in G\qquad\quad\Longrightarrow\qquad\quad g\in G_S
\end{align*}
\begin{customlem}{I}\label{ssI98ds98ds98ds98ds89ds89ds}
$(S,\iota)$ is free in the sense stated above \deff $(S,\iota)$ is free in the sense of Definition \ref{dslklkdssdodsoisdoisiudzudsds87s87d87dsdssd}. 
\end{customlem} 
\vspace{-12pt}
\begin{proof}
Let $(U,\psi)$ be a chart of $S$, and let $I\subseteq D_\psi$ be an open interval such that $\gamma_\psi|_I$ is an (analytic) embedding (Corollary \ref{dfdsasasasassa}). Then, $C:=\psi^{-1}(I)$ is a segment such that $\iota|_C=\gamma_\psi\cp\psi|_C$ is an embedding. Hence, for  $g\in G$ fixed, we have the equivalence
\begin{align}
\label{sdkjkjdskjsdnmdsnmdsdsdssd998ds98ds98dsdss233233232d}
\begin{split}
	g\cdot \gamma_\psi|_I\cpsim \gamma_\psi|_I\qquad\quad&\Longleftrightarrow\qquad\quad  g\cdot \iota|_C\cpsim \iota|_C\\[3pt]
	\text{as} &\text{ well as}\\[2pt]
	&\hspace{-26pt} G_{\gamma_\psi}\stackrel{\rm Lemma\:\ref{dffdfdfd}}{=}G_S.
\end{split}
\end{align}
\vspace{-17pt}

We conclude as follows:
\vspace{-4pt}

\begingroup
\setlength{\leftmarginii}{12pt}
\begin{itemize}
\item
Let $(U,\psi)$ be a chart of $S$ such that $\gamma_\psi$ is free. Corollary \ref{dfdsasasasassa} yields a free open interval $I\subseteq D_\psi$ such that $\gamma_\psi|_{I}$ is an embedding. Then, $\Sigma:=\psi^{-1}(I)\subseteq S$ is a free segment by \eqref{sdkjkjdskjsdnmdsnmdsdsdssd998ds98ds98dsdss233233232d}.
\item
Let $\Sigma\subseteq S$ be a free segment. We fix a chart $(U,\psi)$ of $S$ with $U\subseteq \Sigma$, as well as an open interval $I\subseteq D_\psi$ such that $\gamma_\psi|_{I}$ is an  embedding (Corollary \ref{dfdsasasasassa}). Then, $C:=\psi^{-1}(I)\subseteq \Sigma$ is a free segment, because $\Sigma$ is a free segment. Hence, $\gamma_\psi|_I$ is a free segment by \eqref{sdkjkjdskjsdnmdsnmdsdsdssd998ds98ds98dsdss233233232d}, so  that $\gamma_\psi$ is free. \qedhere
\end{itemize}
\endgroup
\end{proof}
\item
A free segment $\Sigma$ is said to be maximal \deff\he the following implication holds:
$$
\Sigma'\subseteq S\quad\text{free segment with}\quad\Sigma\subseteq \Sigma'
\qquad\Longrightarrow\qquad
\Sigma'=\Sigma$$  
It is not hard to see that $\Sigma$ is closed in $S$, and that the following analogue to Lemma \ref{maxim} holds:
\begin{customlem}{II}\label{I98ds98ds98ds98ds89ds89ds}
If $\Sigma\subseteq S$ is a free segment, then there exists $\Sigma'\subseteq S$ maximal with $\Sigma\subseteq \Sigma'$. 
\end{customlem} 
\item
A boundary segment of a segment $\Sigma$ is a segment $\Sigma'\subseteq \Sigma$ such that $\Sigma\setminus\Sigma'$ is connected.
\end{itemize}
\endgroup
\noindent
Now, one possible strategy is first to consider such $(S,\iota)$ without boundary, and then to carry over the obtained results to the boundary case by considering the interior of $S$.  
More specifically, 
one first  has 
to adapt Proposition \ref{prop:shifttrans} to the  analytic 1-submanifold case (without boundary) by means of the classes $[g]\in G\slash G_S$; and, then has to go through the argumentation in Sect.\ \ref{disgencur}, keeping in mind that  $S$ can be compact now  (see  Case B below). 
Indeed, if $\wm$ is sated and analytic in $M$ and  $(S,\iota)$ is free without boundary, then the following results are to be expected:\footnote{Since a connected, Hausdorff, second countable 1-dimensional manifold with boundary  is either homeomorphic to $\UE$ or to an interval \cite{Gale}, $S$ is compact/non-compact without boundary \deff $S$ is homeomorphic to $\UE$/an open  interval.}
\vspace{6pt}

\noindent
{\bf Case A:}\:\: Let $S$ be non-compact and not a free segment by itself:\footnote{Observe that analytic embedded curves are both analytic immersive curves and analytic 1-submanifolds. In this case, the following  two situations just encode the $\tau$-decomposition and the $A$-decomposition case that we have discussed in this paper.}  
\begingroup
\setlength{\leftmargini}{14pt}
\begin{itemize}
\item[$\blacktriangleright$]
If $S$ admits no compact maximal segment, then $S$ admits only two maximal segments $\Sigma,\Sigma'$, and we have $S=\Sigma\cup \Sigma'$ as well as $\Sigma\cap \Sigma'=\{z\}$ for $z\in S$ unique. In addition to that, either $g\cdot \iota(\Sigma) \subseteq \iota(\Sigma')$ or $ g\cdot \iota(\Sigma)\supset \iota(\Sigma')$ holds for some $g\in G_{z}$, whereby the class of $g$  is uniquely determined by the condition  $g\cdot \iota|_{\Sigma}\cpsim \iota$ with $[g]\neq [e]$.
\item[$\blacktriangleright$]
If $S$ admits a compact maximal segment $\Sigma_0$, then there exists a $\Sigma_0$-decomposition $\mathcal{S}$ of $S$, i.e., a family $\{(\Sigma_n,[g_n])\}_{n\in \cn}$ consisting of free segments $\{\Sigma_n\}_{n\in \cn}$ on which $\iota$ is an embedding,  
as well as classes $\{[g_n]\}_{n\in \cn}$ such that the following holds:
\vspace{-2pt}
\begingroup
\setlength{\leftmarginii}{12pt}
\begin{itemize}
\item[$\bullet$]
$\Sigma_p\cap \Sigma_{q}$ is singleton if $|p-q|= 1$, and empty if $|p-q|\geq 2$,
\vspace{2pt}
\item[$\bullet$]
 $\Sigma_n$ is compact for $\cn_-<n<\cn_+$, and $\Sigma_{\cn_\pm}$ is a boundary segment of $S$ if $\cn_{\pm}\neq \pm\infty$ holds.
 \vspace{1pt}
\item[$\bullet$]
$g_n\cdot \iota(\Sigma_0)=\iota(\Sigma_n)$ holds for all $\cn_-<n<\cn_+$, and we have
\begin{align*} 
	g_{\cn_{-}}\cdot \iota(\Sigma_{-})=\iota(\Sigma_{\cn_-})\quad\text{if}\quad \cn_-\neq -\infty\qquad\quad\text{as well as}\qquad\quad g_{\cn_{+}}\cdot \iota(\Sigma_{+})=\iota(\Sigma_{\cn_+})\quad\text{if}\quad \cn_+\neq \infty
\end{align*}
for certain boundary segments $\Sigma_\pm$ of $\Sigma_0$. 
\end{itemize}
\endgroup  
\noindent
The only other $\Sigma_0$-decomposition $\ovl{\mathcal{S}}$ of $S$ is given by \hspace*{\fill}($\ocn_\pm=-\cn_\mp$)
\begin{align*}
(\{\ovl{\Sigma}_n\}_{n\in \ocn},\{[\ovl{g}_n]\}_{n\in \ocn})\quad\text{with}\quad \ovl{\Sigma}_n=\Sigma_{-n}\quad\text{and}\quad [\ovl{g}_n]=[g_{-n}]
\quad\text{for each}\quad n\in \ocn=\{-n \: | \: n\in \cn\}.
\end{align*}
Moreover, for  the boundary points $z_\pm$ shared by $\Sigma_{\pm 1}$ with $\Sigma_0$, we either have $g_{\pm 1}\notin G_{z_\pm}$ or $g_{\pm 1}\in G_{z_\pm}$.   
In the first/second case, we say that $\Sigma_0$ is {\bf positive/negative}.   
Let $\kappa\colon S\rightarrow I$ be a fixed homeomorphism, with $I$ an open interval.   
We say that $\mathcal{S}$ is $\kappa$-oriented \deff $\kappa(z_-)<\kappa(z_+)$ holds: 
\begingroup
\setlength{\leftmarginii}{12pt}
\begin{itemize}
\item[$\bullet$]
If $\Sigma_0$ is positive, then each other compact maximal segment is positive, and each point in $S$ is contained in the interior of such a  positive segment. Moreover, $[g_n]=[h^n]$ holds for all $n\in \cn$, for some unique class $[h]\in G\slash G_S$. This class is independent of the maximal segment $\Sigma_0$ provided that the corresponding $\kappa$-oriented decomposition of $S$ is chosen.\footnote{For the only other $\Sigma_0$-decomposition $\ovl{S}$ of $S$, the mentioned formula holds with $\ovl{h}:=h^{-1}$ instead of $h$.}   
\item[$\bullet$]
If $\Sigma_0$ is negative, then the segments $\{\Sigma_n\}_{n\he\in\he \cn\he\cup\he\{0\}}$ are maximal, and the only maximal segments of $S$.
Each compact $\Sigma_n$ is negative, and $[g_n]=[g_{\sigma(\sign(n))}\cdot {\dots}\cdot g_{\sigma(n)}]$ holds for all $n\in \cn$ with $\sigma$ as in \eqref{sdfgd}.\hspace*{\fill}$\ddagger$
\end{itemize}
\endgroup  
\noindent
\end{itemize}
\endgroup
\noindent
{\bf Case B:}\:\: If $S$ is compact, then $S$ necessarily admits a compact maximal segment $\Sigma_0$, just by existence (Lemma \ref{I98ds98ds98ds98ds89ds89ds}) and closedness of such segments. If $S$ is not a free segment by itself, then it is to be expected that the following assertions hold: 
\vspace{6pt}

\noindent
There exists a finite $\Sigma_0$-decomposition $\mathcal{S}$ of $S$ -- i.e.\ compact maximal segments $\Sigma_1,\dots,\Sigma_n$ with $S=\Sigma_0\cup{\dots}\cup \Sigma_n$ for $n\geq 1$ as well as classes $[g_1],\dots,[g_n]\in G\slash G_S$ -- such that 
$$g_k\cdot \iota(\Sigma_0) =\iota(\Sigma_k)\quad 
\text{holds for}\quad
k=1,\dots,n,$$ 
and additionally the following:
\vspace{-2pt} 
\begingroup
\setlength{\leftmargini}{12pt}
\begin{itemize}
\item
If $n=1$,\: then $\Sigma_0\cap \Sigma_1$ consists of two elements. Moreover, there exists only one $\Sigma_0$-decomposition. 
\item
If $n\geq 2$, then $\Sigma_p\cap \Sigma_{q}$ is singleton for $|p-q|\in\{1,n\}$, and empty for $2\leq |p-q|\leq n-1$. Moreover, the only other $\Sigma_0$-decomposition $\ovl{\mathcal{S}}$ of $S$ is given by
\begin{align}
\label{dfddsf}
	\ovl{\Sigma}_{k}=\Sigma_{\zeta(k)}\qquad\text{and}\qquad [\ovl{g}_k]=[g_{\zeta(k)}]\qquad\quad\forall\: 1\leq k\leq n,
\end{align}
with $\zeta\in \mathrm{S}_n$ defined by $\zeta(k):=n-(k-1)$ for $k=1,\dots,n$.  
\end{itemize}
\endgroup  
\noindent
Finally, let $z_\pm$ denote the boundary points of $\Sigma_0$,   chosen such that in the case $n\geq 2$ we have $\{z_+\}=\Sigma_0\cap \Sigma_1$ as well as $\{z_-\}=\Sigma_0\cap \Sigma_n$.  
Let furthermore $\kappa\colon \UE\rightarrow S$ be a fixed  
homeomorphism, and set $\alpha\colon \RR\ni \phi\mapsto \e^{\I\phi}\in \UE$. We say that $\mathcal{S}$ is $\kappa$-oriented \deff 
$$\Sigma_0=(\kappa\cp\alpha)([\lambda_-,\lambda_+])\qquad\text{holds with}\qquad \kappa(\lambda_\pm)=z_\pm\qquad\text{for some}\qquad \lambda_-<\lambda_+.$$  
Then, $\Sigma_0$ is either {\bf positive} or {\bf negative} (same definition as in Case A), and the following assertions hold:
\begingroup
\setlength{\leftmargini}{12pt}
\begin{itemize}
\item[$\bullet$]
If $\Sigma_0$ is positive, then each other maximal (necessarily compact) segment  is positive; and each point in $S$ is contained in the interior of some positive segment. 
Moreover, there exists a unique class $[h]\in G\slash G_S$ such that 
the following holds for each positive segment $\Sigma_0'$: 

If $\mathcal{S}'$ is the  
$\kappa$-oriented  $\Sigma'_0$-decomposition  of $S$, with corresponding maximal segments $\Sigma_1',\dots,\Sigma'_{n'}$ and classes $[g'_1],\dots,[g'_{n'}]\in G\slash G_S$, then  $n'=n$ holds as well as $[g_k']=[h^k]$ for $k=1,\dots,n$. 

For instance, let $S\equiv \UE$, and let $G$ be the discrete subgroup of $\UE$ that is generated by $h:=\e^{\I 2\pi \slash n}$, just acting via left multiplication on $S$. Then, $\Sigma=\e^{\I K}$ is positive for each $K=[t,t+2\pi \slash n]$.
\item[$\bullet$]
If $\Sigma_0$ is negative, then the segments $\Sigma_0,\dots,\Sigma_n$ are   negative, and the only maximal segments of $S$. The integer $n$ is necessarily odd; and, for $n\geq 1$ and $g_{-1}:=g_n$, we have $[g_k]=[g_{\sigma(1)}\cdot {\dots}\cdot g_{\sigma(k)}]$ for $k=1,\dots,n$. 
For instance,
\vspace{-4pt} 
\begingroup
\setlength{\leftmarginii}{12pt}
\begin{itemize}
\item
Let $S\equiv \UE\subseteq \RR^2$, and let $G$ be the discrete group that is generated by the reflection at the $x_2$-axis. Then, $\Sigma_0=\e^{\I K_0}$ and $\Sigma_1=\e^{\I K_1}$ are negative for $K_0=[-\pi/2,\pi/2]$ and $K_1=[\pi/2, 3\pi/2]$. 
\item
Similarly, if $G$ is the discrete group that is generated by the reflections at the $x_1$- and the $x_2$-axis, then $\Sigma_p=\e^{\I K_p}$ is negative for $K_p=[p\cdot \pi/2, (p+1)\cdot \pi/2]$ for $p=0,\dots,3$.
\end{itemize}
\endgroup
In both cases, the mentioned formula for the classes $[g_p]$ is easily verified.\hspace*{\fill}$\ddagger$
\end{itemize}
\endgroup  
\section*{Acknowledgements}
The author thanks Christopher Beetle, Rory Conboye, Jonathan Engle and Bernhard Kr\"otz for general remarks on drafts of the present article. 
This work was supported in part by the Alexander von Humboldt foundation of Germany, and NSF Grants PHY-1205968 and PHY-1505490.

\addtocontents{toc}{\protect\setcounter{tocdepth}{0}}
\appendix

\section*{APPENDIX}

\section{Appendix to Sect.\ \ref{gffggf}}

\subsection{}
\label{appA7}
\begin{proof}[Proof of Lemma  \ref{lemma:expeig}]
Analyticity of $\gamma$ is clear from the assumptions; and $\gamma$ is immersive, because
\begin{align*}
	\textstyle\dot\gamma(t)=0\quad\text{for}\quad t\in \RR\qquad\quad&\textstyle\Longleftrightarrow\qquad\quad \forall\he\tau\in \RR\colon \quad 0=\frac{\dd}{\dd s}\big|_{s=0}\: \exp(\tau\cdot \g)\cdot \gamma(t+s)\\
	&\textstyle\quad\hspace{97.6pt}=\frac{\dd}{\dd s}\big|_{s=0}\: \exp(((t+\tau)+s)\cdot \g)\cdot x\\[1pt]
	&\textstyle\quad\hspace{97.6pt}=\dot\gamma(t+\tau)\\[2pt]
	& \Longleftrightarrow\qquad\quad \im[\gamma]=\{\gamma(0)\}=\{x\}\\[2pt]
	& \Longleftrightarrow\qquad\quad \exp(\RR\cdot \g)\subseteq  G_x\\[-2pt]
	& \stackrel{\text{def.}}{\Longleftrightarrow}\qquad\quad \g \in \mg_x. 
\end{align*}  
Assume now that $\gamma$ is not injective. Since 
\begin{align}
\label{eq:cycl}
	\gamma(t')=\gamma(t)\qquad\quad \Longleftrightarrow\qquad\quad \gamma(t'+s)=\gamma(t+s)\qquad\forall\: s\in \RR
\end{align}
holds, $Z:=\{t\in \RR_{>0}\:|\: \gamma(t)=\gamma(0)\}\neq \emptyset$ is non-empty, hence $\pii(x,\g):=\inf(Z)\geq 0$ is defined. Moreover: 
\begingroup
\setlength{\leftmargini}{15pt}
{
\renewcommand{\theenumi}{\alph{enumi})} 
\renewcommand{\labelenumi}{\theenumi}
\begin{enumerate}
\item
\label{nmdsnmsdnmdsnmdsds81}
$\pii(x,\g)>0$\hspace{27.1pt}\quad  as $\gamma$ is locally injective,  because $\gamma$ is immersive.
\item
\label{nmdsnmsdnmdsnmdsds82}
$\gamma(\pii(x,\g))=\gamma(0)$\quad as $\gamma$ is continuous; thus, $\gamma(n\cdot \pii(x,\g))=\gamma(0)$ follows for $n\in \ZZ$ inductively from \eqref{eq:cycl}.
\end{enumerate}}
\endgroup
\noindent
Point \ref{nmdsnmsdnmdsnmdsds82} together with \eqref{eq:cycl} shows:
$$
\gamma(\pii(x,\g))\stackrel{\ref{nmdsnmsdnmdsnmdsds82}}{=}\gamma(0) \qquad\quad\stackrel{\eqref{eq:cycl}}{\Longrightarrow}\qquad\quad \gamma(t + n\cdot \pii(x,\g))=\gamma(t)\qquad\forall\: t\in\RR,\: n\in \ZZ.
$$ 
Finally assume that $\gamma(t)=\gamma(t')$ holds for $t,t'\in \RR$, hence   $\gamma(t-t')=\gamma(0)$ by \eqref{eq:cycl}. We choose $n\in \mathbb{Z}$ with  
      $0\leq (t-t')+n \cdot\pii(x,\g) \leq \pii(x,\g)$, and obtain:   
      \begin{align*}
      \gamma(t-t')=\gamma(0)\qquad\quad\stackrel{\eqref{eq:cycl}}{\Longrightarrow}\qquad\quad \gamma((t-t')+n \cdot\pii(x,\g))=\gamma(n \cdot\pii(x,\g))\stackrel{\ref{nmdsnmsdnmdsnmdsds82}}{=}\gamma(0).
	\end{align*} 
	Then, minimality of $\pii(x,\g)$ implies $(t-t') + n\cdot \pii(x,\g) \in \{0,\pii(x,\g)\}$, so that $(t-t')=n'\cdot \pii(x,\g)$ holds for some   $n'\in \mathbb{Z}$ (necessarily unique as $\pii(x,\g)>0$).
\end{proof}

\subsection{}
\label{appA77}

\begin{proof}[Proof of Lemma \ref{dfdfdfdfdf}]
For $\ell=0$, the claim is just clear from Theorem \ref{ofdpofdpfdpof}. Let thus $\ell\geq 1$; and  
 choose $L\colon \RR^\ell\rightarrow \RR^{m+\ell}$ injective and linear, such that 
$$ \phi\colon \RR^{m}\times \RR^\ell \rightarrow \RR^{m}\times \RR^\ell,\qquad (v,w)\mapsto \dd_x f(v) + L(w)
$$ 
is an isomorphism. Then, the assumptions of Theorem \ref{ofdpofdpfdpof} hold for 
  $p:=(x,0)$ and 
\[
	F\colon U\times \RR^\ell  \rightarrow \RR^{m}\times \RR^{\ell}, \qquad (v,w)\mapsto f(v)+L(w),
\]
as we have $\dd_{p} F=\phi$. There thus exist $O,W\subseteq \RR^{m}\times \RR^{\ell}$ open with 
$$(x,0)=p\in O\subseteq U\times\RR^\ell\qquad\text{and}\qquad (f(x),0)=F(p)\in W\subseteq \RR^{m}\times \RR^{\ell}$$ 
such that $(F|_O)|^W$ is an analytic diffeomorphism; hence, $\alpha\cp F|_{O}=\id_{O}$ holds for 
$\alpha:= ((F|_O)|^W)^{-1}\colon W\rightarrow O$. Since $O$ is open with $(x,0)\in O$, there exists $V\subseteq U$ open with $x\in V$, such that $V\times\{0\}\subseteq O$ holds; hence,
\[
\alpha(f(v))=\alpha(F(v,0))=(v,0)\qquad\quad\forall\: v\in V.\qedhere
\]
\end{proof}

\subsection{}
\label{appA2}
\begin{proof}[Proof of Lemma \ref{lemma:BasicAnalyt0}]
Let $n\equiv\dim[M]$. 
By Corollary \ref{dfdsasasasassa}, there exists an open interval $J\subseteq I$ with $t\in J$ and an analytic chart $(O,\psi)$ of $M$ with $\gamma(J)\subseteq O$ such that
\begin{align}
\label{cnmcxmdsfbsjfdmssdds}
	(\psi\cp \gamma)(s)=(s,0)\in  \RR\times \RR^{n-1}\qquad\quad\forall\: s\in J
\end{align} 
holds. Hence, $\gamma|_J$ is an embedding, thus $\gamma_J:= (\gamma|_J)|^{\gamma(J)}$ is bijective. We write $\psi=(\psi_1,\dots,\psi_n)$: 
\begingroup
 \setlength{\leftmargini}{12pt}
\begin{itemize}
\item[$\triangleright$]
By continuity of $\gamma,\gamma'$, we have $\gamma(t)=\gamma'(t')$, hence
\begin{align}
\label{lkfdlkfd}
	\quad(\psi\cp\gamma')(t')=(\psi\cp\gamma)(t) =(t,0)\in J\times \{0\}\subseteq \RR\times \RR^{n-1}.
\end{align}
\item[$\triangleright$]
By continuity of $\gamma'$ and $\gamma(t')=\gamma'(t)\in O$, there exists an open interval $J'\subseteq I'$ with $t'\in J'$ and $\gamma'(J')\subseteq O$.

Then, \eqref{cnmcxmdsfbsjfdmssdds} shows that $t'$ is an accumulation point of zeroes of the analytic function $f_{\zd}:=\psi_{\zd}\cp\gamma'|_{J'}$, for $k=2,\dots,n$; 
so that analyticity implies 
\begin{align}
\label{lkfdlkfd1}
	(\psi\cp \gamma')(J')\subseteq \RR\times \{0\} \subseteq \RR\times \RR^{n-1}.
\end{align}
\item[$\triangleright$]
By continuity of $\gamma'$, \eqref{lkfdlkfd} and \eqref{lkfdlkfd1}, we can achieve $(\psi\cp\gamma')(J')\subseteq J\times \{0\}$, just by shrinking $J'$ if necessary. 
\end{itemize}
\endgroup
\noindent	  
The claim now holds for the analytic function
\begin{equation*}
	\rho:= \psi_1\cp \gamma'|_{J'}\stackrel{\eqref{cnmcxmdsfbsjfdmssdds}}{=} ((\psi\cp\gamma_J)|^{J\times \{0\}})^{-1} \cp (\psi \cp \gamma'|_{J'}) =(\gamma_J^{-1}\cp\psi^{-1}|_{J\times \{0\}})\cp (\psi \cp \gamma'|_{J'})=\gamma_J^{-1}\cp \gamma'|_{J'}
\end{equation*}
and the connected ($\rho$ is continuous) subset $B:=\rho(J')$. In fact, clearly $B$ is singleton if $\gamma'|_{J'}$ is constant; and,  if $B$ is singleton, then $\gamma'|_{J'}$ must be constant by injectivity of $\gamma_J^{-1}$.   
\end{proof}

\subsection{}
\label{appA1}
\begin{proof}[Proof of Lemma \ref{gdfgfgf}]
Passing to a subsequence, we can achieve $\{g_n\}_{n\in \NN}\subseteq G_{\gamma(t)}$ or $\{g_n\}_{n\in \NN}\subseteq G\slash G_{\gamma(t)}$:
\begingroup
\setlength{\leftmargini}{12pt}
\begin{itemize}
\item[$\bullet$]
$\{g_n\}_{n\in \NN}\subseteq G_{\gamma(t)}$:\:\:     
We have $\gamma(t_n)=g_n\cdot \gamma(t)=\gamma(t)$ for all $n\in \NN$. 
We fix an open Interval $J\subseteq I$ with $t\in J$, and an analytic chart $(O,\psi)$ of $M$ with $\gamma(J)\subseteq O$ and $\psi(\gamma(t))=0$. We write $\psi=(\psi_1,\dots,\psi_{\dim[M]})$. Then, $t$ is an accumulation point of zeros of the analytic function $f_k:=\psi_k\cp\gamma|_J$ for $1\leq k\leq \dim[M]$; so that analyticity implies 
$\gamma|_J=\gamma(t)$, hence $\gamma(J)\subseteq G\cdot \gamma(t)$.
\item
$\{g_n\}_{n\in \NN}\subseteq G\slash G_{\gamma(t)}$:\:\: 
We write $\mg=\mc\oplus \mg_z$ for $z:=\gamma(t)$ and a suitable  linear subspace $\mc\subseteq \mg$; and set $m:= \dim[\mc]\geq 1$ and $k:= \dim[\mg_z]$.\footnote{The property $\dim[\mc]\geq 1$ follows from the condition $G\backslash G_z\supseteq \{g_n\}_{n\in \NN}\rightarrow e$:\: In fact, since $\exp$ is a local diffeomorphism at $0\in \mg$, there exists $O\subseteq \mg$ open with $0\in O$, such that  
$\widetilde{O}:=\exp(O)$ is an open neighbourhood of $e$ in $G$. Assume now $\dim[\mc]=0$, hence $\mg=\mg_z$. Then, $\widetilde{O}\subseteq G_z$ holds as $\mg_z=\{\g\in \mg\:|\: \exp(\RR\cdot \g)\subseteq G_z\}$, which contradicts $G\backslash G_z\supseteq \{g_n\}_{n\in \NN}\rightarrow e\in \widetilde{O}$.} We furthermore fix  isomorphisms $\varsigma_m\colon \RR^m\rightarrow \mc$ and $\varsigma_k\colon \RR^k\rightarrow \mg_z$.    
\begingroup
 \setlength{\leftmarginii}{12pt}
\begin{itemize}
\item[$\triangleright$]
By Theorem \ref{ofdpofdpfdpof}, there exist open neighbourhoods $U\subseteq \RR^m$ and $U'\subseteq \RR^k$ of zero, such that 
\begin{align*}
	\Lambda\colon U\times U'\rightarrow W,\qquad (u,u')\mapsto \exp(\varsigma_m(u))\cdot \exp(\varsigma_k(u'))
\end{align*}	  
is a homeomorphism (analytic diffeomorphism) to an open neighbourhood $W\subseteq G$ of $e$.  
\item[$\triangleright$]
The differential $\dd_0 \iota=\dd_e\wm_z\cp\varsigma_m$ at $0\in \RR^m$ of the analytic map
\begin{align*}
	\iota:=\wm_z\cp\exp\cp \:\varsigma_m\colon \RR^m\rightarrow M
\end{align*}
\vspace{-25pt}

\noindent
 is injective, because we have: 
\begin{align*}
	\textstyle\dd_e\wm_z(\g)=0\quad\text{for}\quad \g\in \mg\qquad\quad&\textstyle\Longleftrightarrow\qquad\quad \forall\: t\in \RR\colon\quad 0= \dd_{z}\wm_{\exp(t\cdot g)}(\dd_e\wm_z(\g) )\\[2pt]
	&\hspace{106pt}\textstyle= \frac{\dd}{\dd s}\big|_{s=0}\:
	\wm( \exp(t\cdot \g), \wm( \exp(s\cdot \g),z))\\[2pt]
	&\hspace{106pt}\textstyle= \frac{\dd}{\dd s}\big|_{s=0}\:\underbrace{\wm( \exp((t+s)\cdot \g), z)}_{\displaystyle\exp((t+s)\cdot \g) \cdot z } \\[-9pt]
	& \Longleftrightarrow\qquad\quad \exp(\RR\cdot \g)\subseteq  G_z\\[-2pt]
	& \stackrel{\text{def.}}{\Longleftrightarrow}\qquad\quad \g \in \mg_z. 
\end{align*} 
In particular, we have $\ell:= \dim[M]-m\geq 0$;  and we 
 choose $V\subseteq U$ and $(O,\psi)$ as in Corollary \ref{dfdsasasasassa}, for $x\equiv 0$ there (hence $O\ni \iota(x)=z=\gamma(t)$), hence
\begin{align}
\label{opasopasopsaopasopas}
	(\psi\cp\iota)(V)=V\times \{0\}\subseteq \RR^m\times \RR^\ell. 
\end{align}
\vspace{-20pt}

\noindent
We write $\psi=(\psi_1,\dots,\psi_{\dim[M]})$.
\item[$\triangleright$] 
Since $\lim_n g_n=e$ holds, we have for $q\in \NN$ suitably large:\hspace*{\fill}($\tilde{W}$ is open, as $\Lambda$ is a homeomorphism) 
\begin{align*}
\{g_n\cdot z\}_{n\geq q}\subseteq O\qquad\quad\text{as well as}\qquad\quad		\{g_n\}_{n\geq q}\subseteq \tilde{W}:=\Lambda(V\times U')\subseteq W. 
\end{align*}
Let $n\geq q$. Then,   
$g_n=\Lambda(v_n,u'_n)$ holds for some (necessarily unique) $(v_n,u'_n)\in V\times U'$, hence      
\begin{align}
\label{dfsjhdfhfdaaa}	  
	g_n\cdot z=\Lambda(v_n,u'_n)\cdot z=\exp(\varsigma_m(v_n))\cdot \exp(\varsigma_\ell(u'_n))\cdot z=\exp(\varsigma_m(v_n))\cdot z=\iota(v_n)\in \iota(V).
\end{align}
This shows $\{g_n\cdot z\}_{n\geq q}\subseteq \iota(V)\cap O$. 
\end{itemize}
\endgroup
\noindent	  
Let now $J\subseteq I$ be an open interval with $t \in J$ and $\gamma(J)\subseteq O$, and let $m+1\leq k \leq  m+\ell$. Then, $t$ is an accumulation point of zeroes of the analytic function $f_k:=\psi_k \cp \gamma|_{J}$, as we have
\begin{align*}
	f_k(t_n)=\psi_k(g_n\cdot \gamma(t))=\psi_k(g_n\cdot z)\stackrel{\eqref{dfsjhdfhfdaaa}}{=}(\psi_k\cp \iota)(v_n)\stackrel{\eqref{opasopasopsaopasopas}}{=}0\qquad\quad\forall\: n\geq q.
\end{align*}
Hence, analyticity implies $(\psi\cp\gamma)(J)\subseteq \RR^m\times \{0\}$. Since $(\psi\cp\gamma)(t)=(\psi\cp\iota)(0)=(0,0)\in V\times \{0\}$ holds, we can shrink $J$ around $t$ in such a way that 
\[
 (\psi\cp\gamma)(J)\subseteq V \times \{0\} \stackrel{\eqref{opasopasopsaopasopas}}{=} (\psi\cp\iota)(V)\quad\text{holds, hence}\quad \gamma(J)\subseteq \iota(V)\subseteq G\cdot \gamma(t).\qedhere
\]
\end{itemize}
\endgroup
\end{proof}

\subsection{}
\label{appA3}
\begin{proof}[Proof of Lemma \ref{lemma:BasicAnalyt2}]
Assume first that $D\equiv I$ and $D'\equiv I'$ are open. Due to our assumptions
$$\rho:=(\gamma'|^{\im[\gamma']})^{-1}\cp \gamma\colon D \rightarrow D'$$
is a homeomorphism, with $\rho^{-1}= (\gamma|^{\im[\gamma]})^{-1}\cp \gamma'$. 
For $t\in I$ fixed, Lemma \ref{lemma:BasicAnalyt0} provides  an analytic map $\rho_t\colon D\supseteq  J_t\rightarrow B'_t \subseteq D'$, with $t\in J_t \subseteq D$ ($J_t$ an open interval and $B'_t$ an interval) and $\gamma|_{J_t}=\gamma' \cp\rho_t$; hence, 
$$\rho|_{J_t}=(\gamma'|^{\im[\gamma']})^{-1}\cp \gamma|_{J_t}=\rho_t$$ 
is analytic. Since $t\in I$ was arbitrary, it follows that $\rho$ is analytic.  
Interchanging the roles of $\gamma$ and $\gamma'$ in the above argumentation, we see that also $\rho^{-1}$ is analytic, i.e., that $\rho$ is an analytic diffeomorphism with the desired properties. 

 Assume now that $D,D'$ are arbitrary intervals; and let $\wt{\gamma}\colon I\rightarrow M$ and $\wt{\gamma}'\colon I'\rightarrow M$ be analytic embeddings extending $\gamma$ and $\gamma'$, respectively. For each $t\in I$, Lemma \ref{lemma:BasicAnalyt0} provides $\rho_t\colon D\supseteq  J_t\rightarrow B'_t \subseteq D'$ analytic with $t\in J_t \subseteq D$ ($J_t$ an open interval and $B'_t$ an interval), 
 such that $\wt{\gamma}|_{J_t}=\wt{\gamma}'\cp \rho_t$ holds. We set 
\begin{align*}
	\textstyle I\supseteq J:= \bigcup_{t\in D}J_t\qquad\quad\text{as well as}\qquad\quad I'\supseteq B':= \bigcup_{t\in D}B'_t;
\end{align*}
 and observe that $(\wt{\gamma}'|^{\im[\tilde{\gamma}']})^{-1}\cp\wt{\gamma}|_{J}\colon J\rightarrow B'$ is a homeomorphism as $\wt{\gamma}$ and $\wt{\gamma}'$ are embeddings. Since this implies that $B'$ is an open interval, the claim follows from the first part.    
\end{proof}

\subsection{}
\label{appA4}
\begin{proof}[Proof of Lemma \ref{maximalextension}]
    Let $\gamma\colon D\rightarrow M$ be a fixed analytic (immersive) curve; and denote the set of all analytic (immersive) extensions $\delta\colon J_\delta\rightarrow M$ of $\gamma$ by $\EE$ (observe $\EE\neq \emptyset$, due to our conventions). We define
    	\begin{align*}
    		\ovl{\gamma}\colon I:= \textstyle\bigcup_{\delta\in \EE}J_\delta\rightarrow M\qquad\text{by}\qquad 
    		\ovl{\gamma}(t):=\delta(t)\qquad\text{for some}\qquad\delta\in \EE\qquad\text{with}\qquad t\in J_\delta.
    	\end{align*}
This is well defined; because, if $\delta'\in \EE$ is another extension with $t\in J_{\delta'}$, then 
\begin{align*}
	D\subseteq J_\delta\cap J_{\delta'}=: J\ni t \qquad\quad\stackrel{\text{Lemma }\ref{corgleich}}{\Longrightarrow}\qquad\quad \delta|_J=\delta'|_J \qquad\quad\Longrightarrow\qquad\quad \delta(t)=\delta'(t).
\end{align*} 
Evidently, $\ovl{\gamma}$ is an (immersive) analytic extension of $\gamma$. Moreover, we obtain from the definition of $I$:
\begingroup
\setlength{\leftmargini}{12pt}
\begin{itemize}
\item
$\ovl{\gamma}$ is maximal, 
because for $\delta\in \EE$ we have $J_\delta\subseteq I$, hence $\delta=\ovl{\gamma}|_{J_\delta}$ by Lemma \ref{corgleich} ($D'\equiv D$, $\gamma\equiv \delta$, $\gamma'\equiv \ovl{\gamma}|_{J_\delta}$).
\item
If $\delta\in \EE$ is maximal, then $\dom[\delta]= I$ holds, hence $\delta=\ovl{\gamma}$ by 
Lemma \ref{corgleich} ($D'\equiv D$, $\gamma\equiv \delta$, $\gamma'\equiv \ovl{\gamma}$).  
\qedhere 
\end{itemize}
\endgroup
\end{proof}

\subsection{}
\label{appA5}
\begin{proof}[Proof of Corollary \ref{cgdcfgd}]
Let $[c',c]\equiv \CM:=[a,b]\cap \ovl{\rho}^{-1}(K')$ be as in Lemma \ref{lem:maxextKomp}, and assume $\im[\gamma']\nsubseteq \im[\gamma]$. Then, we cannot have $\ovl{\rho}([c',c])=K'\equiv [a',b']$; hence, 
$c'=a$ or $c=b$ must hold by \eqref{ddddd}. Since $t\in J\subseteq C$ holds, we have the implications:
\begin{equation*}
\begin{array}{lllllllll}
&c'&\hspace{-8pt}=a\quad\:\:&\Longrightarrow\quad\:\: &[a,t]&\hspace{-8pt}\subseteq \CM\quad\:\:&\Longrightarrow\qquad\:\: \gamma([a,t])&\hspace{-8pt}=\gamma'(\ovl{\rho}([a,t]))&\hspace{-8pt}\subseteq \im[\gamma']\\[3.5pt]
 &c&\hspace{-8pt}=b\quad\:\:&\Longrightarrow\quad\:\: &[t,b]&\hspace{-8pt}\subseteq \CM\quad\:\:&\Longrightarrow\qquad\:\: \gamma([t,b])&\hspace{-8pt}=\gamma'(\ovl{\rho}([t,b]))&\hspace{-8pt}\subseteq \im[\gamma'],
\end{array}
\end{equation*}
which proves the claim.
\end{proof}

\subsection{}
\label{appA6}
\begin{proof}[Proof of Corollary \ref{dfdgttrgf}]
Assume that $\rho$ is such a negative diffeomorphism. Then, $[a',a]\cap \ovl{\rho}^{-1}([a',a])=[a',c]$ holds for some $c\in [r,a]$:
\begingroup
\setlength{\leftmargini}{12pt}
\begin{itemize}
\item[$\triangleright$]
Assume $c=a$, hence $\gamma=\gamma\cp \ovl{\rho}|_{[a',a]}$. Since $\rho$ (thus $\ovl{\rho}$) is negative, we have $\ovl{\rho}(a')=a$ and  $\ovl{\rho}(a)<a$. The intermediate value theorem applied to $\alpha:=\ovl{\rho}-\id_{[a',a]}$ yields $\tau\in (a',a)$ with $\alpha(\tau)=0$, hence $\ovl{\rho}(\tau)=\tau$. 

Then, for $\epsilon >0$ suitably small, we have 
\begin{align*}
	\gamma(\tau +\epsilon)=\gamma( \ovl{\rho}(\tau +\epsilon))\quad\text{with}\quad\ovl{\rho}(\tau+\epsilon)\leq\tau<\tau+\epsilon
\quad\text{by negativity of}\quad \ovl{\rho}, 	
\end{align*}
which contradicts that $\gamma$ is injective on a neighbourhood of $\tau$. 
\item[$\triangleright$] 
Assume $c<a$; hence, $\gamma|_{[a',c]}=\gamma\cp \ovl{\rho}|_{[a',c]}$ holds,  with $\ovl{\rho}(a')=a$ and $\ovl{\rho}(c)<a$  as $\rho$ (thus $\ovl{\rho}$) is negative.  
Then, $\ovl{\rho}(c)=a'$ holds by \eqref{ddddd} in Lemma \ref{lem:maxextKomp}. The intermediate value theorem applied to $\alpha:=\ovl{\rho}|_{[a',c]}-\id_{[a',c]}$ yields $\tau\in (a',c)$ with $\alpha(\tau)=0$, hence $\ovl{\rho}(\tau)=\tau$. 

Then, for $\epsilon >0$ suitably small, we have 
\begin{align*}
	\gamma(\tau +\epsilon)=\gamma( \ovl{\rho}(\tau +\epsilon))\quad\text{with}\quad\ovl{\rho}(\tau+\epsilon)\leq\tau<\tau+\epsilon
\quad\text{by negativity of}\quad \ovl{\rho}, 	
\end{align*}
which contradicts that $\gamma$ is injective on a neighbourhood of $\tau$. 
\qedhere
\end{itemize}
\endgroup
\end{proof}
\subsection{}
\label{appA799}

\begin{proof}[Proof of the Equivalence used in Remark \ref{remmmmmi}.\ref{regp1}] 
First observe that if $C_1,\dots,C_n$ are components of $G$, then $C_1\cup{\dots}\cup C_n$ is open and second (hence first) countable (the components of $G$ are open and second countable as the identity component of $G$ is open and first countable).
\begingroup
\setlength{\leftmargini}{12pt}
\begin{itemize}
\item
Let $\wm_x$ be proper, and assume that the left side of \eqref{lkdslkdslklkdslkdslklkdsdsds09ds9009dsdsdsdsdsdsds} holds. We fix $K\subseteq M$ compact with $\{g_n\cdot x\}_{n\in \NN}\subseteq K\ni y$, and observe that $L:=\wm_x^{-1}(K)$ is covered by finitely many components $C_1,\dots,C_n$ of $G$. Since 
$C:=C_1\cup{\dots}\cup C_n$ is second (hence first) countable,  
$L\subseteq C$ is sequentially compact. Hence, $\{g_n\}_{n\in \NN}\subseteq L$ admits a convergent subsequence.    
\item
Assume that the implication \eqref{lkdslkdslklkdslkdslklkdsdsds09ds9009dsdsdsdsdsdsds} holds,  let $K\subseteq M$ be compact, and set $L:=\wm_x^{-1}(K)$: 
\vspace{-4pt}
\begingroup
\setlength{\leftmarginii}{15pt}
\begin{itemize}
\item [a)]
$K$ is sequentially compact as $M$ is first countable (as locally homeomorphic to $\RR^{\dim(M)}$).
\vspace{2pt}
\item [b)]
$L$ is compact if sequentially compact:
\vspace{4pt}

{\it Proof.}
It suffices to show that $L$ is covered by finitely many components of $G$ (as then second countable). Assume thus that $G$ cannot be covered by finitely many components of $G$. Then, (since $G$ is covered by its components) there exist (mutually disjoint) components $\{C_n\}_{n\in \NN}$ of $G$  
with $C_n\cap L\neq \emptyset$ for all $n\in \NN$. We fix some  $g_n\in C_n\cap L$ for each $n\in \NN$. 
Since $K$ is sequentially compact by a), $\{g_n\cdot x\}_{n\in \NN}\subseteq K$ admits a convergent subsequence $\{g_{\iota(n)}\cdot x\}_{n\in \NN}\rightarrow y\in K\subseteq  M$. Hence, \eqref{lkdslkdslklkdslkdslklkdsdsds09ds9009dsdsdsdsdsdsds} 
yields a convergent subsequence  $\{g_{\kappa(\iota(n))}\}_{n\in \NN}\rightarrow g\in G$. Let $C$ denote the component of $G$ that contains $g$. Since $C$ is open, there exists $m\in \NN$ with  $g_{\kappa(\iota(n))}\in C\cap C_{\kappa(\iota(n))}$ for all $n\geq m$; which contradicts that the components of $G$ are mutually disjoint.

\vspace{4pt}
\item[c)]
$L$ is sequentially compact, hence compact by b):
\vspace{2pt}

{\it Proof.}
Let $\{g_n\}_{n\in \NN}\subseteq L$. Since $K$ is sequentially compact by a), $\{g_n\cdot x\}_{n\in \NN}\subseteq K$ admits a convergent subsequence $\{g_{\iota(n)}\cdot x\}_{n\in \NN}\rightarrow y\in K\subseteq  M$. Hence, \eqref{lkdslkdslklkdslkdslklkdsdsds09ds9009dsdsdsdsdsdsds} shows that $\{g_{\iota(n)}\}_{n\in \NN}$ admits a convergent subsequence; which proves the claim. \qedhere
\end{itemize}
\endgroup 
\end{itemize}
\endgroup
\end{proof}

\subsection{}
\label{appA8}
\begin{proof}[Proof of Lemma \ref{podspodsapopo}]
	If $\tau:= \rho|_{\wt{D}}$ is an analytic diffeomorphism and $\g\in \mg\backslash \mg_x$, then Lemma \ref{lemma:expeig} shows that $\gag \cp \ovl{\tau}$ is an analytic immersive extension of $\gamma|_{\wt{D}}$; which proves the one direction. 
	For the other direction, we assume that $\gamma|_{\wt{D}}$ is an analytic immersion, i.e., there exists an analytic immersion 
	\begin{align*}
		\delta\colon I\rightarrow M\qquad\text{with}\qquad \wt{D}\subseteq I\qquad\text{and}\qquad\delta|_{\wt{D}}=\gamma|_{\wt{D}}.
	\end{align*}		
Since $\dom[\ovl{\rho}]$ is open with $\wt{D}\subseteq \dom[\ovl{\rho}]$, Lemma \ref{corgleich} implies that  
	\begin{align*}
		\delta|_{J}=\gag\cp\ovl{\rho}|_{J}\qquad\text{holds for the open interval}\qquad J:=I\cap \dom[\ovl{\rho}]\supseteq \tilde{D}.
	\end{align*} 
	Since $\delta$ is immersive, we have $\g\in \mg\backslash\mg_x$ (elsewise $\gag$ is constant, hence $\delta|_J$). Since $\delta$ is immersive, the product rule implies $\dot{\ovl{\rho}}|_J\neq 0$, i.e., that $\ovl{\rho}|_J$ is a diffeomorphism. Since  $\delta|_{J}$ and $\gag\cp\ovl{\rho}|_{J}$ are locally embeddings (Corollary \ref{dfdsasasasassa}), Lemma \ref{lemma:BasicAnalyt2} implies that $\ovl{\rho}|_J$ is analytic.
\end{proof}

\subsection{}
\label{appA9}
\begin{proof}[Proof of Lemma \ref{vd}]
The claim is clear if $\gamma$ is constant. Assume thus that $\gamma$ is non-constant with $\gamma|_{\dom[\kappa]}= \gag\cp \kappa$. Set $I':=\dom[\ovl{\kappa}]$, and  
 let $\ovl{\gamma}\colon \ovl{I}\rightarrow M$  denote the maximal analytic extension of $\gamma$. Since $\gag\cp\ovl{\kappa}$ is an analytic extension of $\gamma|_{\dom[\kappa]}$,  Lemma \ref{corgleich} and maximality of $\ovl{\gamma}$ yield 
\begin{align}
\label{opsdopsdopsd}
	\ovl{\gamma}|_{I'}= \gag \cp \ovl{\kappa} \qquad\text{hence}\qquad I'\subseteq \ovl{I}.
\end{align} 
The claim thus follows if we show $D\subseteq I'$.  
For this, we assume that $D\not\subseteq I'$ holds; and observe the following:
\begingroup
\setlength{\leftmargini}{15pt}
{
\renewcommand{\theenumi}{\sf\alph{enumi})} 
\renewcommand{\labelenumi}{\theenumi}
\begin{enumerate}
\item
\label{oidskjdskjdskjdskjdsjkdsds9ds98ds98ds98ds98ds98ds98ds0}
We have $\g\in \mg\setminus\mg_z$ for each $z\in \im[\gag]=\exp(\RR\cdot \mg)\cdot x$, since $\gamma$ is non-constant.

In fact, let $z=\exp(\tau\cdot \g)\cdot x$ for $\tau\in \RR$. Then, the following implication holds:
\begin{align*}
\g\in \mg_z\qquad\Longrightarrow\qquad \gag(t)&=\exp((t-\tau)\cdot \g)\cdot z= z\qquad\forall\: t\in \RR\\
\Longrightarrow\qquad\hspace{6.5pt} \ovl{\gamma}|_{I'}&=z
\quad\:\:\stackrel{\text{Lemma }\ref{corgleich}}{\Longrightarrow}\:\:\quad\ovl{\gamma}=z\qquad\Longrightarrow\qquad\gamma=z.
\end{align*}
In particular,  $\pii(x,\g)>0$ holds, and $\gag$ is (analytic) immersive by Lemma \ref{lemma:expeig}. Hence, there exists an open interval $J\subseteq \RR$ with $t\in J$ and $J\cap [\ppi(x,\g)+J]=J$, such that $\gag\big|_J$ is an embedding (Corollary \ref{dfdsasasasassa}).
\item
\label{oidskjdskjdskjdskjdsjkdsds9ds98ds98ds98ds98ds98ds98ds1}
If we are given $I'\supseteq \{\tau_n\}_{n\in \mathbb{N}} \rightarrow t'\in D\backslash I' \subseteq \ovl{I}$, then $\{\ovl{\kappa}(\tau_n)\}_{n\in \NN}\subseteq \RR$ is bounded.

In fact, if the claim is wrong, we can pass to a subsequence to achieve  $\lim_n \ovl{\kappa}(\tau_n) =\pm \infty$ (we now treat both cases simultaneously). We set $z:=\gamma(\tau_0)=\gag(\ovl{\kappa}(\tau_0))$ and fix $0<d<\pii(z,\g)$ (observe $\pii(z,\g)>0$ by \ref{oidskjdskjdskjdskjdsjkdsds9ds98ds98ds98ds98ds98ds98ds0} and Lemma \ref{lemma:expeig}). By the intermediate value theorem, we can redefine $\{\tau_n\}_{n\geq 1}$ such that
\begin{align*}
	\ovl{\kappa}(\tau_n)=\ovl{\kappa}(\tau_0)\pm n\cdot d\qquad\quad \forall\: n\in \NN.
\end{align*} 
We set $g:=\exp(d\cdot \g)$, and obtain:
\begin{align}
\label{podpoadodspoaads}
	\textstyle  \lim_n g^n\cdot (g\cdot z) = \lim_n g^n\cdot z =\lim_n \gag(\ovl{\kappa}(\tau_0)\pm n\cdot d)=\lim_n \gag(\ovl{\kappa}(\tau_n))=\lim_n \gamma(\tau_n)=\gamma(t').
\end{align}	 
This contradicts that $\gamma(t')$ is sated, as $\gamma(t')\neq g\cdot z\neq z\neq \gamma(t')$ holds:
\vspace{-2pt}
\begingroup
\setlength{\leftmarginii}{12pt}
\begin{itemize}
\item
We have $g\cdot z\neq z$ by the choice of $d$.
\item
We obtain $z\neq \gamma(t')\neq g\cdot z$ from $g\cdot z\neq z$, since 
 \eqref{podpoadodspoaads} implies 
	$g\cdot \gamma(t')=\gamma(t')=g^{-1}\cdot \gamma(t')$.
\end{itemize}
\endgroup
\end{enumerate}}
\endgroup
  	\noindent
  	Let now $I'\supseteq \{t'_n\}_{n\in \mathbb{N}} \rightarrow t'\in D\backslash I' \subseteq \ovl{I}$ be fixed, and choose $J$ as in \ref{oidskjdskjdskjdskjdsjkdsds9ds98ds98ds98ds98ds98ds98ds0}. 
  By \ref{oidskjdskjdskjdskjdsjkdsds9ds98ds98ds98ds98ds98ds98ds1}, we can assume that $\lim_n \ovl{\kappa}(t'_n)=t\in \RR$ exists (pass to a subsequence if necessary). Then, continuity yields    
	\begin{align}
	\label{opisdopdsopsd}
	\textstyle\ovl{\gamma}(t')=\lim_n \ovl{\gamma}(t'_n)\stackrel{\eqref{opsdopsdopsd}}{=}\lim_n(\gag\cp \ovl{\kappa})(t'_n)=\gag(t).
	\end{align}	
\vspace{-18pt}

\noindent
We conclude the following:
\begingroup
\setlength{\leftmargini}{12pt}
\begin{itemize}
\item[$\triangleright$]
For each sequence $I'\supseteq \{\tau_n\}_{n\in \mathbb{N}} \rightarrow t'\in D\backslash I' \subseteq \ovl{I}$, we automatically  have
\begin{align}
	\label{opisdopdsopsd2}
	\textstyle\lim_n(\gag\cp \ovl{\kappa})(\tau_n)\stackrel{\eqref{opsdopsdopsd}}{=}\lim_n \ovl{\gamma}(\tau_n)=\ovl{\gamma}(t') \stackrel{\eqref{opisdopdsopsd}}{=}\gag(t).
\end{align}	
\item[$\triangleright$]
Lemma \ref{lemma:BasicAnalyt0} (applied to $\gamma\equiv \gag$, $I\equiv \RR$, $\gamma'\equiv \ovl{\gamma}$, $I'\equiv \ovl{I}$) shows that 
\begin{align}
\label{sdoioisdoidsoidsds98s98ds98ds988ds998ds98sd98ds89ds8ds98ddsds}
	\ovl{\gamma}|_{J'}=\gag\cp \rho
\end{align}	
	 holds for an analytic map $\rho\colon \ovl{I}\supseteq J'\rightarrow B\subseteq \RR$, with $\rho(t')=t\in B$, $B$ connected, and $J'$ an open interval with $t'\in J'$. By continuity, we can additionally assume $\rho(J')\subseteq J$, just by shrinking $J'$ around $t'$ if necessary.
\end{itemize}
\endgroup
\noindent
The claim now follows from \eqref{sdoioisdoidsoidsds98s98ds98ds988ds998ds98sd98ds89ds8ds98ddsds}  once we have shown that  $\ovl{\kappa}(J'\cap I')\subseteq J$ can be achieved by shrinking $J'$ around $t'$. In fact, then we obtain (for the first implication observe that $\gag\big|_J$ is an embedding)
\vspace{-3pt}
	\begin{align*}
		\gag\cp \ovl{\kappa}|_{J'\cap I'}\stackrel{\eqref{opsdopsdopsd}}{=}\ovl{\gamma}|_{J'\cap I'}\stackrel{\eqref{sdoioisdoidsoidsds98s98ds98ds988ds998ds98sd98ds89ds8ds98ddsds}}{=}\gag\cp\rho|_{J'\cap I'}\qquad\Longrightarrow\qquad \ovl{\kappa}|_{J'\cap I'}=\rho|_{J'\cap I'}\quad\:\stackrel{\text{Corollary } \ref{sdoppsdoods}}{\Longrightarrow}\quad\: t'\in J'\subseteq I', 	
	\end{align*}	  
	which contradicts $t'\in D\backslash I'$. 
To prove the claim, it thus remains to show that $\ovl{\kappa}$ extends continuously to $I'\cup \{t'\}$ via $\ovl{\kappa}(t'):=t$, i.e., that the following statement holds:
\begingroup
\setlength{\leftmargini}{15pt}
{
\renewcommand{\theenumi}{\sf\alph{enumi})} 
\renewcommand{\labelenumi}{\theenumi}
\begin{enumerate}
\setcounter{enumi}{2}
\item
\label{oidskjdskjdskjdskjdsjkdsds9ds98ds98ds98ds98ds98ds98ds2}
For each fixed $I'\supseteq \{s_n\}_{n\in \mathbb{N}} \rightarrow t'\in D\backslash I' \subseteq \ovl{I}$, we have $\lim_n \ovl{\kappa}(s_n)= t$.

In fact,  $\{\ovl{\kappa}(s_n)\}_{n\in \NN}\cup\{t\}$ is contained in a compact interval $K\equiv [a,b]\subseteq \RR$ by \ref{oidskjdskjdskjdskjdsjkdsds9ds98ds98ds98ds98ds98ds98ds1}; and we conclude:  
\begingroup
\setlength{\leftmarginii}{12pt}
\begin{itemize}
\item
If $\gag|_K$ is injective, then $\gag|_K$ is an embedding; so that 
\begin{align*}
	\textstyle \lim_n\gag|_K(\ovl{\kappa}(s_n))\stackrel{\eqref{opisdopdsopsd2}}{=} \gag|_K(t)\qquad\quad\Longrightarrow\qquad\quad \lim_n \ovl{\kappa}(s_n)=t.
\end{align*}  
\item
If $\gag|_K$ is not injective, then we necessarily have $\pii(x,\g)<\infty$ (by Lemma \ref{lemma:expeig}), and 
\begin{align*}
	\Omega\colon \UE\rightarrow \im[\gag]\subseteq M,\qquad  \e^{\I\cdot \alpha}\mapsto \gag(\alpha\cdot \pii(x,\g)/(2\pi))
\end{align*}
is an embedding. Then, it is clear from \eqref{opisdopdsopsd2} (applied to $\{\tau_n\}_{n\in \mathbb{N}}\equiv \{s_n\}_{n\in \mathbb{N}}$) and the definition of the period,  that for each open neighbourhood $O\subseteq J$ of $t$ there exists  $m_O\in \NN$ with $\{\ovl{\kappa}(s_n)\}_{n\geq m_O}\subseteq \ZZ\cdot \pii(x,\g) + O$.
\begingroup
\setlength{\leftmarginiii}{13pt}
\begin{itemize}
\item[$\triangleright$]
If $\{\ovl{\kappa}(s_n)\}_{n\geq m_O}\subseteq O$ holds for a suitable choice of $O$, then we necessarily have $\lim_n\ovl{\kappa}(s_n)=t$. 
\item[$\triangleright$]
	In the other case, we can  pass to subsequences of $\{s_n\}_{n\in \NN}$ and $\{t'_n\}_{n\in \NN}$, such that $s_n<t'_n$ as well as  $\ovl{\kappa}(s_n)\in \ZZ_{\neq 0}\cdot \pii(x,\g) + J$ and $\ovl{\kappa}(t'_n)\in J$ holds for each $n\in \NN$. Since $J\cap [\ppi(x,\g)+J]=J$ holds, the intermediate value theorem provides  
\begin{align*}
	s_n<\tau_n <t'_n\qquad\text{with}\qquad\ovl{\kappa}(\tau_n)\in \RR\backslash (\ppi(x,\g) + J)\qquad\quad\forall\: n\in \NN.
\end{align*}	
This, however contradicts \eqref{opisdopdsopsd2}, as $\lim_n \tau_n=t'$ holds by the squeeze lemma. \qedhere
\end{itemize}
\endgroup
\end{itemize}
\endgroup 
\end{enumerate}}
\endgroup
\end{proof}

\subsection{}
\label{appA10}
\begin{proof}[Proof of Lemma \ref{kljklvjkvjklvjklxcv}]
Let $H\subseteq G$ denote the closure of the group generated by $O_\gamma:=\{g\in G\:|\: g\cdot \gamma\cpsim \gamma\}\subseteq G$. Then, $G_\gamma\subseteq H$ is a normal subgroup, as $g^{-1}\cdot q \cdot g \in G_\gamma$ holds for all $q\in G_\gamma$ and $g\in O_\gamma$ by Corollary \ref{fsdfsfdfs}. Therefore, $Q:=H\slash G_\gamma$ is a Lie group; and the canonical projection $\pri\colon H\rightarrow Q$ is a Lie group homomorphism with $\ker[\dd_e\pri]=\mg_\gamma$. We now argue as follows: 
\begingroup
\setlength{\leftmargini}{12pt}
\begin{itemize}
\item
We have $\g\in \mh\subseteq \mg$ ($\mh$ the Lie algebra of $H$), as
\begin{align*}
\exp(s\cdot \g)\cdot \gamma(t)=\gamma(s+t)\qquad \forall\: s,t\in \RR\qquad\quad\Longrightarrow\qquad\quad \exp(\RR\cdot \g)\subseteq  O_\gamma\subseteq H.
\end{align*}
\item
For  $t,\lambda \in \RR$ and $\ccc\in \mg_\gamma\subseteq \mh$, we  have $t\cdot (\lambda\cdot \g+\ccc)\in \mh$ by the previous point, and we obtain
\begin{align*}
&\pri(\exp(t\cdot (\lambda\cdot \g+\ccc)))=   
\exp_\mq(\dd_e\pri(t\cdot (\lambda\cdot \g+\ccc) ))
=\exp_\mq(\dd_e\pri(t\cdot \lambda\cdot \g ))
=\pri(\exp(t\cdot \lambda\cdot \g))\qquad\qquad\\[2pt]
&\hspace{170pt}\text{hence}\\[2pt]
&\qquad\:\:\:\exp(t\cdot (\lambda\cdot \g+\ccc))= \exp(t\cdot \lambda\cdot \g)\cdot \mathrm{h}(t,\lambda,\ccc)\qquad\text{for some}\qquad \mathrm{h}(t,\lambda,\ccc)\in G_\gamma.
\end{align*}
\end{itemize}
\endgroup
\noindent
Since $G_\gamma\subseteq G_{\gamma(\tau)}$ holds, the second point yields 
\[
\textstyle\exp(t\cdot (\lambda\cdot \g+\ccc))\cdot \gamma(\tau) = \exp(t\cdot\lambda\cdot \g)\cdot \gamma(\tau)\qquad\quad\forall\: t,\tau,\lambda \in \RR. \qedhere
\] 
\end{proof}
\subsection{}
\label{appA11}
\begin{proof}[Proof of Lemma \ref{ldldsldldsalldsakdshfdsf}]
	The uniqueness statement is clear from $\g\notin \mg_x$ and $\mg_\gag\subseteq \mg_x$. Now, since $\wm$ is sated, Lemma \ref{vd} shows  $\gagg=\gag\cp\ovl{\rho}$.  
Since $\gagg$ is immersive by Lemma \ref{lemma:expeig}, $\ovl{\rho}$ is an analytic diffeomorphism (as immersive by the chain rule). Hence, $\gag=\gagg\cp\kappa$ holds for the analytic diffeomorphism $\kappa:=\ovl{\rho}^{-1}$:
\begingroup
\setlength{\leftmargini}{12pt}
\begin{itemize}
\item
Replacing $\q$ by $-\q$ and $\kappa$ by $-\kappa$ if necessary, we can assume $\gag=\gagg\cp\kappa$ with $\dot\kappa>0$. 
\item
Replacing $y$ by $\exp(\kappa(0)\cdot \q)\cdot y$ and $\kappa$ by $\kappa-\kappa(0)$, we can assume $\gag=\gagg\cp\kappa$ with $\dot\kappa>0$ and $\kappa(0)=0$.
\end{itemize} 
\endgroup
\noindent
The claim now follows once we have shown 
\begin{align*} 
	g_t:=\exp(-\kappa(t)\cdot \q)\cdot \exp(t\cdot \g)\in G_{\gag}\qquad\quad\forall\: t\in \RR,
\end{align*}
just by considering the derivative of $\alpha\colon \RR\ni t\mapsto g_t\in G_{\gag}$ at $t=0$. Let thus $t\in \RR$ be fixed. We consider the homeomorphisms 
\begin{align*}
	\Delta:=\kappa(t+ \cdot) -\kappa(t)\colon [0,\infty)\rightarrow [0,\infty)\qquad \text{and}\qquad\Delta':=\kappa^{-1}\cp\Delta\qquad \text{with}\qquad \Delta(0),\Delta'(0)=0.
\end{align*}
For each $s\geq 0$, we obtain (with $\gag=\gagg\cp\kappa$ in the second\slash fourth step)
 \begin{align*}
	g_t\cdot \gag(s)&=\exp(-\kappa(t)\cdot \q)\cdot\gag(t+s)
	= \exp(-\kappa(t)\cdot \q)\cdot\gagg(\kappa(t+s))\\
	&=\exp(-\kappa(t)\cdot \q)\cdot\gagg(\kappa(t)+\Delta(s))=\gagg(\Delta(s))
	=\gag(\kappa^{-1}(\Delta(s)))=\gag(\Delta'(s)). 
\end{align*}
We obtain $g_t\in G_{\gag(0)}$ if we choose $s\equiv 0$. Moreover, the above identity shows that there exist $0<s,s'<\pii(x,\g)$ 
with $g_t\cdot \gag([0,s])=\gag([0,s'])$: 
\begingroup
\setlength{\leftmargini}{12pt}
\begin{itemize}
\item[$\triangleright$]
If $s\leq s'$, then Lemma \ref{stabbiii} with $\gamma\equiv\gag$, $g\equiv g_t$, $\tau\equiv 0$, $\ell\equiv s$, $k\equiv s'$ shows $g_t\in G_\gag$. 
\item[$\triangleright$]
If $s>s'$, then Lemma \ref{stabbiii} with $\gamma\equiv\gag$, $g\equiv g^{-1}_t$, $\tau\equiv 0$, $\ell\equiv s'$, $k\equiv s$ shows $g^{-1}_t\in G_\gag$, hence $g_t\in G_\gag$.
\qedhere
\end{itemize}
\endgroup
\end{proof}

\section{Appendix to Sect.\ \ref{skjsdghfsd}}

\subsection{}
\label{appB1}
\begin{proof}[Proof of Lemma \ref{lemma:simpliiiiii}]
Assume that the claim is wrong; and write $A=[a',a]$, as well as $K_n=[k'_n,k_n]$ for each $n\in \NN$. Then, there exists $\iota\colon \NN\rightarrow \NN$ strictly increasing with $g_{\iota(n)}\cdot \gamma(K_{n})\nsubseteq\gamma(A)$ for each $n\in \NN$. We obtain from \eqref{conni} that
\vspace{-3pt}
\begin{align*}
	 \gamma|_{K_{\iota(n)}}\cpsim g_{\iota(n)}\cdot \gamma|_{K_{\iota(n)}}\quad\quad\:\:&\stackrel{\iota(n)\geq n}{\Longrightarrow}\quad\quad\:\: \gamma|_{K_{n}}\cpsim g_{\iota(n)}\cdot \gamma|_{K_{n}}
	\qquad\quad\Longrightarrow\qquad\quad  \gamma|_{\tilde{J}_n}= (g_{\iota(n)}\cdot \gamma)\cp \rho_n
\end{align*}
for an analytic diffeomorphism $\rho_n\colon K_n\supseteq \tilde{J}_n \rightarrow \tilde{J}'_n\subseteq K_n$. Let now $m \in\NN$ be such large that $K_m\subseteq A$ holds, and fix some $t_n\in \tilde{J}_n$ for each $n\geq m$. Then, we have
\begin{align*}
a'<k'_m\leq k_n'<t_n<k_n\leq k_m<a\qquad\quad\forall\: n\geq m,
\end{align*}
so that Corollary \ref{cgdcfgd} (for $\gamma\equiv \gamma|_A$, $\gamma'\equiv (g_{\iota(n)}\cdot \gamma)|_{K_n}$, and $t\equiv t_n$ there) shows that 
\begin{align*}
	\gamma([a',t_n])\subseteq g_{\iota(n)}\cdot \gamma(K_n)\qquad\qquad&\text{or}\qquad\qquad \gamma([t_n,a])\subseteq g_{\iota(n)}\cdot \gamma(K_n)\qquad\qquad\:\text{holds},\\
	\text{hence}\qquad\qquad \gamma([a',k'_m])\subseteq g_{\iota(n)}\cdot \gamma(K_n)\qquad\qquad&\text{or}\qquad\qquad \gamma([k_m,a\hspace{1pt}])\hspace{1pt}\subseteq g_{\iota(n)}\cdot \gamma(K_n).
\end{align*}  
We thus have 
\begin{align*}
	\hspace{-8.6pt}g_n^{-1}\cdot \gamma([a',k'_m])\subseteq \gamma(K_n)\qquad\qquad\text{or}\qquad\qquad g_n^{-1}\cdot \gamma([k_m,a])\subseteq \gamma(K_n)
\end{align*}
for infinitely many $n\in \NN$; which contradicts that $\gamma(\tau)$ is sated, as $\bigcap_n K_n=\{\tau\}$ holds. 
\end{proof}

\subsection{}
\label{appB2}
\begin{proof}[Proof of Lemma \ref{dds}]
It is clear from Definition \ref{contgen} that $\gamma$ admits a $(\gamma,\tau)$-approximation $\{(g_n,K_n,J_n)\}_{n\in \NN}$. 
\begingroup
\setlength{\leftmargini}{12pt}
\begin{itemize}
\item[$\triangleright$]
By Corollary \ref{lemma:simpli}.\ref{lemma:simpli1} applied to $L\equiv K_{n+1}$ for $n\in \NN$, there exists $p(n)\in \NN$ with 
\begin{align*}
	g_p\cdot \gamma(K_{p})\subseteq \gamma(K_{n+1}) \qquad\quad \forall\: p\geq p(n).
\end{align*}
We thus inductively obtain $\iota\colon \NN\rightarrow \NN$  strictly increasing with
\begin{align*}
	g_{\iota(n)}\cdot \gamma(K_{\iota(n)})\subseteq \gamma(K_{n+1})\subseteq \gamma(J_n)\qquad \text{hence}\qquad g_{\iota(n)}\cdot\gamma(\tau)\in \gamma(J_{n})\qquad\text{for all}\:\: n\in \NN.
\end{align*}
By Remark \ref{jsakjsakjs}.\ref{jsakjsakjsb}, we thus can assume that \eqref{fdfdgfgf} holds, just by passing to $\{(g_{\iota(n)},K_n,J_n)\}_{n\in \NN}$ if necessary. 
\item[$\triangleright$]	
By Corollary \ref{lemma:simpli}.\ref{lemma:simpli1} applied to $L\equiv K_{n+1}$ for $n\in \NN$, there exists some  
$p(n)\geq n+1$ with
\begin{align*}
	g_{p(n)}\cdot \gamma(K_{p(n)})\subseteq \gamma(K_{n+1})\subseteq \gamma(J_n). 
\end{align*}
We define $\iota\colon\NN\rightarrow \NN$ inductively by $\iota(0):=0$ as well as $\iota(n):=p(\iota(n-1))$ for each $n\geq 1$. Then, $ \{(g_{\iota(n)},K_{\iota(n)},J_{\iota(n)})\}_{n\in \NN}$ fulfills \eqref{eq:schachtelung} by construction; so that by Remark \ref{jsakjsakjs}.\ref{ddiudsdsmncxycxc}, we  can achieve that \eqref{eq:schachtelung} holds, just by passing to $ \{(g_{\iota(n)},K_{\iota(n)},J_{\iota(n)})\}_{n\in \NN}$ if necessary.
\item[$\triangleright$]
By Corollary \ref{lemma:simpli}.\ref{lemma:simpli1} applied to $L\equiv K$, there exists $n_0\in \NN$ with 
	$g_n\cdot \gamma(K_{n})\subseteq \gamma(K)$ for all  
	for all $n\geq n_0$. 
By Remark \ref{jsakjsakjs}.\ref{ddiudsdsmncxycxc}, we thus can assume that \eqref{ldslsddsoidsoid} holds, just by passing to $ \{(g_{\iota(n)},K_{\iota(n)},J_{\iota(n)})\}_{n\in \NN}$ with $\iota\colon \NN\ni n\mapsto n_0+n\in \NN$.  \qedhere
\end{itemize}
\endgroup
\end{proof}

\subsection{}
\label{appB4}
\begin{proof}[Proof of Lemma \ref{podspods}]
Let $\{(g_{n},K_n,J_n)\}_{n\in \NN}$ be as in Lemma \ref{dds}. If infinitely many $g_n$ are positive, we can just pass to a subsequence to achieve that all of them are positive (Remark \ref{jsakjsakjs}.\ref{ddiudsdsmncxycxc}). 
 
In the other case, infinitely many $g_n$ are negative:
\begingroup
{
\setlength{\leftmargini}{15pt}
\renewcommand{\theenumi}{{\small\sf\arabic{enumi}})} 
\renewcommand{\labelenumi}{\theenumi}
\begin{enumerate}
\item
 By Remark \ref{jsakjsakjs}.\ref{ddiudsdsmncxycxc}, we can assume that each $g_n$ is negative, just by  passing to a subsequence if necessary.
\item
\label{ksdhjdshjsewiuiuewiuewewnnmewmewmew1}
By \eqref{eq:schachtelung} we have
\begin{align}
\label{odsioidsoioisd98dss98ddssddsewcxxccx}
 g_{n+1}\cdot g_{n+2}\cdot \gamma(K_{n+2})\subseteq g_{n+1}\cdot \gamma(K_{n+1})\subseteq \gamma(J_n)\qquad\quad \forall\: n\in \NN.
\end{align}
We set $K':=K$ as well as
\begin{align*}
	 g_n':=g_{n+1}\cdot g_{n+2}, \qquad K_n':=K_{n},\qquad J'_n:= J_{n}\qquad\quad\hspace{6.8pt} \forall\: n\in \NN.
\end{align*} 
Then, \eqref{odsioidsoioisd98dss98ddssddsewcxxccx} implies that \eqref{conni} and \eqref{fdfdgfgf} hold with $g_n\equiv g'_n$ and $J_n\equiv J'_n$ there. Hence, $\{(g'_n,K'_n,J'_n)\}_{n\in \NN}$ is a $(\gamma,\tau)$-collection such that \eqref{fdfdgfgf} holds.
\item
\label{ksdhjdshjsewiuiuewiuewewnnmewmewmew2}
By Corollary \ref{lemma:simpli}.\ref{lemma:simpli1}, there exists $n_0 \in \NN$ with  
$g'_n\cdot \gamma(K'_n)\subseteq \gamma(K')$ for all $n\geq n_0$. Then, for each $n\geq n_0$, we have
	$g'_n\cdot \gamma|_{J'_n} = \gamma\cp \rho'_n$ for the (unique) analytic diffeomorphism (Lemma \ref{lemma:BasicAnalyt2})
\begin{align*}
	\rho'_n\equiv (\gamma|^{\im[\gamma]})^{-1}\cp (g'_n\cdot \gamma|_{J_n'})\colon J'_n\rightarrow I'_n\subseteq K'. 
\end{align*}
By Remark  \ref{opdspodspodspodspodsds0909ds09dsdsdsdsds}, we furthermore have $g_n\cdot \gamma|_{J_n}=\gamma\cp\rho_n$  for the analytic diffeomorphism 
\begin{align*}
	\rho_n= (\gamma|^{\im[\gamma]})^{-1}\cp (g_n\cdot \gamma|_{J_n})\colon J_n\rightarrow I_n\subseteq J_{n-1}\subseteq I\qquad
	\text{for each}\qquad n\geq 1.
\end{align*}
	Hence, $J_{n+2}\subseteq J_n\equiv J'_n$ and $I_{n+2}\subseteq J_{n+1}$ holds for each $n\in \NN$; and we obtain for $n\geq n_0$ that
\begin{align*}
	\rho'_{n}|_{J_{n+2}}&=(\gamma|^{\im[\gamma]})^{-1}\cp(g_{n+1}\cdot g_{n+2}\cdot \gamma|_{J_{n+2}})=(\gamma|^{\im[\gamma]})^{-1}\cp(g_{n+1}\cdot (\gamma\cp \rho_{n+2}))\\
	&=(\gamma|^{\im[\gamma]})^{-1}\cp((g_{n+1}\cdot \gamma|_{I_{n+2}})\cp \rho_{n+2})=(\gamma|^{\im[\gamma]})^{-1}\cp((g_{n+1}\cdot \gamma|_{J_{n+1}})\cp \rho_{n+2})\\
	&=(\gamma|^{\im[\gamma]})^{-1}\cp((\gamma\cp \rho_{n+1})\cp \rho_{n+2})=\rho_{n+1}\cp\rho_{n+2}
\end{align*}
holds, which implies that $g'_n$ is positive as both $\rho_{n+1}$ and $\rho_{n+2}$ are negative. 	
\item	
\label{ksdhjdshjsewiuiuewiuewewnnmewmewmew3}
\begingroup
\setlength{\leftmarginii}{10pt}
\begin{itemize}
\item[\:{\sf a)}]
It remains to show that $g'_n\in G\backslash G_{\gamma(\tau)}$ holds for infinitely many $n\geq n_0$.\vspace{3pt}

\textit{Proof of the Claim.}
If $g'_n\in G\backslash G_{\gamma(\tau)}$ holds for infinitely many $n\geq n_0$, then there exists $\iota\colon \NN\rightarrow \NN$ strictly increasing with $\iota(0)\geq n_0$ as well as $\{g_{\iota(n)}'\}_{n\in \NN}\subseteq G\backslash G_{\gamma(\tau)}$. According to Remark \ref{jsakjsakjs}.\ref{ddiudsdsmncxycxc} $\{(g'_{\iota(n)},K'_{\iota(n)},J'_{\iota(n)})\}_{n\in \NN}$ is a $(\gamma,\tau)$-approximation, such that 
   \eqref{fdfdgfgf} holds by Step  \ref{ksdhjdshjsewiuiuewiuewewnnmewmewmew1} as well as \eqref{ldslsddsoidsoid} 
by our choice of $n_0$ in Step \ref{ksdhjdshjsewiuiuewiuewewnnmewmewmew2}, with $g_{\iota(n)}'$ positive for each $n\in \NN$ by Step \ref{ksdhjdshjsewiuiuewiuewewnnmewmewmew2}.
\hspace*{\fill}$\mathsmaller{\square}$
\item[{\sf b)}]
To prove the statement in {\sf a)} (i.e.\ that $g'_n\in G\backslash G_{\gamma(\tau)}$ holds for infinitely many $n\geq n_0$), it suffices to show that      
$g_n'\in G\setminus G_\gamma$ holds for infinitely many $n\geq n_0$.
\vspace{3pt}

\textit{Proof of the Claim.}
Assume that $g_n'\in G\setminus G_\gamma$ holds for infinitely many $n\geq n_0$, hence that there exists $\iota\colon \NN\rightarrow \NN$ strictly increasing with $\iota(0)\geq n_0$ and $\{g'_{\iota(n)}\}_{n\in \NN}\subseteq G\setminus G_\gamma$. 
Then, Lemma \ref{seque}.\ref{seque1} implies that $g'_{\iota(n)}\in G_{\gamma(\tau)}$ can only hold for  finitely many $n\in \NN$ (which proves the claim):
\vspace{2pt}
\begingroup
\setlength{\leftmarginiii}{12pt}
\begin{itemize} 
\item
Assume that $g'_{\iota(n)}\in G_{\gamma(\tau)}$ holds for infinitely many $n\in \NN$, hence that there exists $\tilde{\iota}\colon \NN\rightarrow \NN$ strictly increasing with $\{g'_{\tilde{\iota}(\iota(n))}\}_{n\in \NN}\subseteq  G_{\gamma(\tau)}$.
\vspace{2pt}
\item
Then, for each $n\in \NN$ we have 
\begin{align*}
g_{\tilde{\iota}(\iota(n))}'\cdot \gamma|_{J_{\tilde{\iota}(\iota(n))}'}=\gamma\cp \rho'_{\tilde{\iota}(\iota(n))}
\qquad\:\:\Longrightarrow\qquad\:\: 
g_{\tilde{\iota}(\iota(n))}'\cdot \gamma|_{K_{\tilde{\iota}(\iota(n))}'\cap [\tau,\infty)}\cpsim \gamma|_{K_{\tilde{\iota}(\iota(n))}'\cap [\tau,\infty)},
\end{align*}
because  $\rho'_{\tilde{\iota}(\iota(n))}$ is positive with $\rho'_{\tilde{\iota}(\iota(n))}(\tau)=\tau$ as  $g'_{\tilde{\iota}(\iota(n))}\in G_{\gamma(\tau)}$ holds.
\vspace{3pt}
\item
Lemma \ref{seque}.\ref{seque1} applied to $\{g'_{\tilde{\iota}(\iota(n))}\}_{n\in \NN}$, yields $m\in \NN$ with $\{g'_{\tilde{\iota}(\iota(n))}\}_{n\geq m}\subseteq G\setminus G_{\gamma(\tau)}$, which contradicts the assumptions. 
  \hspace*{\fill}$\mathsmaller{\square}$
\end{itemize}
\endgroup
\item[\:{\sf c)}]
Finally assume that the condition in {\sf b)} is not fulfilled, i.e., that 
$g_n'\in G\setminus G_\gamma$ only holds for finitely many $n\geq n_0$. Then, there exists $m\geq n_0+1$ with $\{g'_n\}_{n\geq m-1}\subseteq G_\gamma$.  For each $n\geq m$, we have 
$g_{n+1}=g^{-1}_n\cdot h_n$ for $h_n:=g'_{n-1}\in G_\gamma$, hence 
\vspace{-17pt}
\begin{align*}
	g_{n+2}=g_{n+1}^{-1}\cdot h_{n+1}=h_n^{-1}\cdot g_n\cdot  h_{n+1}\qquad\quad &\Longrightarrow\qquad\quad   g_{n+2}\cdot \gamma(\tau)=\overbrace{h_n^{-1}}^{\in\: G_\gamma}\cdot \overbrace{g_n\cdot \gamma(\tau)}^{\stackrel{\eqref{ldslsddsoidsoid}}{\in} \im[\gamma]}=g_n\cdot \gamma(\tau).
\end{align*}
We obtain  
$\gamma(\tau)\stackrel{\eqref{fdfdgfgf}}{=}\lim_n g_{m+2n}\cdot \gamma(\tau)=g_m\cdot \gamma(\tau)$, which contradicts $g_m\in G\setminus G_{\gamma(\tau)}$.
\qedhere
\end{itemize}
\endgroup
\end{enumerate}}
\endgroup
\end{proof}

\subsection{}
\label{appB3}
\begin{proof}[Proof of Lemma \ref{pofpofdofdop}]
Let $\{(g_n,K_n,J_n)\}_{n\in \NN}$ be as in Corollary \ref{dffssdffds}. 
By Remark \ref{podspoddsa}, it suffices to show that there exists an open interval $J\subseteq I\equiv\dom[\gamma]$ with $\tau\in J$, such that $\gamma(J)\subseteq G\cdot \gamma(\tau)$ holds. 
Now, by \eqref{fdfdgfgf}, we have $\{g_n\cdot \gamma(\tau)\}_{n\in \NN}\subseteq \im[\gamma]$ with $\lim_n g_n\cdot \gamma(\tau)=\gamma(\tau)$.  
Since $\gamma$ is an embedding, there exists
$$I\backslash \{\tau\}\supseteq \{s_n\}_{n\in \NN}\rightarrow \tau\qquad\quad\:\:\text{with}\qquad\quad\:\: g_n\cdot \gamma(\tau)= \gamma(s_n)\qquad\forall\: n\in \NN.$$ 
Since $x\equiv\gamma(\tau)$ is stable with $\{g_n\}_{n\in \NN}\subseteq G\backslash G_{x}$, there exist sequences $\{h_n\}_{n\in \NN}, \{h'_n\}_{n\in \NN}\subseteq G_{[x]}$, as well as $\iota\colon \NN\rightarrow \NN$ strictly increasing, such that $\{h_{\iota(n)}\cdot g_{\iota(n)}\cdot h'_{\iota(n)}\}_{n\in \NN}\rightarrow g\in G_x$ converges. We define 
\begin{align*}
	g'_n:= h_{\iota(n)}\cdot g_{\iota(n)}\cdot h'_{\iota(n)}\cdot g^{-1} \qquad\quad\forall\: n\in \NN;
\end{align*}	
and observe that $\lim_n g'_n=e$ holds, as well as 
\begin{align*}
	g'_n\cdot \gamma(\tau)=g_{\iota(n)}\cdot \gamma(\tau)= \gamma(s_{\iota(n)})\qquad\quad\forall\: n\in \NN.
\end{align*}		  
The claim now follows from Lemma \ref{gdfgfgf}. 
\end{proof}

\subsection{}
\label{appB5}
\begin{proof}[Proof of Lemma \ref{daaddsd}]
Let $(\gamma,\rho)\vee \{(g_{n},K_n,J_n)\}_{n\in \NN}$ be as in Lemma \ref{pofpofdofdop}. If $\Delta_n>0$ holds for infinitely many $n\in \NN$, the claim just follows by passing to a  subsequence (cf.\ Remark \ref{kldlkfdklfd}).  
In the other case, passing to a subsequence, we can achieve that $\Delta_n<0$ holds for all $n\in \NN$. We set  
$$\inv\colon \dom[\gamma]\equiv I=(i',i)\ni t\mapsto i'+i- t\in I,$$ 
observe that $\inv^{-1}=\inv$ holds, and define
\begin{align*}
	\tilde{\gamma}:=\gamma\cp \inv,\qquad \tilde{\tau}:=\inv^{-1}(\tau),\qquad \tilde{g}_n:= g_n,\qquad \tilde{K}_n:=\inv^{-1}(K_n),\qquad \tilde{J}_n:=\inv^{-1}(J_n),\qquad \tilde{\rho}_n:=\inv^{-1}\cp \rho_n\cp \inv|_{\tilde{J}_n}
\end{align*}
as well as $\tilde{K}:= \tilde{K}_0$ and $\tilde{I}:=\inv(I)=I$. Then, $\delta|_{I'}=\tilde{\gamma}\cp \tilde{\rho}$ with $\tilde{\rho}(\ovl{\tau})=\tilde{\tau}$ holds for $\tilde{\rho}:= \inv^{-1}\cp \rho$ 
(i.e.\ $\tilde{\gamma}$ 
 fulfills the ``diffeomorphism-condition'' in Definition \ref{ddskjdsjkdsjd}); and we evidently have: 
\begin{align*}
\textstyle\bigcap_{n\in \NN}\tilde{K}_n =\{\tilde{\tau}\}\qquad\quad\:\: &\!\text{with}\qquad\quad\tilde{I} \supset \tilde{K} = \tilde{K}_0\supset \tilde{J}_0 \supset \tilde{K}_1\supset \tilde{J}_1 \supset \tilde{K}_2\supset \tilde{J}_2\supset  \dots\\[4pt]
\{\tilde{g}_n\}_{n\in \NN}&\subseteq G\backslash G_{\tilde{\gamma}(\tilde{\tau})}\\[4pt]
\tilde{\gamma}(\tilde{I})=\gamma(I)&\subseteq G\cdot \gamma(\tau)=G\cdot \tilde{\gamma}(\tilde{\tau})\\[4pt] 
\text{as } &\text{well as}\\[4pt]
 \tilde{g}_n\cdot \tilde{\gamma}(\tilde{\tau})=g_n\cdot \gamma(\tau)&\in \gamma(J_n)=\tilde{\gamma}(\tilde{J}_n)
\\[4pt]
	\underbrace{\tilde{g}_n\cdot \tilde{\gamma}(\tilde{K}_n)}_{g_n\cdot\:\gamma(K_n)}\subseteq\underbrace{\tilde{\gamma}(\tilde{K})}_{\gamma(K)}\subseteq \tilde{\gamma}(\tilde{I})\qquad\quad&\text{and}\qquad\quad \tilde{\rho}_n\colon \tilde{J}_n\rightarrow \tilde{I}_n:= \inv^{-1}(I_n)\subseteq \tilde{I}\\[-12pt] 
&\!\text{with}\\[4pt]
	\tilde{g}_n\cdot \tilde{\gamma}|_{\tilde{J}_n}= \gamma\cp \rho_n \cp\inv|_{\tilde{J}_n}=\tilde{\gamma}\cp \tilde{\rho}_n
\qquad\quad\:\:&\!\!\text{hence}\qquad\quad\:\: \tilde{g}_n\cdot \gamma|_{\tilde{J}_n}\cpsim \gamma|_{\tilde{J}_n} 	
	\\[3pt]    
\text{for e} &\text{ach }n\in \NN.
\end{align*}
Obviously,  $\tilde{\rho}_n$ (hence $\tilde{g}_n$) is  positive for each $n\in \NN$; and $\tilde{g}_n$ shifts $\tilde{\tau}$ to the right, because
\[
	\tilde{g}_n\cdot \tilde{\gamma}(\tilde{\tau})=g_n\cdot \gamma(\tau)=\gamma(\tau+\Delta_n)=\tilde{\gamma}(\inv^{-1}(\tau+\Delta_n))=\tilde{\gamma}(\tilde{\tau}-\Delta_n).\qedhere
\]
\end{proof}

\subsection{}
\label{appB6}
\begin{proof}[Proof of Lemma \ref{approx}]
\begingroup
\setlength{\leftmargini}{16pt}
\begin{enumerate}
\item
We have $p(n)\geq 1$, as $g_n\cdot \gamma(J_n)\subseteq \gamma(K)\subseteq \gamma(K')$ holds by \eqref{ldslsddsoidsoid}; and   
the right side of \eqref{propies} is clear from 
\vspace{-4pt}
\begin{align*}
\gamma(\tau_{n,p+1})=(g_n)^p\cdot (g_n\cdot\gamma(\tau))\stackrel{\eqref{fdfdgfgf}}{\in} (g_n)^p\cdot \gamma(J_n)=\gamma(I_{n,p}) \qquad\quad\forall\: 0\leq p \leq p(n)-1. 
\end{align*}
To prove the left side of \eqref{propies}, for each $0\leq p \leq p(n)$, we let  
$\rho_{n,p}\colon J_n\rightarrow I_{n,p}$ denote the unique analytic diffeomorphism with 
	$(g_n)^p\cdot \gamma|_{J_n}=\gamma\cp \rho_{n,p}$ (Lemma \ref{lemma:BasicAnalyt2}).\footnote{In anticipation of the proof of Part \ref{approx3}, we define $\rho_{n,p}$ also for $p=p(n)$ at this point (although we only need the diffeomorphisms $\rho_{n,p}$ for $0\leq p<p(n)$ for the argumentation in this part).} 
	\vspace{4pt}
	
	\noindent
	These diffeomorphisms are positive, hence strictly increasing:
	\vspace{-3pt}
\begingroup
\setlength{\leftmarginii}{12pt}
\begin{itemize}
\item[$\triangleright$]	
We have $\dot\rho_{n,0}>0$, as necessarily $I_{n,0}=J_n$ and $\rho_{n,0}=\id_{J_n}$ holds. In addition to that,    
we have $\dot\rho_{n,1}>0$ by positivity of $g_n$, as $\rho_{n,1}=\rho_n\colon J_n \rightarrow I_{n,1}=I_n$ holds by uniqueness. 
\item[$\triangleright$]
Positivity thus follows inductively, if we prove the following implication:
\begin{align*}
	\dot\rho_{n,p}>0\quad\text{for some}\quad 1\leq p \leq p(n)-1\qquad\quad\Longrightarrow\qquad\quad\dot\rho_{n,p+1}>0. 
\end{align*}
Let thus $1\leq p \leq p(n)-1$. 
Since $g_n\cdot \gamma|_{J_n}\cpsim \gamma|_{J_n}$ holds by \eqref{conni}, there exists $J\subseteq J_n$ with $\rho_n(J)\subseteq  J_n$. We obtain 
\begin{align*}
\gamma\cp \rho_{n,p+1}|_{J}=(g_n)^{p+1}\cdot \gamma|_{J}= (g_n)^{p}\cdot (g_n\cdot \gamma|_{J}) = (g_n)^{p}\cdot (\gamma\cp\rho_n|_J)=\gamma\cp\rho_{n,p}\cp\rho_n|_J,
\end{align*}
hence $\rho_{n,p+1}|_{J}=\rho_{n,p}\cp\rho_n|_J$ by injectivity of $\gamma$, which shows $\dot\rho_{n,p+1}>0$.
\end{itemize}
\endgroup
\vspace{-5pt}

\noindent	 
Since $g_n$ shifts $\tau$ to the right, we have $\tau_{n,0}\equiv\tau<\tau_{n,1}= \rho_{n}(\tau)=\tau +\Delta_n\stackrel{\eqref{fdfdgfgf}}{\in} J_n$ for some $\Delta_n>0$. For $0\leq p \leq p(n)-1$, we obtain
\begin{align}
\label{sdfsdffsd}
\begin{split}
	 \gamma(\tau_{n,p+1})&= (g_n)^{p+1}\cdot \gamma(\tau)=(g_n)^{p}\cdot (g_n \cdot \gamma(\tau))=(g_n)^{p}\cdot \gamma(\tau_{n,1})\\
	 &=  
	 \gamma(\rho_{n,p}(\tau_{n,1}))=\gamma(\rho_{n,p}(\tau+\Delta_n)) 
	 =\gamma(\rho_{n,p}(\tau)+\Delta'_n)=\gamma(\tau_{n,p}+\Delta_n')
\end{split}
\end{align}
for some $\Delta'_n>0$ as $\rho_{n,p}$ is positive. The left side of \eqref{propies} now follows inductively. 
\item
If $p(n)=\infty$ holds, then $\{\tau_{n,p}\}_{p\in \NN}\subseteq K'$ is strictly increasing by \eqref{propies}, hence $\lim_p \tau_{n,p} =t\in K'$ exists with $t\geq \tau_{n,1}>\tau$. 
We thus have 
\begin{align*}
\textstyle\gamma(t)=\lim_p\gamma(\tau_{n,p})=\lim_p\he(g_n)^p\he\cdot \gamma(\tau)\qquad\quad\Longrightarrow \qquad\quad	 \gamma(t)&=\textstyle\lim_p\he (g_n)^p\cdot (g_n\cdot \gamma(\tau))\\
&=\textstyle\lim_p \he(g_n)^p \cdot\gamma(\tau_{n,1});
\end{align*} 
which contradicts that $\gamma(t)$ is sated, as $\tau, \tau_{n,1}, t$ are mutually different and $\gamma$ is  injective. 
\item
By Part \ref{approx2}, $h_n:=(g_n)^{p(n)+1}$ is defined for each $n\in \NN$, with 
\begin{align}
\label{pocspodspodspods}
	h_n\cdot \gamma(K_n)\nsubseteq \gamma(K')\qquad\quad\forall\: n\in \NN 
\end{align}
by the definition of $p(n)$. Assume now that the claim is wrong, hence that $i_{n,p(n)}\leq b$ holds for infinitely many $n\in \NN$. 
\begingroup
\setlength{\leftmarginii}{12pt}
\begin{itemize}
\item
It suffices to show that for each $n\in \NN$ with $i_{n,p(n)}\leq b$, there exists an open interval $J[n]\subseteq K'$ with $J[n]\cap K\neq \emptyset$ as well as an analytic diffeomorphism $\rho[n]\colon K'\supseteq J[n]\rightarrow J'[n]\subseteq K_n$ with
\begin{align*}
	(\gamma|_{K'})|_{J[n]}=(h_n\cdot \gamma)|_{K_n}\cp \rho[n].
\end{align*}
In fact, then we obtain a contradiction to satedness of $\gamma(\tau)$ as follows: 
\begingroup
\setlength{\leftmarginiii}{12pt}
\begin{itemize}
\item[$\triangleright$]	
By \eqref{pocspodspodspods}, Corollary \ref{cgdcfgd} applied to $t\in J[n]\cap K$ and $\gamma\equiv \gamma|_{K'}$, $\gamma'\equiv (h_n\cdot \gamma)|_{K_n}$  shows that  
\begin{align*}
	\gamma([a',t])\subseteq h_n\cdot \gamma(K_n)\qquad\quad&\text{or}\qquad\quad \gamma([t,b'])\subseteq h_n\cdot \gamma(K_n)\qquad\quad\:\text{holds};\\
	\text{hence,}\qquad\quad \gamma([a',a])\subseteq h_n\cdot \gamma(K_n)\qquad\quad&\text{or}\qquad\hspace{7.33pt} \hspace{2pt}\gamma([b,b'])\subseteq h_n\cdot \gamma(K_n).
\end{align*}
\item[$\triangleright$]	
We thus have $h_n^{-1}\cdot \gamma([a',a])\subseteq \gamma(K_n)$ or $h_n^{-1}\cdot \gamma([b,b'])\subseteq \gamma(K_n)$ for infinitely many $n\in \NN$, which contradicts that $\gamma(\tau)$ is sated as $\bigcap_{n\in \NN}K_n =\{\tau\}$. 
\end{itemize}
\endgroup
\item
To show the existence of intervals and diffeomorphisms as in the previous point, let now $n\in \NN$ with $i_{n,p(n)}\leq b$ be fixed. We argue as follows:  
\begingroup
\setlength{\leftmarginiii}{12pt}
\begin{itemize}
\item[$\triangleright$]	
Since $g_n\cdot \gamma(\tau)\in \gamma(J_n)$ holds, there exists an open interval $J'[n]\subseteq J_n\subseteq K_n$ containing $\tau$ with $g_n\cdot \gamma(J'[n])\subseteq \gamma(J_n)$. We obtain
\begin{align}
\label{dffdf}
	h_n\cdot \gamma(J'[n])=(g_n)^{p(n)}\cdot (g_n\cdot \gamma(J'[n]))\subseteq (g_n)^{p(n)}\cdot \gamma(J_n)=\gamma(I_{n,p(n)})\subseteq \gamma(K').
\end{align}
Hence, $h_n\cdot \gamma(J'[n])=\gamma(J[n])$ holds for an open interval $J[n]\subseteq K'$, because $\gamma$ is an embedding; specifically, 
 $(\gamma|_{K'})|_{J[n]}=h_n\cdot \gamma\cp \rho[n]$ holds  for the analytic diffeomorphism (Lemma \ref{lemma:BasicAnalyt2})
\begin{align*}
	\rho[n]= (h_n\cdot\gamma|^{\im[\gamma]})^{-1}\cp \gamma|_{J[n]}\colon K'\supseteq J[n]\rightarrow J'[n]\subseteq K_n.
\end{align*}
\item[$\triangleright$]
Then, $h_n\cdot \gamma(\tau)=\gamma(\tau_{n,p(n)+1})$ holds for some unique
\vspace{-7pt}
\begin{align*}
	\qquad\qquad\qquad 
J[n]\ni \tau_{n,p(n)+1}
	\stackrel{\eqref{dffdf}}{<}i_{n,p(n)}\leq b.
\end{align*} 
Copying the proof of Part \ref{approx1} with $p= p(n)$ in \eqref{sdfsdffsd}, we additionally obtain 
$a<\tau_{n,p(n)}<\tau_{n,p(n)+1}$; which shows     
$\tau_{n,p(n)+1}\in J[n]\cap K\neq \emptyset$.
\end{itemize}
\endgroup	
\end{itemize}
\endgroup 
\item 
The second statement is clear from the first statement as well as from \eqref{fgopopfgopfg} and \eqref{propies}. 
Assume now that the first statement is wrong:
\begingroup
\setlength{\leftmarginii}{12pt}
\begin{itemize}
\item
There exist $t\in (\tau,b]$, $n_0\in \NN$ and $\epsilon>0$ with $\tau< t-\epsilon$, as well as $\iota\colon \NN\rightarrow \NN$ strictly increasing with $\iota(0)\geq \max(n_0,d)$, such that  
$\tau_{\iota(n),m}\notin (t-\epsilon,t]$ holds for all  $1\leq m\leq p(\iota(n))$ and $n\in \NN$.  
\item
For each $n\in \NN$, the left side of \eqref{propies} implies that one of the following two situations hold:
\begingroup
\setlength{\leftmarginiii}{12pt}
\begin{itemize}
\item[$*$]
$(t-\epsilon,t]\subseteq (\tau_{\iota(n),q(n)},\tau_{\iota(n),q(n)+1})$ holds for some $0\leq q(n)\leq p(\iota(n))-1$, hence   
$(t-\epsilon,t]\subseteq I_{\iota(n),q(n)}$ by the right side of \eqref{propies}.
\vspace{2pt}
 \item[$*$]
 $(t-\epsilon,t]\subseteq   (\tau_{\iota(n),p(\iota(n))},b]$ holds, hence $(t-\epsilon,t]\subseteq I_{\iota(n),q(n)}$ for  $q(n):= p(\iota(n))$ as  $b<i_{\iota(n),p(\iota(n))}$ holds by Part \ref{approx3}  (observe $\iota(n)\geq \iota(0) \geq d$). 
\end{itemize}
\endgroup
\noindent
We thus have
\begin{align*}
	\gamma((t-\epsilon,t])\subseteq \gamma(I_{\iota(n),q(n)})=(g_{\iota(n)})^{q(n)}\cdot \gamma(J_{\iota(n)})\qquad\:\:\Longrightarrow\qquad\:\:
	(g_{\iota(n)})^{-q(n)}\cdot \gamma((t-\epsilon,t])\subseteq \gamma(J_{\iota(n)})
\end{align*}
for each $n\in \NN$, 
which contradicts that $\gamma(\tau)$ is sated.
\end{itemize}
\endgroup
\item
If $q(n)=0$ holds for infinitely many $n\in \NN$, the left side of \eqref{fhjjghhhg} implies $g=e$, which shows the right side of \eqref{fhjjghhhg}. We thus can assume that   
there exists $\iota\colon \NN\rightarrow \NN$ strictly increasing with $1\leq q(\iota(n))\leq p(\iota(n))$ for all $n\in \NN$, and set 
\begin{align*}
	g_{\iota(n)}':=(g_{\iota(n)}\cdot h_{\iota(n)})^{q(\iota(n))}\qquad\quad\forall\: n\in \NN.
\end{align*}
Since $(g_{\iota(n)})^{p}\cdot \gamma(J_{\iota(n)})\subseteq \im[\gamma]$ holds for each $0\leq p\leq q(\iota(n))$, we have 
\begin{align}
\label{odspdspdspds}
	g_{\iota(n)}'\cdot \gamma|_{J_{\iota(n)}}=(g_{\iota(n)})^{q(\iota(n))}\cdot \gamma|_{J_{\iota(n)}}=\gamma\cp \rho_{\iota(n),q(\iota(n))}\qquad\quad\forall\: n\in \NN,
\end{align}
for $\rho_{\iota(n),q(\iota(n))}\colon J_{\iota(n)}\rightarrow I_{\iota(n),q(\iota(n))}\subseteq K'$ defined as in Part \ref{approx1}; hence 
\begingroup
\setlength{\leftmarginii}{18pt}
\begin{itemize}
\item[$\rm(a)$]
	$\rho_{\iota(n),q(\iota(n))}$\quad\hspace{78.3pt} is positive,
	\vspace{2pt}
\item[$\rm(b)$]
	$g_{\iota(n)}'\cdot \gamma(\tau)=\gamma(\tau_{\iota(n),q(\iota(n))})$\qquad with $\tau_{\iota(n),q(\iota(n))}=\rho_{\iota(n),q(\iota(n))}(\tau)$ holds for each $n\in \NN$,
\item[$\rm(c)$]
	$g_{\iota(n)}'\cdot \gamma|_{J'}\cpsim \gamma|_{J'}$\quad\hspace{46.7pt} holds for each open interval $J'$ containing $\tau, \tau_{\iota(n),q(\iota(n))}$.
\end{itemize}
\endgroup
\noindent
Combining (b) with the left side of \eqref{fhjjghhhg}, we obtain 
\begin{align*}
	\textstyle\lim_n \gamma(\tau_{\iota(n),q(\iota(n))})=\gamma(\tau)\qquad\text{hence}\qquad \lim_n \tau_{\iota(n),q(\iota(n))}=\tau\qquad\text{as }\gamma\text{ is an embedding}.
\end{align*}
Passing to a subsequence (modifying $\iota$), we thus can assume that $\{\tau_{\iota(n),q(\iota(n))}\}_{n\in \NN}$ is  strictly decreasing (observe that $\tau_{\iota(n),q(\iota(n))}>\tau$ holds for each $n\in \NN$, as $q(\iota(n))\geq 1$ holds for each $n\in \NN$). 
  Then, we find compact intervals $\{K_n\}_{n\in \NN}$ with $K_0'=K'$ 
  as well as 
\begin{align*}
K_n'\subseteq K',\qquad\tau\in J'_n:= \innt[K_n'],\qquad\tau_{\iota(n),q(\iota(n))}\in J_n',\qquad K'_{n+1}\subset J'_n\qquad\qquad\forall\: n\in \NN
\end{align*}
such that $\bigcap_{n\in \NN} K_n'=\{\tau\}$ holds. 
Then, \eqref{conni} holds for $\{(g_{\iota(n)}',K_n',J_n')\}_{n\in \NN}$ by (c); so that Corollary \ref{lemma:simpli}.\ref{lemma:simpli2} yields  $n_0\in \NN$ and a compact neighbourhood 
$L\subseteq K'$ of $\tau$ with  $g'_{\iota(n)}\cdot \gamma(L)\subseteq \gamma(K')$ for all $n\geq n_0$. We proceed as follows:\he\footnote{The following argumentation is  the same as in the end of Lemma \ref{popofdpofd}.} 
\begingroup
\setlength{\leftmarginii}{11pt}
\begin{itemize}
 \item[$\triangleright$]
For each $n\geq n_0$, we have $g'_{\iota(n)}\cdot \gamma|_L=\gamma\cp \kappa_n$ for a unique analytic diffeomorphism $\kappa_n\colon L\rightarrow L_n\subseteq K'$. 
\item[$\triangleright$]
Let $n'_0\geq n_0$ be such large that $J_{\iota(n)}\subseteq L$ holds for each $n\geq n'_0$. 
Then, $\kappa_n$ is positive for 
each $n\geq n'_0$ by (a), because $\kappa_n|_{J_{\iota(n)}}=\rho_{\iota(n),q(\iota(n))}|_{J_{\iota(n)}}$ holds by uniqueness and \eqref{odspdspdspds}.
 \item[$\triangleright$]
Since (b) shows that $g'_{\iota(n)}$ shifts $\tau$ to the right, for $L=[\ell',\ell]$, $K'=[k',k]$, $n\geq n'_0$, and we have
\begin{align*}
	g'_{\iota(n)}\cdot \gamma([\tau,\ell])\subseteq \gamma([\tau,k]).
\end{align*} 
\item[$\triangleright$]
For each $t\in [\tau,\ell]$ and $n\geq n_0'$, we thus have $g'_{\iota(n)}\cdot \gamma(t)\in \gamma([\tau,k])$, hence 
\begin{align*} 
	\textstyle g\cdot \gamma(t)=\lim_n g'_{\iota(n)}\cdot \gamma(t)\in \gamma([\tau,k]).
\end{align*}
	This shows $g\cdot \gamma([\tau,\ell])\subseteq \gamma([\tau,k])$ with $g\in G_{\gamma(\tau)}$; so that Lemma \ref{stabbiii} yields $g\in G_{\gamma}$. \qedhere
\end{itemize}
\endgroup
\end{enumerate}
\endgroup
\end{proof}

\section{Appendix to Sect.\ \ref{disgencur}}

\subsection{}
\label{appC1}
\begin{proof}[Proof of Lemma \ref{eoder}]
\begingroup
\setlength{\leftmargini}{16pt}
\begin{enumerate}
\item
If $A\subseteq I$ is free, then $A$ must be contained in $(i',\tau]$ or $[\tau,i)$. In fact, elsewise $\tau$ is contained in the interior of $A$;  and we obtain:
\vspace{-2pt}
\begin{align*}
g\cdot \gamma|_{(i',\tau]}\psim \gamma|_{[\tau,i)}\qquad
\stackrel{\tau\:\in\: \innt[A]}{\Longrightarrow}\qquad g\cdot \gamma|_{A}\cpsim \gamma|_{A}\qquad
\stackrel{A\:\text{free}}{\Longrightarrow}\qquad g\in G_\gamma\qquad
\Longrightarrow\qquad [g]=[e],
\end{align*}
which contradicts $[g]\neq [e]$. 
\item
As in Part \ref{eoder1},\footnote{If $B\subseteq I$ is free with $A\subset B$, then $\{a_{-1},a_1\}\cap \innt[B]\neq \emptyset$ holds. Then, $g_{\pm 1}\cdot \gamma|_A\psim \gamma|_{A_{\pm 1}}$ implies $\{g_{-1},g_1\}\cap G_\gamma\neq \emptyset$, which contradicts $[g_{\pm 1}]\neq [e]$.} it follows from $g_{\pm 1}\cdot \gamma|_A\psim \gamma|_{A_{\pm 1}}$ and $[g_{\pm 1}]\neq [e]$  that an interval $B\subseteq I$ cannot be free if it properly contains $A$. Hence, $A$ is maximal. 

If a) is wrong (the condition b) is treated analogously), then $\lim_{n\rightarrow -\infty} a_{n} =t$ holds for some $i'<t$. Then, for each $\epsilon>0$, there exists some $N_\epsilon\in \NN$ with $A_n\subseteq  [t,t+\epsilon)$ for all $n\geq N_\epsilon$, hence 
\begin{align*}
	g_n\cdot \gamma(A)=\gamma(A_n)\subseteq \gamma([t,t+\epsilon))\qquad\quad \forall\: n\geq N_\epsilon.
\end{align*}
There thus exists $\iota\colon \NN\rightarrow \NN$ strictly  increasing with $g_{\iota(n)}\cdot \gamma(A)\subseteq \gamma([t,t+1/n))$ for all $n\in \NN$, which contradicts that $\gamma(t)$ is sated (as $\gamma$ is continuous). 
\item
By the maximality statements in Part \ref{eoder1} and Part \ref{eoder2}, each further decomposition of $I$ is necessarily a $\tau$-decomposition $[g']\in G\slash G_\gamma$. But, then 
\begin{align*}
	g\cdot \gamma|_{(i',\tau]}\psim \gamma|_{[\tau,i)}\quad\wedge\quad g'\cdot \gamma|_{(i',\tau]}\psim \gamma|_{[\tau,i)}\qquad\quad&\Longrightarrow\qquad\quad g'\cdot \gamma|_{(i',\tau]}\cpsim g\cdot \gamma|_{(i',\tau]}\\[2pt]
	 &\Longrightarrow\qquad\quad (g^{-1}\cdot g')\cdot \gamma|_{(i',\tau]}\cpsim \gamma|_{(i',\tau]}\\[2pt]
	 &\Longrightarrow\qquad\quad (g^{-1}\cdot g')\in G_\gamma\\[2pt]
	 &\Longrightarrow\qquad\quad [g']=[g]. \qedhere 
\end{align*}	
\end{enumerate}
\endgroup 
\end{proof}

\subsection{}
\label{appC2}
\begin{proof}[Proof of Lemma \ref{taufaith}]
Let $\delta\colon I\rightarrow M$ be immersive and free with $\tau$-decomposition $[g]$; and assume  that 
\begin{align*}
	g'\cdot \delta|_{(i',\tau]} \cpsim \delta\quad\:\text{holds w.r.t.}\quad\: \rho\colon (i',\tau]\supseteq J\rightarrow J'\subseteq I\quad\:\text{for some}\quad\: g'\in G.
\end{align*}
Then, we must have
\begin{align}
\label{popodspodspods11}
g'\cdot \delta|_{(i',\tau]} \cpsim \delta|_{(i',\tau]}\quad\text{w.r.t.}\quad\rho\qquad\quad&\Longrightarrow\qquad\quad [g']=[e]\\
\label{popodspodspods2}
\text{or}\qquad\quad g'\cdot \delta|_{(i',\tau]} \cpsim \delta|_{[\tau,i)}\quad\hspace{3pt}\text{w.r.t.}\quad\rho\qquad\quad&\Longrightarrow\qquad\quad [g']=[g].
\end{align}
The implication in the first line holds, because $(i',\tau]$ is free; and the implication in the second line follows from $g\cdot \delta|_{(i',\tau]} \psim \delta|_{[\tau,i)}$ w.r.t.\ $\mu$:
\begingroup
\setlength{\leftmargini}{12pt}
\begin{itemize}
\item[$\triangleright$]
If $g\cdot \delta|_{(j',\tau]} \psim_{\tau,\tau} \delta|_{[\tau,i)}$ holds for $i'\leq j'<\tau$, then the left side of \eqref{popodspodspods2} implies 
\begin{align*}
	g'\cdot \delta|_{(i',\tau]}\cpsim g\cdot \delta|_{(j',\tau]}\qquad\Longrightarrow\qquad g'\cdot \delta|_{(i',\tau]}\cpsim g\cdot \delta|_{(i',\tau]}\qquad\Longrightarrow\qquad [g']=[g].
\end{align*} 
\item[$\triangleright$]
If $g\cdot \delta|_{(i',\tau]} \psim_{\tau,\tau}  \delta|_{[\tau,j)}$ holds for $\tau<j<i$, then the left side of \eqref{popodspodspods2} implies
\begin{align*}
g'^{-1}\cdot \delta|_{[\tau,i)}\cpsim g^{-1}\cdot \delta|_{[\tau,j)}\qquad&\Longrightarrow\qquad
	g'^{-1}\cdot \delta|_{[\tau,i)}\cpsim g^{-1}\cdot \delta|_{[\tau,i)}\\[-20pt]
	 &\Longrightarrow\qquad g'\cdot g^{-1}=:h\in G_\delta
	 \qquad\Longrightarrow\qquad g'=g\cdot \overbrace{(g^{-1}\cdot h\cdot g)}^{\displaystyle\stackrel{\eqref{stabiconji}}{\in}G_\delta},
\end{align*}
hence $[g']=[g]$.\hspace*{\fill}(for the right side observe $g\cdot \delta\cpsim \delta$)
\end{itemize}
\endgroup
\noindent
Now, since $[g]\neq [e]$ holds by definition, only one of the above cases can occur, i.e., we have either \eqref{popodspodspods11}  (hence $[g']=[e]$) or \eqref{popodspodspods2} (hence $[g']=[g]$):
\begingroup
\setlength{\leftmargini}{12pt}
\begin{itemize}
\item[$\bullet$]
Assume that \eqref{popodspodspods11} holds, hence $[g']=[e]$ and  $\delta|_{(i',\tau]} \cpsim \delta|_{(i',\tau]}$ w.r.t.\ $\rho$. It suffices to show $\rho=\id_J$, because then $\ovl{\rho}|_{(i',\tau]}=\id_{(i',\tau]}$ follows from  Corollary \ref{sdoppsdoods}. 
Assume thus that $\rho\neq \id_J$ holds: 
\begingroup
\setlength{\leftmarginii}{12pt}
\begin{itemize}
\item[$*$]
Lemma \ref{sshift} applied to $\gamma\equiv \delta|_{(i',\tau]}\equiv\gamma'$, $B\equiv (i',\tau]\equiv D'$, $\phi\equiv \id_{(i',\tau]}$, $\psi\equiv\rho$ shows that $\delta|_{(i',\tau]}$ is self-related, because we have  $J\subseteq B$ and $\phi|_J\neq \psi$. 
\item[$*$]
Then, for each $\epsilon >0$ with $(i',\tau+\epsilon)\subseteq  I$, Lemma \ref{sdffsd}.\ref{sdffsd2} applied to $\gamma\equiv \delta|_{(i',\tau+\epsilon)}$ and $D\equiv (i',\tau]$ yields $\delta|_{(i',\tau]}\cpsim \delta|_{(\tau,\tau+\epsilon)}$. Consequently, 
\begin{align*} 
g\cdot \delta|_{(i',\tau]}\psim \delta|_{[\tau,i)}\quad\text{implies}\quad g\cdot \delta|_{(i',\tau]}\cpsim   \delta|_{(i',\tau]} \quad\text{hence}\quad [g]=[e]\quad\text{which contradicts}\quad [g]\neq [e].
\end{align*} 
\end{itemize}
\endgroup
\item[$\bullet$]
Assume that \eqref{popodspodspods2} holds, hence $[g']=[g]$ and  $g\cdot \delta|_{(i',\tau]} \cpsim \delta|_{[\tau,i)}$ w.r.t.\ $\rho$. It suffices to show $\rho=\mu|_J$, because then $\ovl{\rho}|_{\dom[\mu]}=\mu$ follows from Corollary \ref{sdoppsdoods}.  
\begingroup
\setlength{\leftmarginii}{12pt}
\begin{itemize}
\item
	If $\dom[\mu]=(j',\tau]$ holds for some $i'<j'<\tau$, then we have $\im[\mu]=[\tau,i)$. 
\begingroup
\setlength{\leftmarginiii}{10pt}
\begin{itemize}	
\item	
	Lemma \ref{sshift} applied to $\gamma\equiv  \delta|_{(i',\tau]}$, $\gamma'\equiv g^{-1}\cdot\delta|_{[\tau,i)}$, $B\equiv\dom[\mu]$, $D\equiv (i',\tau]$, $D'\equiv [\tau,i)$, $\phi\equiv \mu$, $\psi\equiv \rho$ shows that $\delta|_{(i',\tau]}$ is self-related if 	
	  $J\nsubseteq \dom[\mu]$ holds, or if $J\subseteq \dom[\mu]$ and $\rho\neq \mu|_J$ holds.
	  \item 
	  Now, the same arguments as in the previous point show that   $\delta|_{(i',\tau]}$ cannot be self-related. Hence, we must have 
	  $J\subseteq \dom[\mu]$ as well as $\rho= \mu|_J$. 
	  \end{itemize}
	  \endgroup
\item
	If $\dom[\mu]=(i',\tau]$ holds, then $\dom[\mu^{-1}]=\im[\mu]=[\tau,j)$ holds for some $\tau<j\leq i$. 
\begingroup
\setlength{\leftmarginiii}{10pt}
\begin{itemize}	
\item
Lemma \ref{sshift} applied to $\gamma\equiv \delta|_{[\tau,i)}$, $\gamma'\equiv g\cdot \delta|_{(i',\tau]}$, $B\equiv \dom[\mu^{-1}]=[\tau,j)$, $D\equiv [\tau,i)$, $D'=(i',\tau]$, $\phi\equiv \mu^{-1}$, $\psi\equiv \rho^{-1}$ shows that $\delta|_{[\tau,i)}$ is self-related if 
	$J'\nsubseteq \dom[\mu^{-1}]$ holds, or if $J'\subseteq \dom[\mu^{-1}]$ and $\rho^{-1}\neq \mu^{-1}|_{J'}$ holds. 
\vspace{3pt}

	It thus suffices to show that $\delta|_{[\tau,i)}$ is not self-related, because then we necessarily have $J'\subseteq \dom[\mu^{-1}]$ and $\rho^{-1}=\mu^{-1}|_{J'}$, hence $\rho=\mu|_J$. 
\item	
	Assume now that $\delta|_{[\tau,i)}$ is self-related, and fix  	
 $\epsilon >0$ with $(\tau-\epsilon,i)\subseteq I$. Lemma \ref{sdffsd}.\ref{sdffsd2} applied to $\gamma\equiv \delta|_{(\tau-\epsilon,i)}$  and $D\equiv [\tau,i)$ shows   	
	$\delta|_{[\tau,i)} \cpsim \delta|_{(\tau-\epsilon,\tau)}$, hence $g\cdot \delta|_{[\tau,i)}\cpsim g\cdot \delta|_{(i',\tau]}$. Then, since $\dom[\mu]=(i',\tau]$ holds, we have that 
	\[
g\cdot \delta|_{(i',\tau]}\psim \delta|_{[\tau,i)}\:\:\:\text{w.r.t.}\:\:\:\mu\qquad\text{implies}\qquad g\cdot \delta|_{[\tau,i)}\cpsim   \delta|_{[\tau,i)} \qquad\text{hence}\qquad [g]=[e],
\]
which contradicts $[g]\neq [e]$.\qedhere
\end{itemize}
\endgroup
\end{itemize}
\endgroup
\end{itemize}
\endgroup
\end{proof}

\subsection{}
\label{appC3}
\begin{proof}[Proof of Lemma \ref{dsfdsffds}]
Let $(\{a_n\}_{n\in \cn},\{[g_n]\}_{n\in \cn})$ be an $A$-decomposition of $\gamma$. 
\begingroup
\setlength{\leftmargini}{12pt}
\begin{itemize}
\item
{\sf Faithfulness:} Assume that $g\cdot \gamma|_A\cpsim \gamma$ holds w.r.t.\ the analytic diffeomorphism $\rho\colon A\supseteq J\rightarrow J'\subseteq I$. By a) and b) in Lemma \ref{eoder}.\ref{eoder2}, we have $J'\cap \innt[A_n]\neq \emptyset$ for some $\cn_-\leq n\leq \cn_+$. Then, shrinking $J$ (hence $J'$) if necessary, we can assume that $J'\subseteq A_n$ holds, hence $g\cdot \gamma|_A\cpsim \gamma|_{A_n}$. 
\begingroup
\setlength{\leftmarginii}{12pt}
\begin{itemize}
\item
Then, $\im[\mu_n]=A_n$ implies $g_n\cdot \gamma|_A\cpsim g\cdot \gamma|_{A}$, hence $[g]=[g_n]$. 
\item
Assume that $J\nsubseteq \dom[\mu_n]$ holds, or that $J\subseteq \dom[\mu_n]$ and $\rho\neq \mu_n|_J$ holds: 
\begingroup
\setlength{\leftmarginiii}{10pt}
\begin{itemize}	
\item
Lemma \ref{sshift} applied to $\gamma\equiv  \gamma|_A$, $\gamma'\equiv g_n^{-1}\cdot\gamma|_{A_n}$, $B\equiv \dom[\mu]$, $D\equiv A$, $D'\equiv A_n$, $\phi\equiv \mu$, $\psi\equiv \rho$ shows that $\gamma|_A$ is self-related.
\vspace{3pt}
\item
Set $\tilde{D}:=(a_{-1},a_1]$ and fix $\epsilon>0$ with $\tilde{I}:=(a_{-1},a_1+\epsilon)\subseteq A\cup A_1$. The previous point implies that $\gamma|_{\tilde{D}}$ is self-related. Hence, Lemma \ref{sdffsd}.\ref{sdffsd2} applied to $\gamma\equiv \gamma|_{\tilde{I}}$, $I\equiv \tilde{I}$, $D\equiv \tilde{D}$  yields $\gamma|_{A}\cpsim  \gamma|_{A_1}$.      
\vspace{3pt}
\item
Then, $\im[\mu_1]=A_1$ implies $g_1\cdot \gamma|_A\cpsim  \gamma|_A$, hence $[g_1]=[e]$ as $A$ is free. This contradicts that $[g_1]\neq [e]$ holds by definition.
\end{itemize}
\endgroup
We thus have $J\subseteq \dom[\mu_n]$ and $\rho= \mu_n|_J$, hence $\ovl{\rho}|_{\dom[\mu_n]}=\mu_n$ by Corollary \ref{sdoppsdoods}. Then, the index $n\in \cn\cup\{0\}$ is necessarily unique, because $\mu_n\neq \mu_m$ holds for all $n\neq m\in \cn\cup\{0\}$.  
\end{itemize}
\endgroup
\item
{\sf Uniqueness:} Let $(\{a'_n\}_{n\in \cn'},\{[g'_n]\}_{n\in \cn'})$ be an $A$-decomposition of $\gamma$. 
\begingroup
\setlength{\leftmarginii}{10pt}
\begin{itemize}	
\item
Since $A_0=A=A_0'$ holds, we have $a_{\pm 1}=a'_{\pm 1}$. Hence, 
 $g_{\pm 1}'\cdot \gamma|_A\psim \gamma|_{A'_{\pm 1}}$ implies $g_{\pm 1}'\cdot \gamma|_A\cpsim \gamma|_{A_{\pm 1}}$.
\item 
Then, $[g_{\pm 1}']=[g_{\pm 1}]$ and $\mu'_{\pm 1}=\mu_{\pm 1}$ holds, because $(\{a_n\}_{n\in \cn},\{[g_n]\}_{n\in \cn})$ is faithful.
\item
Then, $A_{\pm 1}=A'_{\pm 1}$ holds by $\mu'_{\pm 1}=\mu_{\pm 1}$; so that  
we have the following implications: 
\begin{align*}
	\cn_-\leq -2\qquad\quad\Longrightarrow \qquad\quad &a_{-2}=a'_{-2}\qquad\stackrel{g_{-2}'\he\cdot\he \gamma|_A\he\psim\he \gamma_{A'_{-2}}}{\Longrightarrow}\qquad g_{-2}'\cdot \gamma|_A\cpsim \gamma|_{A_{-2}}\\
	\cn_+\geq \phantom{-}2\qquad\quad\Longrightarrow\qquad\quad &\hspace{6.5pt}a_{2}=a'_{2}\qquad\hspace{6.3pt}\hspace{5.5pt}\stackrel{g_{2}'\he\cdot\he \gamma|_A\he\psim\he \gamma_{A'_{2}}}{\Longrightarrow}\qquad\hspace{6.1pt}\hspace{5.3pt} g_{2}'\cdot \gamma|_A\cpsim \gamma|_{A_{2}}.
\end{align*}
\end{itemize}
\endgroup
We can now apply the above arguments inductively, to conclude that both decompositions as well as the corresponding diffeomorphisms coincide.\qedhere
\end{itemize}
\endgroup
\end{proof}

\subsection{}
\label{appC4}
\begin{proof}[Proof of Lemma \ref{freemax}]
\begingroup
\setlength{\leftmargini}{12pt}
\begin{enumerate}
\item
For each $g'\in G$, we have the implication
$$
\begin{array}{lll}
g' \cdot \gamma'|_{D'}\cpsim \gamma'|_{D'}\qquad &\Longrightarrow\qquad  g'\cdot \gamma'\cp \rho\cpsim  \gamma'\cp \rho 
\qquad
&\Longrightarrow\qquad  g'\cdot (g\cdot \gamma|_D)\cpsim  g\cdot \gamma|_D\\[4pt]
&\Longrightarrow\qquad(g^{-1}\cdot g'\cdot g)\in G_{\gamma|_D}
= G_\gamma\qquad
&\Longrightarrow\qquad g'\cdot (g\cdot \gamma)\cp \rho^{-1}=g\cdot \gamma\cp \rho^{-1}\\[4pt]
&&\Longrightarrow\qquad g'\cdot \gamma'|_{D'} = \gamma'|_{D'}\\[4pt]
&&\Longrightarrow\qquad g'\in G_{\gamma'|_{D'}}=G_{\gamma'}.
\end{array}
$$
\vspace{-10pt}
\item
Let $J\subseteq I\cap\dom[\ovl{\rho}]$ be an open interval with $D\subseteq J$.   
Since $I'\supseteq D'=\ovl{\rho}(D)=\im[\rho]$ is compact, shrinking $J$ if necessary, we can assume that $J':=\ovl{\rho}(J)\subseteq I'$ holds. Lemma \ref{corgleich} shows $\gamma|_J=\gamma'\cp \ovl{\rho}|_J$, hence $\gamma'|_{J'}=\gamma \cp \rho'$ for $\rho':=\ovl{\rho}^{-1}|_{J'}$. 
Now, if $D'$ is not maximal, then there exists a free (w.r.t.\ $\gamma'$) interval $D''\subseteq I'$ with $D'\subset D''\subseteq J'$. Then, $\rho'(D'')\subset D$ is free (w.r.t.\ $\gamma$) by Part \ref{eoder1}, which contradicts that $D$  is maximal.\qedhere
\end{enumerate}
\endgroup
\end{proof}

\subsection{}
\label{appC5}
\begin{proof}[Proof of Lemma \ref{lemmfreeneig}]
It suffices to prove \eqref{fre1}, because \eqref{fre2}  
then follows from \eqref{fre1} and Lemma \ref{freemax}.\ref{freemax1} (just by replacing $\gamma$ by $\gamma \cp \inv$  for $\inv\colon [t',t]\ni s\mapsto t+t'-s\in [t',t]$).   
Let $I\subseteq (a',t)$ be an open interval with $\tau:= a\in I$   such that $\gamma\equiv \delta|_I$ is an embedding; and 
assume that \eqref{fre1} is wrong, i.e., that 
$[\tau,k]$ is not free for each $\tau< k\leq d:=t$.  Then, Lemma \ref{seque}.\ref{seque2} shows that $\gamma$ is continuously generated at $\tau$. We  choose $\{(g_n,K_n,J_n)\}_{n\in \NN}$ as in Lemma \ref{podspods}, and argue as follows: 
\begingroup
\setlength{\leftmargini}{11pt}
\begin{itemize}
\item[$\triangleright$]
Assume there exists some $n\in \NN$, such that $g_n$ shifts $\tau= a$ to the left. Then, $g_n\cdot \gamma(J)\subseteq \gamma((a',a))$ holds for some $J\subseteq J_n\cap (-\infty,a]$; hence, we have $g_n\cdot \gamma|_{[a',a]}\cpsim \gamma|_{[a',a]}$. Since $[a',a]$ is free, this implies $g_n\in G_\gamma\subseteq G_{\gamma(\tau)}$; which contradicts $g_n\notin G_{\gamma(\tau)}$.  
Consequently, each $g_n$ shifts $\tau$ to the right.
\item[$\triangleright$]
We apply Corollary \ref{lemma:simpli}.\ref{lemma:simpli2} to fix a compact neighbourhood $[\ell',\ell]\equiv L\subseteq I$ of $\tau$  as well as some $n_0\in \NN$, with $g_n\cdot \gamma(L)=\gamma(L_n)\subseteq \gamma(I)$ for all $n\geq n_0$. 
We write $L_n\equiv [\ell'_n,\ell_n]\subseteq I$ for each $n\geq n_0$, and conclude from positivity of $g_n$ that
\begin{align*}
	g_n\cdot \gamma([\ell',\tau])=\gamma([\ell'_n,\tau_n])\qquad\text{holds with}\qquad \tau<\tau_n,\quad g_n\cdot \gamma(\tau)=\gamma(\tau_n),\quad g_n\cdot \gamma(\ell')=\gamma(\ell'_n).
\end{align*}
\begingroup
\setlength{\leftmarginii}{12pt}
\begin{itemize}
\item
If $\ell_n'<\tau$ holds for some $n\geq n_0$, then we have $g_n\cdot \gamma|_{[a',a]}\cpsim \gamma|_{[a',a]}$. Hence, we obtain a contradiction as in the previous point. 
\item
If $\ell_n'\geq \tau$ holds for all $n\geq n_0$, then   
$g_n\cdot \gamma([\ell',\tau])\subseteq \gamma([\tau,\tau_n])$ holds for all $n\geq n_0$. This contradicts that $\wm$ is sated, as \eqref{fdfdgfgf} implies $\lim_n \tau_n=\tau$.\footnote{Observe that $\gamma$ is an embedding, and that $\lim_n \gamma(\tau_n)=\lim_n g_n\cdot \gamma(\tau)=\gamma(\tau)$ holds by \eqref{fdfdgfgf}.}\qedhere
\end{itemize}
\endgroup
\end{itemize}
\endgroup
\end{proof}

\subsection{}
\label{appC6}
\begin{proof}[Proof of Lemma \ref{asaqwqw}]
We only prove Part \ref{asaqwqw1} (Part \ref{asaqwqw2} follows analogously). 
Let thus $[a',a]$ be maximal w.r.t.\ $\gamma= \delta|_{[t',t]}$ with $t=a'$ and $a<t\in I$ and assume that \eqref{gl11} holds for $\gamma$ and $[e]\neq [g]\in G\slash G_\gamma = G\slash G_\delta$. We conclude the following:
\begingroup
\setlength{\leftmargini}{12pt}
\begin{itemize}
\item 
For each $0<\epsilon\leq s'$, we have by \eqref{gl11}
\begin{align*}
	g\cdot \delta|_{[a',a]}\cpsim \delta|_{[a,a+\epsilon]}\qquad\text{hence}\qquad
	g\cdot \delta|_{[a',a+\epsilon]}\cpsim \delta|_{[a',a+\epsilon]}
	\qquad\text{with}\qquad g\in G\setminus G_\delta;
\end{align*}
so that $[a',a+\epsilon]$ is not free w.r.t.\ $\delta$.
\item
By Corollary \ref{dfdsasasasassa}, both $g\cdot \delta$ and $\delta$ are locally embeddings (around $a'$ and $a$, respectively). Thus, Lemma \ref{lemma:BasicAnalyt1} together with \eqref{gl11}  implies\footnote{Apply Lemma \ref{lemma:BasicAnalyt1} to $\gamma\equiv g\cdot\delta|_{(a'-r',a'+r')}$ and $\gamma'\equiv  \delta|_{(a-r,a+r)}$, for $r',r>0$ suitably small.} that for $\epsilon >0$ suitably small,  we have 
$$
g\cdot \delta|_{[a'-\epsilon,a']}\cpsim \delta|_{[a-\epsilon,a]}\qquad\text{hence}\qquad
g\cdot \delta|_{[a'-\epsilon,a]}\cpsim \delta|_{[a'-\epsilon,a]}
\qquad\text{with}\qquad g\in G\setminus G_\delta;
$$
so that $[a'-\epsilon,a]$ is not free w.r.t.\ $\delta$.
\end{itemize}
\endgroup
\noindent
This shows that $[a,a']$ is maximal w.r.t.\ $\delta$.
\end{proof}

\subsection{}
\label{appC7}
\begin{proof}[Proof of the statements made in Remark \ref{fdfdsasasaasxafdfd}]

\noindent
\begingroup
\setlength{\leftmargini}{16pt}
{
\renewcommand{\theenumi}{{\rm\alph{enumi}})} 
\renewcommand{\labelenumi}{\theenumi}
\begin{enumerate}
\item
\label{appC7a}
Since $h\cdot \gamma|_{[a-\epsilon,a]}\psim_{a,a}\gamma|_{[a,a+\epsilon']}$ holds, 
Lemma \ref{lemma:BasicAnalyt1}  yields 
$$h\cdot \gamma|_{[a,a+\delta]}\psim_{a,a}\gamma|_{[a-\delta',a]}\qquad\text{for certain}\qquad  0<\delta\leq \epsilon',\: 0<\delta'\leq \epsilon.$$ 
Consequently, we have
\begin{align*}
 	h\cdot \gamma|_{[a,a+\delta]}\psim_{a,a}\gamma|_{[a-\delta',a]}\psim_{a,a}h^{-1}\cdot \gamma|_{[a,a+\delta'']}\qquad\text{for certain}\qquad 0<\delta''\leq\epsilon'.
\end{align*}
From this, we obtain
\vspace{-4pt}
$$h^2\cdot \gamma|_{[a,a+\epsilon']} \cpsim \gamma|_{[a,a+\epsilon']}\quad\stackrel{[a,a+\epsilon']_{\phantom{O}}\text{free}}{\Longrightarrow}\quad h^2=:q\in G_\gamma
\quad\:\:\Longrightarrow\quad\:\: h=h^{-1}\cdot q \quad\:\:\Longrightarrow\quad\:\:
[h]=[h^{-1}].
$$
\item
\label{appC7b}
Since $h\cdot \gamma|_{A_-} \psim_{a,a}\gamma|_{A_+}$ holds, Lemma \ref{lemma:BasicAnalyt1} yields 
\begin{align}
\label{dskjdslkjlkdslkdslkdsds98ds98ds98dsdsdsdsds}
h\cdot \gamma|_{[a_- -\epsilon,a_-]} \psim_{a_-,a_+}\gamma|_{[a_+,a_++\epsilon']}\qquad\text{for certain}\qquad  \epsilon,\epsilon'>0.
\end{align}
Moreover, \eqref{dskjdskjdsiudsiudsiudsiudsiuds87878787iuiu1} together with \eqref{dskjdskjdsiudsiudsiudsiudsiuds87878787iuiu2} yields 
\begin{align}
\label{kjdskjdsdsnmsdnmdnmds98d98ds98sd98d98ds98ds98dsds}
	(h_-\cdot h)\cdot \gamma|_{A_+} \psim_{a_+,a_-} \gamma|_{A_{--}}\qquad\:\:\text{as well as}\qquad\:\: h_+ \cdot \gamma|_{A_+}\psim_{a_+,a_+} \gamma|_{A_{++}}.
\end{align}
Combining \eqref{dskjdslkjlkdslkdslkdsds98ds98ds98dsdsdsdsds} with \eqref{kjdskjdsdsnmsdnmdnmds98d98ds98sd98d98ds98ds98dsds}, we obtain
\begin{align}
\label{kjdkjdfkjdfkjkjdfdfdfdfdfd45545}
	h_+ \cdot \gamma|_{A_{+}}\cpsim (h\cdot h_-\cdot h)\cdot \gamma|_{A_{+}}
\qquad\:\:\stackrel{A_+\:\text{free}}{\Longrightarrow}\qquad\:\: 
	[h_+]=[h\cdot h_-\cdot h],
\end{align}
which proves the one part of the claim. The other part (i.e.\ $[h_-]=[h\cdot h_+\cdot h]$) follows analogously; and alternatively as follows: 
\begingroup
\setlength{\leftmarginii}{12pt}
\begin{itemize}
\item[$-$]
We have $h_+\cdot q= h\cdot h_-\cdot h$ for some $q\in G_\gamma$  by \eqref{kjdkjdfkjdfkjkjdfdfdfdfdfd45545}, hence $h_-=h^{-1}\cdot (h_+\cdot q)\cdot h^{-1}$.
\item[$-$]
We have $h^{-1}=h\cdot q'$ for some $q'\in G_\gamma$, as $[h]=[h^{-1}]$ holds by \ref{fdfdsasasaasxafdfda}. 
\item[$-$]
We have $(h_+\cdot h) \in O_\gamma$, as $(h_+\cdot h) \cdot \gamma|_{A_-}\psim_{a_-,a_+} \gamma|_{A_{++}}$ holds by \eqref{dskjdskjdsiudsiudsiudsiudsiuds87878787iuiu1} and  \eqref{dskjdskjdsiudsiudsiudsiudsiuds87878787iuiu2}. 
\end{itemize}
\endgroup
We obtain:
\vspace{-23pt}
\begin{align*}
	[h_-]&\stackrel{\phantom{\eqref{stabiconji}}}{=} [(h\cdot q')\cdot (h_+\cdot q)\cdot (h\cdot q')]
	=[(h\cdot q')\cdot (h_+\cdot h)\cdot (h^{-1}\cdot  q\cdot h)]\\
	&\stackrel{\eqref{stabiconji}}{=}[(h\cdot q')\cdot (h_+\cdot h)]=[h\cdot(h_+\cdot h) \cdot ((h_+\cdot h)^{-1} \cdot q'\cdot (h_+\cdot h))]\hspace{20pt} \\
	&\stackrel{\eqref{stabiconji}}{=}[h\cdot h_+\cdot h].\qedhere 
\end{align*}
\qedhere
\end{enumerate}}
\endgroup
\end{proof}

\subsection{}
\label{appC8}
\begin{proof}[Proof of Equation \eqref{kcfdjksdfkjds}]
If $\cn_+\geq 2$ holds (the case $\cn_-\leq -2$ follows analogously), then we have 
\begin{align*}
	\hspace{-25pt}g_1\cdot \gamma|_{A_0}\psim_- \gamma|_{A_1}\quad&\stackrel{\text{Lemma } \ref{lemma:BasicAnalyt1}}{\Longrightarrow}\quad \hspace{10.1pt}g_1\cdot \gamma|_{A_{-1}}\cpsim \gamma|_{A_2} \hspace{32.6pt}\qquad\stackrel{\eqref{safgrfgtr}}{\Longrightarrow}\qquad  \gamma|_{A_{-1}}\cpsim g_1\cdot\gamma|_{A_2}\\[-4pt]
	&\hspace{8pt}\Longrightarrow\qquad\hspace{2pt}  g_{-1}\cdot \gamma|_{A_{-1}} \cpsim (g_{-1}\cdot g_1)\cdot \gamma\qquad\hspace{-1.2pt}\stackrel{(*)}{\Longrightarrow}\qquad\hspace{18.7pt} \gamma\cpsim (g_{-1}\cdot g_1)\cdot \gamma.
\end{align*}
For the implication $(*)$ observe that $g_{-1}\cdot \gamma|_A\psim_- \gamma|_{A_{-1}}$ together with \eqref{safgrfgtr}  implies $\gamma|_A\psim_- g_{-1}\cdot\gamma|_{A_{-1}}$.
\end{proof}

\section{Appendix to Sect.\ \ref{ghdhgg}}

\subsection{}
\label{appD1}
\begin{proof}[Proof of Lemma \ref{dffdfdfd}]
Let $(U_0,\psi_0)$ be a fixed chart of $S$. Since $S$ is connected, for each fixed $y\in S$, there exist charts $(U_1,\psi_1),\dots,(U_n,\psi_n)$ with $y\in U_n$ such that $U_{i}\cap U_{i+1}\neq \emptyset$ holds for $i=0,\dots,n-1$.\footnote{Since domains of charts ore open by convention, the  set $C$ of all $y\in S$, for which the claim holds, is non-empty  open. Now, $C$ is additionally closed; because, for $y'\in S\backslash C$, the domain of each chart around $y'$ must completely be contained in $S\backslash C$.} 
\begingroup
\setlength{\leftmargini}{12pt}
\begin{itemize}
\item[$\triangleright$]
For a fixed chart $(U,\psi)$, there thus exist charts $(U_1,\psi_1),\dots,(U_{n-1},\psi_{n-1})$ such that $U_{i}\cap U_{i+1}\neq \emptyset$ holds for $i=0,\dots,n-1$, with $(U_n,\psi_n)= (U,\psi)$.
\item[$\triangleright$]
Then, for each $0 \leq i\leq n-1$, we have
\vspace{-4pt}
\begin{align*}
U_{i}\cap U_{i+1}\neq \emptyset\qquad\quad\Longrightarrow \qquad\quad \gamma_{\psi_{i}}\cpsim \gamma_{\psi_{i+1}} \qquad\stackrel{\text{Corollary } \ref{fdsfds7}}{\Longrightarrow} \qquad  G_{\gamma_{\psi_{i}}}=G_{\gamma_{\psi_{i+1}}}.
\end{align*}
This implies $G_{\gamma_\psi}=G_{\gamma_{\psi_0}}$, from which the claim is clear.\qedhere
\end{itemize}
\endgroup
\end{proof}

\subsection{}
\label{appD2}
\begin{proof}[Proof of Lemma \ref{oisdkdslkdslkdskjdskjsdkjds98ds98ds98dsdsdsds}]

\noindent
\begingroup
\setlength{\leftmargini}{12pt}
\begin{itemize}
\item
Since $S$ is second countable, there exist charts $(U_n,\psi_n)$ for $n\geq 1$, with $S=\bigcup_{n\in \NN}U_n$.\footnote{Let $\mathsf{Q}\equiv \{Q_n\}_{n \in \NN}$ be a countable base of the topology of $S$, let $\{(U_\iota,\psi_\iota)\}_{\iota\in I}$  be a collection of charts with $S=\bigcup_{\iota\in I}U_\iota$, and set $Z:=\{n\in \NN \:|\: \exists\: \iota(n)\in I\colon Q_n\subseteq U_{\iota(n)}\}$. 
Since each $U_\iota$ is the union of Elements of $\mathsf{Q}$, we    have  
 $S=\bigcup_{\iota\in I}U_\iota=\bigcup_{n\in Z} Q_n\subseteq \bigcup_{n\in Z} U_{\iota(n)}$. 
The claim now follows if we fix $\eta\colon \NN_{\geq 1}\rightarrow Z$ surjective,   
and set $(U_n,\psi_n):= (U_{\iota(\eta(n))},\psi_{\iota(\eta(n))})$ for  all $n\geq 1$.}   
\item  
  Since $S$ is connected, for each $n\in \NN$, there exist charts (see proof of Lemma \ref{dffdfdfd})  \hspace*{\fill}($m(n)\geq 2$)
\begin{align*}  
  (U[n]_1,\psi[n]_1),\dots,(U[n]_{m(n)},\psi[n]_{m(n)})\quad\:\:&\text{with}\quad\:\: U[n]_i\cap U[n]_{i+1} \neq \emptyset\quad\:\:\text{for}\quad\:\: i=1,\dots,m(n)-1 
  \\[1pt]
  \text{as}&\text{ well as}\\[1pt]
   (U[n]_1,\psi[n]_1)= (U_n,\psi_n)\quad\:&\:\:\:\wedge \qquad (U[n]_{m(n)},\psi[n]_{m(n)})= (U_{n+1},\psi_{n+1}).
\end{align*} 
Then, the claim  holds for  $\{(\tilde{U}_{\ell},\tilde{\psi}_{\ell})\}_{\ell\geq 1}$ defined by  
\[
	\textstyle (\tilde{U}_{\ell},\tilde{\psi}_\ell):=(U[n]_i,\psi[n]_i)\quad\text{for}\quad\ell\geq 1\quad\text{if}\quad \ell= i+ \sum_{p=0}^{n-1} m(p)\quad\text{holds for}\quad n\in \NN,\: 1\leq i \leq m(n).\qedhere
\]
\end{itemize}
\endgroup

\end{proof}

\subsection{}
\label{appD3}
\begin{proof}[Proof of Proposition \ref{dfgffhfh}]
We observe the following:
\begingroup
\setlength{\leftmargini}{12pt}
\begin{itemize}
\item
The second point in \ref{dfgffhfh1} is clear from the first point in \ref{dfgffhfh1} as well as from Lemma \ref{dffdfdfd} and Proposition \ref{classsiilie}.
\item
Since $\UE$ is not homeomorphic to an interval, 
only one of the situations in \ref{dfgffhfh2} can hold.
\item
The one direction in the uniqueness (last) statement in \ref{dfgffhfh2} is clear from Lemma \ref{kljklvjkvjklvjklxcv} (with $\tau\equiv0$ there). For the other direction, let $\q\in \mg$ be such that 
	$\tilde{\Omega}\colon \UE\ni \e^{\I \phi}\mapsto \iota^{-1}(\exp(\phi\cdot \q)\cdot x)\in S$ 
is an analytic diffeomorphism (the case \eqref{ghhgghqqqq2} is treated analogously). Then,    
\begin{align}
\label{skjdskjdskjdsnmdsnmdsnmds08ds98ds98ds}
	 \gamma_{\q}^x=\iota\cp\tilde{\Omega}\cp \alpha\qquad\text{holds for}\qquad \alpha\colon \RR\ni \phi \mapsto \e^{\I\phi}\in \UE,
\end{align} 
hence $\gamma_\q^x$ is immersive (chain rule) 
with $\pii(x,\q)=2\pi$. 
\vspace{-5pt}
\begingroup
\setlength{\leftmarginii}{12pt}
\begin{itemize}
\item
Let $(U,\psi)$ be a fixed chart centred at $z:=\iota^{-1}(x)$, hence 
$(\tilde{\Omega}\cp\alpha)(0)=z=\psi^{-1}(0)$.
\item
We fix an open interval $0\in \tilde{J}\subseteq (-\pi,\pi)$ with $z\in W:=(\tilde{\Omega}\cp\alpha)(\tilde{J})\subseteq U$, and set  
 $J:=\psi(W)\subseteq D_\psi$.  
\end{itemize}
\endgroup 
By Corollary \ref{dfdsasasasassa}, we can 
shrink $\tilde{J}$ around $0$ such that $\gamma_\q^x|_{\tilde{J}}$ and $\gamma_\psi|_{J}$ are embeddings (Corollary \ref{dfdsasasasassa}). Then,
\begin{align}
\label{lkdslkdslkdskjdsdsjksjshjdszusdzusd76ds87ds87d}
\begin{split}
\gamma_\q^x(\tilde{J})\stackrel{\eqref{skjdskjdskjdsnmdsnmdsnmds08ds98ds98ds}}{=}\iota(W)=\gamma_\psi(J)
\qquad\Longrightarrow\qquad\gamma_\q^x\cpsim \gamma_\psi
&\quad\:\:\stackrel{\text{Corollary }\ref{fdsfds7}}{\Longrightarrow}\quad\:\: G_{\gamma_\q^x}=G_{\gamma_\psi}\stackrel{\text{Lemma }\ref{dffdfdfd}}{=}G_S\hspace{20pt}\\[-6pt]
&\quad\:\:\stackrel{\phantom{\text{Corollary }\ref{fdsfds7}}}{\Longrightarrow}\quad\:\:\:\he \mg_{\gamma_\q^x}=\mg_S.
\end{split}   
\end{align}
\vspace{-12pt}

\noindent
Since  $\gamma_\psi$ is exponential w.r.t.\ $(x,\g)$ by  \ref{dfgffhfh1}, the relation $\gamma_\q^x\cpsim \gamma_\psi$ additionally implies
\begin{align*}
\gamma_\q^x\cpsim \gag\quad\:\:&\stackrel{\text{Lemma }\ref{ldldsldldsalldsakdshfdsf}}{\Longrightarrow}\quad\:\: 
\q\in \RR_{\neq 0}\cdot \g + \mg_{\gag} 
\quad\:\: \stackrel{\pii(x,\q)\:=\: 2\pi}{\Longrightarrow}\quad\:\: \q\in \g + \mg_{\gamma_\g^x} \quad\:\:\: \stackrel{\eqref{lkdslkdslkdskjdsdsjksjshjdszusdzusd76ds87ds87d}}{\Longrightarrow}\quad\:\:\: \q\in \g + \mg_{S}
\end{align*}
whereby we have additionally used Lemma \ref{kljklvjkvjklvjklxcv} (with $\tau\equiv 0$) and injectivity of $\iota$ in the second  implication.
\end{itemize}
\endgroup
\noindent 
It thus remains to show the first point in \ref{dfgffhfh1},  as well as the first statement in \ref{dfgffhfh2}:  
\vspace{6pt}

\noindent
Since $(S,\iota)$ is exponential, there exists a chart $(U_0,\psi_0)$ of $S$ with  
 $\gamma_{\psi_0}=\gag\cp \rho_{\psi_0}$ for some $x\in M$, $\g\in \mg\backslash \mg_x$ (observe that $\gamma_\psi$ is immersive), and an analytic diffeomorphism $\rho_{\psi_0}\colon D_{\psi_0}\rightarrow D'_0\subseteq \RR$.  
\begingroup
\setlength{\leftmargini}{12pt}
\begin{itemize}
\item
If $\pii(x,\g)<\infty$ holds, we can assume $\pii(x,\g)=2\pi$, just by rescaling $\g$ and $\rho_0$ if necessary.
\item
We can assume $x\in \im[\iota]$; just by fixing $t\in D_{\psi_0}$, replacing $x$ by $\exp(\rho_{\psi_0}(t))\cdot x$, and then replacing $\rho_{\psi_0}$ by $\rho_{\psi_0}-\rho_{\psi_0}(t)$. 
\end{itemize}
\endgroup
\noindent
It suffices to prove the following statement:\hspace*{\fill}(proof below)
\vspace{6pt}

\noindent
{\bf Claim 1:}     
There exist charts $(U_n,\psi_n)$ for $n\in \NN$ with $S= \bigcup_{n\in \NN} U_n$, an interval $D\subseteq \RR$ with $\gag(D)=\iota(S)$, and analytic diffeomorphisms $\rho_n\colon  D_{\psi_n} \rightarrow D'_n \subseteq D$ for $n\in \NN$, such that:  
\begingroup
\setlength{\leftmargini}{16pt}
{
\renewcommand{\theenumi}{\sf\alph{enumi})} 
\renewcommand{\labelenumi}{\theenumi}
\begin{enumerate}
\item
\label{lkdslkds1}
\hspace{10.2pt}\:$\gamma_{\psi_n}=\gag\cp \rho_n$\hspace{28.4pt}\: holds for each $n\in \NN$,
\item
\label{lkdslkds3}
\hspace{17.5pt}\:$D=\bigcup_{n\in \NN} D'_n$\hspace{19.6pt}\: holds,
\item
\label{lkdslkds2}
\hspace{0pt}\:$\innt[D]=\bigcup_{n\in \NN} \innt[D'_n]$\:\: holds.
\end{enumerate}}
\endgroup
\noindent  
In fact, assume that  
Claim 1 holds true:
\begingroup
\setlength{\leftmargini}{12pt}
\begin{itemize}
\item 
Proof of the first point in  \ref{dfgffhfh1}:
   
Let $(U,\psi)$ be a chart of $S$. Since $S= \bigcup_{n\in \NN} U_n$ holds, there exists some $n\in \NN$ with 
\begin{align*}
	U\cap U_n\neq \emptyset\quad\:\: \Longrightarrow\quad\:\: \gamma_\psi\cpsim \gamma_{\psi_n} \quad\:\: \stackrel{\ref{lkdslkds1}}{\Longrightarrow}\quad\:\: \gamma_\psi\cpsim \gag
	\quad\:\: \stackrel{\text{Lemma } \ref{vd}}{\Longrightarrow}\quad\:\:
	\gamma_\psi\:\:\:\text{is exponential w.r.t.}\:\:\: (x,\g).
\end{align*}
\item
Proof of the first statement in \ref{dfgffhfh2}:
\vspace{-3pt}
\begingroup
\setlength{\leftmarginii}{12pt}
\begin{itemize}
\item[$*$] 
If $\gag|_D$ is injective, then $\Omega\colon D\rightarrow S$ defined by \eqref{ghhgghqqqq2} is bijective, as $\gag(D)=\iota(S)$ holds. Moreover, $\Omega$ is an analytic diffeomorphism, because 
\vspace{-3pt}
\begin{align*}
\Omega^{-1}\cp \psi_n^{-1}=\big((\gag|_D)^{-1}\cp \iota\big)\cp \psi_n^{-1}= (\gag|_D)^{-1}\cp\gamma_{\psi_n}\stackrel{\ref{lkdslkds1}}{=}\rho_n\qquad\quad\forall\: n\in \NN
\end{align*}
holds with $\bigcup_{n\in \NN}\im[\rho_n]=D$ by \ref{lkdslkds3}. 
\vspace{2pt}
\item[$*$]
If $\gag|_D$ is not injective, then $\pii(x,\g)=2\pi$ holds due to our above modifications (Lemma \ref{lemma:expeig}). Hence, $\Omega$ given by  \eqref{ghhgghqqqq1} is well defined and bijective. 
\begingroup
\setlength{\leftmarginiii}{12pt}
\begin{itemize}
\item[$\triangleright$]
There exist $\tau\in \RR$ and $\epsilon>0$ with $\tau+[-2\epsilon ,2\pi +2\epsilon]\subseteq D$:
\vspace{4pt}

\noindent
	In fact, since $\gag|_D$ is not injective, there exists $\tau \in D$ with $\tau+[0,2\pi]\subseteq D$. 
	Hence, if the claim is wrong, we have $D=\tau+[0,2\pi]$. Now, \ref{lkdslkds3} provides $m,n\in \NN$, with
	 $$\tau_-:=\tau\in D'_{m}\qquad\text{and}\qquad \tau_+:=\tau + 2\pi\in D'_{n}\qquad\text{hence}\qquad D'_{m},D'_n\subseteq D=[\tau_-,\tau_+].$$
	 Since $\im[\psi_m]=D_{\psi_m}=\rho_m^{-1}(D'_m)$ and $\im[\psi_n]=D_{\psi_n}=\rho_n^{-1}(D'_n)$ are open in $(-\infty,0]$, we must have 
	 	 \begin{align*}
	  D'_{m}=[\tau_-,r_-)\quad \text{for some} \quad \tau_-<r_-\in D\qquad\quad&\wedge\qquad\quad 
	  D'_{n}=(r_+,\tau_+]\quad\text{for some}\quad D\ni r_+<\tau_+\\[2pt]
	  &\text{i.e.}\\[-14pt]
	 D_{\psi_m}=(\rho_m^{-1}(r_-),\he\overbrace{\rho_m^{-1}(\tau_-)}^{=\:0}]\qquad\quad&\wedge\qquad\quad
	  D_{\psi_n}=(\rho_n^{-1}(r_+),\he\overbrace{\rho_n^{-1}(\tau_+)}^{=\: 0}].
	 \end{align*}
	Thus,  
	$(U_m,\psi_m)$ and $(U_n,\psi_n)$ are boundary charts around
	\vspace{-3pt}
	\begin{align}
	\label{lkdslkskdsds9898ds98dsdsdsdscxcxcx}
	\begin{split}
		z:=\psi_m^{-1}\big(\rho_{m}^{-1}(\tau_-)\big)=(\iota^{-1}\cp \gamma_{\psi_m})\big(\rho_{m}^{-1}(\tau_-)\big)
		&\stackrel{\ref{lkdslkds1}}{=}
		(\iota^{-1}\cp \gag)(\tau_-)\stackrel{\pii(x,\g)\:=\:2\pi}{=}(\iota^{-1}\cp \gag)(\tau_+)\\
		&\stackrel{\ref{lkdslkds1}}{=}
		(\iota^{-1}\cp \gamma_{\psi_n})\big(\rho_{n}^{-1}(\tau_+)\big)
		=\psi_n^{-1}\big(\rho_{n}^{-1}(\tau_+)\big).
	\end{split}	
	\end{align}
	Now, shrinking both charts around $z$ (decreasing $r_-$ and increasing $r_+$) if necessary, we can assume $D'_m\cap D'_n =\emptyset$. 
Since $D'_m, D'_n\subseteq D$ holds, and since we have 
\vspace{-4pt}
	$$\qquad\gag(x)=\gag(y)\:\:\:\text{for}\:\:\: x,y\in D\qquad\stackrel{\pii(x,\g)\:=\:2\pi}{\Longleftrightarrow}\qquad x,y\in \{\tau_-,\tau_+\}
	$$
	with $\iota^{-1}\cp \gag(\tau_\pm)=z$ by  \eqref{lkdslkskdsds9898ds98dsdsdsdscxcxcx},  
	the intersection
	\vspace{-4pt}
	$$
U_m\cap U_n=	\psi_m^{-1}(D_{\psi_m})\cap \psi_n^{-1}(D_{\psi_n})\stackrel{\ref{lkdslkds1}}{=}\big(\big(\iota^{-1}\cp \gag\big)(D'_m)\big)\cap \big(\big(\iota^{-1}\cp \gag\big)(D'_n)\big)=\{z\}
	$$ 
	is singleton; which contradicts that $(U_m,\psi_m)$ and $(U_n,\psi_n)$ are boundary charts around $z$.
\item[$\triangleright$]
The previous point together with \ref{lkdslkds2} shows that $\tau + [-\epsilon,2\pi+\epsilon] \subseteq \innt[D]$ is covered by the open intervals $J_n':= \innt[D_n']$ for $n\in \NN$. In addition to that, we have  
$$
S=\iota^{-1}\big(\gag(D)\big)\stackrel{\pii(x,\g)\:=\:2\pi}{=}\iota^{-1}(\gag(\tau + [0,2\pi))\subseteq  \iota^{-1}(\gag(\tau + [-\epsilon,2\pi+\epsilon])).
$$ 
Hence, $S= \bigcup_{n\in \NN}U'_n$ holds for
	\vspace{-4pt}
\begin{align*}
		U'_n:= \psi_n^{-1}(J_{\psi_n})\stackrel{\ref{lkdslkds1}}{=} (\iota^{-1}\cp\gag)(J_n')\quad\:\:\text{with}\quad\:\: J_{\psi_n}:= \rho_n^{-1}(J_n')\subseteq D_{\psi_n}\qquad\quad\forall\: n\in \NN.
\end{align*}
(In particular, $S$ is a manifold without boundary, as each $J_{\psi_n}$ is an open interval.)
\end{itemize}
\endgroup 
Then, $\Omega$ is an analytic diffeomorphism, because 
\vspace{-6pt}
\begin{align*}
	\Omega^{-1}\cp \psi_n^{-1}=  \Omega^{-1}\cp (\iota^{-1}\cp  \gamma_{\psi_n})\stackrel{\ref{lkdslkds1}}{=}  \Omega^{-1}\cp ((\iota^{-1} \cp \gag)\cp \rho_n)=\e^{\I\rho_n}
\end{align*}  
holds with\footnote{Hence, if $(V,\chi)$ is a chart around $z\in \UE$, then there exists a chart $(U,\psi)$ of $S$, with $D_\psi$ an open interval, such that $\Omega^{-1}\cp\psi^{-1}(U)\subseteq V$ holds.} $\tau+[-\epsilon, 2\pi+\epsilon]\subseteq \bigcup_{n\in \NN} J_n'$.
\end{itemize}
\endgroup  
\end{itemize}
\endgroup
\vspace{-4pt}

\noindent
It remains to prove Claim 1:
\vspace{4pt}

\noindent 
By Lemma \ref{oisdkdslkdslkdskjdskjsdkjds98ds98ds98dsdsdsds},  
there exist charts $(U_n,\psi_n)$ for $n\geq 1$ with $S=\bigcup_{n\in \NN}U_n$, such that $U_n\cap U_{n+1}\neq \emptyset$ holds for all $n\in \NN$. We now prove by induction that there exist intervals 
\begin{align}
\label{lovcxpoivcxpiiovcxvxc}
A_0\subseteq A_1\subseteq A_2\subseteq A_3\subseteq  {\dots}
\end{align}
as well as analytic diffeomorphisms $\rho_n\colon  D_{\psi_n} \rightarrow D'_n \subseteq A_n$ for $n\in \NN$, such that  
\begingroup
\setlength{\leftmargini}{18pt}
{
\renewcommand{\theenumi}{\sf\roman{enumi})} 
\renewcommand{\labelenumi}{\theenumi}
\begin{enumerate}
\item
\label{hgghhgh1}
\hspace{16.7pt} $\gamma_{\psi_n}=\gag\cp \rho_n$,
\item
\label{hgghhgh2}
\hspace{19.6pt} $A_n=\bigcup_{m\leq n} D'_m$,
\item
\label{hgghhgh3}
\hspace{0pt}\: $\innt[A_n]=\bigcup_{m\leq n} \innt[D'_m]$
\end{enumerate}}
\endgroup
\noindent  
holds for each $n\in \NN$.  
Then, Claim 1 holds for $D:=\bigcup_{n\in \NN} A_n$. In fact, 
\begingroup
\setlength{\leftmargini}{12pt}
\begin{itemize}
\item 
	$D$ is an interval by \eqref{lovcxpoivcxpiiovcxvxc}, with 
	$$
	\textstyle\gag(D)\stackrel{\ref{hgghhgh2}}{\subseteq} \bigcup_{n\in \NN}\gag(D_n')\stackrel{\ref{hgghhgh1}}{=}\bigcup_{n\in \NN}\gamma_{\psi_n}(D_{\psi_n})=\bigcup_{n\in \NN}\iota(U_n)=\iota(S).
	$$
\item
 \ref{lkdslkds1} and \ref{lkdslkds3} are clear from \ref{hgghhgh1} and \ref{hgghhgh2}, respectively.
\item 
  \ref{lkdslkds2} is clear from \eqref{lovcxpoivcxpiiovcxvxc} and \ref{hgghhgh3}.\footnote{If $t'\in \innt[D]$, then there exists $n\in \NN$ with $t'\in A_n$. Then, $t'\in \innt[D]$ together with $D=\bigcup_{n\in \NN} A_n$ and \eqref{lovcxpoivcxpiiovcxvxc} implies that $t'\in \innt[A_m]$ holds for some $m\geq n$. Then, \ref{hgghhgh3} shows that $t'\in \innt[D'_p]$ holds for some $p\in \NN$, which establishes \ref{lkdslkds2}.}  
\end{itemize}
\endgroup
\noindent
Now, to perform the induction, observe that \ref{hgghhgh1}, \ref{hgghhgh2}, \ref{hgghhgh3} evidently hold for $n= 0$, with $A_0= D_0'$ and $\rho_0= \rho_{\psi_0}$.  
We thus can assume that there exist $m\in \NN$ and  diffeomorphisms $\rho_n\colon  D_{\psi_n} \rightarrow D'_n \subseteq A_n$ for $0\leq n\leq m$, such that \ref{hgghhgh1}, \ref{hgghhgh2}, \ref{hgghhgh3} hold. Then, $U_{n}\cap U_{n+1}\neq \emptyset$ implies
\begin{align*} 
	 \gamma_{\psi_{n+1}}\cpsim \gamma_{\psi_{n}}\qquad\:\:\stackrel{\ref{hgghhgh1}}{\Longrightarrow}\qquad\:\: \gamma_{\psi_{n+1}}\cpsim\gag|_{D'_{n}} \qquad\stackrel{\text{Lemma }\ref{vd}}{\Longrightarrow}\qquad \gamma_{\psi_{n+1}}=\gag\cp \rho_{n+1}
\end{align*}
for an analytic diffeomorphism 
\begin{align*}
	\rho_{n+1}\colon D_{\psi_{n+1}}\rightarrow D_{n+1}'\subseteq A_{n+1}:=A_{n}\cup D'_{n+1}\qquad\quad\text{with}\qquad\quad   \innt[D'_{{n}}]\cap \innt[D'_{n+1}]\neq \emptyset. 
\end{align*}
Since $D'_{n}\subseteq A_{n}$ holds, the right side shows $ \innt[A_n]\cap \innt[D_{n+1}']\neq \emptyset$; and we conclude: 
\begingroup
\setlength{\leftmargini}{12pt}
\begin{itemize}
\item
$A_{n+1}$ is an interval, with $A_{n}\subseteq A_{n+1}$ by construction.
\item
If $t'$ is contained in $\innt[A_{n+1}]$, then $t'$ is contained in $\innt[D'_{n+1}]$ or in $\innt[A_{n}]$. Hence, \ref{hgghhgh3} (for $n+1$) is clear from the induction hypothesis.
\end{itemize}
\endgroup
\noindent
Since  \ref{hgghhgh1} and \ref{hgghhgh2} (for $n+1$) are just clear from the construction, the claim now follows by induction.
\end{proof}

\subsection{}
\label{appD4}
\begin{proof}[Proof of Corollary \ref{lkdslksdlkdslkdsoidsoioidsds78987ds98ds98dsdsdsdsdsdsdss}]
The one direction is covered by Proposition \ref{dfgffhfh}.\ref{dfgffhfh2}. Assume thus $S\cong \UE$ via \eqref{ghhgghqqqq1} (the case $S\cong D$ via \eqref{ghhgghqqqq2} is treated analogously).  Then,    
\begin{align}
\label{skjdskjdskjdsnmdsnmdsnmds08ds98zzds98ds33233232}
	 \gamma_{\g}^x=\iota\cp\Omega\cp \alpha\qquad\text{holds for}\qquad \alpha\colon \RR\ni \phi \mapsto \e^{\I\phi}\in \UE,
\end{align}
hence $\gamma_\g^x$ is immersive (chain rule) 
with $\pii(x,\g)=2\pi$. 
\vspace{-5pt}
\begingroup
\setlength{\leftmargini}{12pt}
\begin{itemize}
\item
Let $(U,\psi)$ be a fixed chart centred at $z:=\iota^{-1}(x)$, hence 
$(\Omega\cp\alpha)(0)=z=\psi^{-1}(0)$.
\item
We fix an open interval $0\in J\subseteq (-\pi,\pi)$ with $z\in W:=(\Omega\cp\alpha)(J)\subseteq U$, and set  
 $J':=\psi(W)$.  
\end{itemize}
\endgroup 
\noindent
By Corollary \ref{dfdsasasasassa}, we can 
shrink $J$ around $0$, such that $\gamma_\g^x|_{J}$ and $\gamma_\psi|_{J'}$ are embeddings. Then,
\vspace{-4pt}
\[
\gamma_\g^x(J)\stackrel{\eqref{skjdskjdskjdsnmdsnmdsnmds08ds98zzds98ds33233232}}{=}\iota(W)=\gamma_\psi(J')
\quad\:\:\:\Longrightarrow\quad\:\:\:\gamma_\g^x\cpsim \gamma_\psi\quad\:\: \stackrel{\text{Lemma } \ref{vd}}{\Longrightarrow}\quad\:\:
	\gamma_\psi\:\:\:\text{is exponential w.r.t.}\:\:\: (x,\g).\qedhere
\]
\end{proof}
\subsection{}
\label{appD4b}
\begin{proof}[Proof of Lemma \ref{dslklkdslkdssdoioidsoidsds09ds09ds09ddsdsds09}]
The one direction is evident. For the other direction, it suffices to prove the following:
\vspace{6pt}

\noindent
{\bf Claim 2:} Let $(U,\psi)$ and $(U',\psi')$ be charts of $S$. Then, the following implication holds:   
\begin{align}
\label{lkdslkdsdsnmnmdsnmdsnmdsds8sd98s98d98dsdsds3343}
\gamma_{\psi}\cpsim \gamma_{\psi'}\qquad\quad\Longrightarrow\qquad\quad \gamma_{\psi}\:\text{ is free}\quad\:\:\Longleftrightarrow\quad\:\: \gamma_{\psi'}\:\text{ is free.}
\end{align} 
In fact, assume that Claim 2 holds true, and let $(U_0,\psi_0)$ be a chart of $S$ such that $\gamma_{\psi_0}$ is free: 
\begingroup
\setlength{\leftmargini}{12pt}
\begin{itemize}
\item
Lemma \ref{oisdkdslkdslkdskjdskjsdkjds98ds98ds98dsdsdsds} provides  charts  $\{(U_n,\psi_n)\}_{n\geq 1}$ with $S=\bigcup_{n\in \NN}U_n$, such that  $U_n\cap U_{n+1}\neq \emptyset$ (hence $\gamma_{\psi_n}\cpsim \gamma_{\psi_{n+1}}$) holds for each $n\in \NN$. It now follows by induction from Claim 2 that $\gamma_{\psi_n}$ is free for each $n\in \NN$. 
\item
Let $(U,\psi)$ be a chart of $S$. Since $S=\bigcup_{n\in \NN}U_n$ holds, there exists some $n\in \NN$ with $U\cap U_n\neq \emptyset$, hence $\gamma_{\psi}\cpsim \gamma_{\psi_n}$. Claim 2 thus shows that $\gamma_\psi$ is free.
\end{itemize}
\endgroup
\noindent
{Proof of Claim 2.} Assume that the left side of \eqref{lkdslkdsdsnmnmdsnmdsnmdsds8sd98s98d98dsdsds3343} holds, and  that $\gamma_\psi$ is free (the other implication follows analogously). 
By Theorem \ref{sfdknfdhujfdd}, $\gamma_\psi$ is either a free segment or admits a $\tau$-decomposition or admits an $A$-decomposition. 
In any case, $\gamma_{\psi}\cpsim \gamma_{\psi'}$ implies that $\gamma_{\psi}(J)=\gamma_{\psi'}(J')$ holds for a free open interval $J\subseteq D_{\gamma_\psi}$ and an open interval $J'\subseteq D_{\gamma_{\psi'}}$, such that $\gamma\equiv \gamma_{\psi}|_J$ and $\gamma'\equiv\gamma_{\psi'}|_{J'}$ are embeddings.\footnote{Alternatively, one obtains such intervals form Theorem \ref{sfdknfdhujfd} and  \eqref{ldslkdlkdsd09s09d09ds09ds09ds09dssddsdscxcxcx}.} 
Lemma \ref{freemax}.\ref{freemax1} together with  Lemma \ref{lemma:BasicAnalyt2} shows that $J'$ is free w.r.t.\ $\gamma'$, hence   free w.r.t.\ $\gamma_{\psi'}$ by \eqref{ldslkdlkdsd09s09d09ds09ds09ds09dssddsdscxcxcx}.     
\end{proof}

\subsection{}
\label{appD5}
\begin{proof}[Proof of Lemma \ref{fdfxvcx}]
If $z\in S\backslash \FP$ holds, then the claim is clear from Lemma \ref{jgjhjh}.\ref{jgjhjh2}. Assume thus that $z\in \FP$ holds, i.e., that       
\eqref{sdfsdfds} holds for a (boundary) chart $(U,\psi)$ around $z$ (hence, $\psi(U)\subseteq (-\infty,0]$ and $\psi(z)=0$).   
\vspace{-3pt}
\begingroup
\setlength{\leftmargini}{12pt}
\begin{itemize}
\item
$\gamma_\psi$ is free by Lemma \ref{dslklkdslkdssdoioidsoidsds09ds09ds09ddsdsds09}; hence, $\ovl{\gamma}_\psi$ is free by \eqref{ldslkdlkdsd09s09d09ds09ds09ds09dssddsdscxcxcx}.  
\item
Since $z\in \FP\cap U\neq \emptyset$ holds with $\psi(z)=0$, 
Lemma \ref{ghhjggh} 
shows\footnote{Lemma \ref{ghhjggh} shows that $\ovl{\gamma}_\psi$ either admits a $0$-decomposition or is negative with $A$-decomposition $(\{a_n\}_{n\in \cn},\{[g_n]\}_{n\in \cn})$ such that $a_n=0$ holds for some $n\in \cn$.} 
that  there exist open intervals $(j',j)\equiv J\subset I\equiv(i',i)\subseteq \dom[\ovl{\gamma}_\psi]$   
with $i'<j'<0<j<i$  
such that $(i',0]$ and  $[0,i)$ are free. 
\item
By  Corollary \ref{dfdsasasasassa} and \eqref{sdfsdfds}, we can shrink $I$ around $0$ such that $\ovl{\gamma}_\psi|_{I}$ is an embedding, with 
$g\cdot \ovl{\gamma}_\psi((i',0])=\ovl{\gamma}_\psi([0,i))$. 
Then, $[g]$ is a $0$-decomposition of $\gamma:= \ovl{\gamma}_\psi|_I$ with
\begin{align}
\label{dskkjdsnmdsnmdsdszudszuds87ds87ds87ds87ds87dsdsds}
g\cdot \gamma((i',0])=\gamma([0,i)).
\end{align}
\end{itemize}
\endgroup
\noindent
The claim holds for the neighbourhood $V:=\psi^{-1}((j',0])$ of $z$. 
In fact, assume that $g'\cdot\iota(V)\cap \iota(V)$ is infinite for some $g'\in G\backslash G_S$.  Then, $g'\cdot \gamma((j',0])\cap \gamma((j',0])$ is infinite; hence, (by compactness of $[j',0]$) there exist converging sequences 
\begin{align*}
I\setminus\{t\}\supseteq [j',0]\backslash\{t\} \supseteq  \{t_n\}_{n\in \NN}\rightarrow t\in [j',0]\qquad\:\: &\text{ and}\:\qquad\:\: I'\setminus\{t'\}\supseteq [j',0]\backslash\{t'\} \supseteq \{t'_n\}_{n\in \NN}\rightarrow t'\in [j',0]\\[1pt]
&\text{ with}\\[1pt]
g'\cdot \gamma(t_n)=\:&\gamma(t_n')\quad \text{for all}\quad n\in \NN.
\end{align*}
Lemma \ref{lemma:BasicAnalyt0} 
implies that      
$g'\cdot \gamma|_{(i',0]}\cpsim \gamma|_I$  holds; so that, by faithfulness, we have 
\vspace{-4pt} 
\begin{align*}
	\text{either}\qquad\quad [g']=[e] \qquad\quad\text{or}\qquad\quad [g']=[g]\quad\:\wedge\quad\: g'\cdot \gamma((i',0])\stackrel{\eqref{dskkjdsnmdsnmdsdszudszuds87ds87ds87ds87ds87dsdsds}}{=}\gamma([0,i)).
\end{align*}
\begingroup
\setlength{\leftmargini}{12pt}
\begin{itemize}
\item
In the first case, we obtain  
$$
g'\in G_{\gamma} \stackrel{\eqref{ldslkdlkdsd09s09d09ds09ds09ds09dssddsdscxcxcx}}{=} G_{\ovl{\gamma}_\psi}\stackrel{\eqref{ldslkdlkdsd09s09d09ds09ds09ds09dssddsdscxcxcx}}{=}G_{\gamma_\psi}\stackrel{\text{Lemma }\ref{dffdfdfd}}{=}G_S
\qquad\text{which contradicts the assumption}\qquad g'\in G\setminus G_S. 
$$
\item
In the second case, we obtain
\begin{align*}
	g'\cdot\iota(V)\cap \iota(V)= g'\cdot \gamma((j',0]) \cap \gamma((j',0]) \subseteq \gamma([0,i)) \cap \gamma((i',0])\stackrel{\gamma\text{ injective}}{=}\{\gamma(0)\}
\end{align*} 
which contradicts the assumption that the intersection on the left side is infinite.\qedhere
\end{itemize}
\endgroup
\end{proof}

\end{document}